 \newtheorem{theorem}{Theorem}[section]
\newtheorem{corollary}[theorem]{Corollary}
\newtheorem{proposition}[theorem]{Proposition}
\newtheorem{lemma}[theorem]{Lemma}
\theoremstyle{remark}
\newtheorem{remark}[theorem]{Remark}
\newtheorem{definition}[theorem]{Definition}
\newtheorem{example}[theorem]{Example}
\newtheorem{examples}[theorem]{Examples}
\newcommand\A{\mathbb{A}}
\renewcommand\L{\mathcal{L}}
\newcommand{\TM}{\mathbb{T}M}
\renewcommand{\TH}{\mathbb{T}H}
\newcommand{\T}{\mathbb{T}}
\newcommand{\n}{\mathfrak{n}}
\newcommand{\pr}{\on{pr}}
\newcommand\lie[1]{\mathfrak{#1}}
\renewcommand{\k}{\lie{k}}
\newcommand{\h}{\lie{h}}
\newcommand{\g}{\lie{g}}
\newcommand{\q}{\lie{q}}
\newcommand{\on}{\operatorname}
\newcommand{\Ad}{ \on{Ad} }
\newcommand{\ad}{ \on{ad} }
\newcommand{\End}{ \on{End} }
\renewcommand{\ker}{ \on{ker}}
\newcommand{\Mult}{{\on{Mult}}}
\newcommand{\da}{\dasharrow}
\renewcommand\a{\mathsf{a}}
\newcommand{\hra}{\hookrightarrow}
\newcommand{\xra}{\xrightarrow}
\newcommand{\rra}{\rightrightarrows}
\renewcommand{\d}{{\mbox{d}}}
\newcommand{\dd}{\mf{d}}
\newcommand{\ol}{\overline}
\newcommand\sig{\sigma}
\newcommand\eps{\epsilon}
\newcommand\Om{\Omega}
\newcommand\om{\omega}
\newcommand{\f}{\tfrac}
\newcommand{\p}{\partial}
\renewcommand{\l}{\langle}
\renewcommand{\r}{\rangle}
\newcommand{\hh}{{ \f{1}{2}}}
\newcommand{\ti}{\tilde}
\newcommand\pt{\on{pt}}
\newcommand\beqn{\begin{equation}}
\newcommand\eeqn{\end{equation}}
\newcommand{\ca}{\mathcal}
\newcommand{\wh}{\widehat}
\newcommand{\wt}{\widetilde}
\newcommand{\mf}{\mathfrak}
\newcommand{\beq}{\begin{eqnarray*}}
\newcommand{\eeq}{\end{eqnarray*}}
\newcommand{\Cour}[1]      {[\![#1]\!]}
\begin{document}

\title[]{Dirac Lie groups}

\author{D. Li-Bland}
\address{University of Toronto, Department of Mathematics,
40 St George Street, Toronto, Ontario M4S2E4, Canada }
\email{dbland@math.toronto.edu}

\author{E. Meinrenken}
\address{University of Toronto, Department of Mathematics,
40 St George Street, Toronto, Ontario M4S2E4, Canada }
\email{mein@math.toronto.edu}

\date{\today}

\begin{abstract}
A classical theorem of Drinfel'd states that the category of simply connected Poisson Lie groups $H$
is isomorphic to the category of Manin triples $(\dd,\g,\h)$, where $\h$ is the Lie algebra of $H$.
In this paper, we consider \emph{Dirac Lie groups}, that is, Lie groups $H$ endowed with a multiplicative Courant algebroid $\A$ and a Dirac structure $E\subseteq \A$ for which the multiplication is a Dirac morphism. 
It turns out that the simply connected Dirac Lie groups are classified by so-called \emph{Dirac
Manin triples}. We give an explicit construction of the Dirac Lie group structure defined by a Dirac Manin 
triple, and develop its basic properties. 
\end{abstract}

\maketitle
\setcounter{tocdepth}{2}

{\small \tableofcontents \pagestyle{headings}}

\setcounter{section}{-1}
\vskip.3in
\section{Introduction}\label{sec:intro}

Dirac structures were introduced by T. Courant \cite{Courant:1990uy} as a common framework for closed 2-forms 
and Poisson structures on manifolds. He showed that the integrability condition $\d\om=0$ for 2-forms and 
$[\pi,\pi]=0$ for bivector fields admits a common generalization to an integrability condition on Lagrangian subbundles   
$E\subseteq \TM=TM\oplus T^*M$ relative to a certain bracket on $\Gamma(\TM)$. Liu-Weinstein-Xu \cite{ManinTriplesBi}
generalized Courant's original set-up, replacing $\TM$ with a more general notion of a \emph{Courant algebroid}
$\A\to M$. The theory of Courant algebroids and Dirac structures was clarified and simplified in the work of 
Dorfman \cite{Dorfman:1993us}, \v{S}evera \cite[Letter no.7]{LetToWein}, Roytenberg \cite{Roytenberg99}, 
Uchino \cite{Uchino02}, and others. It has recently gained attention through the development 
of generalized complex geometry \cite{Gualtieri:2004wh,Hitchin:2003kx}, and it provides a unified setting for 
various types moment maps \cite{Alekseev:2009tg,Bursztyn:2009wi}.

A Poisson Lie group is a Lie group $H$, equipped with a Poisson
structure such that the multiplication map is Poisson. To extend this
definition to Dirac geometry, it is required that the Courant
algebroid $\A$ itself carries a multiplicative structure. As suggested
by Mehta \cite{Mehta:2009js} and further explored in
\cite{LiBland:2010wi}, we require that $\A$ carries a
$\ca{VB}$-groupoid structure $\A\rra\g$ over the group $H\rra \pt$, in
such a way that the groupoid multiplication is a Courant morphism
$\Mult_\A\colon\A\times\A\da \A$. (For the standard Courant algebroid
$\A=\TH$ this structure is automatic, with $\g=\h^*$.) One then has a
notion of a \emph{multiplicative} Dirac structure $E\subseteq \A$. In
the case of $\TH$ these were classified in the work of Ortiz
\cite{Ortiz:2008bd} and Jotz \cite{Jotz:2009va}, independently. While
\cite{Jotz:2009va,Ortiz:2008bd} refer to multiplicative Dirac
structures as Dirac Lie group structures, we will reserve this latter
term for the case that the multiplication map is a Dirac morphism
(i.e~ a morphism of Manin pairs as in \cite{Bursztyn:2009wi}). For
$\A=\TH$, only the Poisson Lie group structures are Dirac Lie group
structures in our sense, but many more examples are obtained by
considering more general Courant algebroids. These include the well known
Cartan-Dirac structure (cf.~ \cite{Alekseev:2009tg} and references therein), 
and the examples in 
Section 5 of \cite{Klimczyk:1997op}. One of the goals of
this paper is to develop the theory of Dirac Lie groups in this
setting. The super-geometric interpretation of Dirac Lie group
structures was previously studied in \cite{LiBland:2010wi} under the
name \emph{MP Lie groups}.

By Drinfel'd's theorem \cite{Drinfeld83}, the category of simply connected Poisson Lie groups $H$
is canonically equivalent to the category of Manin triples $(\dd,\g,\h)$. That is, $\dd$ is a Lie algebra with a
vector space decomposition into two Lie subalgebras $\g,\h$, and equipped with a non-degenerate invariant symmetric bilinear form (`metric') for which $\g,\h$ are Lagrangian. According to a recent result of Michal Siran \cite{Siran11}, 
the non-simply connected Poisson Lie groups are similarly classified by $H$-equivariant Manin triples. 

We will show that Dirac Lie groups $H$ are classified by $H$-equivariant \emph{Dirac Manin triples} $(\dd,\g,\h)_\beta$. These consist of a Lie algebra $\dd$ with a vector space direct sum decomposition into two Lie subalgebras $\g,\h$, together with an invariant symmetric bilinear form $\beta$ on the dual space $\dd^*$ such that $\g$ is $\beta$-coisotropic, i.e.~ the restriction of $\beta$ to 
$\on{ann}(\g)$ is zero. Here $\beta$ may be degenerate or even zero, and there is no compatibility requirement between $\beta$ and $\h$. We will prove:
\begin{theorem}\label{th:main}
The category of Dirac Lie groups and the category of equivariant Dirac Manin triples are canonically 
equivalent.   
\end{theorem}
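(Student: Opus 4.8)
The plan is to set up a pair of functors between the two categories and check that they are mutually inverse (up to natural isomorphism). Given a Dirac Lie group $H$ with multiplicative Courant algebroid $\A\rra\g$ and multiplicative Dirac structure $E\subseteq\A$ for which multiplication is a Dirac morphism, I would first linearize at the group unit: the fiber $\dd=\A_e$ carries a Lie algebra structure (coming from the Courant bracket of sections, restricted to those that are suitably invariant/multiplicative — this is the standard way a $\ca{VB}$-groupoid over a point produces a Lie algebra on its fiber), together with the metric inherited from $\A$, while $\h=\ker(\A_e\to\g)$ and a complementary $\g$ (identified with the base of the $\ca{VB}$-groupoid) give the vector-space decomposition $\dd=\g\oplus\h$. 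Invariance of the bracket on $\A$ and the Dirac-morphism condition on $\Mult_\A$ force $\g,\h$ to be Lie subalgebras. The metric on $\dd$ need not be nondegenerate, so I dualize: the possibly-degenerate bilinear form on $\dd$ transposes to a bilinear form $\beta$ on $\dd^*$, and the condition that multiplication on $E$ be a Dirac morphism (morphism of Manin pairs) translates precisely into $\g$ being $\beta$-coisotropic, i.e. $\beta|_{\on{ann}(\g)}=0$. The $H$-action on $\A$ covering conjugation induces the equivariance structure. This defines the functor from Dirac Lie groups to equivariant Dirac Manin triples on objects; on morphisms one takes the induced map on unit fibers.

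For the reverse functor I would give the explicit construction promised in the abstract. Starting from $(\dd,\g,\h)_\beta$, integrate $\h$ to the simply connected $H$, build a transitive Courant algebroid over $H$ out of the data $(\dd,\g,\h)$ and $\beta$ — morally $\A = H\times\dd$ with an anchor and bracket twisted by the $\g\oplus\h$ decomposition, carrying the $\ca{VB}$-groupoid structure $\A\rra\g$ whose source/target maps come from projecting $\dd\to\g$ after trivializing by left/right translation — and then let $E\subseteq\A$ be the subbundle generated by translating $\g\subseteq\dd$ around, which is a Dirac structure precisely because $\g$ is a subalgebra and is $\beta$-coisotropic. The multiplicativity of $\A$ and the Dirac-morphism property of $\Mult_E$ are then checked against the subalgebra and coisotropy conditions. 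Equivariance of the Manin triple is exactly the data needed to descend or to cover non-simply-connected $H$.

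The two functors being quasi-inverse reduces to two reconstructions: that linearizing the explicitly constructed Dirac Lie group returns the original $(\dd,\g,\h)_\beta$ (essentially by construction, reading off the unit fiber), and conversely that a Dirac Lie group is determined by its linearization. The latter is the substantive integration statement: one shows that the multiplicative Courant algebroid and its multiplicative Dirac structure are rigid once the infinitesimal data are fixed, using the simple connectivity of $H$ and a uniqueness-of-integration argument for the relevant Lie-theoretic/groupoid data (the $\ca{VB}$-groupoid $\A\rra\g$ is integrated from its ``Lie algebroid'' which is encoded by $\dd=\g\oplus\h$ together with $\beta$), paralleling Drinfel'd's and Siran's arguments in the Poisson case.

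I expect the main obstacle to be precisely this last rigidity/uniqueness step: verifying that \emph{all} of the structure — the Courant bracket, the $\ca{VB}$-groupoid maps, and the Dirac subbundle $E$ — is globally reconstructed from the unit-fiber data with no further invariants, and in particular handling the (possibly degenerate) form $\beta$, which has no analogue in Drinfel'd's nondegenerate setting, as well as tracking the equivariance data carefully in the non-simply-connected case. Checking that the Dirac-morphism condition for $\Mult_E$ is genuinely equivalent to $\beta$-coisotropy of $\g$ — in both directions — is the conceptual heart and will require unwinding the definition of a morphism of Manin pairs from \cite{Bursztyn:2009wi} in this multiplicative context.
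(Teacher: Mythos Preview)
Your outline has a genuine structural gap: you have misidentified what $\dd$ is. You set $\dd=\A_e$, but the fiber of a Courant algebroid always carries a \emph{nondegenerate} metric, so your remark that ``the metric on $\dd$ need not be nondegenerate'' is a red flag that the bookkeeping has gone wrong. In the paper's setup, $\A_e$ is called $\q$, and it always comes with a nondegenerate invariant form $\gamma$ in which $\g=\A^{(0)}=E_e$ sits as a Lagrangian subalgebra; the complement you call $\h$ is what the paper calls $\mf{r}=\ker(t_e)$. The actual $\dd$ is $\g\oplus\h$ with $\h=\operatorname{Lie}(H)$, and in general $\dim\h\neq\dim\g$, so $\dd\not\cong\q$. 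The link between them is a Lie algebra homomorphism $f\colon\q\to\dd$, defined as $f=t_e+\a_e$, and the bilinear form $\beta$ on $\dd^*$ is recovered as $\beta^\sharp=f\circ f^*$. Your ``dualize the possibly-degenerate metric'' step has no content here, because the metric on $\A_e$ is never degenerate; the degeneracy of $\beta$ arises instead from the failure of $f$ to be an isomorphism.

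This misidentification propagates to your forward construction. Setting $\A=H\times\dd$ gives a bundle of the wrong rank unless $\dim\h=\dim\g$ (the exact case, where $\beta$ is nondegenerate and $\g$ Lagrangian---essentially the classical Manin-triple situation). The paper's key new ingredient, which your outline lacks entirely, is a reduction procedure that produces from $(\dd,\g,\h)_\beta$ a new triple $(\q,\g,\mf{r})_\gamma$ with $\gamma$ nondegenerate and $\g\subset\q$ Lagrangian: one forms $\dd\ltimes\dd^*_\beta$ with its canonical split form, takes the coisotropic subalgebra $\mf{c}=\g\ltimes\dd^*_\beta$, and sets $\q=\mf{c}/\mf{c}^\perp$. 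Only then does one build $\A=H\times\q$ as an action Courant algebroid (or equivalently as a coisotropic reduction of $\TH\times(\ol{\q}\oplus\q)$). Without this intermediate step your construction cannot produce the general Dirac Lie group, and your ``rigidity/uniqueness'' worry is not the real obstacle---the real obstacle is that you have not yet found the correct object to integrate.
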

Theorem \ref{th:main} may be deduced from the classification of \emph{MP Lie groups} in \cite{LiBland:2010wi}, but 
we will give a direct proof, not using super geometry.

The functor from Dirac Manin triples $(\dd,\g,\h)_\beta$ to Dirac Lie groups 
is constructed as follows. As a first step, we use a reduction procedure to construct a new Dirac Manin triple 
$(\q,\g,\mf{r})_\gamma$, with a Lie algebra homomorphism $f\colon \q\to \dd$ taking 
$\mf{r}$ to $\h$ and restricting to the identity on $\g$. The new Dirac Manin triple is such that 
$\gamma$ is non-degenerate and $\g$ is Lagrangian in $\q$. The corresponding Dirac Lie group $(\A,E)$ is described using a  `left trivialization' 
\[ \A=H\times\q,\ \ E=H\times\g,\] 
where $H\times\q$ is an \emph{action Courant
algebroid} \cite{LiBland:2009ul}. An explicit description of the groupoid structure in terms
of this trivialization is given in Theorem \ref{th:mainB}. It is rather
cumbersome, however, to verify the compatibility properties from these explicit formulas. 
Therefore, we show that one can also obtain $(\A,E)$ by co-isotropic 
reduction of the direct product of the multiplicative Manin pairs $(\TH,TH)$ and 
$(\ol{\q}\oplus \q,\g\oplus \g)$ (where $\ol{\q}\oplus \q\rra\q$ carries the pair groupoid structure).

Of particular interest are the Dirac Lie group structures $(\A,E)$ over $H$ for which the underlying Courant algebroid is \emph{exact}, in the sense of \v{S}evera. We prove that this is the case 
if and only if $\beta$ is non-degenerate and $\g$ is Lagrangian. In this case we construct a \emph{canonical} isomorphism with the Courant algebroid $\TH_\eta$, with twisting 3-form the Cartan 3-form over $H$. 
We hence obtain another concrete description of the Dirac Lie group structure, in terms of differential 
forms and vector fields. 


The organization of the paper is as follows. In Section~\ref{sec:prel} we summarize the basic theory of Courant algebroids, Dirac structures, and their morphisms. In Section~\ref{sec:DLG} we introduce and motivate our definition of Dirac Lie groups. Next, in Sections~\ref{sec:classDLG} and~\ref{sec:mor} we classify Dirac Lie groups and their morphisms in terms of Lie theoretic data. Then, in Section~\ref{sec:explF} we summarize the structural formulas for Dirac Lie groups obtained in the previous two sections, and use them to describe some examples explicitly. Following this, in Section~\ref{sec:compl} we relate Dirac Lie groups to the theory of quasi-Poisson geometry \cite{Alekseev99} and the language of quasi-Lie bialgebroids \cite{Roytenberg02,KosmannSchwarzbach:2005wc,PonteXu:08}. Finally, in Section~\ref{sec:exact} we study those Dirac Lie groups for which the underlying Courant algebroid is exact. 

\vskip.1in

\noindent{\bf Acknowledgements.} We thank Henrique Bursztyn and Pavol \v{S}evera for 
discussions and helpful comments. David Li-Bland was supported by an NSERC CGS-D Grant; Eckhard Meinrenken was supported by
an NSERC Discovery Grant.

\section{Preliminaries}\label{sec:prel}
We begin with a quick review of Courant algebroids and Dirac structures. A more slow-paced overview and 
further references can be found in our paper \cite{LiBland:2009ul}. 
\subsection{Basic definitions}\label{subsec:basic}
A \emph{Courant algebroid} over a manifold $M$ is a vector bundle $\A\to M$, together with a bundle 
map $\mf{a}\colon \A\to TM$ called the \emph{anchor}, a bundle metric\footnote{In this paper, we take 
`metric' to mean a non-degenerate symmetric bilinear form.} $\l\cdot,\cdot\r$, and 
 a bilinear bracket $\Cour{\cdot,\cdot}$ on its space of sections $\Gamma(\A)$. These are required 
to satisfy the following axioms, for all sections $x_1,x_2,x_3\in\Gamma(\A)$:
\begin{enumerate}
\item[c1)] $\Cour{x_1,\Cour{x_2,x_3}}=\Cour{\Cour{x_1,x_2},x_3}
+\Cour{x_2,\Cour{x_1,x_3}}$, 
\item[c2)] $\a(x_1)\l x_2,x_3\r=\l \Cour{x_1,x_2},\,x_3\r+\l x_2,\,\Cour{x_1,x_3}\r$,
\item[c3)] $\Cour{x_1,x_2}+\Cour{x_2,x_1}=\a^*(\d \l x_1,x_2\r)$.
\end{enumerate}
Here $\a^*\colon T^*M\to \A^*\cong\A$ is the dual map to $\a$. The axioms c1)-c3) imply various other properties, 
in particular
\begin{enumerate}
\item[c4)] $\Cour{x_1,fx_2}=f\Cour{x_1,x_2}+\a(x_1)(f)x_2$,
\item[c5)] $\Cour{fx_1,x_2}=f\Cour{x_1,x_2}-\a(x_2)(f)x_1+\l x_1,x_2\r \a^*(\d f)$ 
\item[c6)] $\a(\Cour{x_1,x_2})=[\a(x_1),\a(x_2)]$,
\end{enumerate}
for sections $x_i\in\Gamma(\A)$ and functions $f\in C^\infty(M)$. We
will refer to the bracket $\Cour{\cdot,\cdot}$ as the \emph{Courant
bracket} (it is also know as the \emph{Dorfman bracket}). 

Following \v{S}evera \cite{LetToWein}, the Courant algebroid is called
\emph{exact} if the sequence
\[ 0\to T^*M\to \A\to TM\to 0\]
is exact. In particular, the bundle metric of $\A$ is of split signature, and 
$T^*M$ is a Lagrangian subbundle. Any choice of a Lagrangian splitting $\mathsf{l}\colon TM\to \A$ 
gives an isomorphism $\A\xra{\cong} TM\oplus T^*M$, with inverse map $v+\alpha\mapsto \mathsf{l}(v)+\a^*(\alpha)$. 
Under this identification, the anchor map $\a$ becomes projection 
to the first summand, the bilinear form is
$\l v_1+\alpha_1,v_2+\alpha_2\r=\l\alpha_2,v_1\r+\l \alpha_1,v_2\r$, and the Courant bracket reads
\[ \Cour{v_1+\alpha_1,v_2+\alpha_2}=[v_1,v_2]+\L_{v_1}\alpha_2-\iota(v_2)\d\alpha_1+\iota(v_1)\iota(v_2)\eta,\]
for vector fields $v_i\in\mf{X}(M)$ and 1-forms $\alpha_i\in\Om^1(M)$. 
Here  $\eta\in \Om^3(M)$ is the closed 3-form obtained as 
\begin{equation}\label{eq:3form}
 \iota(v_1)\iota(v_2)\iota(v_3)\eta
=\l \Cour{\mathsf{l}(v_1),\mathsf{l}(v_2)},\mathsf{l}(v_3)\r.\end{equation}
Conversely, given a closed 3-form $\eta$, these formulas define a Courant algebroid structure 
on $TM\oplus T^*M$. 
We will denote this Courant algebroid by $\TM_\eta$, or simply $\TM$ if $\eta=0$.
The set of Lagrangian splittings is an affine space, with $\Om^2(M)$ as its space of motions, 
and a change of the splitting by a 2-form $\om$ changes $\eta$ to $\eta+\d\om$. 

Another class of examples of Courant algebroids is as follows.
Suppose $\g$ is a Lie algebra equipped with an invariant metric. 
Given a Lie algebra action $\varrho\colon \g\to \mf{X}(M)$ on a manifold $M$, 
let $\A=M\times\g$ with anchor map $\a(m,\xi)=\varrho(\xi)_m$, and with
the bundle metric coming from the metric on $\g$. As shown in
\cite{LiBland:2009ul}, the Lie bracket on constant sections $\g\subseteq
C^\infty(M,\g)=\Gamma(\A)$ extends to a Courant bracket \emph{if and only if}
the stabilizers $\g_m\subseteq \g$ are coisotropic, i.e.~ $\g_m\supseteq \g_m^\perp$. Explicitly, for $\xi_1,\xi_2\in \Gamma(\A)=C^\infty(M,\g)$ the
Courant bracket reads (see \cite[$\mathsection$ 4]{LiBland:2009ul})
\begin{equation}
\label{eq:actioncourant} \Cour{\xi_1,\xi_2}=[\xi_1,\xi_2]+\L_{\varrho(\xi_1)}\xi_2-\L_{\varrho(\xi_2)}\xi_1+\varrho^*\l\d\xi_1,\xi_2\r.\end{equation}
Here $\varrho^*\colon T^*M\to M\times\g$ is the dual map to the action map $\varrho\colon M\times\g\to TM$, 
using the metric to identify $\g^*\cong \g$. 
We refer to $M\times\g$ with bracket \eqref{eq:actioncourant} as an \emph{action Courant algebroid}. 

For any Courant algebroid $\A$, we denote by $\ol{\A}$ the Courant algebroid with the same 
bracket and anchor, but with the opposite bilinear form.  

\subsection{Involutive subbundles}
Let $\A\to M$ be a Courant algebroid. 
A subbundle $E\subseteq \A$ along a submanifold $S\subseteq M$ is called \emph{involutive} if it has the 
property 
\[ x_1|_S,\ x_2|_S\in\Gamma(E) 
\Rightarrow \Cour{x_1,x_2}|_S\in \Gamma(E).\]  
We stress that this property need not define a bracket on $\Gamma(E)$, in general.
Using the properties c4 and c5 of Courant algebroids, one finds that if $E\to S$ is an involutive sub-bundle, 
with $0<\on{rank}(E)<\on{rank}(\A)$, then 
\[ \a(E)\subseteq TS,\ \ \a(E^\perp)\subseteq TS.\]
Note that the second property is not preserved under intersections of bundles, and indeed 
a sub-bundle given as the intersection of involutive sub-bundles need not be involutive (unless these subbundles are defined over the same submanifold). 
An involutive Lagrangian sub-bundle $E\subseteq \A$  along $S\subseteq M$ is called
a \emph{Dirac structure along $S$}.   For instance, if $\A=\T M$ is the standard Courant algebroid, 
then $T^*M|_S$ and $TS\oplus \on{ann}(TS)$ are Dirac structures along $S$. 

A Dirac structure along $S=M$ is
simply called a Dirac structure. Dirac structures were introduced by
Courant \cite{Courant:1990uy} and Liu-Weinstein-Xu
\cite{ManinTriplesBi}; the notion of a Dirac structure along a
submanifold goes back to \v{S}evera \cite{LetToWein} and was developed
in \cite{Alekseev:2002tn,Bursztyn:2009wi,PonteXu:08} .

\subsection{Courant relations}
A smooth relation $S\colon M_0\da M_1$ between manifolds is an immersed submanifold 
$S\subseteq M_1\times M_0$. We will write $m_0\sim_S m_1$ if $(m_1,m_0)\in S$. 
Given smooth relations $S\colon M_0\da M_1$ and 
$S'\colon M_1\da M_2$, the set-theoretic composition $S'\circ S$ is the image of 
\begin{equation}\label{eq:intersection}
 S'\diamond S=(S'\times S)\cap (M_2\times (M_1)_\Delta \times M_0)
 \end{equation}
under projection to $M_2\times M_0$. We say that the two relations \emph{compose cleanly} if 
\eqref{eq:intersection} is a clean intersection in the sense of Bott (i.e. it is smooth, and the intersection of the tangent bundles is the tangent bundle of the intersection), and the map from
$S'\diamond S$ to $M_2\times M_0$ has constant rank. In this case, the composition 
$S'\circ S\colon M_0\da M_2$ is a well-defined smooth relation. See Appendix~\ref{app:compos} for more information on the composition of smooth relations. For background on clean intersections of manifolds, see 
e.g.~ \cite[page 490]{ho:an3}. 
 
Specializing to vector bundles, Lie algebroids 
and Courant algebroids, we define
\begin{definition}
\begin{enumerate}
\item 
A \emph{vector bundle relation} (\emph{$\ca{VB}$-relation}) $R\colon V_0\da V_1$ between vector bundles $V_i\to M_i$ is a subbundle $R\subseteq V_1\times V_0$ along a submanifold $S\subseteq M_1\times M_0$. 
\item 
A \emph{Lie algebroid relation} (\emph{$\ca{LA}$-relation}) $R\colon E_0\da E_1$ between Lie algebroids $E_i\to M_i$ is a Lie subalgebroid $R\subseteq E_1\times E_0$ along a submanifold $S\subseteq M_1\times M_0$. 
\item 
A \emph{Courant relation} (\emph{$\ca{CA}$-relation}) $R\colon \A_0\da \A_1$ between Courant algebroids $\A_i\to M_i$ is a Dirac structure $R\subseteq \A_1\times \ol{\A_0}$ along a submanifold $S\subseteq M_1\times M_0$. 
\end{enumerate}
For a $\ca{VB}$-relation $R\colon V_0\da V_1$, with underlying relation $S\colon M_0\da M_1$, we 
define $\ker(R)\subseteq p_{M_0}^*V_0,\ \on{ran}(R)\subseteq p_{M_1}^*V_1$ to be the kernel and range of the bundle map $R\to p_{M_1}^*V_1,\ 
(v_1,v_0)\mapsto v_1$ (where $p_{M_i}\colon S\to M_i,\ (m_1,m_0)\mapsto m_i$).  
\end{definition}
We will sometimes depict $\ca{VB}$-relations by diagrams as follows:
\[ \xymatrix{ {V_0} \ar@{-->}[r]^{R}\ar[d] & {V_1}\ar[d] \\
M_0 \ar@{-->}[r]_{S} & M_1
}\]
The dashed arrow $\da$ for $S$ is replaced by a solid arrow if $S$ is the graph of a map, and 
similarly for $R$. Given a $\ca{VB}$-relation  $R\colon V_0\da V_1$ and sections $\sig_i\in\Gamma(V_i)$, we will write 
$\sig_0\sim_R \sig_1$ if $(\sig_1,\sig_0)$ restricts to a section of $R$. Given a relation $S\colon M_0\to M_1$ 
and functions $f_i\in C^\infty(M_i)$, we write $f_0\sim_S f_1$ if $f_0(m_0)=f_1(m_1)$ for all 
$(m_1,m_0)\in S$. The following is clear from the definitions: 

\begin{proposition}
Suppose $\A_0,\A_1$ are Courant algebroids and $R\colon \A_0\to \A_1$ is a $\ca{VB}$-relation
with underlying relation $S\colon M_0\da M_1$.  
Suppose $\sig_0\sim_R \sig_1$ and $\tau_0\sim_R \tau_1$. Then 
\begin{enumerate}
\item If $R$ is Lagrangian, $\l\sig_0,\tau_0\r\sim_S \l\sig_1,\tau_1\r$.  
\item If $R$ is involutive, $\Cour{\sig_0,\tau_0}\sim_R \Cour{\sig_1,\tau_1}$. 
\end{enumerate}
\end{proposition}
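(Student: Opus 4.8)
The plan is to reduce both assertions to the pointwise/local setting and then use the defining properties c1)--c6) of Courant algebroids, exactly as in the analogous statements for vector bundles and Lie algebroids. For part (a), I would argue as follows. Since $R$ is Lagrangian along $S\subseteq M_1\times M_0$, and since $\sig_0\sim_R\sig_1$, $\tau_0\sim_R\tau_1$ mean precisely that $(\sig_1,\sig_0)$ and $(\tau_1,\tau_0)$ restrict to sections of $R$ over $S$, the pairing $\l(\sig_1,\sig_0),(\tau_1,\tau_0)\r$ with respect to the metric of $\A_1\times\ol{\A_0}$ vanishes identically on $S$. Unwinding the definition of the product metric (which is $\l\cdot,\cdot\r_{\A_1}$ on the first factor minus $\l\cdot,\cdot\r_{\A_0}$ on the second), this vanishing says $\l\sig_1,\tau_1\r\circ p_{M_1}=\l\sig_0,\tau_0\r\circ p_{M_0}$ on $S$, which is exactly the assertion $\l\sig_0,\tau_0\r\sim_S\l\sig_1,\tau_1\r$.

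For part (b), the key point is that involutivity of $R$ as a subbundle of $\A_1\times\ol{\A_0}$ along $S$ means: whenever $X,Y\in\Gamma(\A_1\times\ol{\A_0})$ restrict to sections of $R$ over $S$, then $\Cour{X,Y}$ restricts to a section of $R$ over $S$ as well. The Courant bracket on a product $\A_1\times\ol{\A_0}$ is the direct product bracket, $\Cour{(x_1,x_0),(y_1,y_0)}=(\Cour{x_1,y_1}_{\A_1},\Cour{x_0,y_0}_{\ol{\A_0}})$, and the bracket of $\ol{\A_0}$ is the same as that of $\A_0$. Hence, choosing extensions $X,Y\in\Gamma(\A_1\times\ol{\A_0})$ of $(\sig_1,\sig_0)$ and $(\tau_1,\tau_0)$ — which exist because $R$ is a subbundle, so any of its sections over $S$ extends to a global section of the ambient bundle — applying involutivity to $X,Y$ gives that $(\Cour{\sig_1,\tau_1},\Cour{\sig_0,\tau_0})$ restricts to a section of $R$ over $S$, i.e. $\Cour{\sig_0,\tau_0}\sim_R\Cour{\sig_1,\tau_1}$.

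The only genuine subtlety, and the step I would write most carefully, is the well-definedness of the conclusion — that is, checking that the restriction of $\Cour{X,Y}$ (resp.\ of $\l X,Y\r$) to $S$ depends only on $\sig_0\sim_R\sig_1$ and $\tau_0\sim_R\tau_1$, and not on the chosen extensions $X,Y$. For the metric this is immediate since $\l X,Y\r$ is $C^\infty$-bilinear and tensorial. For the bracket one invokes properties c4) and c5): the difference of two extensions of $(\sig_1,\sig_0)$ is a section vanishing on $S$, hence locally an $I_S$-linear combination of global sections (where $I_S$ is the ideal of functions vanishing on $S$), and c4)--c5) show that bracketing with, or against, such a section produces a section whose restriction to $S$ lies in the span of $R$ and its annihilator data — in fact, since $\a(R)\subseteq TS$ and $\a(R^\perp)\subseteq TS$ by the discussion of involutive subbundles, the correction terms tangentially differentiate $I_S$-functions along $S$ and hence still vanish on $S$. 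This is precisely the ``clear from the definitions'' content the authors allude to, and it is the part that, while routine, requires the pointwise bookkeeping with c4)--c5) to be spelled out.
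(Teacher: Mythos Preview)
Your argument is correct and is precisely the unwinding the paper has in mind (the paper gives no proof, remarking only that the statement is ``clear from the definitions''). However, your third paragraph is unnecessary: by definition, $\sig_i\in\Gamma(\A_i)$ are already \emph{global} sections over $M_i$, so the product sections $(\sig_1,\sig_0),(\tau_1,\tau_0)\in\Gamma(\A_1\times\ol{\A_0})$ are already global over $M_1\times M_0$. No extension from $S$ is needed, and hence there is no well-definedness issue to check. The paper's definition of involutivity is phrased exactly in these terms --- if two global sections of $\A_1\times\ol{\A_0}$ restrict over $S$ to sections of $R$, then so does their Courant bracket --- so your first two paragraphs already constitute a complete proof.
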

The composition $R'\circ R$ of two $\ca{VB}$-relations is called \emph{clean} if it is clean as a composition of submanifolds. It is then automatic that $R'\diamond R$ and $R'\circ R$ are smooth subbundles along
$S'\diamond S$ and $S'\circ S$, respectively, where $S',S$ are the base manifolds of $R',R$. Conversely, if the base manifolds compose cleanly, and the pointwise fibers of $R'\diamond R,\ R'\circ R$
have constant rank, then the subbundles compose cleanly. 
\begin{remark}\label{rem:grab}
Here it is convenient to work with the characterization of vector bundles and their morphisms in terms of scalar multiplication, due to Grabowski and Rotkiewicz \cite{Grabowski:2009dc}. Specifically, 
a smooth submanifold of a vector bundle is a vector subbundle if and only if it is invariant under scalar multiplication \cite[Theorem 2.3]{Grabowski:2009dc}, and a smooth map between vector bundles is a vector bundle homomorphism if and only if it intertwines scalar multiplication \cite[Theorem 2.4]{Grabowski:2009dc}.  
\end{remark}

The following proposition shows that the clean composition of $\ca{CA}$-relations is again a 
$\ca{CA}$-relation. There is a parallel statement for $\ca{LA}$-relations, with a similar proof. 
\begin{proposition}\label{prop:dirrel}
Suppose $\A_i\to M_i$ are Courant algebroids, and that 
$R\colon \A_0\da \A_1$ and $R'\colon \A_1\da \A_2$ are $\ca{VB}$-relations 
with clean composition. 
\begin{enumerate}
\item 
If $R,R'$ are involutive then so is $R'\circ R$.
\item 
If $R,R'$ are Lagrangian then so is $R'\circ R$.
\end{enumerate}
In particular, if $R,R'$ are Courant relations then so is $R'\circ R$. 
\end{proposition}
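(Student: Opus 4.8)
The plan is to prove the two parts separately, since "Courant relation" is by definition "Lagrangian $+$ involutive Dirac structure in the product," and once (a) and (b) are established the final assertion is immediate: given Courant relations $R,R'$, the composition $R'\circ R$ is a subbundle along $S'\circ S\subseteq M_2\times M_0$ (this is part of the hypothesis that the composition is clean, combined with the remark that clean composition of the base manifolds plus constant fiber rank yields a clean composition of the bundles), it is Lagrangian by (b), and it is involutive by (a); hence it is a Dirac structure in $\A_2\times\ol{\A_0}$, i.e.\ a $\ca{CA}$-relation.

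For part (b), the Lagrangian condition is pointwise and purely linear-algebraic, so I would argue fiberwise. Fix a point $(m_2,m_0)\in S'\circ S$; by cleanness of the composition there is $m_1\in M_1$ with $(m_2,m_1)\in S'$, $(m_1,m_0)\in S$. On the fiber level, $R'\circ R$ at $(m_2,m_0)$ is the composition of the linear relations $R$ at $(m_1,m_0)$ and $R'$ at $(m_2,m_1)$ (using that the fiberwise composition equals the fiber of the composition — this is where constant rank of $R'\diamond R\to M_2\times M_0$ enters). So the statement reduces to the standard fact that the composition of Lagrangian relations between symplectic, or here metrized (with the sign flip built into $\ol{\A}$), vector spaces is Lagrangian: if $L\subseteq \A_1\oplus\ol{\A_0}$ and $L'\subseteq\A_2\oplus\ol{\A_1}$ are Lagrangian, then $L'\circ L\subseteq\A_2\oplus\ol{\A_0}$ is Lagrangian. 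One checks isotropy directly from the definition of composition, and then a dimension count (or the standard observation that $(L'\circ L)^\perp = L'\circ L$ follows from $L^\perp=L$, $(L')^\perp=L'$ by a short diagram chase with the metric pairing) gives the Lagrangian property.

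For part (a), involutivity is about sections and brackets, so I would use the section-matching formalism set up just above. The content is: if $\sig_0\sim_{R'\circ R}\sig_2$ and $\tau_0\sim_{R'\circ R}\tau_2$, then $\Cour{\sig_0,\tau_0}\sim_{R'\circ R}\Cour{\sig_2,\tau_2}$. The subtlety — and this is the main obstacle — is that a section of $R'\circ R$ along $S'\circ S$ need not a priori be "visible" through a single intermediate section of $R$ and of $R'$: the composition collapses the fiber $(M_1)_\Delta$, so one must \emph{produce} intermediate data. I would handle this by first checking that $R'\circ R$ can equally be computed as: $\sig_0\sim_{R'\circ R}\sig_2$ iff there exist (locally, near a chosen point of $S'\circ S$) a section $\sig_1\in\Gamma(p_{M_1}^*\A_1)$ over $S'\diamond S$ with $\sig_0\sim_R\sig_1$ and $\sig_1\sim_{R'}\sig_2$ — this is exactly where cleanness of the intersection \eqref{eq:intersection} is used, to extend matched sections off the diagonal and to guarantee the relevant maps are submersions onto their images so that such $\sig_1$ exists. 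Granting this, apply Proposition (the one just before this) part (2) to $R$ and to $R'$: from $\sig_0\sim_R\sig_1$, $\tau_0\sim_R\tau_1$ we get $\Cour{\sig_0,\tau_0}\sim_R\Cour{\sig_1,\tau_1}$, and similarly $\Cour{\sig_1,\tau_1}\sim_{R'}\Cour{\sig_2,\tau_2}$; composing the two matchings gives $\Cour{\sig_0,\tau_0}\sim_{R'\circ R}\Cour{\sig_2,\tau_2}$, as desired. It remains only to note that $R'\circ R$ has $0<\rk<\rk(\A_2\times\ol{\A_0})$ in the nontrivial cases, or to observe that the argument is vacuous otherwise.

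The step I expect to fight with is the "intermediate section" lemma: making precise, using Bott-cleanness of $S'\diamond S$ and the constant-rank hypothesis, that every local section of the composed relation lifts to a matched pair of sections through $\A_1$. Everything downstream is then a formal consequence of the preceding Proposition and the linear-algebra fact about Lagrangian relations; the Grabowski–Rotkiewicz characterization (Remark~\ref{rem:grab}) can be invoked to see cheaply that the set-theoretic composition is a subbundle (it is scalar-multiplication invariant), which streamlines the bookkeeping about $R'\circ R$ being a smooth subbundle in the first place.
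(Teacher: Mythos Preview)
Your treatment of part (b) is correct and matches the paper: the Lagrangian property is fiberwise linear algebra, and the paper likewise appeals directly to the standard fact (Lemma~\ref{lem:lagRel}) that compositions of Lagrangian linear relations are Lagrangian.

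Part (a), however, has a genuine gap at precisely the step you flag. To invoke the preceding Proposition for $R$ you need global sections $\sig_0\in\Gamma(\A_0)$ and $\sig_1\in\Gamma(\A_1)$ with $(\sig_1,\sig_0)|_S\in\Gamma(R)$ --- this is a condition along \emph{all} of $S$, not just along the part sitting in $S'\diamond S$. Similarly, applying it to $R'$ requires $(\sig_2,\sig_1)|_{S'}\in\Gamma(R')$ along all of $S'$. So your single intermediate section $\sig_1$ over $M_1$ must simultaneously satisfy two independent constraints, one imposed by $\sig_0$ along $p_{M_1}(S)$ and one by $\sig_2$ along $p_{M_1}(S')$. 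Nothing in the clean-composition hypothesis guarantees these are compatible: for instance, a point $m_1\in p_{M_1}(S)$ that does \emph{not} lie under $S'\diamond S$ may be $S$-related to some $m_0$ where $\sig_0(m_0)$ is entirely unconstrained by the original hypothesis on $\sig$. Your proposed $\sig_1\in\Gamma(p_{M_1}^*\A_1)$ over $S'\diamond S$ sidesteps this, but then it is not a section over $M_1$ and the preceding Proposition does not apply to it.

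The paper's resolution is to decouple the two constraints by passing to the fourfold product $M_2\times M_1\times M_1\times M_0$. There one lifts $\sig$ to a section $\ti\sig$ of $\A_2\times\ol{\A_1}\times\A_1\times\ol{\A_0}$ such that (ii) $\ti\sig|_{S'\times S}$ takes values in $R'\times R$ and (iii) $\ti\sig$ is related to $\sig$ via the diagonal relation $Q$ along $M_2\times(M_1)_\Delta\times M_0$. The point is that (ii) lives over the \emph{product} $S'\times S$, so the $R$ and $R'$ constraints no longer interfere; (iii) is a separate condition over the diagonal. The clean-intersection hypothesis is exactly what allows one to find $\ti\sig$ meeting both (ii) and (iii). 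One then uses involutivity of $R'\times R$ and of $\A_2\times(\A_1)_\Delta\times\ol{\A_0}$ (each a genuine involutive subbundle over its own base) and pushes back down via $Q$. This is the missing idea in your argument.
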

\begin{proof}
(a)
Let $p\colon M_2\times M_1\times M_1\times M_0\to M_2\times M_0$ be the projection, and 
let 
\[ Q\colon \A_2\times\ol{\A_1}\times \A_1\times\ol{\A_0}\da \A_2\times \A_0\] 
be the relation 
defined by $(\A_2)_\Delta\times (\A_1)_\Delta\times (\A_0)_\Delta$. Under this relation, 
$\ti{\sig}\sim_Q \sig$ if and only if the restriction of $\ti{\sig}-p^*\sig$ to 
$M_2\times (M_1)_\Delta\times M_0$ takes values in $0\times (\A_1)_\Delta\times 0$. 
Since $Q$ is involutive, we have 
\[ \ti{\sig}\sim_Q \sig,\ \ti{\tau}\sim_Q\tau\Rightarrow \Cour{\ti{\sig},\ti{\tau}}\sim_Q \Cour{\sig,\tau}.\]
Suppose $R,R'$ are involutive. 
Let $\sig$ be a section of $\A_2\times \ol{\A_0}$ whose restriction to $S'\circ S$ takes values 
in $R'\circ R$. Since $R'\diamond R\to R'\circ R$ is a surjective vector bundle homomorphism covering a 
submersion $S'\diamond S\to S'\circ S$, the restriction 
$\sig|_{S'\circ S}$ admits a lift to a section $\ti{\sig}_{S'\diamond S}$ of $ R'\diamond R$. 
By definition, $\ti{\sig}_{S'\diamond S}-p^*\sig\rvert_{S'\diamond S}$ takes values in 
$0\times (\A_1)_\Delta\times 0$. Since the bundles $R'\times R\to S'\times S$ and $\A_2\times (\A_1)_\Delta\times \A_0\to M_2\times (M_1)_\Delta\times M_0$ intersect cleanly, we may choose  
$\ti{\sig}\in\Gamma(\A_2\times\ol{\A_1}\times \A_1\times \ol{\A_0})$ such that
\begin{enumerate}
\item[(i)] $\ti{\sig}\rvert_{S'\diamond S}=\ti{\sig}_{S'\diamond S}$,
\item[(ii)]
$\ti{\sig}\rvert_{S'\times S}$ takes values in $R'\times R$, 
\item[(iii)] $(\ti{\sig}-p^*\sig)\rvert_{M_2\times (M_1)_\Delta\times M_0}$ takes values in 
$0\times(\A_1)_\Delta\times 0$, i.e.~ $\ti{\sig}\sim_Q \sig$. 
\end{enumerate}
Note that (iii) implies that $\ti{\sig}\rvert_{M_2\times (M_1)_\Delta\times M_0}$
takes values in $\A_2\times(\A_1)_\Delta\times\ol{\A_0}$.

Given another section $\tau$ of $\A_2\times \ol{\A_0}$ whose restriction $\tau|_{S'\circ S}$ takes values in $R'\circ R$, let $\ti{\tau}$ be constructed similarly. Since $R'\times R$ and 
$\A_2\times(\A_1)_\Delta\times\ol{\A_0}$ are involutive, 
the restriction of $\Cour{\ti{\sig},\ti{\tau}}$ to $S'\times S$ takes values in $R'\times R$, 
while the restriction to $M_2\times (M_1)_\Delta\times M_0$ takes values in $\A_2\times(\A_1)_\Delta\times\ol{\A_0}$. Hence $\Cour{\ti{\sig},\ti{\tau}}\rvert_{S'\diamond S}$ 
takes values in  $R'\diamond R$. Since $\Cour{\ti{\sig},\ti{\tau}}\sim_Q \Cour{\sig,\tau}$, this shows 
that $\Cour{\sig,\tau}\rvert_{S'\circ S}$ takes values in $R'\circ R$.

Part (b) follows from the well-known statement that the composition of Lagrangian relations of vector spaces is again Lagrangian (Lemma~\ref{lem:lagRel}).  

\end{proof}
A \emph{Courant morphism} \cite{LetToWein} is a Courant relation $R\colon \A_0\da \A_1$ such that the 
underlying relation $S\colon M_0\da M_1$ is the graph of a map $\Phi\colon M_0\to M_1$. 
(In contrast with vector bundle morphisms or Lie algebroid morphisms, one does 
not require that $R$ be a graph.) As a special case of Proposition \ref{prop:dirrel}, the composition of 
Courant morphisms is again a Courant morphism. 

\begin{example}
Any smooth map $\Phi\colon M_0\to M_1$ 
has a \emph{standard lift} to a Courant morphism $R_\Phi\colon \TM_0\da \TM_1$, given by 
\begin{equation}\label{eq:standardLift}
v_0+\alpha_0\sim_{R_\Phi} v_1+\alpha_1 \quad\Leftrightarrow\quad v_1=T\Phi(v_0),\text{ and }\alpha_0=T\Phi^*\alpha_1.
\end{equation}
%
More generally, suppose $\eta_i\in\Omega^3(M_i)$ are closed three forms, and $\omega\in\Omega^2(M_0)$ with $\eta_0=\Phi^*\eta_1+d\omega$. Then there is a Courant morphism $R_{\Phi,\omega}\colon (\TM_0)_{\eta_0}\da (\TM_1)_{\eta_1}$  given by \cite[Example~2.11]{Bursztyn:2009wi}
$$v_0+\alpha_0\sim_{R_{\Phi,\omega}} v_1+\alpha_1 \quad\Leftrightarrow\quad v_1=T\Phi(v_0),\text{ and }\alpha_0=T\Phi^*\alpha_1-\iota(v_0)\omega.$$
\end{example}

\subsection{Manin pairs}
A \emph{Manin pair} $(\A,E)$ is a Courant algebroid $\A\to M$ together with a Dirac structure $E\subseteq \A$. 
If $M=\pt$, this reduces to the usual notion of a Manin pair of Lie algebras. A \emph{morphism of Manin pairs} \cite{Bursztyn:2009wi}
\[ R\colon (\A,E)\da (\A',E'),\] 
with underlying map $\Phi\colon M\to M'$, is a morphism of Courant algebroids with the property that for all $m\in M$, any element of $E'_{\Phi(m)}$ is $R$-related to a unique element of $E_m$. Equivalently, in terms of composition of relations, 
\[ \Phi^* E'=R\circ E,\ \ \ \ \ker(R)\cap E=0.\]
One obtains a bundle map $\Phi^*E'\to E$, associating to each $x'\in E'_{\Phi(m)}$ 
the unique $x\in E_m$ to which it is $R$-related. This bundle map is a comorphism of Lie algebroids
\cite{Mackenzie05}, thus in particular the map $\Phi^*\colon \Gamma(E')\to \Gamma(E)$ preserves Lie brackets.  

\begin{example}\label{ex:canonicalmorphism}
For any Manin pair $(\A,E)$ over $M$, there is a morphism of Manin pairs 
\[ R\colon (\TM,TM)\da (\A,E)\]
where $v+\alpha\sim_R x$ if and only if $v=\a(x)$ and $x-\a^*(\alpha)\in E$.
\end{example}

\begin{example}
Suppose $M,M'$ are Poisson manifolds with bivector fields $\pi,\pi'$. Let $\Phi\colon M\to M'$ be a smooth map. Then the standard lift $R_\Phi\colon \T M\da \T M'$ (cf.~  \eqref{eq:standardLift}) defines a morphism of Manin pairs $R_\Phi\colon (\T M,\on{Gr}_\pi)\da (\T M',\on{Gr}_{\pi'})$ if and only if $\Phi$ is a Poisson map. 
\end{example}

\section{Dirac Lie groups}\label{sec:DLG}
The definition of Dirac Lie group structures (Definition~\ref{def:dirlie} below) requires that the ambient Courant algebroid  itself be multiplicative, in the sense that it carries the structure of a $\ca{CA}$-groupoid.
\subsection{$\ca{CA}$-groupoids}
For any groupoid $H\rra H^{(0)}$ and $k>0$ denote $H^{(k)}=\{(g_1,\ldots,g_k)\in H^k|\ s(g_i)=t(g_{i+1}),\ i=1,\ldots,k-1\}$. Let 
$\on{Mult}_H\colon H^{(2)}\to H,\ (X,Y)\to
 X\circ Y$ denote the groupoid multiplication, and 
 \[ \on{gr}(\on{Mult}_H)=\{(X\circ Y,X,Y)|\ \ (X,Y)\in H^{(2)}\}\subseteq H^3\] 
 its graph. 
\begin{definition} \label{def:CALAVBgroupoid}
Let $H\rra H^{(0)}$ be a Lie groupoid.
\begin{enumerate}
\item 
 A \emph{$\ca{VB}$-groupoid} over $H$ is a vector bundle $V\to H$, equipped with a 
 groupoid structure such that $\on{gr}(\on{Mult}_V)\subseteq V^3$ 
is a vector subbundle along $\on{gr}(\on{Mult}_H)$. 
\item An \emph{$\ca{LA}$-groupoid} over $H$ is a Lie algebroid $E\to H$, equipped with a 
 groupoid structure such that $\on{gr}(\on{Mult}_E)\subseteq E^3$ 
is a Lie subalgebroid along $\on{gr}(\on{Mult}_H)$.
\item A \emph{$\ca{CA}$-groupoid} over $H$ is a Courant algebroid $\A\to H$, equipped with a
groupoid structure such that
$\on{gr}(\on{Mult}_\A)\subseteq \A\times \ol{\A}\times \ol{\A}$  is a Dirac structure along  
$\on{gr}(\on{Mult}_H)$.
\end{enumerate}
\end{definition}
In other words, we require that the groupoid multiplication is a $\ca{VB}$-relation, $\ca{LA}$-relation or 
$\ca{CA}$-relation, respectively. It is common to indicate a $\mathcal{VB}$-groupoid $V$ by a diagram 
\[ \xymatrix{ {V} \ar[r]\ar@<-4pt>[r] \ar[d] &{V^{(0)}}\ar[d]\\
H \ar[r]\ar@<-4pt>[r] & H^{(0)} }\]  
\begin{remark}
\begin{enumerate}
\item The definition  of $\ca{VB}$-groupoids given above is somewhat shorter 
than Pradines' original definition \cite{Pradines:1988td}, which requires that  all the groupoid structure maps of $V$ are morphisms of vector bundles. The equivalence of the two definitions follows  
from Grabowski-Rotkiewicz's Remark~\ref{rem:grab}. For instance, since  $V^{(0)}\subseteq V$ is  a smooth submanifold invariant under scalar multiplication, it is a vector subbundle. Similarly, since $s_V,\,t_V\colon V\to V^{(0)}$ 
are smooth maps intertwining scalar multiplication they are vector bundle morphisms. 
\item $\ca{LA}$-groupoids are due to Mackenzie \cite{Mackenzie:2003wj,Mackenzie:2008tz}. The definition above implies that 
$V^{(0)}$ is a Lie subalgebroid along $H^{(0)}$, and that all the groupoid structure 
maps are morphisms of Lie algebroids.
\item The concept of a $\ca{CA}$-groupoid (also called Courant groupoid) was suggested by  Mehta
\cite[Example 3.8]{Mehta:2009js} and Ortiz \cite[Section 7.3]{Ortiz:2009ux}, and developed in detail in
\cite{LiBland:2010wi}.
\end{enumerate}
\end{remark}  
A \emph{relation of $\ca{CA}$-groupoids} 
$R\colon \A_0\da \A_1$ is a $\ca{CA}$-relation such that $R\subseteq \A_1\times\ol{\A_0}$ is a 
Lie subgroupoid. If the underlying relation $S\colon H_0\da H_1$ is the graph of a groupoid 
homomorphism, then $R$ is called a \emph{morphism of $\ca{CA}$-groupoids}. Relations and morphisms 
of $\ca{VB},\ca{LA}$-groupoids are defined similarly. 
\begin{proposition}\label{prop:units}\label{prop:alongunit}
Let $\A\to H$ be a $\ca{CA}$-groupoid.
Then the set of units $\A^{(0)}\subseteq \A$ is a Dirac structure along $H^{(0)}\subseteq H$.  
Furthermore, the groupoid inversion defines a morphism of Courant algebroids 
$\on{Inv}_\A\colon \A\da \ol{\A}$ over $\on{Inv}_H\colon H\to H$.
\end{proposition}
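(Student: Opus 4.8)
The plan is to exploit the general machinery of clean composition of $\ca{CA}$-relations (Proposition~\ref{prop:dirrel}) together with the defining property that $\on{gr}(\Mult_\A)$ is a $\ca{CA}$-relation. First I would treat the statement about units. Recall that in any groupoid, the unit submanifold $H^{(0)}$ and its embedding are recovered from the multiplication: for instance $\on{gr}(\Mult_H)$ restricted to triples of the form $(X,X,\sfe)$ picks out the units, where $\sfe\colon H^{(0)}\to H$ is the unit map. The idea is to realize $\A^{(0)}$ (with its embedding along $H^{(0)}$) as a clean composition of $\on{gr}(\Mult_\A)$ with suitable ``trivial'' $\ca{CA}$-relations built from diagonals and the zero bundle, so that Proposition~\ref{prop:dirrel} immediately gives that $\A^{(0)}$ is involutive and Lagrangian, hence a Dirac structure along $H^{(0)}$. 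Concretely, one composes $\on{gr}(\Mult_\A)\colon \A\times\ol\A\da\A$ (viewed appropriately) with the relation $(\A)_\Delta\times \{0\}\colon \pt\da \A\times\ol\A$ encoding the constraint $X=X\circ \sfe$; cleanness here is the content of the fact that $\Mult_H$ restricted to $\{(X,\sfe_{s(X)})\}$ is a diffeomorphism onto $H$, and the constant-rank condition on fibers follows since $\on{gr}(\Mult_\A)$ is a subbundle. An alternative, more hands-on route: use the groupoid structure maps of $\A$ directly — by Remark following Definition~\ref{def:CALAVBgroupoid}, $\A^{(0)}\subseteq\A$ is already a vector subbundle along $H^{(0)}$, so it only remains to check it is Lagrangian and involutive, and both can be extracted from the Dirac condition on $\on{gr}(\Mult_\A)$ by restricting to units and using that for $x\in\A^{(0)}$ the triple $(x,x,x)$ lies in $\on{gr}(\Mult_\A)$; the Lagrangian condition on the rank-one ``diagonal slice'' then forces $\langle x,x\rangle = 0$ and more generally $\A^{(0)}$ isotropic of half-rank, and involutivity follows from Proposition~2.? (the involutivity transfer along related sections).

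For the second statement, I would argue that $\on{Inv}_\A$ is obtained from $\on{gr}(\Mult_\A)$ by a clean composition as well. The inversion relation is characterized by: $x$ is sent to $x^{-1}$ iff $x\circ x^{-1}$ is a unit. Thus $\on{gr}(\on{Inv}_\A)\subseteq \ol\A\times\ol\A$ (along $\on{gr}(\on{Inv}_H)$) can be written as the composition of $\on{gr}(\Mult_\A)$ with the inclusion of $\A^{(0)}$ (already shown to be a Dirac structure along $H^{(0)}$ in the first part) — i.e.\ one imposes the first output of $\Mult_\A$ to lie in $\A^{(0)}$. Since $\A^{(0)}$ is a $\ca{CA}$-relation $\A\da\pt$ (being Lagrangian and involutive along $H^{(0)}$) and $\on{gr}(\Mult_\A)$ is a $\ca{CA}$-relation, Proposition~\ref{prop:dirrel} yields that the composition is again a $\ca{CA}$-relation, provided the composition is clean. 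Cleanness reduces to cleanness at the level of base manifolds, which is the standard groupoid fact that the map $H\to H^{(2)}$, $X\mapsto (X,X^{-1})$ followed by $\Mult_H$ lands cleanly in $H^{(0)}$ (indeed it's a submersion onto $H^{(0)}$ with the fibers being the $H$-orbits of... — more simply, the relevant intersection is transverse because $t\colon H\to H^{(0)}$ is a submersion), plus the constant-rank condition on the vector-bundle fibers, which holds since all bundles in sight are genuine subbundles. Finally, since the underlying relation $\on{gr}(\on{Inv}_H)$ is the graph of the diffeomorphism $\on{Inv}_H\colon H\to H$, this $\ca{CA}$-relation is in fact a Courant morphism $\on{Inv}_\A\colon\A\da\ol\A$, as claimed. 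One still checks compatibility with the metrics — the target must be $\ol\A$, not $\A$ — which is forced by the sign bookkeeping: $\on{gr}(\Mult_\A)$ is Lagrangian in $\A\times\ol\A\times\ol\A$, and tracing through the two ``input'' copies of $\ol\A$ shows inversion reverses the metric.

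The main obstacle I anticipate is verifying \emph{cleanness} of these compositions — that is, checking that the relevant fiber products of base manifolds are clean intersections in Bott's sense and that the resulting vector-bundle relations have constant-rank fibers. For units this is essentially automatic from the groupoid axioms (the unit map is a section, and restriction of $\Mult_H$ to the graph of the unit section is a diffeomorphism), and for inversion it follows from the submersivity of source/target maps; but one must phrase it carefully using the characterization in Remark~\ref{rem:grab} (invariance/intertwining of scalar multiplication) to conclude the compositions stay in the category of vector subbundles and subbundle morphisms. A secondary point is bookkeeping the $\ol\A$ versus $\A$ decorations consistently through the compositions so that the final relation really lands in $\ol\A$. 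Once cleanness is in hand, both conclusions are formal consequences of Proposition~\ref{prop:dirrel}, so the bulk of the write-up will be setting up the right composition diagrams rather than any genuinely new computation.
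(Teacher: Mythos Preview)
Your approach is essentially the same as the paper's: both obtain $\A^{(0)}$ and $\on{gr}(\on{Inv}_\A)$ as clean compositions of $\ca{CA}$-relations built from $\on{gr}(\Mult_\A)$ and diagonals, then invoke Proposition~\ref{prop:dirrel}. The paper sharpens your somewhat vague first composition by introducing the \emph{division} relation $D\colon \ol\A\times\A\da\A$, $(x_1,x_2)\sim_D x_1^{-1}\circ x_2$, which is an explicit cyclic permutation of the components of $\on{gr}(\Mult_\A)$ (hence manifestly a Dirac structure); then $\A^{(0)}=D\circ\A_\Delta$ and $\on{gr}(\on{Inv}_\A)=\A^{(0)}\circ\on{gr}(\Mult_\A)$, exactly as in your inversion argument.
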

\begin{proof}
Define a relation $D\colon \ol{\A}\times \A\da \A$, where $(x_1,x_2)\sim_D x$ if and only if 
$x=x_1^{-1}\circ x_2$. Since 
\[ D=\{(x_1^{-1}\circ x_2,x_1,x_2)|\ t(x_1)=t(x_2)\}\subseteq \A\times\A \times \ol{\A}.\]
is obtained from $\on{gr}(\Mult_\A)$ by a cyclic permutation of components (and an overall sign change of the metric), it is a Dirac structure along the graph of the relation $H\times H\da H,\ (g_1,g_2)\sim g_1^{-1} g_2$. On the other hand, we may think of the diagonal in $\ol{A}\times \A $ as a Courant relation 
$\A_\Delta\colon 0\da \ol{\A}\times \A$, with underlying relation $H_\Delta:\pt\da H\times H$. Observe $\A^{(0)}=D\circ \A_\Delta$, where the composition is clean. Hence $\A^{(0)}$ is a Dirac structure along 
$H^{(0)}$. Similarly, the graph of the groupoid inversion 
$\on{gr}(\on{Inv}_\A)\subseteq \ol{\A}\times \ol{\A}$ is a clean composition of Courant relations
$\on{gr}(\on{Inv}_\A)=\A^{(0)}\circ \on{gr}(\on{Mult}_\A)$.
\end{proof}
Note that $D$ and $\A_\Delta$ are relations of Courant groupoids, if we take $\ol{\A}\times \A$ with the pair groupoid structure.

\subsection{Multiplicative Manin pairs and Dirac Lie group structures}
\begin{definition}\cite{LiBland:2010wi,Ortiz:2009ux,Bursztyn:2009wi,Mehta:2009js}
A \emph{multiplicative Manin pair}  is a Manin pair 
$(\A,E)$, where $\A\rra \A^{(0)}$ is a $\ca{CA}$-groupoid over $H\rra H^{(0)}$, and  
$E\rra E^{(0)}$ is a $\mathcal{VB}$-subgroupoid of $\A$. A \emph{morphism of 
multiplicative Manin pairs} $R\colon (\A_0,E_0)\da (\A_1,E_1)$ is a morphism of Manin pairs
which is also a morphism of $\ca{CA}$-groupoids $R\colon \A_0\da \A_1$.
\end{definition}
The involutivity condition implies that $E$ inherits the structure of an $\ca{LA}$-groupoid. 

As shown in Proposition~\ref{prop:alongunit}, for any $\ca{CA}$-groupoid structure $\A\rra \A^{(0)}$, 
the space $\A^{(0)}$ of units is a Dirac structure along $H^{(0)}$.  In this paper, we are mainly concerned with the case that $H^{(0)}=\pt$, 
such that $H$ is a group. In this case, the 
groupoid multiplication defines a Courant morphism 
$R=\on{gr}(\Mult_\A)$ 
covering the group multiplication,
\[ \xymatrix{ {\A\times \A} \ar@{-->}[r]^{\ \ \ R}\ar[d] & {\A}\ar[d] \\
{H\times H}\ar[r]_{\ \ \ \Mult_H} & H
}\]%
\begin{definition}\label{def:dirlie}
A \emph{Dirac Lie group structure} on a Lie group $H$ is a multiplicative Manin pair $(\A,E)$ 
over $H$ such that the multiplication morphism 
$\on{gr}(\Mult_\A)\colon (\A,E)\times (\A,E)\da (\A,E)$ is a morphism of Manin pairs.  

Given Dirac Lie group structures $(\A,E),(\A',E')$ on Lie groups $H,H'$, a morphism of multiplicative Manin pairs
$(\A,E)\da (\A',E')$ is called a \emph{morphism of Dirac Lie groups}.
\end{definition}

\begin{remark}
In other words, we define the category of Dirac Lie groups to be the subcategory of \emph{group like} objects in the category of Manin pairs. Meanwhile, the category of multiplicative Manin pairs is the subcategory of \emph{groupoid like} objects in the category of Manin pairs.
Our definition is more restrictive than that of Ortiz \cite{Ortiz:2009ux,Ortiz:2008bd} and Jotz \cite{Jotz:2009va}, where Dirac Lie group structures are taken to be arbitrary multiplicative Manin pairs over $H$. 
Note that \cite{Jotz:2009va,Ortiz:2008bd} only explore the case $\A=\TH$. In an earlier paper,  
Milburn \cite{Milburn:2007} gives a `categorical' definition of what he calls \emph{Dirac groups}, 
similar to the Ortiz-Jotz definition.  
\end{remark}

\begin{proposition}\label{prop:dirm}
A multiplicative Manin pair $(\A,E)$ defines a Dirac Lie group structure if and only if 
$E$ is a wide subgroupoid of $\A$, i.e~ $\A^{(0)}=E^{(0)}$. 
\end{proposition}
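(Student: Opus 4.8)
The statement to prove is Proposition~\ref{prop:dirm}: a multiplicative Manin pair $(\A,E)$ over a Lie group $H$ (so $H^{(0)}=\pt$) defines a Dirac Lie group structure, i.e.\ $\on{gr}(\Mult_\A)$ is a morphism of Manin pairs, if and only if $\A^{(0)}=E^{(0)}$. The plan is to unwind the definition of ``morphism of Manin pairs'' for $R=\on{gr}(\Mult_\A)\colon (\A,E)\times(\A,E)\da (\A,E)$ and read off exactly what it imposes. Recall that $R$ is already a morphism of $\ca{CA}$-groupoids by hypothesis (that is part of being a multiplicative Manin pair together with the $\ca{CA}$-groupoid axiom), so the Courant-morphism half is automatic; the only extra content is the Manin-pair condition, namely that for each point $(g_1,g_2)\in H\times H$, every element of $E_{g_1g_2}$ is $R$-related to a \emph{unique} element of $(E\times E)_{(g_1,g_2)}$. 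Equivalently, in the compositional form stated after Example~\ref{ex:canonicalmorphism}: $\Mult_H^*E = R\circ (E\times E)$ together with $\ker(R)\cap (E\times E)=0$.

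\textbf{Key steps.} First I would handle the easy direction. Assume $(\A,E)$ is a Dirac Lie group structure. Restrict the morphism-of-Manin-pairs condition to the identity element: over $(e,e)\in H\times H$, the multiplication relation is the groupoid composition of units, and the source/target of $\A^{(0)}\rra H^{(0)}=\pt$ means $\A^{(0)}$ itself sits inside $\A$ as the Lie algebra-level Manin pair. Chasing the condition that every element of $\A^{(0)}=E_e$ (using $E^{(0)}\subseteq E_e$, but more precisely $E$ at the unit is $\A^{(0)}\cap E = E^{(0)}$ by the subgroupoid structure) lifts, and comparing ranks, forces $E^{(0)}$ to have the same rank as $\A^{(0)}$; since $E^{(0)}\subseteq \A^{(0)}$ always (as $E$ is a subgroupoid of $\A$ and $\A^{(0)}$ is the unit space), this gives $E^{(0)}=\A^{(0)}$. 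The cleanest way to organize this is a dimension count: a morphism of Manin pairs induces a fibrewise \emph{isomorphism} $\Phi^*E'\to \ker(R)^\perp\cap(\text{lift space})$, in particular $\rk E' = \rk(E\times E) - \rk\big(\ker(R)\cap (E\times E)\big) - (\text{drop})$, and matching this against the general rank identity for the groupoid multiplication relation pins down $E^{(0)}$.

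For the converse, assume $\A^{(0)}=E^{(0)}$; I must show $R$ is a morphism of Manin pairs. Here I would exploit the factorization of $\Mult_\A$ through the unit relation already used in the proof of Proposition~\ref{prop:alongunit}: the multiplication graph $\on{gr}(\Mult_\A)$, the division relation $D$, and $\A_\Delta$ all differ by permutations/sign changes, and $\A^{(0)} = D\circ \A_\Delta$ is a clean composition. Using $E^{(0)}=\A^{(0)}$ wide, one gets that $E$ as a subgroupoid is ``as large as possible over the units,'' and the $\ca{VB}$-subgroupoid structure of $E$ forces $\on{gr}(\Mult_E) = \on{gr}(\Mult_\A)\cap (E\times E\times E)$ to have the right rank, which is precisely the rank making the lift $x'\mapsto x$ exist and be unique. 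Concretely: an element $x\in E_{g_1g_2}$ decomposes via the groupoid as $x = x_1\circ x_2$; existence of a lift in $E\times E$ amounts to being able to choose such a decomposition with $x_i\in E_{g_i}$, and uniqueness amounts to $\ker(R)\cap (E\times E)=0$, i.e.\ no nonzero $(x_1,x_2)\in E_{g_1}\times E_{g_2}$ with $x_1\circ x_2 = 0$ in $\A_{g_1g_2}$. Both follow by translating to the units (where $E$ and $\A$ agree) using left/right translation in the $\ca{VB}$-groupoid, combined with the fact that $E$ is involutive, hence an $\ca{LA}$-subgroupoid, so its multiplication is the honest restriction of the one on $\A$.

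\textbf{Main obstacle.} The delicate point is the \emph{uniqueness}/$\ker(R)\cap(E\times E)=0$ half of the converse: wideness $E^{(0)}=\A^{(0)}$ is a condition only at the units, and one must propagate it to all of $H$. I expect the right tool is translation invariance — using the groupoid structure of the $\ca{VB}$-groupoid $E\subseteq \A$ to transport the fibre $E_{g}$ to a fibre over a unit, where $E$ and $\A^{(0)}$ coincide by hypothesis — together with the rank-constancy built into $\on{gr}(\Mult_E)$ being a subbundle along $\on{gr}(\Mult_H)$. A secondary subtlety is keeping straight that $E^{(0)}\subseteq\A^{(0)}$ genuinely holds in general for a $\ca{VB}$-subgroupoid: this is where Remark~\ref{rem:grab} (scalar-multiplication characterization) is convenient, since $E^{(0)} = E\cap V^{(0)}$ is scalar-invariant and sits inside $\A^{(0)}$. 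Once these bookkeeping issues are settled, both directions reduce to the rank identity for the multiplication relation, so I would present the proof as: (1) the rank of $\on{gr}(\Mult_\A)$ restricted appropriately, (2) the Manin-pair condition $\iff$ a rank equality $\iff$ $\rk E^{(0)} = \rk \A^{(0)}$, and (3) conclude using $E^{(0)}\subseteq\A^{(0)}$.
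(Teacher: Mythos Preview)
Your plan identifies the correct logical target --- the two conditions $\Mult_H^*E = R\circ(E\times E)$ and $\ker(R)\cap(E\times E)=0$ --- but it never isolates the single concrete observation that makes both directions immediate, and the vague appeals to ``translation invariance'' and ``rank-constancy built into $\on{gr}(\Mult_E)$'' do not substitute for it.

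The missing ingredient is the Lagrangian dimension count at the group unit. By Proposition~\ref{prop:units}, $\A^{(0)}\subseteq\A_e$ is Lagrangian; and $E_e\subseteq\A_e$ is Lagrangian because $E$ is a Dirac structure. Thus $\dim\A^{(0)}=\dim E_e=\on{rk}(E)$. Now wideness $E^{(0)}=\A^{(0)}$ forces $E_e\supseteq E^{(0)}=\A^{(0)}$, hence $E_e=\A^{(0)}=:\g$ exactly; this makes $E$ \emph{vacant}, so $s_h,t_h\colon E_h\to\g$ are isomorphisms for every $h$. With that in hand the converse direction is a two-line argument: given $x\in E_{h_1h_2}$, the element $x_1\in E_{h_1}$ is uniquely determined by $t_{h_1}(x_1)=t(x)$, and then $x_2=x_1^{-1}\circ x\in E_{h_2}$. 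Your proposed route through the division relation $D$ and the compositions of Proposition~\ref{prop:alongunit} is unnecessary, and your phrase ``translating to the units (where $E$ and $\A$ agree)'' is off: $E_e$ and $\A_e$ do not agree --- one is Lagrangian in the other --- it is $E^{(0)}$ and $\A^{(0)}$ that coincide.

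For the forward direction your dimension-count sketch is too hazy to be a proof. The paper's argument is the explicit contrapositive: if $E^{(0)}\subsetneq\A^{(0)}$ then $\dim E^{(0)}<\on{rk}(E)$, so $\ker(s|_E)\neq 0$; any nonzero $x\in\ker(s|_E)$ yields the nonzero pair $(x^{-1},x)\in\ker(\Mult_\A)\cap(E\times E)$, violating the Manin-pair morphism condition. You should produce this explicit witness rather than invoke an unspecified rank identity.
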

\begin{proof}
Suppose $(\A,E)$ is a multiplicative Manin pair over $H$, and that $\A^{(0)}=E^{(0)}$. 
We will show that the multiplication morphism is a morphism of Manin pairs.  

By Proposition~\ref{prop:units}, $\g=\A^{(0)}$ is Lagrangian, as is $E_e$. Since $E$ contains the units, it follows that $E_e=\g$. More generally, for any 
$h\in H$ the source and target map give isomorphisms $s_h,t_h\colon E_h\to \g$. 
Hence if $h_1,h_2\in H$ are given, then any $x\in E_{h_1h_2}$ can be uniquely written as a product 
$x=x_1\circ x_2$ with $x_i\in E_{h_i}$: $x_1$ is uniquely determined by $t(x_1)=t(x)$, and 
then $x_2=x_1^{-1}\circ x$. This shows that $\on{Mult}_\A$ gives a morphism of Manin pairs. 

Conversely, suppose $E^{(0)}$ is a proper subspace of $A^{(0)}$. Then $\dim E^{(0)}<\on{rank}(E)=\dim A^{(0)}$. 
In particular, $\ker(s|_E)$ is non-trivial. Since $$\{(x^{-1},x)\mid x\in \ker(s|_E)\}\subseteq \ker(\Mult_\A)\cap (E\times E),$$ 
this shows that $\Mult_\A$ does not define a morphism of Manin pairs.  
\end{proof}

\subsection{Examples}

\begin{example}
For any Courant algebroid $\A\to M$, the direct product $\ol{\A}\times \A\to M\times M$, with groupoid structure 
that of a pair groupoid, defines a $\ca{CA}$-groupoid structure over the pair groupoid 
$M\times M\rra M$: 
\[ \xymatrix{ {\ol{\A}\times \A} \ar@<2pt>[r]\ar@<-2pt>[r] \ar[d] &{\A}\ar[d]\\
M\times M \ar@<2pt>[r]\ar@<-2pt>[r] & M
 }\]
If $(\A,E)$ is a Manin pair, then $(\ol{\A}\times \A,E\times E)$ becomes a multiplicative Manin pair. 
If $M=\pt$, so that $\A=\g$ is a 
quadratic Lie algebra, the diagonal $\g_\Delta \subseteq \ol{\g}\oplus \g$ defines a 
Dirac Lie group structure on $H=\{e\}$. 
\end{example}

\begin{example}
The standard Courant algebroid over any Lie groupoid $H\rra H^{(0)}$ is a $\ca{CA}$-groupoid 
$\TH\rra TH^{(0)}\oplus A^*H$, where $AH\to H^{(0)}$ is the Lie algebroid of $H$, and $A^*H$ its dual. 
The $\ca{VB}$-groupoid structure is given as the direct sum of 
the tangent prolongation $TH\rra TH^{(0)}$ and the cotangent groupoid $T^*H\rra A^*H$.
See \cite[Example 3.8]{Mehta:2009js} and  \cite[Example 9]{LiBland:2010wi}. Both 
$(\TH,T^*H)$ and $(\TH,TH)$ are multiplicative Manin pairs. 

In particular, if  $H$ is a Lie group, the $\mathcal{VB}$-groupoid structure on $\TH$ is the 
direct product of the group $TH\rra \pt$ with the symplectic groupoid $T^*H\rra \h^*$:
\[ \xymatrix{ {\TH} \ar@<2pt>[r]\ar@<-2pt>[r] \ar[d] &{\h^*}\ar[d]\\
H  \ar@<2pt>[r] \ar@<-2pt>[r] & \pt
 }\]
If $(\TH,E)$ is a Dirac Lie group structure, then $E\cap TH=0$ since the source and target maps 
$E\to \h^*$ are surjective. Thus $E$ is the graph of a bivector field $\pi\in\Gamma(\wedge^2 TH)$. 
The condition that $E$ is a subgroupoid translates into the condition that $\pi$ is multiplicative, i.e.~ 
a Poisson-Lie group structure. In fact the following was obtained by Ortiz 
\cite{Ortiz:2008bd} and Jotz \cite{Jotz:2009va}, as part of a general classification of multiplicative Manin pairs for $\A=\TH$: 
\begin{proposition}
The Dirac Lie group structures for the standard Courant algebroid  
over a Lie group $H$ are exactly those of the form $(\TH,\on{Gr}_\pi)$ where 
$\pi$ defines a Poisson-Lie group structure on $H$. If $(H,\pi),(H',\pi')$ 
are Poisson Lie groups and $\Phi\colon H\to H'$ is a Lie group homomorphism, 
then the standard lift of $\Phi$ is a Dirac Lie group morphism if and only if $\Phi$ is a 
Poisson Lie group morphism, i.e.~ $\pi\sim_\Phi \pi'$.  
\end{proposition}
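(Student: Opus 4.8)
The plan is to exploit the structural characterizations already assembled: Proposition~\ref{prop:dirm} reduces the classification of Dirac Lie group structures on a fixed Courant algebroid to identifying which multiplicative Manin pairs $(\TH,E)$ have $E$ wide, and the preceding discussion in the same example already shows that for $\A=\TH$ any Dirac Lie group structure forces $E\cap TH=0$ (since $s,t\colon E\to\h^*$ are surjective onto a Lagrangian complement of $TH$ in the fiber over a point), so that $E=\on{Gr}_\pi$ for a bivector field $\pi$. Thus the first half of the statement amounts to checking the equivalence ``$E=\on{Gr}_\pi$ is a (wide, involutive, Lagrangian) subgroupoid of $\TH$ $\iff$ $\pi$ is a multiplicative bivector field defining a Poisson--Lie structure.'' Here I would invoke the Ortiz--Jotz classification of multiplicative Dirac structures on $\TH$ (cited as \cite{Ortiz:2008bd,Jotz:2009va}) together with Proposition~\ref{prop:dirm}: among their multiplicative Manin pairs, the wide ones are exactly the graphs of multiplicative bivectors, and Courant-involutivity of $\on{Gr}_\pi$ is the classical statement that $[\pi,\pi]=0$. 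Multiplicativity of $\pi$ as a bivector field is literally the condition that $\on{Gr}_\pi\rra\h^*$ is a subgroupoid of the $\ca{VB}$-groupoid $\TH\rra\h^*$, so the two conditions together say precisely that $\pi$ is a Poisson--Lie structure.

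For the morphism half, let $\Phi\colon H\to H'$ be a Lie group homomorphism and $R_\Phi\colon\TH\da\TH'$ its standard lift. By the Example immediately after Proposition~\ref{prop:dirrel}, $R_\Phi$ is always a Courant morphism, and it is easily checked to be a morphism of $\ca{CA}$-groupoids since $\Phi$ is a group homomorphism (the standard lift is functorial and its graph is closed under the $\ca{VB}$-groupoid multiplications on $\TH,\TH'$, which are just the direct products of the tangent and cotangent groupoid structures). So the only content is: $R_\Phi$ is a morphism of Manin pairs $(\TH,\on{Gr}_\pi)\da(\TH',\on{Gr}_{\pi'})$ $\iff$ $\pi\sim_\Phi\pi'$, i.e.\ $\Phi$ is a Poisson map. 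But this is exactly the content of the Example in the ``Manin pairs'' subsection (the one stating that $R_\Phi$ is a morphism of Manin pairs of Poisson manifolds iff $\Phi$ is Poisson), now applied with the multiplicative/groupoid structures already verified. So I would simply cite that Example and note that the groupoid-morphism condition is automatic.

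In more detail, the order of steps: (1) recall from the surrounding Example that a Dirac Lie group structure $(\TH,E)$ has $E\cap TH=0$, hence $E=\on{Gr}_\pi$; (2) apply Proposition~\ref{prop:dirm} to see $(\TH,\on{Gr}_\pi)$ is a Dirac Lie group structure iff $\on{Gr}_\pi$ is a wide subgroupoid, and unwind that wideness plus the $\ca{VB}$-subgroupoid condition is multiplicativity of $\pi$, while Lagrangian-involutivity of $\on{Gr}_\pi$ is $[\pi,\pi]=0$; (3) conclude $\pi$ is a Poisson--Lie structure, and conversely that every Poisson--Lie $\pi$ yields such a structure (both directions are in \cite{Ortiz:2008bd,Jotz:2009va}); (4) for morphisms, verify $R_\Phi$ is automatically a $\ca{CA}$-groupoid morphism when $\Phi$ is a group homomorphism, then invoke the Manin-pair Example to get that it is a morphism of Manin pairs iff $\pi\sim_\Phi\pi'$, i.e.\ $\Phi$ is Poisson, i.e.\ $\Phi$ is a Poisson--Lie group morphism.

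The main obstacle is step (2): one must carefully match the abstract wideness/subgroupoid condition against the classical notion of a multiplicative bivector field, and make sure that the combination of ``$\ca{VB}$-subgroupoid'' (giving multiplicativity) and ``Dirac structure'' (giving $[\pi,\pi]=0$) is exactly the Poisson--Lie condition with no extra or missing hypotheses. This is essentially a bookkeeping translation between the groupoid-theoretic language used here and the Lie-theoretic language of \cite{Ortiz:2008bd,Jotz:2009va}, and since those references carry out precisely this classification, the cleanest route is to cite them for both directions and use Proposition~\ref{prop:dirm} only to isolate the wide case; everything else (standard lift is a Courant morphism, is a groupoid morphism for group homomorphisms, is a Manin-pair morphism iff $\Phi$ is Poisson) has already been recorded in the excerpt.
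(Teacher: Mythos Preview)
Your proposal is correct and follows essentially the same approach as the paper. The paper does not give a separate proof of this proposition: the surrounding discussion already records that $E\cap TH=0$ forces $E=\on{Gr}_\pi$, that the subgroupoid condition is multiplicativity of $\pi$, and then attributes the full statement to Ortiz and Jotz; your outline reconstructs exactly this, adding the explicit appeal to Proposition~\ref{prop:dirm} for the wideness criterion and to the Poisson-map Example for the morphism half.
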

As a special case, any Lie group carries a `trivial' Dirac Lie group structure $(\TH,T^*H)$. 
The Manin pair $(\TH,TH)$ is multiplicative, but is not a Dirac Lie group structure
in our sense since $TH$ is not a wide subgroupoid. 
\end{example}

\begin{example}\label{ex:canonicalmorphism1}
For any multiplicative Manin pair $(\A,E)$, the morphism $(\TH,TH)\da (\A,E)$ (cf.~ Example~\ref{ex:canonicalmorphism}) is 
a morphism of multiplicative Manin pairs.  
\end{example}

\begin{example}\label{ex:cardir}
\cite[$\mathsection$ 3.4]{Alekseev:2009tg}
Let $G$ be a Lie group whose  Lie algebra $\g$ carries an invariant metric $B$. 
Then there is a $\ca{CA}$-groupoid structure on $G$, 
 \[ \xymatrix{ G\times (\ol{\g}\oplus \g) \ar@<+4pt>[r]\ar@<0pt>[r] \ar[d] &{\g}\ar[d]\\
G  \ar@<2pt>[r]\ar@<-2pt>[r] & \pt
 }\]
Here the $\mathcal{VB}$-groupoid structure is the direct product of the group $G\rra \pt$ with the 
pair groupoid $\ol{\g}\oplus \g\rra \g$. As a Courant algebroid, $\A=G\times (\ol{\g}\oplus \g)$
is the action Courant algebroid for the following action of $\ol{\g}\oplus \g$ on $G$
\begin{equation*}\varrho(\zeta_1,\zeta_2)=\zeta_2^L-\zeta_1^R\end{equation*}
where $\zeta^L,\zeta^R$ are the left-,right-invariant vector fields defined by $\zeta\in\g$.
Since the action $\varrho$ is transitive, the Courant algebroid $\A$ is exact. In fact there is an explicit 
isomorphism of Courant algebroids $\kappa\colon G\times (\ol{\g}\oplus \g)\to \T G_\eta$, where $\eta=\f{1}{12}B(\theta^L,[\theta^L,\theta^L]) \in\Om^3(G)$
is the Cartan 3-form:
\begin{equation}\label{eq:kappa} 
\kappa(\zeta_1,\zeta_2)=\Big(\zeta_2^L-\zeta_1^R,\ \hh B(\theta^L,\zeta_2)+\hh B(\theta^R,\zeta_1)\Big).
\end{equation}
Here, $\theta^L,\ \theta^R\in\Omega^1(G,\g)$ are the left-,right-invariant Maurer-Cartan forms, defined by the property $$\iota(\zeta^L)\theta^L=\zeta=\iota(\zeta^R)\theta^R,\ \ \zeta\in\g.$$

The subbundle $E=G\times \g_\Delta$ defines a Dirac Lie group
structure on $G$. This is the \emph{Cartan-Dirac structure} on $G$,
found independently by Alekseev, \v{S}evera and Strobl. Its multiplicative properties were noted in \cite{Alekseev:2009tg}.  
\end{example}

\subsection{Constructions with $\ca{CA}$-groupoids}
In this section we will collect some further properties and constructions 
for $\ca{CA}$-groupoids. While we are mainly interested in the case $H^{(0)}=\pt$,  
the general proofs are more conceptual and in any case not harder. 
\subsubsection{Basic properties}
Given a Lie groupoid $H\rra H^{(0)}$, let  $TH\rra TH^{(0)}$ be its  tangent prolongation  
and $T^*H\rra A^*H$ the cotangent groupoid.
\begin{proposition}\label{prop:aastar} 
For any  $\ca{CA}$-groupoid 
$\A\rra \A^{(0)}$ over  $H\rra H^{(0)}$, the anchor map defines a morphism of 
$\mathcal{VB}$-groupoids $\a\colon \A\to TH$, while $\a^*$ defines a morphism of $\mathcal{VB}$-groupoids 
$\a^*\colon T^*H\to \A$. 
\end{proposition}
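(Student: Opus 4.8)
The plan is to realize both maps as compositions of $\ca{CA}$-relations (or relations of $\ca{CA}$-groupoids) that have already been shown to be well-behaved, so that the conclusion follows from the composition results. The key observation is that $\a$ and $\a^*$ are themselves built into the Courant algebroid structure via the canonical morphism of Example~\ref{ex:canonicalmorphism}, namely $R\colon (\TH, TH)\da (\A,E^{(0)}\text{-thing})$, but more directly: for \emph{any} Courant algebroid $\A\to M$, the anchor $\a\colon\A\to TM$ arises as $\a = \pr\circ R$ where $R\colon \TM\da\A$ is the canonical Courant relation, and dually $\a^*$ arises from its transpose. So the first step is to identify $\a$ and $\a^*$ intrinsically in terms of Courant relations: concretely, the graph of $\a\colon \A\to TH$ equals the composition $\on{gr}(\a) = R_{\operatorname{id}}\circ(\text{canonical relation})$, or more simply one checks that $\on{gr}(\a)\subseteq TH\times\ol{\A}$, viewed as a $\ca{VB}$-relation $\A\da TH$, is the image under an obvious projection of the canonical $\ca{CA}$-morphism attached to $\A$.

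The second step is the groupoid compatibility. One knows that $TH\rra TH^{(0)}$ (tangent prolongation) and $T^*H\rra A^*H$ (cotangent groupoid) are $\ca{VB}$-groupoids, indeed $\ca{CA}$-groupoid pieces of $\TH\rra TH^{(0)}\oplus A^*H$ as recalled in the examples. By hypothesis $\on{gr}(\Mult_\A)\subseteq \A\times\ol\A\times\ol\A$ is a Dirac structure along $\on{gr}(\Mult_H)$. The claim that $\a$ is a $\ca{VB}$-groupoid morphism amounts to saying that $\on{gr}(\a)$ is a Lie subgroupoid of $TH\times\ol\A$ over the graph of $\operatorname{id}_H$, i.e.\ that $\a$ intertwines the two groupoid multiplications: $\a(x_1\circ x_2)=\a(x_1)\circ\a(x_2)$ whenever $(x_1,x_2)\in\A^{(2)}$. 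The natural way to get this for free is to express $\on{gr}(\a)$ as a clean composition of relations of $\ca{CA}$-groupoids and invoke the groupoid version of Proposition~\ref{prop:dirrel} together with Remark~\ref{rem:grab} (scalar-multiplication invariance automatically upgrades the clean composite to a $\ca{VB}$-subbundle, hence here a $\ca{VB}$-subgroupoid). For $\a^*$ one argues dually: $\on{gr}(\a^*)\subseteq\A\times\ol{T^*H}$ is, up to a flip of factors and a sign on the metric, the same Dirac structure that gives $\on{gr}(\a)$, because $\a^*$ is literally the metric-transpose of $\a$ and the Dirac (Lagrangian involutive) condition is symmetric under this operation. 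Concretely, if $L\subseteq TH\times\ol\A$ is the Dirac structure $\on{gr}(\a)$, then its image under the canonical isometry $TH\times\ol\A\cong \ol{T^*H}^\vee\times\A \to \A\times\ol{T^*H}$ (pairing $TH$ with $T^*H$ and $\A$ with $\ol\A$) is exactly $\on{gr}(\a^*)$, so it too is a relation of $\ca{CA}$-groupoids and in particular a $\ca{VB}$-groupoid morphism.

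The third step is bookkeeping at the units: one must check that $\a$ carries $\A^{(0)}$ into $TH^{(0)}\subseteq TH$ and $\a^*$ carries $A^*H = (T^*H)^{(0)}$ into $\A^{(0)}$, so that the groupoid morphism is defined over the correct base map $TH^{(0)}\oplus A^*H$. This follows from Proposition~\ref{prop:units}: $\A^{(0)}$ is a Dirac structure along $H^{(0)}$, so $\a(\A^{(0)})\subseteq \a(TH|_{H^{(0)}}$-admissible part$)\subseteq TH^{(0)}$ by the anchor-image constraint $\a(E)\subseteq TS$ for involutive $E$ along $S$ recorded in Section~\ref{subsec:basic}; dually $\a^*(\operatorname{ann}(TH^{(0)}))\subseteq (\A^{(0)})^\perp\cap\ker(\a)^{\text{stuff}}$, and since $\A^{(0)}$ is Lagrangian this lands in $\A^{(0)}$.

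I expect the main obstacle to be the second step, specifically producing $\on{gr}(\a)$ cleanly as a composition of \emph{groupoid} relations rather than merely as a composition of Courant relations — one has to be careful that the relation $\A\da\TH$ (say $x\sim v$ iff $v=\a(x)$ and some Lagrangian-completion condition holds) is genuinely a Lie subgroupoid, and that the composition with $\on{gr}(\Mult_\A)$ is clean at the level of base groupoids (here cleanness is automatic since the base relations are graphs of smooth maps, so transversality is trivial). Once the relation is set up correctly, the fact that it is a $\ca{VB}$-groupoid morphism — and not just a $\ca{VB}$-relation — is forced by Grabowski--Rotkiewicz: the composite is invariant under scalar multiplication and projects isomorphically, hence is a graph, hence a morphism. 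The alternative, more pedestrian route — writing things in a local Lagrangian splitting $\A\cong TH\oplus T^*H_\eta$ near a point and checking $\a(x_1\circ x_2)=\a(x_1)\circ\a(x_2)$ by hand from the explicit multiplicativity of $\on{gr}(\Mult_\A)$ — also works but is exactly the kind of cumbersome computation the introduction warns against, so I would only fall back on it if the clean-composition argument runs into trouble identifying the base maps.
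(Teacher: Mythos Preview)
Your proposal is significantly more complicated than necessary, and has a structural problem: $\on{gr}(\a)\subseteq TH\times\A$ is \emph{not} a $\ca{CA}$-relation, since $TH$ by itself carries no bundle metric and $\on{gr}(\a)$ has the wrong rank to be Lagrangian in anything. So the machinery of clean composition of Courant relations (Proposition~\ref{prop:dirrel}) does not apply to it directly. Your attempt to manufacture $\on{gr}(\a)$ out of the canonical morphism $\TM\da\A$ of Example~\ref{ex:canonicalmorphism} is also problematic, because that morphism depends on a choice of Dirac structure $E$, which plays no role in the statement.

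The paper's proof is a two-line argument that you almost stumbled onto in your Step~3. Recall from Section~\ref{subsec:basic} that for any involutive subbundle $E\subseteq\A$ along a submanifold $S$, one has $\a(E)\subseteq TS$. Apply this to the Dirac structure $\on{gr}(\Mult_\A)\subseteq\A\times\ol\A\times\ol\A$ along $S=\on{gr}(\Mult_H)$: the anchor of the product is $\a\times\a\times\a$, so its image on $\on{gr}(\Mult_\A)$ lands in $T\on{gr}(\Mult_H)=\on{gr}(\Mult_{TH})$. That is exactly the statement that $\a$ intertwines the two groupoid multiplications, hence $\on{gr}(\a)$ is a $\ca{VB}$-subgroupoid of $TH\times\A$ and $\a$ is a $\ca{VB}$-groupoid morphism. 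For $\a^*$, just dualize: the dual of a $\ca{VB}$-groupoid morphism is again one (Appendix~\ref{app:fiberproducts}), giving $\a^*\colon T^*H\to\A^*$, and the metric furnishes a $\ca{VB}$-groupoid isomorphism $\A^*\cong\A$ (since $\on{gr}(\Mult_\A)$ is Lagrangian). No compositions of relations, no cleanness checks, no local splittings.
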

\begin{proof}
By definition of a $\ca{CA}$-groupoid, the image of $\on{gr}(\Mult_\A)$ under the anchor map lies in $T \on{gr}(\Mult_H)=\on{gr}(\Mult_{TH})$. Hence, the graph of $\a$ is a $\ca{VB}$-subgroupoid of 
$TH\times \A$, proving that $\a$ is a
$\mathcal{VB}$-groupoid homomorphism. Dualizing, $\a^*\colon T^*H\to \A^*$ is a 
$\mathcal{VB}$-groupoid homomorphism. But the isomorphism $\A^*\cong \A$ given by the metric is 
an isomorphism of $\mathcal{VB}$-groupoids. 
\end{proof}

\begin{corollary}\label{cor:diag}
For any $\ca{CA}$-groupoid $\A$ over $H$, the diagonal morphism $\TH\da \ol{\A}\times \A$ 
given by 
\[ v+\alpha\sim (x,y)\Leftrightarrow v=\a(x),\ \ y-x=\a^*(\alpha)\] is a morphism of $\ca{CA}$-groupoids. 
\end{corollary}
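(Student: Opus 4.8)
The plan is to realize the diagonal morphism $\TH\da \ol{\A}\times \A$ as a composition of two morphisms of $\ca{CA}$-groupoids, each of which is already known. First I would invoke Example~\ref{ex:canonicalmorphism1}: the canonical morphism $R\colon (\TH,TH)\da (\A,E)$ is a morphism of multiplicative Manin pairs, hence in particular a morphism of $\ca{CA}$-groupoids $\TH\da \A$; concretely, $v+\alpha\sim_R x$ iff $v=\a(x)$ and $x-\a^*(\alpha)\in E$. However, the relation in the statement does not involve $E$ at all, so this is not quite the right building block. Instead I would use Proposition~\ref{prop:aastar}, which says $\a^*\colon T^*H\to \A$ is a $\mathcal{VB}$-groupoid morphism, together with the observation that the relation described here is exactly the composition of the standard lift $R_{\mathrm{diag}}\colon \TH\da \ol{\TH}\times\TH$ of the diagonal map $H\to H\times H$ with the morphism $\ol{\a}\times\a$, suitably combined with $\a^*$.

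More precisely, the cleanest route is to observe that the relation in the Corollary can be written as the clean composition of two explicit Courant relations. Let $R_1\colon \TH\da \ol\A\times\A$ be given by the standard lift of the diagonal embedding $H\hookrightarrow H\times H$ followed by the pair $(\a^*, \a^*)$ on the cotangent side and by $(\a,\a)$ — but this double-counts, so I would instead argue directly as follows. Write the claimed relation $D$ as $D = \{(x,y,v,\alpha) : v=\a(x),\ y = x + \a^*(\alpha)\}$. This is the image of $\on{gr}(\a)\times_{TH} \on{gr}(\a^*)$ under an obvious identification, i.e. $D$ is obtained by composing the $\ca{CA}$-groupoid morphism $\a\colon \A\to TH$ (reversed, using that graphs of $\ca{CA}$-morphisms compose) with $\a^*\colon T^*H\to\A$ and the diagonal. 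The key algebraic facts I would use: (i) by Proposition~\ref{prop:aastar}, $\on{gr}(\a)$ and $\on{gr}(\a^*)$ are $\ca{VB}$-subgroupoids; (ii) by axioms c1)--c3) and the exactness discussion, $D$ is a Dirac structure in $\ol{\TH}\times\ol\A\times\A$ — this is really the statement that the canonical morphism of Example~\ref{ex:canonicalmorphism} combined with its mirror image on the $\ol\A$ factor gives a Dirac structure, and one checks the Lagrangian and involutivity conditions directly from the Courant bracket formula. Then $D$ is a Lie subgroupoid of $\ol{\TH}\times\ol\A\times\A$ because it is built entirely from the groupoid morphisms $\a,\a^*$ and the diagonal embedding of $H$, all of which are compatible with the groupoid structures by Proposition~\ref{prop:aastar} and the fact that $H\hookrightarrow H\times H$ is a groupoid homomorphism.

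The main obstacle, and the step I would spend the most care on, is verifying that $D$ is genuinely a \emph{Dirac structure} (Lagrangian and involutive) in $\ol{\TH}\times\ol\A\times\A$ along $\on{gr}(H\hookrightarrow H\times H)$, as opposed to merely a $\ca{VB}$-relation. The Lagrangian condition is a pointwise linear-algebra check: one computes $\langle (v_1,\alpha_1,x_1,y_1),(v_2,\alpha_2,x_2,y_2)\rangle$ with the split form on $\TH$ and the metrics on $\ol\A,\A$, substitutes $v_i=\a(x_i)$, $y_i = x_i+\a^*(\alpha_i)$, and uses $\langle \a^*\alpha, x\rangle = \langle \alpha, \a(x)\rangle$ together with the sign conventions for $\ol{\phantom{x}}$; the terms cancel, and a dimension count confirms maximality. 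The involutivity is the statement that the canonical morphism $(\TH,TH)\da(\A,E)$ of Example~\ref{ex:canonicalmorphism} respects brackets, extended to the product — but since $E$ plays no role here, it is cleaner to note that $D$ is precisely the composition $\on{gr}(\a^*)^{-1}\diamond \big((\ol{\TH}\text{-diagonal})\times \on{gr}(\a^*)\big)$ of Courant relations and apply Proposition~\ref{prop:dirrel}, reducing involutivity of $D$ to involutivity of $\on{gr}(\a)$ and $\on{gr}(\a^*)$, which holds because these are (the graphs of) Courant-algebroid morphisms in the appropriate exact model. Finally, that $D$ is a subgroupoid follows because each constituent relation is a relation of $\ca{CA}$-groupoids by Proposition~\ref{prop:aastar}, and the clean composition of such relations is again one, so $D$ is a morphism of $\ca{CA}$-groupoids as claimed.
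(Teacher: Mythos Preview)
Your plan is correct in outline and matches the paper on the key structural point---the $\ca{VB}$-groupoid compatibility---but it is much more laborious than the paper's two-sentence argument.

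The paper's proof has exactly two steps: (1) the fact that the diagonal relation is a morphism of Courant algebroids (i.e.~a Dirac structure in $\ol{\A}\times\A\times\ol{\TH}$) is simply \emph{cited} from \cite[Proposition~1.6]{LiBland:2009ul}; (2) the $\ca{VB}$-groupoid compatibility follows immediately from Proposition~\ref{prop:aastar}, since the relation is the fibered combination of $\on{gr}(\a)$ and $\on{gr}(\a^*)$, both of which are graphs of $\ca{VB}$-groupoid homomorphisms. The combination gives a morphism of $\ca{CA}$-groupoids.

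Your step (2) is the same as the paper's. Where you diverge is step (1): rather than invoking the existing result, you try to reprove the Courant-morphism property from scratch, first via Example~\ref{ex:canonicalmorphism1} (correctly abandoned, since $E$ is irrelevant), then via an ad hoc composition, then by a direct Lagrangian-plus-involutivity check, and finally by a composition argument using Proposition~\ref{prop:dirrel}. The Lagrangian check you sketch is fine; the involutivity-via-composition you propose (``$\on{gr}(\a^*)^{-1}\diamond\big((\ol{\TH}\text{-diagonal})\times\on{gr}(\a^*)\big)$'') is not written precisely enough to be verified as stated and would need care, since $\on{gr}(\a)$ and $\on{gr}(\a^*)$ are not themselves Courant relations between the Courant algebroids in question without further packaging. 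A cleaner self-contained route, if you insist on not citing, is to verify involutivity directly from axioms c1)--c6) using the explicit description of $D$; but the paper simply avoids all of this by quoting the earlier result.
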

\begin{proof}
As shown in \cite[Proposition 1.6]{LiBland:2009ul}, the diagonal morphism is a morphism of Courant algebroids. 
By Proposition~\ref{prop:aastar}, it is also a morphism of $\mathcal{VB}$-groupoids. 
\end{proof}

\subsubsection{Reduction and Pull-backs}
\begin{proposition}[Coisotropic reduction]\label{prop:reduction}
Let $\A\rra \A^{(0)}$ be a $\ca{CA}$-groupoid over $H\rra H^{(0)}$, 
and let $C\subseteq \A$ be a $\mathcal{VB}$-subgroupoid 
along a subgroupoid $K\subseteq H$.  Assume that 
\begin{enumerate}
\item $C$ is co-isotropic, 
\item $C$ is involutive, 
\item $\a(C)\subseteq TK,\ \a(C^\perp)=0$. 
\end{enumerate}
Then 
the quotient $\A_C=C/C^\perp$ defines a $\ca{CA}$-groupoid structure on $K$, 
in such a way that the inclusion map $K\to H$ lifts to a morphism of $\ca{CA}$-groupoids, 
$C/C^\perp\da \A$. If $E\subseteq \A$ defines a multiplicative Manin pair $(\A,E)$, and
$E$ is transverse to $C$ then 
\[ (\A_C,E_C)=(C/C^\perp,\ (E\cap C)/(E\cap C^\perp))\] 
is again a multiplicative Manin pair.
\end{proposition}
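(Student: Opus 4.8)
The plan is to build the $\ca{CA}$-groupoid structure on $\A_C=C/C^\perp$ by exhibiting it as a clean composition of Courant relations, so that all the compatibility axioms come for free from Proposition~\ref{prop:dirrel} rather than from direct verification. First I would check that the ordinary coisotropic reduction works at the level of Courant algebroids: conditions (a)--(c) are exactly what is needed for $C/C^\perp$ to be a Courant algebroid over $K$ (this is standard, e.g. following the reduction procedure in \cite{Bursztyn:2009wi} or \cite{LiBland:2009ul}; conditions (b) and (c) guarantee that the bracket and anchor descend, and (a) that the metric descends to a non-degenerate one). The inclusion $K\hookrightarrow H$ lifts to a Courant morphism $\A_C\da \A$ whose underlying relation is $\{(x\bmod C^\perp, x)\mid x\in C\}$, a Dirac structure in $\A\times\ol{\A_C}$ along $\on{gr}(K\hookrightarrow H)$ — again this is the content of the reduction construction.

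The groupoid part is the new ingredient. I would argue that $C/C^\perp\rra C^{(0)}/(C^{(0)})^\perp$ is a $\ca{VB}$-groupoid over $K$ because $C$ is a $\ca{VB}$-subgroupoid of $\A$ and $C^\perp$ is too (the orthogonal complement of a $\ca{VB}$-subgroupoid with respect to a multiplicative metric is again a $\ca{VB}$-subgroupoid — this uses that the metric pairing is compatible with $\on{gr}(\Mult_\A)$ being Lagrangian), so the quotient inherits a $\ca{VB}$-groupoid structure; use Remark~\ref{rem:grab} to see the quotient structure maps are vector bundle morphisms. Then to see that $\on{gr}(\Mult_{\A_C})$ is a Dirac structure along $\on{gr}(\Mult_K)$, I would express it as the composition of the Dirac structure $\on{gr}(\Mult_\A)$ with the reduction relations: concretely, $\on{gr}(\Mult_{\A_C})$ is obtained from $\on{gr}(\Mult_\A)\cap (C\times C\times C)$ by quotienting each factor by $C^\perp$, and this is exactly the clean composition $R_{\mathrm{red}}\circ \on{gr}(\Mult_\A)\circ (R_{\mathrm{red}}^{-1}\times R_{\mathrm{red}}^{-1})$ of Courant relations, which is again a Courant relation by Proposition~\ref{prop:dirrel}. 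The cleanness of this composition is where I expect to have to do real work — I would verify it using the hypothesis $\a(C^\perp)=0$ together with the $\ca{VB}$-groupoid structure, which forces the relevant intersections of tangent bundles to have the right dimension; the Grabowski--Rotkiewicz scalar-multiplication criterion again helps to identify the composed relation as a subbundle of constant rank.

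For the Manin pair statement, I would use the transversality of $E$ and $C$. Transversality $E+C=\A$ (along $K$) together with $E\cap C^\perp \subseteq E\cap (E+C)^\perp = E\cap E^\perp$ hmm — more carefully, since $E$ is Lagrangian and $E\pitchfork C$, the image $E_C=(E\cap C)/(E\cap C^\perp)$ is a Lagrangian subbundle of $\A_C$ of the correct rank, and it is involutive because $E$ and $C$ are both involutive and a section of $E_C$ lifts to a section of $E\cap C$ (here I would use that the anchor conditions make $\a(E\cap C)\subseteq TK$). That $E_C$ is a $\ca{VB}$-subgroupoid follows since $E$ and $C$ are both $\ca{VB}$-subgroupoids and transversality is preserved fiberwise over $K$ — so the intersection $E\cap C$ is a $\ca{VB}$-subgroupoid and its quotient by $E\cap C^\perp$ inherits the structure. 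Hence $(\A_C,E_C)$ is a multiplicative Manin pair. The main obstacle throughout is the cleanness verification for the composition of relations that produces $\on{gr}(\Mult_{\A_C})$; everything else is either standard reduction theory or a diagram chase using Remark~\ref{rem:grab}. An alternative, if the cleanness is too delicate to handle uniformly, is to prove the $\ca{VB}$-groupoid statement by hand from Pradines' definition and then check the single Dirac-along-a-submanifold condition directly, but I would try the composition-of-relations route first since it is cleaner and parallels Proposition~\ref{prop:alongunit}.
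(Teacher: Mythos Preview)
Your approach is essentially the same as the paper's: reduce the Courant algebroid via \cite[Proposition~2.1]{LiBland:2009ul}, observe that $C^\perp$ is a $\mathcal{VB}$-subgroupoid so that $C/C^\perp$ inherits a $\mathcal{VB}$-groupoid structure (the paper appeals to Corollary~\ref{cor:david} for this), and then obtain $\on{gr}(\Mult_{\A_C})$ by composing $\on{gr}(\Mult_\A)$ with the reduction relation $S$ on each factor. The paper writes this last step symmetrically as $\on{gr}(\Mult_{\A_C})=\on{gr}(\Mult_\A)\circ(S\times S\times S)$ and asserts the composition is \emph{transverse}, not merely clean, without further comment; so your anticipated ``real work'' on cleanness is in fact handled by a one-line transversality claim. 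For the Manin pair, the paper simply writes $E_C=E\circ S$ as a transverse composition, using that transversality of $E$ and $C$ is equivalent to $E\cap C^\perp=0$ (which is the fact you were groping for in your ``hmm'' aside).
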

Here transversality means $E|_K+C=\A|_K$, or equivalently $E\cap C^\perp=0$. 
\begin{proof}
Since $C$ is a co-isotropic $\mathcal{VB}$-subgroupoid of $\A$, $C^\perp$ is 
a $\mathcal{VB}$-subgroupoid of $C$, and $\A_C=C/C^\perp$ inherits a 
$\mathcal{VB}$-groupoid structure (see Corollary~\ref{cor:david} from Appendix~\ref{app:fiberproducts} for details). 
By \cite[Proposition 2.1]{LiBland:2009ul}, the Courant bracket on $\A$ descends to a Courant bracket 
on the quotient $\A_C$, in such a way that 
\begin{equation}\label{eq:s}
 S=\{(x,[x])|\ x\in C\}\subseteq \A\times \ol{\A}_C\end{equation}
is a Courant morphism $S\colon\ \A_C\da \A$. Here $[x]\in
\mathbb{B}$ denotes the image of $x\in C$. The graph of the groupoid multiplication 
of $\A_C$ is a transverse composition of Courant relations, 
\[ \on{gr}(\Mult_{\A_C})=\on{gr}(\Mult_\A)\circ (S\times S\times S),\]
hence it is itself a Courant relation. 
Thus $\A_C$ carries a $\ca{CA}$-groupoid structure. Since $S$ is a 
Dirac structure along the graph of the inclusion, and also a subgroupoid, 
it defines a $\ca{CA}$-groupoid morphism. 

If $E\subseteq \A$ is a multiplicative Dirac structure transverse to $C$, then $E_C=E\circ S$ 
is a transverse composition, and is a multiplicative Dirac structure in $\A_C$. 
\end{proof}

\begin{proposition}[Pull-backs]
Let $\A\rra \A^{(0)}$ be a $\ca{CA}$-groupoid over $H\rra H^{(0)}$, and $\Phi\colon K\to H$ 
a homomorphism of Lie groupoids. Suppose that $\Phi$ is transverse to the anchor map $\a\colon \A\to TH$. Then the pull-back Courant algebroid $\Phi^!\A\rightrightarrows 
\Phi^* \A^{(0)}$ inherits the structure 
of a $\ca{CA}$-groupoid over $K\rra K^{(0)}$.
\end{proposition}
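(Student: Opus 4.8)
The plan is to realize the pull-back $\ca{CA}$-groupoid as a clean (indeed transverse) composition of $\ca{CA}$-relations, so that the groupoid multiplication on $\Phi^!\A$ comes for free from Proposition~\ref{prop:dirrel} and the already-established fact that composition of $\ca{CA}$-groupoid relations is a $\ca{CA}$-groupoid relation. Recall that the pull-back Courant algebroid $\Phi^!\A$ is defined (see \cite{LiBland:2009ul}) as a fiber product involving $\Phi$, $\a$, and the standard lift; concretely, its sections relate to sections of $\A$ and of $\TK$ via a canonical $\ca{CA}$-relation $R_\Phi^!\colon \Phi^!\A\da \A$ covering $\Phi\colon K\to H$, and the transversality of $\Phi$ to $\a$ is exactly the hypothesis needed for this relation to be well-defined (a Dirac structure along $\on{gr}(\Phi)$). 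So the first step is to recall this relation and note that, by the VB-relation composition remarks following Remark~\ref{rem:grab} (and Proposition~\ref{prop:aastar}, which makes $\a$ a $\ca{VB}$-groupoid morphism), $R_\Phi^!$ carries the structure of a relation of $\ca{VB}$-groupoids — equivalently, $R_\Phi^! \subseteq \A\times\ol{\Phi^!\A}$ is a Lie subgroupoid over $\on{gr}(\Phi)$.

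Next I would write the candidate multiplication on $\Phi^!\A$ as the composition
\[
 \on{gr}(\Mult_{\Phi^!\A}) = (R_\Phi^!)^{\mathrm{op}} \circ \on{gr}(\Mult_\A)\circ (R_\Phi^!\times R_\Phi^!),
\]
i.e.\ one pulls two sections up to $\A$, multiplies there, and pushes back down. The key technical point is that each of these compositions is \emph{clean} (in fact transverse): this follows from the transversality of $\Phi$ to $\a$, since that is precisely what controls the fiber dimensions in the relevant intersections $S'\diamond S$, exactly as in the cleanness arguments for Proposition~\ref{prop:reduction}. Granting cleanness, Proposition~\ref{prop:dirrel} shows the composite is again a Courant relation, and the $\ca{VB}$-groupoid composition remarks show it is a subgroupoid of $\A\times\ol{\Phi^!\A}$ whose base is the graph of $\Mult_K$. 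Hence $\on{gr}(\Mult_{\Phi^!\A})$ is a Dirac structure along $\on{gr}(\Mult_K)$, which is exactly the defining property of a $\ca{CA}$-groupoid over $K\rra K^{(0)}$. One should also check (by the same composition-of-relations bookkeeping, or by $\ca{VB}$-groupoid functoriality) that the unit, source, target and inversion maps of $\Phi^!\A$ are the expected ones and are $\ca{VB}$-groupoid morphisms over $K^{(0)}$; this is routine once the multiplication relation is in hand, using Proposition~\ref{prop:units}.

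The main obstacle I anticipate is verifying cleanness/transversality of the iterated composition carefully: one must confirm that the fiber product defining $\Phi^!\A$ interacts well with the three-fold product $\A\times\ol\A\times\ol\A$ and with $\on{gr}(\Mult_H)$, i.e.\ that transversality of $\Phi$ to $\a$ at single points propagates to transversality of $\on{gr}(\Phi)\times\on{gr}(\Phi)$-type data against $\on{gr}(\Mult_H)$ and $\on{gr}(\Mult_{TH})$. This is a diagram-chase in tangent spaces, made manageable by the fact that $\a$ and $\Mult$ are all $\ca{VB}$-groupoid morphisms, so the relevant linear maps are compatible with the groupoid structures; I would organize it exactly as the cleanness verifications already used in the proof of Proposition~\ref{prop:reduction}, citing Appendix~\ref{app:compos} for the general composition-of-relations facts. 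An alternative, if the direct composition proves awkward, is to present $\Phi^!\A$ itself as a coisotropic reduction of a product $\A$-groupoid pulled back along an honest groupoid map, reducing to Proposition~\ref{prop:reduction}; but the composition-of-relations route is cleaner and I would pursue it first.
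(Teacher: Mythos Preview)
Your ``alternative'' is in fact what the paper does. The proof presents $\Phi^!\A$ as the coisotropic reduction $(\A\times\TK)_C$ with $C=\A\times_{TH}\TK$ (the fiber product of $\a_\A$ and $d\Phi\circ\a_{\TK}$), observes via the Moerdijk--Mr\v{c}un fiber-product result (Proposition~\ref{prop:moerdijk}) that $C$ is a Lie subgroupoid of the $\ca{CA}$-groupoid $\A\times\TK$, invokes Corollary~\ref{cor:david} to see $C^\perp\subseteq C$ is a subgroupoid, and then applies Proposition~\ref{prop:reduction} directly. No iterated composition of relations is needed.

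Your primary route has two difficulties you should be aware of. First, there is a bootstrapping problem: to say $R_\Phi^!$ is a relation of $\ca{VB}$-groupoids you must already have a groupoid structure on $\Phi^!\A$, which is what you are trying to construct; the paper sidesteps this by getting the $\ca{VB}$-groupoid structure on $C$ (hence on $C/C^\perp$) from the fiber-product-of-groupoids argument, independently of any Courant considerations. Second, the composition $(R_\Phi^!)^\top\circ\on{gr}(\Mult_\A)\circ(R_\Phi^!\times R_\Phi^!)$ has underlying base relation $\{(k;k_1,k_2)\colon \Phi(k)=\Phi(k_1)\circ\Phi(k_2)\}$, which is strictly larger than $\on{gr}(\Mult_K)$ whenever $\Phi$ fails to be injective; so the composed Courant relation is not, on the nose, a Dirac structure along $\on{gr}(\Mult_K)$, and extra work would be needed to cut it down. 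The reduction approach avoids both issues and is shorter, not longer.
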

\begin{proof}
By definition (see \cite[Proposition 2.7]{LiBland:2009ul}), the pull-back Courant algebroid is a reduction
$\Phi^!\A=(\A\times \T K)_C$ relative to the coisotropic subbundle $C$ along 
$\on{gr}(\Phi)\cong K$, 
\[ C=\A\times_{TH} \T K \subseteq \A\times \T K,\]
the fiber product relative to the maps $\a_\A\colon \A\to TH$ and $d\Phi\circ \a_{\T K}\colon \T K\to TH$. 
Proposition~\ref{prop:moerdijk} shows that $C$ is a Lie groupoid.  Its
space of units $C^{(0)}=\A^{(0)}\times_{TH^{(0)}} A^*K$ is a smooth
subbundle of $\A^{(0)}\times A^*K$ along
$\on{gr}(\Phi|_{K^{(0)}})\cong K^{(0)}$.  Corollary~\ref{cor:david}
from Appendix~\ref{app:fiberproducts} shows that $C^\perp\subseteq C$
is a subgroupoid. Hence $C/C^\perp$ inherits a $\ca{CA}$-groupoid
structure.
\end{proof}

\section{Classification of Dirac Lie group structures}\label{sec:classDLG}

In this Section we will give the general classification and construction of 
Dirac Lie group structures over Lie groups $H$. The classification will be given in terms of \emph{$H$-equivariant} Dirac Manin triples $(\dd,\g,\h)_\beta$. 
\subsection{Vacant $\ca{LA}$-groupoids}\label{subsec:vacuum}
Following Mackenzie \cite{Mackenzie:2003wj}, a $\ca{VB}$-groupoid $V\to H$ will be called \emph{vacant}  if
it has the property $V^{(0)}=V|_{H^{(0)}}$.
\begin{lemma}
For any Dirac Lie group structure $(\A,E)$ over a group $H$, the sub-bundle 
$E$ is a vacant $\ca{LA}$-groupoid.
\end{lemma}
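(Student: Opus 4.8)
The plan is to unpack the definition of a vacant $\ca{VB}$-groupoid and recognize that the statement amounts to a combination of results already established in the excerpt. First I would recall that, by Definition~\ref{def:dirlie}, a Dirac Lie group structure $(\A,E)$ over $H$ consists of a multiplicative Manin pair in which $\on{gr}(\Mult_\A)$ restricts to a morphism of Manin pairs. The involutivity clause in the definition of a multiplicative Manin pair (noted right after that definition) already gives that $E$ inherits the structure of an $\ca{LA}$-groupoid: since $E \rra E^{(0)}$ is a $\mathcal{VB}$-subgroupoid of $\A$ and $E$ is involutive as a Dirac structure, $\on{gr}(\Mult_E) = \on{gr}(\Mult_\A) \cap (E \times E \times E)$ is a Lie subalgebroid of $\on{gr}(\Mult_\A)$ along $\on{gr}(\Mult_H)$, hence $E$ is an $\ca{LA}$-groupoid in the sense of Definition~\ref{def:CALAVBgroupoid}(b). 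So the only substantive point to verify is that $E$ is \emph{vacant}, i.e.\ $E^{(0)} = E|_{H^{(0)}}$.

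Next I would pin down what $E^{(0)}$ and $E|_{H^{(0)}}$ mean here. Since $H^{(0)} = \pt$, the base $H^{(0)}$ of the groupoid is a point, so $E|_{H^{(0)}} = E_e$, the fiber of $E$ over the identity element $e \in H$. On the other hand, $E^{(0)}$ is the space of units of the $\ca{VB}$-groupoid $E$, which by Proposition~\ref{prop:dirm} (or its proof) is exactly $\A^{(0)}$ — this is precisely the wideness condition $\A^{(0)} = E^{(0)}$ characterizing Dirac Lie group structures. So I must show $\A^{(0)} = E_e$. But this identification is already carried out verbatim in the proof of Proposition~\ref{prop:dirm}: by Proposition~\ref{prop:units}, $\g := \A^{(0)}$ is Lagrangian in $\A_e = \A|_{H^{(0)}}$, and $E_e$ is Lagrangian as well (being a fiber of the Dirac structure $E$); since $E$ contains its units, $\A^{(0)} = E^{(0)} \subseteq E_e$, and equality follows because both are Lagrangian subspaces of the same metric vector space, hence of equal dimension. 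This gives $E^{(0)} = \A^{(0)} = E_e = E|_{H^{(0)}}$, which is the vacancy condition.

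So the proof assembles as follows: (1) invoke the involutivity of $E$ to get the $\ca{LA}$-groupoid structure; (2) use Proposition~\ref{prop:dirm} to identify $E^{(0)} = \A^{(0)}$; (3) use Proposition~\ref{prop:units} to see $\A^{(0)}$ is Lagrangian, and the fact that $E$ contains its units together with a dimension/Lagrangian count to conclude $\A^{(0)} = E_e = E|_{H^{(0)}}$. I do not anticipate a real obstacle: the lemma is essentially a repackaging of the wideness characterization of Dirac Lie groups in the groupoid-theoretic terminology of Mackenzie's vacant $\ca{VB}$-groupoids, and all the ingredients (Propositions~\ref{prop:units} and~\ref{prop:dirm}, plus the involutivity remark following the definition of multiplicative Manin pairs) are already available. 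The only point requiring a line of care is making explicit that, over a one-point base, $E|_{H^{(0)}}$ is literally the fiber $E_e$, so that the abstract equality $E^{(0)} = E|_{H^{(0)}}$ coincides with the concrete equality $\A^{(0)} = E_e$ established in the proof of Proposition~\ref{prop:dirm}.
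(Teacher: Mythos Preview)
Your proposal is correct and follows essentially the same approach as the paper's proof. The paper's version is much terser---just two sentences noting that the Lie algebroid bracket comes from the Courant bracket and that $E^{(0)}\cong \A^{(0)}$ is a Lagrangian subspace of $\A_e$ which must therefore coincide with $E_e$---but your unpacking of the argument via Propositions~\ref{prop:units} and~\ref{prop:dirm} and the Lagrangian dimension count is exactly the intended reasoning.
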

\begin{proof}
The Lie algebroid bracket is induced from the Courant bracket on $\A$. Since 
$E^{(0)}\cong \A^{(0)}$ is a Lagrangian subspace of $\A_e$, it must must coincide with $E_e$. 
\end{proof}
As shown by Mackenzie \cite{Mackenzie:2003wj}, vacant
$\ca{LA}$-groupoids over groups are characterized in terms of
Lie-theoretic data. We will review his theory from a mildly different
perspective; further details are given in Appendix~\ref{app:mat}.
\begin{definition}\label{def:triple}
Let $H$ be a Lie group with Lie algebra $\h$. A \emph{Lie algebra triple} 
$(\dd,\g,\h)$ is a Lie algebra $\dd$ with a vector space decomposition $\dd=\g\oplus \h$ into two 
Lie subalgebras $\g,\h$. Given an  action of $H$ on $\dd$ by automorphisms, which integrates the adjoint action of $\h\subseteq \dd$ and restricts to the adjoint 
action of $H$ on $\h$, we refer to $(\dd,\g,\h)$ as an \emph{$H$-equivariant Lie algebra triple}.  
\end{definition}
We will simply write $h\mapsto \Ad_h$ for the action of $H$ on $\dd$. Part (b) of the following 
Proposition associates a vacant $\ca{LA}$-groupoid $E\to H$ to any $H$-equivariant Lie algebra triple $(\dd,\g,\h)$. It is realized as a $\ca{LA}$-subgroupoid of the direct product of $TH\rra 0$ with the 
pair groupoid $\g\oplus \g\rra \g$. 
\begin{proposition}\label{prop:dress}
Let $(\dd,\g,\h)$ be an $H$-equivariant Lie algebra triple.
\begin{enumerate}
\item 
The subset 
\[ V=\{(v,X,X')|\ v\in T_hH,\ X,X'\in\dd,\ \Ad_hX'-X=\iota(v)\theta^R_h\},\]
 is an $\ca{LA}$-subgroupoid of $TH\times (\dd\oplus\dd)\rra \dd$, of rank equal to $\dim\g+2\dim\h$.
Its object space is $V^{(0)}=\dd$.
\item
The subset  
\[ E=\{(v,\xi,\xi')|\ v\in T_hH,\ \xi,\xi'\in\g,\ \Ad_h\xi'-\xi=\iota(v)\theta^R_h\}\]
is a vacant $\ca{LA}$-subgroupoid of $TH\times (\g\oplus\g)\rra \g$, of rank equal to $\dim\g$. 
Its object space is $E^{(0)}=\g$. 
The source map 
$(v,\xi,\xi')\mapsto \xi'$ is a trivialization of $E$, and defines a morphism of Lie algebroids $E\to \g$.
\end{enumerate}
\end{proposition}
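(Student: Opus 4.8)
The plan is to verify each claimed property of the subsets $V$ and $E$ directly, building on the structure of the equivariant Lie algebra triple and the standard groupoid structures on the factors. I would begin with part (a). First I would observe that $TH\times(\dd\oplus\dd)$ carries an evident $\ca{VB}$-groupoid structure: $TH\rra 0$ is the tangent prolongation of $H\rra\pt$ (i.e.\ just the tangent bundle of the group, which is again a group under $T\Mult_H$), and $\dd\oplus\dd\rra\dd$ is the pair groupoid. I would then check that $V$ is closed under the groupoid multiplication inherited from this product. Concretely, if $(v_1,X_1,X_1')$ sits over $h_1$ and $(v_2,X_2,X_2')$ sits over $h_2$ with $X_1'=X_2$ (the composability condition in the pair groupoid factor), I must show that the product, whose $H$-component is $h_1h_2$ and whose $(\dd\oplus\dd)$-component is $(X_1,X_2')$, again satisfies the defining relation $\Ad_{h_1h_2}X_2'-X_1=\iota(w)\theta^R_{h_1h_2}$ where $w=T\Mult_H(v_1,v_2)$. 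This is a cocycle-type computation using the right Maurer-Cartan form identity $\theta^R_{h_1h_2}\circ T\Mult_H = \Ad_{h_1}\circ\theta^R_{h_2}\oplus\theta^R_{h_1}$ (in the appropriate sense) together with the two relations for the factors; the $H$-equivariance hypothesis — that $h\mapsto\Ad_h$ is an action by automorphisms integrating $\ad_\h$ — is exactly what makes $\Ad_{h_1h_2}=\Ad_{h_1}\Ad_{h_2}$ available. The rank count is a fiber-dimension computation: over $h\in H$, once $X'\in\dd$ and $v\in T_hH$ are chosen freely, $X$ is determined, giving $\dim\h+\dim\dd=2\dim\h+\dim\g$; and one checks the defining relations cut out a smooth subbundle (e.g.\ by invariance under scalar multiplication in the sense of Grabowski--Rotkiewicz, Remark~\ref{rem:grab}, applied to the $\ca{VB}$-groupoid $TH\times(\dd\oplus\dd)$). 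That $V$ is a Lie \emph{subalgebroid}, hence an $\ca{LA}$-subgroupoid, follows since it is a subgroupoid that is a subbundle and the ambient object is an $\ca{LA}$-groupoid; alternatively one identifies the Lie algebroid of $V$ explicitly. Finally $V^{(0)}$: setting $h=e$ forces $v=0$ and $X'=X$, so $V^{(0)}=\{(0,X,X)\}\cong\dd$.

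For part (b), the key point is that $E$ is simply the restriction of $V$ to the sub-pair-groupoid $\g\oplus\g\rra\g$ inside $\dd\oplus\dd\rra\dd$: since $\g\subseteq\dd$ is a Lie subalgebra and is $\Ad_H$-invariant (being complementary to the $\Ad_H$-invariant $\h$ inside the direct-sum decomposition respected by the action — here I'd note that $\Ad_h$ restricts to the adjoint action on $\h$, and one needs $\g$ invariant as well; this is part of what ``action by automorphisms integrating $\ad_\h$'' delivers together with the triple structure, and I would spell out that $\g$ is $\Ad_H$-stable because $H$ is connected-generated-by-$\h$ considerations OR because it is built into the equivariance — this is the subtle point to get right), the defining relation $\Ad_h\xi'-\xi=\iota(v)\theta^R_h$ with $\xi,\xi'\in\g$ makes sense and $E$ is a subgroupoid of $V$ by the same computation as in (a), restricted to the $\g$-valued sections. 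It is \emph{vacant} because $E^{(0)}=V^{(0)}\cap(0\times\g\oplus\g)=\{(0,\xi,\xi)\mid\xi\in\g\}\cong\g=E|_{H^{(0)}}$, which is the definition of vacant. The rank is $\dim\h+\dim\g$? — no: over $h$, $\xi'\in\g$ and $v\in T_hH$ are free and $\xi$ is determined, giving $\dim\g+\dim\h$; but the claim says rank $\dim\g$. The resolution is that the condition $\xi\in\g$ is an extra constraint: we need $\iota(v)\theta^R_h+\xi\in\g$ with $\Ad_h\xi'-\xi\in\g$ automatically, so actually $v$ is \emph{not} free — rather $\xi=\Ad_h\xi'-\iota(v)\theta^R_h$ must land in $\g$, cutting $\dim\h$ equations, leaving $\dim\g$. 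So the fiber over $h$ is parametrized (generically) by $\xi'\in\g$ alone via the source map, which is exactly why $(v,\xi,\xi')\mapsto\xi'$ trivializes $E$; I would make this precise by showing the source map $E\to\g$ is a fiberwise isomorphism, using that $\theta^R_h\colon T_hH\to\h$ is an isomorphism so that for each $\xi'$ the equation determines $v$ uniquely once we demand $\xi\in\g$ (project $\Ad_h\xi'$ to $\h$ along $\g$ to read off $\iota(v)\theta^R_h$, hence $v$, hence $\xi$). That the source map is a morphism of Lie algebroids $E\to\g$ then follows because it is a morphism of groupoids $E\to(\g\rra\pt)$ covering $\g\to\pt$ — the source map of any $\ca{LA}$-groupoid is an LA-morphism, or one checks it directly on the Lie algebroid level.

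The main obstacle I anticipate is bookkeeping with the Maurer--Cartan forms and the equivariance in the multiplicativity check of part (a) — getting the cocycle identity $\Ad_{h_1}\bigl(\iota(v_2)\theta^R_{h_2}\bigr)+\iota(v_1)\theta^R_{h_1}=\iota(T\Mult_H(v_1,v_2))\theta^R_{h_1h_2}$ correct, with the right placement of $\Ad_{h_1}$ versus $\Ad_{h_2}$, and confirming it meshes with $\Ad_{h_1h_2}X_2'-X_1 = \Ad_{h_1}(\Ad_{h_2}X_2'-X_2)+(\Ad_{h_1}X_1'-X_1)$ (using $X_1'=X_2$). Once this single identity is pinned down, both (a) and (b) follow with essentially the same argument, and the remaining items (smoothness/subbundle via scalar-multiplication invariance, rank counts, identification of unit spaces, the trivialization) are routine. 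A secondary subtlety, flagged above, is making sure $\g$ is genuinely $\Ad_H$-invariant from the definition of $H$-equivariant Lie algebra triple; if the definition only guarantees invariance of $\h$, I would either note that $\g$-invariance is forced on the identity component and handle components separately, or — more likely the intended reading — take ``action by automorphisms'' together with ``restricts to the adjoint action on $\h$'' plus the direct-sum decomposition to mean the decomposition $\dd=\g\oplus\h$ is $\Ad_H$-stable, which I would state explicitly as the first line of the proof.
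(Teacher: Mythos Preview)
Your verification that $V$ is a $\ca{VB}$-subgroupoid via the Maurer--Cartan cocycle identity is fine and is essentially equivalent to the paper's fiber-product argument (the paper realizes $V$ as the fibered product of two $\ca{VB}$-groupoid maps into $\dd\rtimes H$, which packages the same identity). But there is a genuine gap in your argument for the Lie-subalgebroid property. You write that ``$V$ is a Lie subalgebroid \ldots\ since it is a subgroupoid that is a subbundle and the ambient object is an $\ca{LA}$-groupoid.'' This inference is not valid: a $\ca{VB}$-subgroupoid of an $\ca{LA}$-groupoid need not be closed under the Lie algebroid bracket. Concretely, the bracket on $TH\times(\dd\oplus\dd)$ involves Lie derivatives of the $\dd$-valued components along the vector-field components, and checking that the defining equation $\Ad_h X'-X=\iota(v)\theta^R_h$ is preserved under this bracket is a nontrivial computation that you have not carried out. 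Your hedge ``alternatively one identifies the Lie algebroid of $V$ explicitly'' is not a proof either. The paper handles this point differently: it uses an $H\times H$-action by $\ca{LA}$-groupoid automorphisms to reduce to a neighborhood of $e$, then embeds $H$ into a connected Lie group $D$ integrating $\dd$ and identifies $V$ (locally) as the restriction to $TH$ of the action Lie algebroid $D\times(\dd\oplus\dd)\hookrightarrow TD\times(\dd\oplus\dd)$, which is manifestly a Lie subalgebroid.

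There is also a misconception in your treatment of part (b). You assert that $\g\subseteq\dd$ is $\Ad_H$-invariant, and worry about whether this follows from the definition. In fact $\g$ is \emph{not} $\Ad_H$-invariant in general --- this is precisely the content of the dressing action $h\bullet\xi=(1-p_\h)\Ad_h\xi$, which records the $\g$-component of $\Ad_h\xi$ and is typically not equal to $\Ad_h\xi$. The paper's argument for (b) does not need any such invariance: it simply observes that $TH\times(\g\oplus\g)$ is an $\ca{LA}$-subgroupoid of $TH\times(\dd\oplus\dd)$ (because $\g$ is a Lie subalgebra of $\dd$), and $E$ is the intersection of $V$ with this $\ca{LA}$-subgroupoid. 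Your eventual rank computation for $E$ is correct --- the equation $\Ad_h\xi'-\xi\in\h$ together with $\xi\in\g$ determines $\xi$ and $v$ from $\xi'$ --- but the path you take to get there, via $\Ad_H$-invariance of $\g$, is wrong.
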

\begin{proof}
(a) 
The $\ca{VB}$-groupoid $TH\times(\dd\oplus \dd) \rra \dd$ may be regarded as a direct sum of two 
$\ca{VB}$-subgroupoids $TH\rra 0$ and $H\times (\dd\oplus \dd)\rra \dd$. 
Right trivialization $TH\cong \h\rtimes H$ gives a fiberwise injective group isomorphism 
\begin{equation}\label{eq:map1}
 TH\to \dd\rtimes H,\ \ v\mapsto (\iota_v\theta^R_h,h)
\end{equation}
where $h$ is the base point of $v$, and the semi-direct product is relative to $\Ad$.
On the other hand, the map  
\begin{equation}\label{eq:map2} 
H\times(\dd\oplus \dd)\to \dd\rtimes H,\ \ (h,X,X')\mapsto \Ad_h(X')-X
\end{equation}
is a fiberwise surjective $\ca{VB}$-groupoid homomorphism. 
The fibered product of the two maps \eqref{eq:map1}, \eqref{eq:map2} is equal to $V$, 
which is hence a $\ca{VB}$-subgroupoid
of rank $\dim\h+2\dim\dd-\dim\dd=\dim\g+2\dim\h$. 
 
Let $H\times H$ act on $H$ by $(h_1,h_2).h=h_1hh_2^{-1}$, on $TH$ by the tangent lift of 
this action, and on  $\dd\oplus \dd$ by $(h_1,h_2).(X,X')=(\Ad_{h_1}X,\Ad_{h_2}X')$. 
We obtain a diagonal action on $TH\times (\dd\oplus \dd)$ by $\ca{LA}$-groupoid 
automorphisms. The maps \eqref{eq:map1}, \eqref{eq:map2} are equivariant relative 
to the action $(h_1,h_2).(Y,h)=(\Ad_{h_1}Y,h_1hh_2^{-1})$ on $\dd\rtimes H$, 
hence $V$ is $H\times H$-invariant. To verify that $V$ is a $\ca{LA}$-subgroupoid, it 
hence suffices to check near the group unit. In particular, we may assume that $H$ is connected and simply connected. Let $D$ be a connected Lie group with Lie algebra $\dd$, and with the action of $\dd\oplus\dd$
by $(X,X')\mapsto X'^L-X^R$. The corresponding action Lie algebroid embeds as a Lie subalgebroid 
\begin{equation}\label{eq:big}
\{(v,X,X')|\  v=X'^L|_d-X^R|_d\}\subseteq TD\times(\dd\oplus\dd)\end{equation}
(where $d\in D$ is the base point of $v\in TD$). On a neighborhood of $e\in H$, the group homomorphism 
$H\to D$ exponentiating $\h\to \dd$ is an embedding, and $V$ is simply the intersection of \eqref{eq:big} 
with 
$TH\times (\dd\oplus\dd)$. In particular, it is a Lie subalgebroid of $TH\times (\dd\oplus\dd)$.

%

(b) The same argument as for $V$ shows that $E$ is a subbundle of rank $\dim\g$. Since $E$ is the intersection 
of $V$ with the $\ca{LA}$-subgroupoid $TH\times(\g\oplus\g)$, it is itself an $\ca{LA}$-subgroupoid. 
Since $E$ has trivial intersection with the subbundle 
of elements of the form $(v,\xi,0)$, the source map $TH\times(\g\oplus\g)\to \g,\ (v,\xi,\xi')\mapsto \xi'$ 
defines a trivialization of $E$. Furthermore, since this projection is a Lie algebroid homomorphism, the 
same is true for its restriction to $E$.
\end{proof} 

\begin{proposition}\label{prop:VLAg1}
There is a 1-1 correspondence between 
\begin{itemize}
\item[(i)] Vacant $\ca{LA}$-groupoids $E\rra \g$ over groups $H\rra \pt$, and
\item[(ii)] $H$-equivariant Lie algebra triples $(\dd,\g,\h)$.
\end{itemize}
\end{proposition}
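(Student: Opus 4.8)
The assignment (ii)$\to$(i) is already furnished by Proposition~\ref{prop:dress}(b), which builds a vacant $\ca{LA}$-groupoid $E\rra\g$ over $H$ from an $H$-equivariant Lie algebra triple $(\dd,\g,\h)$. So the plan is to construct the inverse assignment (i)$\to$(ii) and to check that the two are mutually inverse. Fix a vacant $\ca{LA}$-groupoid $E\rra\g$ over $H\rra\pt$. By vacancy $\g=E^{(0)}=E_e$, so $E\to H$ is a vector bundle of rank $\dim\g$; and since $E^{(0)}$ is a Lie subalgebroid of $E\to H$ over $\{e\}$, the anchor $\a_E$ vanishes on $E_e$, so $\g=E_e$ is a Lie algebra (its isotropy Lie algebra), which will be the middle term of the triple; the third term is the Lie algebra $\h$ of $H$, with its adjoint action. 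The first and crucial step is the structural fact, essentially due to Mackenzie and treated in Appendix~\ref{app:mat}, that the source map $s\colon E\to\g$ — which by the Grabowski--Rotkiewicz characterization (Remark~\ref{rem:grab}) is a morphism of vector bundles over $H\to\pt$ — restricts to a linear isomorphism on each fibre $E_h$: it is the identity on $E_e$, hence an isomorphism near $e$ by a rank argument, and then an isomorphism for all $h$ by an argument with the groupoid multiplication of $E\rra\g$ (compare the proof of Proposition~\ref{prop:dress}). Thus $s$ yields a vector bundle isomorphism $E\cong H\times\g$ under which $s$ becomes the projection to $\g$.

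Transporting the rest of the structure of $E$ through this trivialization, the target map becomes a fibrewise-linear map $H\times\g\to\g$, $(h,\xi)\mapsto h\cdot\xi$, and comparing with the groupoid composition (for which $s(x\circ y)=s(y)$ and $t(x\circ y)=t(x)$) shows it is a linear $H$-action on $\g$. The anchor $\a_E\colon E\to TH$ (a morphism of $\ca{VB}$-groupoids) becomes, after right trivialization of $TH$, a map $\varpi\colon H\times\g\to\h$, linear in $\xi$, vanishing at $h=e$, and — because $\a_E$ is multiplicative — satisfying the cocycle identity $\varpi(h_1h_2,\xi)=\varpi(h_1,h_2\cdot\xi)+\Ad^H_{h_1}\varpi(h_2,\xi)$. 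I would then set $\dd:=\g\oplus\h$ as a vector space, with $H$-action $\Ad_h$ given by $\Ad^H_h$ on $\h$ and by $\Ad_h\xi:=h\cdot\xi+\varpi(h,\xi)$ on $\g$; the cocycle identity is exactly what makes $h\mapsto\Ad_h$ a representation on $\dd$. The Lie bracket on $\dd$ is then pinned down by requiring $\g$ and $\h$ to be Lie subalgebras and the mixed bracket to be $[Y,\xi]:=\tfrac{d}{dt}\big|_0\Ad_{\exp tY}(\xi)$ for $Y\in\h$, $\xi\in\g$, so that $h\mapsto\Ad_h$ integrates $\ad_\h$ and restricts to the adjoint action on $\h$ by construction. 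That this $\dd$ is a Lie algebra — equivalently, that each $\Ad_h$ is a Lie algebra automorphism, from which all Jacobi identities with an $\h$-entry follow by differentiating $[\Ad_h x,\Ad_h y]=\Ad_h[x,y]$ — is then equivalent to the remaining content of the $\ca{LA}$-groupoid axioms for $E$, namely the Lie algebroid Jacobi identity on $E$ and the compatibility of its bracket with the groupoid multiplication. This is precisely Mackenzie's matched-pair dictionary, to be carried out in Appendix~\ref{app:mat}.

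Finally, to see that the two assignments are mutually inverse, I would compare with the explicit model of Proposition~\ref{prop:dress}: there the trivialized target is $t(v,\xi,\xi')=\xi=(\Ad_h\xi')_\g$ and the trivialized anchor is $\varpi(h,\xi')=(\Ad_h\xi')_\h$, so that $\Ad_h\xi'=h\cdot\xi'+\varpi(h,\xi')$ returns exactly the original action, giving that (ii)$\to$(i)$\to$(ii) is the identity; and since $E$ is in turn reconstructed from $(\dd,\g,\h)$ by those same formulas, (i)$\to$(ii)$\to$(i) is the identity as well. The main obstacle is the opening trivialization lemma — that vacancy forces $s$ to be a fibrewise isomorphism globally on $H$, not just near $e$ — together with the verification that the translation between Lie algebroid data on $E$ and Lie algebra data on $\dd$ is a genuine bijection rather than a one-way implication; once those are secured, the body of the proof is just the assembly described above, with details relegated to Appendix~\ref{app:mat}.
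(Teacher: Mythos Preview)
Your proposal is correct and follows essentially the same route as the paper's Appendix~\ref{app:mat}: trivialize $E$ by the source map, read off the $H$-action $h\bullet\xi$ from the target and the cocycle $\varpi(h,\xi)=\iota(\varrho(\xi))\theta^R_h$ from the anchor, then assemble $\Ad_h\xi=h\bullet\xi+\varpi(h,\xi)$ on $\dd=\g\oplus\h$ and define the bracket by differentiation---this is exactly Propositions~\ref{prop:VLAisMP} and~\ref{prop:ext}, with the paper phrasing the intermediate step in Mackenzie's matched-pair language (your compatibility/cocycle identities are \eqref{eq:comp1*} and \eqref{eq:comp2*}). One small correction: what you flag as the ``main obstacle'' is not one---the paper takes ``$s$ is a fibrewise isomorphism'' as the \emph{definition} of vacant (first line of Appendix~\ref{app:mat}), and indeed for $v\in E_h$ with $s(v)=0$ one has $v\circ 0_{h^{-1}}\in E_e=E^{(0)}$ with source $0$, hence $v=0_h$, so injectivity (and thus bijectivity) of $s_h$ is immediate from the groupoid axioms.
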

The proof of Proposition \ref{prop:VLAg1} is found in Appendix~\ref{app:mat}, but we summarize the construction here. The direction $(ii)\Rightarrow (i)$ is part (b) of Proposition~\ref{prop:dress}. 
In the opposite direction $(i)\Rightarrow (ii)$, 
let $\h$ be the Lie algebra of $H$, and put $\dd=\g\oplus\h$ as a vector space. 
One finds that $\dd$ carries a unique action $\Ad$ of $H$, extending the adjoint action on $\h\subseteq\dd$ and such that 
\begin{equation}\label{eq:eqE}\iota(\a(z))\theta^R_h=\Ad_h s(z)-t(z)\end{equation} for all 
$z\in E_h$. Furthermore, $\dd$ carries a unique Lie bracket such that $\g,\h$ are Lie subalgebras and 
such that the differential of $\Ad\colon H\to \in \on{Aut}(\dd)$ gives the adjoint action $\ad\colon 
\h\to \on{End}(\dd)$.

\subsection{Dirac Manin triples}\label{subsec:genman}
If $V$ is a vector space with an element $\beta\in S^2V$, we denote by 
$\beta^\sharp\colon V^*\to V$ the map $\beta^\sharp(\mu)=\beta(\mu,\cdot)$. 
A subspace $U\subseteq V$ is called $\beta$-coisotropic if $\beta^\sharp(\on{ann}(U))\subseteq U$. 
\begin{definition}
A  \emph{Dirac Manin triple} $(\dd,\g,\h)_\beta$ is a triple 
$(\dd,\g,\h)$ of Lie algebras, together with an element 
$\beta\in (S^2\dd)^\dd$ such that $\g$ is $\beta$-coisotropic.

If $(\dd,\g,\h)$ is an $H$-equivariant triple, and $\beta$ is $H$-invariant, 
we call $(\dd,\g,\h)_\beta$ an \emph{$H$-equivariant Dirac Manin triple}. 
\end{definition}
If $H$ is simply connected, then the $H$-equivariance conditions are automatic.
If $\beta$ is non-degenerate and $\g,\h$ are both Lagrangian Lie subalgebras, 
the Dirac Manin triple is an ordinary Manin triple. 
 
We will now associate to any Dirac Manin triple 
$(\dd,\g,\h)_\beta$ a new Dirac Manin triple 
$(\q,\g,\mf{r})_\gamma$, where $\gamma$ is non-degenerate and $\g$ is Lagrangian in $\q$. 
Let $\dd^*_\beta$ be the Lie algebra, equal to $\dd^*$ as a vector space, 
with the Lie bracket 
\[ \l [\mu_1,\mu_2],\xi\r=\l\mu_2,[\xi,\beta^\sharp(\mu_1)]\r,\ \ \mu_1,\mu_2\in\dd^*_\beta,\ \xi\in\g.\]
The element $\beta$, viewed as a bilinear form on $\dd^*_\beta$, is invariant under the bracket. 
The co-adjoint action of $\dd$ is by derivations of the bracket, hence we may form the semi-direct product
\[ \wh{\dd}=\dd\ltimes\dd^*_\beta.\]
The bilinear form 
\[ \wh{\beta}((\xi_1,\mu_1),(\xi_2,\mu_2))=\beta(\mu_1,\mu_2)+\l \mu_1,\xi_2\r+\l\mu_2,\xi_1\r\]
on $\wh{\dd}$ is invariant and non-degenerate. Note that $\dd\subseteq \dd\ltimes\dd^*_\beta$
is a Lagrangian Lie subalgebra, and $\dd^*_\beta$ is a Lie algebra ideal. This defines a new 
Dirac Manin triple $(\dd\ltimes\dd^*_\beta,\dd,\dd^*_\beta)_{\wh{\beta}}$. 
\begin{remark}\label{rem:CAgrpOvPt}
As observed by Drinfel'd \cite{Drinfeld:1989tu}, there is in fact a 1-1 correspondence between (i) 
Manin pairs $(\wh{\dd},\dd)$ with a Lie algebra ideal complementary to $\dd$, and (ii)
Lie algebras $\dd$ with invariant elements $\beta\in S^2\dd$.  One may interpret this as a classification of $\ca{CA}$-groupoids over $H=\pt$. Here $\wh{\dd}\equiv \dd\ltimes\dd^*_\beta\rra \dd$ is the action Lie groupoid for the translation action of $\dd^*_\beta$ on $\dd$ via the map $\beta^\sharp\colon \dd^*_\beta\to\dd$.
\end{remark}
The Lie subalgebra $\mf{c}=\g\ltimes\dd^*_\beta$ is coisotropic, since it contains 
the Lagrangian Lie subalgebra $\g\ltimes\on{ann}(\g)$. Hence $\mf{c}^\perp$ is an ideal in 
$\mf{c}$, and the quotient
\[\q=\mf{c}/\mf{c}^\perp\]  
is a Lie algebra with a non-degenerate invariant metric induced from that on $\dd\ltimes\dd^*_\beta$. 
Let $\gamma\in (S^2\q)^\q$ be given by the dual metric on $\q^*$. 
The inclusion $\g\hra \dd\ltimes\dd^*_\beta$ descends to an inclusion $\g\hra \q$ as a 
Lagrangian Lie subalgebra, thus $(\q,\g)$ is a Manin pair. 

Since $\dd^*_\beta$ is an ideal complementary to $\dd$, the same is true of $(\dd^*_\beta)^\perp$. 
Let $\wh{f}\colon \dd\ltimes\dd^*_\beta\to \dd$ be the projection with kernel $(\dd^*_\beta)^\perp$. 
Explicitly, $\wh{f}(\xi,\mu)=\xi+\beta^\sharp(\mu)$. 
This is a Lie algebra homomorphism, and since $\mf{c}^\perp\subseteq (\dd^*_\beta)^\perp$, it descends 
to a Lie algebra homomorphism 
\[ f\colon\q\to \dd,\]
with the important properties $f(\xi)=\xi$ for $\xi\in\g$ and $\beta^\sharp=f\circ f^*.$ 

Finally, $\mf{r}=f^{-1}(\h)$ is a Lie algebra complement to $\g$. We have thus defined a Dirac Manin triple 
\[ (\q,\g,\mf{r})_\gamma,\]
where $\gamma$ is non-degenerate and $\g$ is Lagrangian. We denote by $p_{\mf{r}}\in\on{End}(\q)$ the projection to $\mf{r}$ along $\g$ and by $p_\h\in\on{End}(\dd)$ the projection to $\h$ along $\g$; thus $f\circ p_{\mf{r}}=p_\h\circ f$.

\begin{examples} \label{ex:specialcases}
We describe the triple $(\q,\g,\mf{r})_\gamma$ associated to $(\dd,\g,\h)_\beta$ in some extreme cases. 
\begin{enumerate}
\item[(i)] If $\beta=0$ one obtains (independent of $\h$)
\[ (\q,\g,\mf{r})_\gamma=(\g\ltimes\g^*,\g,\g^*)_\gamma\]
with $\gamma$ the bilinear form given by the pairing.  The map $f\colon \q\to \dd$ is projection to 
$\g\subseteq \dd$. 
\item[(ii)] If $\beta$ is non-degenerate, defining a non-degenerate metric on $\dd$, one finds 
\[ (\q,\g,\mf{r})_\gamma=(\dd\oplus \ol{\g/\g^\perp},\g_\Delta,\h\oplus 0)_\gamma\]
where $\g/\g^\perp$ is the quotient Lie algebra with metric induced from that on $\dd$, and 
$\ol{\g/\g^\perp}$ is the same Lie algebra with the opposite metric. $\g_\Delta$ is embedded 
`diagonally' as $\xi\mapsto (\xi,[\xi])$ (where $[\xi]$ is the image in $\g/\g^\perp$), 
and the homomorphism $f$ is projection to the first summand.  
\item[(iii)]In particular, if $\beta$ is non-degenerate and 
$\g$ is \emph{Lagrangian}, we obtain $(\q,\g,\mf{r})_\gamma=(\dd,\g,\h)_\beta$, with $f$ the identity map. 
\end{enumerate}
\end{examples}

\subsection{From Dirac Manin triples to Dirac Lie group structures}\label{subsec:construct}
Let $(\dd,\g,\h)_\beta$ be an $H$-equivariant Dirac Manin triple, and let $(\q,\g,\mf{r})_\gamma$ 
and $f\colon \q\to \g$ be as in Section \ref{subsec:genman}. We will obtain a Dirac Lie group structure $(\A,E)$ on $H$ by reduction from 
the direct product of the multiplicative Manin pairs 
\[ (\TH,TH)\times (\ol{\q}\oplus \q,\g\oplus\g),\]
where $\TH\rra \h^*$ is the standard $\ca{CA}$-groupoid structure, and 
$\ol{\q}\oplus \q\rra\q$ is the pair groupoid. 
\begin{proposition}\label{prop:ciscois}
The subset $C\subseteq \TH\times (\ol{\q}\oplus \q)$ given as
\begin{equation}\label{eq:defc}
 C=\{(v+\alpha,\zeta,\zeta')|\ \Ad_h
 f(\zeta')-f(\zeta)=\iota(v)\theta^R_h\}\end{equation}
(where $h\in H$ is the base point of $v+\alpha\in \TH$) 
is a coisotropic, involutive $\mathcal{VB}$-subgroupoid, 
with $\a(C^\perp)=0$. 
The reduction of $(\TH,TH)\times (\ol{\q}\oplus \q,\g\oplus\g)$ relative to $C$ is a Dirac Lie group structure $(\A,E)$.
\end{proposition}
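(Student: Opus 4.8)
The plan is to verify the three properties of $C$ directly from the explicit formula \eqref{eq:defc}, and then to invoke the coisotropic reduction machinery of Proposition~\ref{prop:reduction}. The key observation is that $C$ is built out of the $\ca{LA}$-subgroupoid $V$ of Proposition~\ref{prop:dress}(a), applied to the \emph{image} triple $(\dd,\g,\h)$, pulled back along $f$. More precisely, consider the map
\[ \Psi\colon \TH\times (\ol{\q}\oplus\q)\lra TH\times(\dd\oplus\dd),\qquad (v+\alpha,\zeta,\zeta')\mapsto (v,f(\zeta),f(\zeta')).\]
This is a $\ca{VB}$-groupoid morphism (it is the direct product of the anchor $\TH\to TH$ with $f\oplus f$, all of which are $\ca{VB}$-groupoid morphisms), and $C=\Psi^{-1}(V)$ where $V$ is the $\ca{LA}$-subgroupoid from Proposition~\ref{prop:dress}(a). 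Since the preimage of a subgroupoid under a groupoid morphism is a subgroupoid, and $\Psi$ intertwines scalar multiplication so that $C$ is a vector subbundle by Remark~\ref{rem:grab}, this already shows $C$ is a $\ca{VB}$-subgroupoid of $\TH\times(\ol{\q}\oplus\q)$ along the subgroupoid $H\subseteq H\times\h^*\times\q$ cut out by the condition $\mu=0$, $\zeta=\zeta'$ with $f(\zeta)\in\g$. I would record its rank: $\dim H + 2\dim\q - (2\dim\dd - \dim V) = \dim H + 2\dim\q - \dim\g$.

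Next I would check coisotropy. The metric on $\TH\times(\ol{\q}\oplus\q)$ restricted to a fiber over $h$ is the split form on $T_hH\oplus T_h^*H$ plus $(-\gamma_{\q})\oplus\gamma_{\q}$ on $\q\oplus\q$. The annihilator condition defining $C$ only constrains the $T_hH$-component of $v+\alpha$ and the images $f(\zeta),f(\zeta')$, so $C^\perp$ consists of triples $(w+\sigma,\eta,\eta')$ with $w=0$, $\eta,\eta'\in\ker f$, and $\sigma$, $\eta$, $\eta'$ tied together by the orthogonality relation against \eqref{eq:defc}. One then computes directly that $C^\perp\subseteq C$: an element of $C^\perp$ has $w=0$ so $\iota(w)\theta^R_h=0$, and has $f(\eta)=f(\eta')=0$, so the defining equation of $C$ is satisfied trivially. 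This is essentially the same computation showing $\mf{c}^\perp\subseteq\mf{c}$ in the construction of $\q$; indeed $C^\perp$ at the unit fiber should be identified with (a copy of) $\mf{c}^\perp$. The condition $\a(C^\perp)=0$ is then immediate since every element of $C^\perp$ has vanishing $TH$-component (the anchor of $\TH\times(\ol{\q}\oplus\q)$ is projection to $TH$, the $\q$'s being an action algebroid whose anchor lands in $TH$ — but here the $\ol{\q}\oplus\q$ factor is the \emph{pair groupoid} with zero anchor as a Courant algebroid over a point, so the anchor of the product is just that of $\TH$).

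For involutivity, I would use that $C=\Psi^{-1}(V)$ together with the fact that $V$ is involutive — but one must be careful, since $\Psi$ is only a $\ca{VB}$-morphism, not a Courant morphism, so involutivity does not pull back formally. Instead I would argue as in Proposition~\ref{prop:dress}: the condition is local near $e\in H$, and by $H\times H$-equivariance (with the analogous bi-action as in the proof of Proposition~\ref{prop:dress}(a)) it suffices to check near the unit, where $C$ becomes the intersection of an action-type Courant subalgebroid of $\TH\times(\ol{\q}\oplus\q)$ — associated to the Lie subalgebra $\mf{c}=\g\ltimes\dd^*_\beta$ of $\wh\dd$ acting appropriately — with the full bundle; intersections with involutive subbundles along the same base are involutive, and action Courant subalgebroids of this type are involutive by \eqref{eq:actioncourant}. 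Alternatively, and perhaps more cleanly, I would exhibit $C$ as a clean (in fact transverse) composition $C = V' \circ (\text{something})$ using the diagonal morphisms of Corollary~\ref{cor:diag} and Proposition~\ref{prop:dirrel}, so that involutivity is automatic. The transversality of $E=TH\times(\g\oplus\g)$ to $C$ — needed for the last sentence of Proposition~\ref{prop:reduction} — follows because $E\cap C^\perp=0$: an element of $C^\perp$ lying in $E$ would have $\q$-components in $\ker f\cap\g = \g\cap\g^\perp$, and the relation tying $\sigma$ to those components, combined with $w=0$, forces everything to vanish (this uses that $\g$ is $\beta$-coisotropic, equivalently that $\ker f\cap\g$ is isotropic in $\q$). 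Granting (1)--(3) and transversality, Proposition~\ref{prop:reduction} produces the $\ca{CA}$-groupoid $\A=(\TH\times(\ol{\q}\oplus\q))_C$ over $H$ together with the multiplicative Dirac structure $E_{\mathrm{red}}=(E\cap C)/(E\cap C^\perp)$; since the reduced Manin pair is obtained from two Dirac Lie group structures (wide subgroupoids) by transverse reduction, $E_{\mathrm{red}}$ is again wide, so by Proposition~\ref{prop:dirm} the pair $(\A,E)$ is a Dirac Lie group structure.

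The main obstacle I anticipate is the involutivity of $C$: getting the reduction-to-the-unit argument to work requires correctly identifying $C$ near $e$ with the intersection of $\TH\times(\ol\q\oplus\q)$ with a concrete involutive subbundle, and matching the bi-invariance carefully so that genuine localization is justified. Everything else — the $\ca{VB}$-subgroupoid property, coisotropy, $\a(C^\perp)=0$, and transversality of $E$ — reduces to short linear-algebra computations in a single fiber, using only that $\g$ is $\beta$-coisotropic and the explicit formula $\wh f(\xi,\mu)=\xi+\beta^\sharp(\mu)$ for the projection defining $f$.
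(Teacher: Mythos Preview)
Your overall architecture matches the paper's: write $C=\Psi^{-1}(V)$ for the map $\Psi\colon(v+\alpha,\zeta,\zeta')\mapsto(v,f(\zeta),f(\zeta'))$, then apply Proposition~\ref{prop:reduction}. However, there are two genuine errors.

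\textbf{The description of $C^\perp$ is wrong.} You claim that elements of $C^\perp$ have $\q$-components in $\ker f$, and deduce $C^\perp\subseteq C$ from $f(\eta)=f(\eta')=0$. This is false. The paper computes $C^\perp$ explicitly: it is spanned by the sections
\[
\psi(\mu)=\Big(-\l\mu,\theta^R\r,\ f^*(\mu),\ f^*(\Ad_{h^{-1}}\mu)\Big),\qquad \mu\in\dd^*,
\]
so the $\q$-components lie in $\on{im}(f^*)$, not $\ker f$. One has $f(f^*(\mu))=\beta^\sharp(\mu)$, which is nonzero whenever $\beta\neq0$. The inclusion $C^\perp\subseteq C$ then comes from the $H$-\emph{invariance} of $\beta^\sharp=f\circ f^*$: one checks $\Ad_h f(f^*(\Ad_{h^{-1}}\mu))-f(f^*(\mu))=0$. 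Your transversality argument for $TH\times(\g\oplus\g)$ is likewise built on the wrong $C^\perp$; the paper's argument is that $\psi(\mu)\in TH\times(\g\oplus\g)$ forces $\mu\in\on{ann}(\h)$, hence $f^*(\mu)\in\mf{r}^\perp$, and $\mf{r}^\perp\cap\g=0$.

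\textbf{The final step is wrong.} You assert that the reduced pair is a Dirac Lie group structure because ``the reduced Manin pair is obtained from two Dirac Lie group structures''. Neither factor is a Dirac Lie group structure: $TH$ is \emph{not} a wide subgroupoid of $\TH\rra\h^*$ (its units are $0\neq\h^*$), and $\g\oplus\g$ is not wide in $\ol\q\oplus\q\rra\q$ (its units are $\g_\Delta\neq\q_\Delta$). The paper instead computes $\A^{(0)}=C^{(0)}/(C^\perp)^{(0)}$ directly: $C^{(0)}=\h^*\times\q$, and $(C^\perp)^{(0)}$ is the image of $\dd^*$ under $\mu\mapsto(p_\h^*\mu,f^*\mu)$, with quotient $\g$; since also $E^{(0)}=\g$, Proposition~\ref{prop:dirm} applies.

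On involutivity you are being too cautious. The map $\Psi$ \emph{does} preserve brackets (Courant bracket to Lie algebroid bracket): the component $\TH\to TH$ is the anchor, which preserves brackets by axiom c6), and $f\oplus f$ is a Lie algebra homomorphism. Hence $C=\Psi^{-1}(V)$ is involutive immediately, with no need for localization or realizing $C$ as a composition of relations.
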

\begin{proof}
An argument similar to that given in the proof of Proposition~\ref{prop:dress} shows that 
$C$ is a $\ca{VB}$-subgroupoid of rank $\dim \q+\dim\dd$. Furthermore, 
$C$ is the pre-image of the $\ca{LA}$-subgroupoid $V$ from Proposition~\ref{prop:dress} under the $\ca{VB}$-groupoid homomorphism 
\begin{equation}\label{eq:grhom}
 \TH\times (\ol{\q}\oplus \q)\to TH\times (\dd\oplus\dd),\ (v+\alpha,\zeta,\zeta')\mapsto 
(v,f(\zeta),f(\zeta')).\end{equation}
Since \eqref{eq:grhom} preserves brackets, and since $V$ is a Lie subalgebroid, 
it follows that $C$ is involutive. 
The orthogonal bundle $C^\perp$ has rank equal to $\dim\dd$,
and is spanned by the sections
\[ \psi(\mu)=\Big(-\big\l \mu,\theta^R\big\r,\,f^*(\mu),\,
f^*(\Ad_{h^{-1}}\mu)\Big),\ \ \mu\in \dd^*.\]
Indeed, the pairing with $(v+\alpha,\zeta,\zeta')\in\Gamma(C)$ is
$\l\mu,-\iota(v)\theta^R_h-f(\zeta)+\Ad_{h}  f(\zeta')\r=0$
as required. The property $C^\perp\subseteq C$ follows by checking the definition of $C$ 
on the sections $\psi(\mu)$, 
\[ \Ad_h \big(f(f^*(\Ad_{h^{-1}}\mu))\big)-f(f^*(\mu))=0,\]
using the $H$-equivariance of $f\circ f^*=\beta^\sharp$.  The object space of 
$\TH\times(\ol{\q}\oplus \q)$ is $\h^*\times\q$, embedded as the space of units 
$T^*_eH\times\q_\Delta$. This is contained in $C$, hence
$C^{(0)}=\h^*\times\q$. On the other hand, 
$(C^\perp)^{(0)}\cong \dd^*$, embedded in $C^{(0)}$ by the  
map $\dd^*\to \h^*\times \q,\ \mu\mapsto (p_\h^*(\mu),f^*(\mu))$.  
We next show that $TH\times (\g\oplus \g)$ is transverse to $C$, or equivalently that $TH\times (\g\oplus \g)\cap C^\perp$ is trivial. Indeed, vanishing of 
the $T^*H$-component of $\psi(\mu)$ is equivalent to $\mu\in \on{ann}(\h)$, 
but then the last two components are contained in 
$f^*(\on{ann}(\h))=\mf{r}^\perp$. 
Coisotropic reduction by $C$ 
(cf. Proposition~\ref{prop:reduction}) gives the 
multiplicative Manin pair 
$(\A,E)=((\TH\times(\ol{\q}\oplus \q))_C,\ (TH\times (\g\oplus \g))_C)$. 
We have $\A^{(0)}=C^{(0)}/(C^\perp)^{(0)}=(\h^*\times\q)/ \dd^*\cong\g$
(the last identification is obtained by taking $0\times\g\hra \h^*\times\q$ as a complement to $\dd^*$),  and also $E^{(0)}=\g$ since  
\[ (TH\times (\g\oplus \g))^{(0)}=\{0\}\times\g\subseteq \h^*\times\q.\]
Since $\A^{(0)}=E^{(0)}$, it follows that $(\A,E)$ is a Dirac Lie group structure on $H$. 
\end{proof}

The construction of $\A$ by coisotropic reduction defines a $\ca{CA}$-groupoid morphism 
\begin{equation}\label{eq:S}
 S\colon \A\da \TH\times (\ol{\q}\oplus \q).
\end{equation}

\subsection{From Dirac Lie group structures to Dirac Manin triples}
%
In this Section we will show that any Dirac Lie group structure $(\A,E)$ on $H$
arises by the reduction procedure from the last section, from a unique $H$-equivariant 
Dirac Manin triple $(\dd,\g,\h)_\beta$. 
\subsubsection{Definition of $(\dd,\g,\h)_\beta$}
As remarked in Section~\ref{subsec:vacuum}, the Dirac structure $E$ is a vacant $\ca{LA}$-groupoid over $H$. Hence it corresponds to a unique $H$-equivariant Lie algebra triple $(\dd,\g,\h)$. 
%
%
%
Let $\q:=\A_e$, and let $f\colon \q\to \dd$ be the linear map given as the sum of the target and anchor map 
at the group unit $e\in H$,
\begin{equation}\label{eq:deff} f(\zeta)=t_e(\zeta)+\a_e(\zeta),\ \ \ \zeta\in\mf{q}. \end{equation}
Let $\gamma \in S^2\q$ be dual 
to the given metric on $\A_e$. Write $\q=\g\oplus \mf{r}$, with $\mf{r}$ be the kernel of $t_e\colon \A_e\to \g$, 
and $\g$ embedded as $E_e$. Thus $f(\tau)=\a_e(\tau)$ for $\tau\in\mf{r}$ and $f(\xi)=\xi$ for $\xi\in\g$.

Define $\beta\in S^2\dd$ by
\[ \beta^\sharp=f\circ f^*\colon \dd^*\to \dd.\]
This defines $(\dd,\g,\h)_\beta$, but we will need to show that $\beta$ is $H$-invariant and that 
this triple gives $(\A,E)$. We will also show that $(\q,\g,\mf{r})_\gamma$ is the triple associated 
to $(\dd,\g,\h)_\beta$. (Among other things, we will have to show that $\q$ is a Lie algebra and that $f$ is a Lie algebra homomorphism.) 
As before, we denote by $p_{\mf{r}}\in\End(\q)$ the projection to 
$\mf{r}$ along $\g$ and by $p_\h\in\End(\dd)$ the projection to 
$\h$ along $\g$. Thus 
$t_e=1-p_{\mf{r}}$ and  $p_\h\circ f=f\circ p_{\mf{r}}$.
\subsubsection{Trivialization of $\A$}
Since $t,s\colon E\to \g$ are fiberwise isomorphisms, 
we have $\A=E\oplus \ker(t)=E\oplus \ker(s)$ as vector bundles. Let
\[ j\colon \A\to E\]
be the projection along $\ker(t)$. The trivialization $E=H\times\g$ 
given by the source map $s\colon E\to\g$ extends to a trivialization $\A=H\times \q$, as follows.
\begin{proposition}\label{prop:trivial}
\begin{enumerate}
\item The map 
\[\A\to \q,\ x\mapsto j(x)^{-1}\circ x \] 
defines a trivialization, $\A\cong H\times\q$, compatible with the metric. 
\item 
The constant sections of $\A\cong H\times\q$ form a Lie algebra under Courant bracket. Thus $\q$ inherits 
a Lie algebra structure. 
\item The subspace $\g$ is a Lie subalgebra of $\q$, and the trivialization of $\A$ restricts 
 to the  given trivialization 
$E\cong H\times\g$. 
\item 
The subspace $\mf{r}$ is a Lie subalgebra of $\q$, and the trivialization of $\A$ restricts to a
trivialization $\ker(t)\cong H\times\mf{r}$.
\item\label{it:d}
Restriction of the anchor map to constant sections defines an action $\q\to \mf{X}(H)$
with coisotropic stabilizers, so that $\A$ is the corresponding action Courant algebroid
(cf.~ Equation \eqref{eq:actioncourant}). 
\end{enumerate}
\end{proposition}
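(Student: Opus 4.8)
The plan is to establish the five claims of Proposition~\ref{prop:trivial} in sequence, exploiting the $\ca{CA}$-groupoid morphism $S\colon \A\da \TH\times(\ol{\q}\oplus\q)$ as little as possible and instead working directly with the groupoid structure on $\A$. First, for part (a), I would observe that the map $x\mapsto j(x)^{-1}\circ x$ is well-defined: for $x\in\A_h$, $j(x)\in E_h$ has $t(j(x))=t(x)$ by construction (since $j$ is the projection along $\ker(t)$), so $j(x)^{-1}\circ x$ is a composable product landing in $\A_e=\q$. Bijectivity on each fibre follows because $\A_h=E_h\oplus\ker(t_h)$ and left translation by $j(x)^{-1}$ is a linear isomorphism; smoothness is clear since all the structure maps are smooth. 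Compatibility with the metric is the key point here: the groupoid multiplication of $\A$ is a Courant morphism, hence Lagrangian, which forces $\l x_1\circ x_2, y_1\circ y_2\r = \l x_1,y_1\r + \l x_2,y_2\r$ whenever both sides are defined, and moreover units are isotropic-complemented appropriately; applying this with the inverse (using that $\on{Inv}_\A$ is a Courant morphism $\A\da\ol\A$, Proposition~\ref{prop:units}) gives $\l j(x)^{-1}\circ x, j(x)^{-1}\circ x\r = \l x,x\r$ after noting the $j(x)$-contributions cancel against their inverses. I would spell this out as: the metric on $\A_h$ transported to $\q$ via $x\mapsto j(x)^{-1}\circ x$ agrees with $\gamma$'s dual, i.e.\ the trivialization is isometric.

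Next, for part (b), the constant sections with respect to this trivialization are by definition the sections $h\mapsto$ (the unique $x\in\A_h$ with $j(x)^{-1}\circ x=\zeta$), i.e.\ $h\mapsto j(x)\circ \wti\zeta$ where $\wti\zeta$ is determined. I would identify these more usefully: a section $\sigma_\zeta$ is constant iff it is ``left-invariant'' in the sense that $x_1\circ \sigma_\zeta(s(x_1)) = \sigma_\zeta(t(x_1))$ fails in general, but rather the correct statement is that $\sigma_\zeta$ restricted along the source fibres is built from $\zeta$ by left translation composed with the $E$-trivialization. The cleanest route: since $E\cong H\times\g$ via the source map is a trivialization of the $\ca{LA}$-groupoid $E$ (Proposition~\ref{prop:dress}(b)), the constant sections of $E$ form a Lie algebra under Courant bracket (the Lie algebroid bracket of $E$, which is induced from the Courant bracket), giving the Lie algebra $\g$. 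To extend to $\q$, I use that $\A = E\oplus\ker(t)$ and that $\ker(t)$ is also trivialized; then I invoke the general principle that for a $\ca{CA}$-groupoid over a group, left-translation gives an involutive framing, and a framing by involutive sections closes under Courant bracket into sections that are again constant — this is where one checks that $\Cour{\sigma_\zeta,\sigma_{\zeta'}}$ is again a constant section, using axioms c1)--c5) together with the groupoid-multiplicativity encoded in $\on{gr}(\Mult_\A)$ being involutive.

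For parts (c) and (d): $\g$ is a Lie subalgebra because the $E$-trivialization is exactly the restriction of the $\A$-trivialization (since $j|_E=\mathrm{id}$, so $j(x)^{-1}\circ x = e\circ$... more precisely for $x\in E_h$ one has $j(x)=x$ and $x^{-1}\circ x = $ the unit over $e$... wait, rather $j(x)^{-1}\circ x$ with $j(x)=x$ gives a source-unit, and tracking through gives $s(x)\in\g$), and the constant sections of $E$ are constant sections of $\A$ landing in $\g$, so $\g$ is closed under the bracket. For $\mf{r}=\ker(t_e)$: I would show $\ker(t)\cong H\times\mf{r}$ under the trivialization by checking that $j(x)^{-1}\circ x\in\mf{r}$ iff $t(j(x)^{-1}\circ x)\in$ the right space, using $t(j(x)^{-1}\circ x) = t(j(x)^{-1}) = s(j(x))$, hmm — I need $\mf{r}=\ker t_e$ to be cut out correctly; the point is $x\in\ker(t_h)$ iff $j(x)=0$ iff $j(x)^{-1}\circ x$... this needs care, and I'd verify $\mf{r}$ is a subalgebra either by the same framing argument restricted to $\ker(t)$, or by noting $\A^{(0)}=E^{(0)}=\g$ forces $\ker(t)$ to be complementary and the bracket of two $\mf{r}$-constant sections, projected via $j$ to $E$, must vanish because it is a section of $\ker(t)$ being compared with — this is the subtle part. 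Finally, part (e): the anchor $\a$ restricted to constant sections gives a linear map $\q\to\mf{X}(H)$; it is a Lie algebra homomorphism because $\a(\Cour{\sigma_\zeta,\sigma_{\zeta'}}) = [\a(\sigma_\zeta),\a(\sigma_{\zeta'})]$ by c6), and the image consists of honest vector fields since $H$ is a manifold; that the stabilizers are coisotropic and that $\A$ is the associated action Courant algebroid follows from the discussion in \cite{LiBland:2009ul} recalled around \eqref{eq:actioncourant} — concretely, the trivialization $\A\cong H\times\q$ is metric-compatible (part (a)) and the constant sections close under bracket (part (b)), which is precisely the characterization of an action Courant algebroid, so one reads off that the bracket must have the form \eqref{eq:actioncourant} and hence the stabilizers are coisotropic.

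The main obstacle will be part (d), specifically showing $\mf{r}$ is a Lie subalgebra and that the trivialization carries $\ker(t)$ to $H\times\mf{r}$: unlike $\g$ (which sits inside the already-trivialized sub-$\ca{LA}$-groupoid $E$), the complement $\ker(t)$ carries no a priori groupoid structure, so I cannot directly invoke a sub-$\ca{LA}$-groupoid trivialization. The resolution I expect to use is that $\ker(t)\cong H\times\mf{r}$ follows from the \emph{target} map giving another trivialization $\A = E\oplus\ker(s)$ symmetric to the source one, combined with the inversion morphism $\on{Inv}_\A\colon\A\da\ol\A$ (Proposition~\ref{prop:units}) which swaps $\ker(s)$ and $\ker(t)$; then closure of $\mf{r}$ under bracket is forced by: the $j$-projection of $\Cour{\sigma_\tau,\sigma_{\tau'}}$ to $E$ lies in the constant sections of $E$ (it's $\Cour{j\sigma_\tau, j\sigma_{\tau'}}+\cdots = 0$ plus lower-order terms that vanish because $j\sigma_\tau=j\sigma_{\tau'}=0$) — making the bracket land in $\ker(t)$-valued constant sections, i.e.\ in $H\times\mf{r}$, so $\mf{r}$ closes. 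I would present this carefully since it is the one place the argument is not a routine consequence of the earlier propositions.
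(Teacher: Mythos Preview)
Your outline has the right shape for parts (a), (c), and (e), but there are genuine gaps in (b) and (d) that your proposed resolutions do not close.

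For (b), you gesture at using ``groupoid-multiplicativity encoded in $\on{gr}(\Mult_\A)$ being involutive'' but never say what relation characterizes constant sections. The paper's device is to introduce a second Courant morphism $P_E\colon \A\times\A\da\A$ (over projection to the second factor) defined by $(x_1,x_2)\sim_{P_E} x$ iff $x_1\in E$ and $x=x_2$. A section $\sigma$ is constant iff there exists $\hat\sigma\in\Gamma(\A\times\A)$ with $\hat\sigma\sim_{\Mult_\A}\sigma$ and $\hat\sigma\sim_{P_E}\sigma$; concretely $\hat\sigma_{h_1,h_2}=(\sigma_{h_1h_2}\circ\sigma_{h_2}^{-1},\sigma_{h_2})$. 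Since both $\Mult_\A$ and $P_E$ are Courant morphisms, they preserve brackets of related sections, and closure under Courant bracket follows in one line. Without this (or an equivalent relational characterization) your ``general principle that left-translation gives an involutive framing'' is a restatement of the claim, not a proof.

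For (d), your proposed argument that $j\Cour{\sigma_\tau,\sigma_{\tau'}}=\Cour{j\sigma_\tau,j\sigma_{\tau'}}+\cdots$ with vanishing lower-order terms is incorrect: the projection $j\colon\A\to E$ along $\ker(t)$ is merely a bundle map, not a Courant morphism, and there is no Leibniz-type formula controlling the defect. The inversion trick swapping $\ker(s)$ and $\ker(t)$ does not help either, since neither subbundle is known to be involutive at this stage. The paper's argument again uses the $\hat\sigma$ characterization: the $\mf{r}$-valued constant sections are exactly those with $\hat\sigma=0\times\sigma$, and this property is manifestly preserved under bracket since $\Cour{0\times\sigma,0\times\sigma'}=0\times\Cour{\sigma,\sigma'}$. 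For the trivialization $\ker(t)\cong H\times\mf{r}$, the clean chain is $t(x)=0\Leftrightarrow t(j(x))=0\Leftrightarrow s(j(x))=0\Leftrightarrow t(j(x)^{-1}\circ x)=0$, using that $s,t$ are fibrewise isomorphisms on $E$.

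A minor point in (a): the metric compatibility is not because ``$j(x)$-contributions cancel against their inverses'' but because $j(x)^{-1}\in E$ is isotropic, so $\l j(x)^{-1}\circ x,j(x)^{-1}\circ x\r=\l j(x)^{-1},j(x)^{-1}\r+\l x,x\r=\l x,x\r$.
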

\begin{proof}
For each $h\in H$, the map $\A_h\to \q,\ x\mapsto j(x)^{-1}\circ x$ has inverse 
$\q\to\A_h,\ \zeta\mapsto y\circ \zeta$, where $y\in E_h$ is the unique element such that $s(y)=t(\zeta)$. 
It is clear that the resulting trivialization extends that of $E$. The trivialization is compatible with the metric, since $\l j(x)^{-1}\circ x,\,j(x)^{-1}\circ x\r=\l j(x)^{-1},\,j(x)^{-1}\r+\l x,\,x\r=\l x,\,x\r$. 

By definition, a section $\sig\in\Gamma(\A)$ is `constant' relative to the
trivialization of $\A$ if and only if  $\sig_{h_1h_2}\circ \sig_{h_2}^{-1}\in E$, 
for all $h_1,\,h_2$. This can be rephrased in terms of morphisms: 
Let $P_E\colon \A\times \A\da \A$ be the Courant morphism, with underlying map $H\times H\to H$ projection to 
the second factor, where $(x_1,x_2)\sim_{P_E} x$ if and only if $x_1\in E$ and $x=x_2$. Thus $P_E\subseteq \A\times \ol{\A}\times\ol{\A}$ is obtained from $\A_\Delta\times E$ by interchanging the last two components.
We note that $\sig\in\Gamma(\A)$ is constant if and only if and only if there is a section 
$\hat{\sig}\in \Gamma(\A\times \A)$ such that 
\[ \hat{\sig}\sim_{\Mult_\A}\sig,\ \ \hat{\sig}\sim_{P_E}\sig.\] 
Note that $\hat{\sig}$ is uniquely determined by the constant section $\sig$: Its value at $h_1,h_2$
is $\hat{\sig}_{h_1,h_2}=(\sig_{h_1h_2}\circ \sig_{h_2}^{-1},\sig_{h_2})\in E_{h_1}\times \A_{h_2}$. 
Given another constant section $\sig'$, we have 
\[ \Cour{\hat{\sig},\hat{\sig}'}\sim_{\Mult_\A}\Cour{\sig,\sig'},\ \ 
\Cour{\hat{\sig},\hat{\sig}'}\sim_{P_E}\Cour{\sig,\sig'},\]
since Courant morphism preserve Courant brackets. Hence $\Cour{\sig,\sig'}$ is constant. 
%
%
%
It follows that the space of constant sections is closed under Courant bracket. 
Furthermore, if $\sig,\sig'$ are constant, then 
$\Cour{\sig,\sig'}+\Cour{\sig',\sig}= \a^*\d\l\sig,\sig'\r=0$
since $\l\sig,\sig'\r$ is constant.  Hence the resulting bracket on
$\q$ is skew-symmetric, and hence is a Lie bracket.  

It is obvious that the trivialization of $\A$ restricts to the given trivialization of $E$. 
Since $E$ is involutive, the constant sections with values in $E$ form a Lie subalgebra, 
thus $\g$ is a Lie subalgebra of $\q$. On the other hand,  
$t(x)=0\Leftrightarrow t(j(x))=0\Leftrightarrow s(j(x))=0\Leftrightarrow t(j(x)^{-1}\circ x)=0$, 
shows that the trivialization takes $\ker(t)$ to $\mf{r}$.  The $\mf{r}$-valued constant sections $\sig$ are exactly those for which $\wh{\sig}=0\times\sig$. Since this property is preserved under Courant bracket, it follows that $\mf{r}$ is a Lie subalgebra 
of $\q$. 

Since the anchor map takes Courant brackets to Lie brackets, we obtain
a $\q$-action on $H$. By construction, the Courant bracket on $\A$
extends the Lie bracket on constant sections, and the anchor map
extends the action map. As shown in \cite{LiBland:2009ul} (cf.~ also Section \ref{subsec:basic}), this 
implies that the action of $\q$ has coisotropic stabilizers. 
\end{proof}

The first part of the Proposition may be phrased as the statement that the trivializing map 
$\A\to \q$ defines a morphism of Manin pairs 
\begin{equation}\label{eq:trivial} T\colon (\A,E)\da (\q,\g),\end{equation} 
where $x\sim_T \zeta$ if and only if 
$\zeta= j(x)^{-1}\circ x$. 
%

\subsubsection{Construction of the coisotropic subgroupoid $C\subseteq \TH\times (\ol{\q}\oplus \q)$}
In the following discussion, whenever we write a composition of groupoid elements we take it to be implicit that 
the elements are composable.  
\begin{proposition}\label{prop:revred}
Let $(\A,E)$ be a Dirac Lie group structure on $H$, and define a Lie algebra structure on  $\q=\A_e$ as above. 
Then the subset $C\subseteq \TH\times (\ol{\q}\oplus \q)$ given as 
\begin{equation}\label{eq:c}
 C=\{(v+\alpha,\zeta,\zeta')|\ \exists x\in \A\colon v=\a(x),\ 
\zeta\circ x\circ {\zeta'}^{-1}\in E\}\end{equation}
is an involutive co-isotropic $\ca{VB}$-subgroupoid satisfying $\a(C^\perp)=0$. There is a canonical isomorphism 
of $\ca{CA}$-groupoids $\A\to C/C^\perp$, taking $E$ to $(TH\times (\g\oplus \g))\cap C\ /\ (TH\times (\g\oplus \g))\cap C^\perp .$
\end{proposition}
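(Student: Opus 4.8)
The plan is to exhibit $C$ as the pullback of the coisotropic, involutive subgroupoid $V\subseteq TH\times(\dd\oplus\dd)$ of Proposition~\ref{prop:dress}(a) under a $\ca{VB}$-groupoid homomorphism, and then identify the reduction $C/C^\perp$ with $\A$ by means of the trivialization $\A\cong H\times\q$ from Proposition~\ref{prop:trivial}. First I would use the trivialization $T\colon(\A,E)\da(\q,\g)$ of \eqref{eq:trivial} to rewrite the defining condition of $C$ in \eqref{eq:c}: given $x\in\A_h$ with trivialized value $\xi_x:=j(x)^{-1}\circ x\in\q$, the condition $\zeta\circ x\circ\zeta'^{-1}\in E$ should translate into a condition relating $\zeta,\zeta',\xi_x$ and $h$ via $\Ad_h$ and $\theta^R_h$, exactly as in Proposition~\ref{prop:dress}. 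The key computation is that $\a(x)=\varrho(\xi_x)_h$ for the action $\varrho\colon\q\to\mf{X}(H)$ of Proposition~\ref{prop:trivial}\eqref{it:d}, together with the behaviour of the source and target maps under the trivialization; once these are in hand, membership in $C$ becomes a linear condition and one reads off that $C$ is the preimage of $V$ under the homomorphism $(v+\alpha,\zeta,\zeta')\mapsto(v,f(\zeta),f(\zeta'))$ of \eqref{eq:grhom}, where $f$ is as in \eqref{eq:deff}. Since that homomorphism intertwines scalar multiplication (Remark~\ref{rem:grab}) and, after verifying $f$ is a Lie algebra morphism, preserves Courant/Lie brackets, $C$ inherits from $V$ the properties of being a coisotropic, involutive $\ca{VB}$-subgroupoid; the computation of $C^\perp$ as the span of the sections $\psi(\mu)$ and the verification $\a(C^\perp)=0$ is then identical to the one in Proposition~\ref{prop:ciscois}.

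For the second assertion, I would construct the isomorphism $\A\to C/C^\perp$ directly using the $\ca{CA}$-groupoid morphism of Example~\ref{ex:canonicalmorphism}/Proposition~\ref{prop:aastar} into $\TH$ and the pair-groupoid morphism into $\ol{\q}\oplus\q$ coming from the trivialization. Concretely, one sends $x\in\A$ to the class of $\big(\a(x)+\alpha_x,\ \xi_x,\ s(j(x))\big)$ for a suitable cotangent component $\alpha_x$ (chosen so that the triple lands in $C$ — here one uses that $\A$ is an action Courant algebroid and that $\a^*$ is the dual of $\varrho$, cf.~\eqref{eq:actioncourant}); this is well defined modulo $C^\perp$ precisely because the ambiguity in $\alpha_x$ and in the choice of lift is absorbed by the sections $\psi(\mu)$. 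One then checks that this map is a morphism of Courant algebroids (it intertwines anchors, metrics and Courant brackets — the bracket statement follows because all three component maps into $\TH$, $\ol{\q}$, $\q$ are Courant morphisms and the target reduction morphism $S$ of \eqref{eq:s} composes cleanly, so uniqueness in the reduced Courant bracket forces the identity) and a morphism of $\ca{VB}$-groupoids (again via Remark~\ref{rem:grab}, using that source, target and multiplication of $\A$ go over to those of $C/C^\perp$). Counting ranks — $\dim\q+\dim\dd$ for $C$, $\dim\dd$ for $C^\perp$, so $\mathrm{rank}(C/C^\perp)=\dim\q=\mathrm{rank}(\A)$ — gives that the morphism is a fibrewise isomorphism, hence an isomorphism of $\ca{CA}$-groupoids.

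Finally, to see that this isomorphism carries $E$ to $(TH\times(\g\oplus\g))\cap C\,/\,(TH\times(\g\oplus\g))\cap C^\perp$, I would simply trace through the construction: under the trivialization $E\cong H\times\g$ the section $s(j(x))$ and $\xi_x$ both lie in $\g$ when $x\in E$, while $\a(E)\subseteq TH$ automatically, so the image of $E$ lands in $TH\times(\g\oplus\g)$; conversely a rank/transversality count (the intersection $E_C$ is a transverse composition, as in Proposition~\ref{prop:ciscois}, of rank $\dim\g$) shows the image is all of the asserted Dirac structure. By Proposition~\ref{prop:dirm} the equality $\A^{(0)}=E^{(0)}$ is inherited, so $(C/C^\perp,E_C)$ is indeed the Dirac Lie group structure $(\A,E)$ back again.

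I expect the main obstacle to be the bracket-preservation part of the second assertion, i.e.~checking that the natural candidate map $\A\to C/C^\perp$ really is a Courant morphism and not merely a metric-preserving $\ca{VB}$-groupoid isomorphism. The cleanest route is to avoid coordinate computation altogether: express the candidate map as a composition of the already-established Courant morphisms $T\colon(\A,E)\da(\q,\g)$, $\a\colon\A\to\TH$ (Proposition~\ref{prop:aastar}), and the canonical morphism of Example~\ref{ex:canonicalmorphism}, then use that clean composition of Courant relations is a Courant relation (Proposition~\ref{prop:dirrel}) together with the uniqueness built into the reduction morphism $S$ of \eqref{eq:s}. Getting the cotangent component $\alpha_x$ and the compatibility with $S$ exactly right — so that the composite relation is genuinely the graph of the claimed map — is the delicate bookkeeping step.
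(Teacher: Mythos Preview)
Your approach has a circularity problem. You propose to deduce that $C$ is coisotropic and involutive from its description as the preimage of $V$ under the map \eqref{eq:grhom}, invoking the argument of Proposition~\ref{prop:ciscois}. But that argument requires two inputs that are \emph{not} available at this point of the development: (i) that $f\colon\q\to\dd$ is a Lie algebra homomorphism, so that \eqref{eq:grhom} preserves brackets and the preimage of an involutive subbundle is involutive; and (ii) that $\beta^\sharp=f\circ f^*$ is $H$-equivariant, needed to verify $\psi(\mu)\in C$ and hence $C^\perp\subseteq C$. In the paper these two facts are Proposition~\ref{prop:atlast}(a)(b), and both proofs \emph{use} the involutivity and coisotropy of $C$ already established in Proposition~\ref{prop:revred}. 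You flag ``after verifying $f$ is a Lie algebra morphism'' but give no independent argument; the obvious attempt via the anchor and the $\dd$-action $\varrho$ on $H$ only pins down $f$ modulo $\ker\varrho$, which need not vanish.

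The paper breaks the circle by never appealing to properties of $f$ or $\beta$. It assembles a Courant morphism $Q\colon\A\da\TH\times(\ol{\q}\oplus\q)$ as a clean composition of morphisms already known to be Courant and multiplicative: the diagonal morphism $R\colon\A\da\TH\times\A$ of Corollary~\ref{cor:diag}, the division $D^\top$, and the trivialization $T\times T$ of \eqref{eq:trivial}. From the explicit description \eqref{eq:qrelation} one checks $\ker(Q)=0$ and $\on{ran}(Q)=C$. Lemma~\ref{lem:kerZeroLem} then delivers, in one stroke, that $C$ is coisotropic with $\a(C^\perp)=0$ and that $\A\cong C/C^\perp$ as $\ca{CA}$-groupoids; involutivity of $C$ comes along because sections of $\on{ran}(Q)$ lift uniquely through $Q$. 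The identification of $E$ with the reduction of $TH\times(\g\oplus\g)$ is the short composition argument $E=(TH\times(\g\oplus\g))\circ Q$. The facts about $f$ and $\beta$, and the alternative description of $C$ you wanted to start from, are then \emph{consequences} (Propositions~\ref{prop:altCdesc} and~\ref{prop:atlast}), not inputs.
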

\begin{proof}
Recall the definition of the division morphism $D\colon \ol{\A}\times\A\da \A$ from the proof of Proposition 
\ref{prop:alongunit} where $(x_1,x_2)\sim_D x$ if and only if $x_1^{-1}\circ x_2=x$. Together with the trivialization $T\colon \A\da \q$, we obtain a morphism $K=(T\times T)\circ D^\top\colon \A\da \ol{\q}\oplus \ol{\q}$. Under this morphism, $x\sim_K (\zeta_1,\zeta_2)$ if and only if 
$\zeta_1\circ x\circ \zeta_2^{-1}\in E$.   

Let $R\colon \A\da \TH\times \A$ be the morphism, with underlying map the diagonal inclusion, 
defined by the property that $x\sim_R (v+\alpha,y)$ 
if and only if $v=\a(x)$ and $y-x=\a^*(y)$. Thus $R\subset \TH\times \A\times \ol{\A}$ is obtained from 
the diagonal morphism cf.~ Corollary~\ref{cor:diag}) by permutation of the components and a sign 
change of the metric. The composition of $R$ with $\TH_\Delta\times K\colon \TH\times \A\da 
\TH\times (\ol{\q}\oplus \q)$ is clean, and defines a morphism 
\[ Q=(\TH_\Delta\times K)\circ R\colon \A\da \TH\times (\ol{\q}\oplus \q)\]
with underlying map $H\to H$ the identity map. Explicitly, 
\begin{equation}\label{eq:qrelation}
 y\sim_Q (v+\alpha,\zeta_1,\zeta_2) \Leftrightarrow \exists x\in \A\colon \zeta_1\circ x\circ \zeta_2^{-1}\in E,\ 
v=\a(x),\ y-x=\a^*(\alpha).\end{equation}
Since $R,\ K,\ \TH_\Delta$ are all $\ca{CA}$-groupoid morphism, the same is true of $Q$. 

We claim  that $\ker(Q)=0$. Indeed, suppose $y\sim_Q (0,0,0)$. The condition 
$x-y=\a^*(\alpha)$ with $\alpha=0$ gives $x=y$, and the condition $\zeta_1\circ x\circ \zeta_2^{-1}\in E$ with $\zeta_i=0$ implies $x=0$, as claimed. On the other hand, $\on{ran}(Q)=C$. By Lemma~\ref{lem:kerZeroLem} below, 
there is an isomorphism of Courant algebroids $\A\to C/C^\perp$.  

Finally, we show that $E= (TH\times(\g\oplus\g))\circ Q$. Suppose \eqref{eq:qrelation}
with $\alpha=0$ and $\zeta_1,\zeta_2\in\g$. Then $x=y$, and $\zeta_1\circ y\circ \zeta_2^{-1}\in E$. Since $\zeta_1,\zeta_2\in \g= E^{(0)}$ it follows that $y\in E$. Therefore $(TH\times(\g\oplus\g))\circ Q\subseteq E$, and the conclusion follows, since both sides are Lagrangian.
\end{proof}

\begin{lemma}\label{lem:kerZeroLem}
Let $R\colon \A\da  \A'$ be a Courant morphism, with 
underlying map $\Phi\colon M\to M'$ a diffeomorphism. 
If $\ker(R)=0$, then $C=\on{ran}(R)$ is co-isotropic, with $\a(C^\perp)=0$, and $\A\cong \A'_C$ as Courant algebroids. If $\A,\A'$ are $\ca{CA}$-groupoids, and $R\colon\A\da\A'$ is a $\ca{CA}$-groupoid morphism, then 
$\A\cong \A'_C$ is an isomorphism of $\ca{CA}$-groupoids.  
\end{lemma}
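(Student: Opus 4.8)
The plan is to prove Lemma~\ref{lem:kerZeroLem} in stages, starting with the linear-algebraic content, then the Courant-algebroid structure, then the $\ca{CA}$-groupoid refinement. First I would unpack the hypothesis $\ker(R)=0$. Since $R\subseteq \A'\times\ol{\A}$ is a $\ca{VB}$-relation along the graph of the diffeomorphism $\Phi$, and $\ker(R)=0$ means that no nonzero element of $\A$ is $R$-related to $0\in\A'$, the projection $R\to p_{M'}^*\A'$ is fiberwise injective; hence $C=\on{ran}(R)\subseteq\A'$ is a subbundle (over $\Phi(M)=M'$, identifying via $\Phi$), and $R$ is the graph of an injective bundle map $\psi\colon\A\to\A'$ covering $\Phi$, with image $C$. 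The fact that $R$ is Lagrangian then translates into two facts: $\psi$ is isometric up to sign onto its image, i.e.\ $\l\psi(x_1),\psi(x_2)\r_{\A'}=\l x_1,x_2\r_{\A}$ (because $R\subseteq\A'\times\ol{\A}$ is isotropic gives $\leq$, and a dimension count using that $R$ is Lagrangian in $\A'\times\ol{\A}$ gives that $C$ has the complementary rank, forcing $C^\perp\subseteq C$ and the isometry). So I would first establish: $C$ is coisotropic, $C^\perp=\{c\in C: \l c,C\r=0\}$ has rank $\operatorname{rank}\A'-\operatorname{rank}\A$, and $\A'_C=C/C^\perp$ is naturally identified with $\A$ via $\psi$ as a metric vector bundle.

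Next I would handle $\a(C^\perp)=0$. Recall from Example~\ref{ex:canonicalmorphism} and the discussion of morphisms of Manin pairs that for a Courant morphism $R$ with $\ker(R)=0$, an element $c\in C^\perp=\on{ran}(R)^\perp$ is $R$-related only to elements $x$ with $\l x,x'\r=0$ for all $x'$, i.e.\ $x=0$ (again by $\ker(R)=0$ combined with nondegeneracy of $\l\cdot,\cdot\r_\A$). Since $R$ is a $\ca{VB}$-relation, $x\sim_R c$ implies $\a_{\A'}(c)=T\Phi(\a_\A(x))$ — more precisely, the base relation is $\on{gr}(\Phi)$ and by the definition of a $\ca{CA}$-relation (a Dirac structure along $\on{gr}(\Phi)$) together with the involutivity consequence $\a(R)\subseteq T\on{gr}(\Phi)$, so $c\sim_R 0$ forces $\a_{\A'}(c)=T\Phi(\a_\A(0))=0$. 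Hence $\a(C^\perp)=0$. At this point the hypotheses of the reduction construction (Proposition~\ref{prop:reduction}, via \cite[Proposition 2.1]{LiBland:2009ul}) for $C\subseteq\A'$ are in place: $C$ is coisotropic, involutive (it is $\on{ran}$ of a Courant morphism, hence a Dirac structure, hence involutive), and $\a(C^\perp)=0$.

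Then I would identify the Courant structures. The reduction $\A'_C=C/C^\perp$ carries the Courant bracket descended from $\A'$; on the other hand $\A$ has its own bracket. The map $\psi$ identifies them as metric bundles, so it remains to see $\psi$ intertwines brackets and anchors. For the anchor this is immediate from the base relation being $\on{gr}(\Phi)$. For the bracket, I would invoke Proposition~\ref{prop:dirrel}(a)/the fact that Courant morphisms preserve Courant brackets: if $x_i\sim_R c_i$ then $\Cour{x_1,x_2}_\A\sim_R\Cour{c_1,c_2}_{\A'}$ (extending pointwise to sections, using that $\psi$ is a bundle isomorphism onto $C$ so sections of $\A$ correspond to sections of $C$), which says exactly $\psi(\Cour{x_1,x_2}_\A)\equiv\Cour{\psi(x_1),\psi(x_2)}_{\A'}\bmod C^\perp$; this is precisely the statement that $\psi$ descends to an isomorphism of Courant algebroids $\A\xrightarrow{\cong}\A'_C$. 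The $\ca{CA}$-groupoid refinement is then nearly formal: if $R$ is a $\ca{CA}$-groupoid morphism, then $R$ is a Lie subgroupoid of $\A'\times\ol{\A}$, so $C=\on{ran}(R)$ is a subgroupoid of $\A'$ and $\psi$ is a groupoid morphism; since $C^\perp$ is a subgroupoid too (this is where I would cite Corollary~\ref{cor:david}, or note $C^\perp=\ker$ of the induced metric pairing, a $\ca{VB}$-subgroupoid), the quotient $\A'_C$ is a $\ca{CA}$-groupoid and $\psi$ descends to an isomorphism of $\ca{CA}$-groupoids.

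The main obstacle I anticipate is the bookkeeping in the first paragraph: carefully extracting from ``$R$ is a Dirac structure in $\A'\times\ol{\A}$ along $\on{gr}(\Phi)$ with $\ker(R)=0$'' both that $R$ is the graph of a bundle \emph{map} $\psi$ (rather than merely a relation) and the precise isometry/coisotropy statement, including the rank count that pins down $C^\perp$. Everything after that — the anchor vanishing on $C^\perp$, the bracket compatibility, and the groupoid upgrade — follows cleanly from the structural propositions already established (Propositions~\ref{prop:dirrel}, \ref{prop:reduction}, Corollary~\ref{cor:david}) and the general principle that Courant morphisms preserve brackets.
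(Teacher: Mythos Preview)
Your outline has the right overall shape, but the first paragraph contains a genuine error that propagates. From $\ker(R)=0$ you correctly deduce that the projection $R\to\A'$ is fiberwise injective, so $R\cong C$ as bundles. But this does \emph{not} make $R$ the graph of a map $\psi\colon\A\to\A'$: for that you would need the \emph{other} projection $R\to\A$ to be a bijection, and it is not injective in general. What the Lagrangian condition gives is the dual statement $\ker(R^\top)=\on{ran}(R)^\perp=C^\perp$ and $\on{ran}(R^\top)^\perp=\ker(R)=0$, so the projection $R\to\A$ is \emph{surjective} with kernel $C^\perp\times 0$. Thus the map goes the other way: there is a surjection $\phi\colon C\to\A$ with $\ker\phi=C^\perp$, inducing $C/C^\perp\cong\A$. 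Your later sentence ``$\psi$ is a bundle isomorphism onto $C$ so sections of $\A$ correspond to sections of $C$'' is therefore false, and the bracket-comparison argument needs to be rewritten modulo $C^\perp$.

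A second gap: the ``dimension count'' does not prove coisotropy. Knowing $\on{rank}(C)+\on{rank}(C^\perp)=\on{rank}(\A')$ does not force $C^\perp\subseteq C$. The clean argument (which also feeds the anchor step) is: since $R\subseteq C\times\ol{\A}$ and $R=R^\perp$, one has $C^\perp\times 0\subseteq R$, so $C^\perp\subseteq\on{ran}(R)=C$. This simultaneously shows that every $c\in C^\perp$ satisfies $0\sim_R c$, whence $(\a_{\A'}(c),0)\in T\on{gr}(\Phi)$ and $\a(c)=0$.

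The paper's proof packages all of this by working with relations rather than bundle maps: having shown $C^\perp\times 0\subseteq R$, it lets $P\colon\A'\da\A'_C$ be the reduction morphism and checks directly that $\ker(P\circ R)=0$ (if $x\sim_R x'$ with $x'\in C^\perp$, then also $0\sim_R x'$, so $x\sim_R 0$, so $x=0$). Since $P\circ R$ is a Courant morphism over a diffeomorphism between bundles of equal rank with trivial kernel, it is an isomorphism. Your strategy is essentially equivalent once the direction of the bundle map is corrected, but the relational phrasing avoids having to separately verify involutivity of $C$ or to track the bracket through the quotient by hand.
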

\begin{proof}
The inclusion $R\subseteq (\A'\times C)|_{\on{gr}(\Phi)}$ shows $(0\times C^\perp)|_{\on{gr}(\Phi)}\subseteq 
R^\perp=R$, hence $C^\perp\subseteq C$ so that $C$ is co-isotropic. Furthermore, since $\a(0,y')=(0,\a(y'))$ 
for $y'\in C^\perp$ is 
tangent to  $\on{gr}(\Phi)$, we see $\a(y')=0$, hence $\a(C^\perp)=0$. Let 
$P\colon \A'\da \A'_C$ be the Courant morphism defined by the reduction. 
Thus $y'\sim_P y''$ if and only if 
 $y'\in C$, with $y''$ its image under the quotient map. We will show that $P\circ R\colon \A\da \A'_C$ is an isomorphism. Indeed, let $x\in\ker (P\circ R)$. Then $x\sim_R x',\ x'\sim_P 0$ for some $x'\in \A'$. 
By definition of $P$, we have $x'\in C^\perp$. Since $(0\times C^\perp)|_{\on{gr}(\Phi)}\subseteq R$, $x\sim_R x'$ 
implies $x\sim_R 0$, hence $x=0$. The property $\ker(P\circ R)=0,\ \on{ran}(P\circ R)=\A'$ means 
that $P\circ R$ defines an isomorphism $\A\cong \A'_C$.  If $R\colon \A\da \A'$ is a morphism of 
$\ca{CA}$-groupoids, then so is $P$ and hence $P\circ R$.  
\end{proof}
The co-isotropic subbundle $C$ has an alternative description, similar to Proposition 
\ref{prop:ciscois}. 
\begin{proposition}\label{prop:altCdesc}
The co-isotropic subbundle $C\subseteq \TH\times (\ol{\q}\oplus \q)$ from Proposition 
\ref{prop:revred} may be written, 
\[
C=\{(v+\alpha,\zeta_1,\zeta_2)|\ \iota(v)\theta^R_h=\Ad_h f(\zeta_2)-f(\zeta_1)\}.
\]
\end{proposition}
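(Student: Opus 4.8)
The plan is to compare the two descriptions of $C$ by composing $Q$ further with an explicit $\ca{CA}$-groupoid morphism that replaces $\q$-data by $\dd$-data, thereby reducing the claim to the already-established formula for the subgroupoid $V\subseteq TH\times(\dd\oplus\dd)$ of Proposition~\ref{prop:dress}(a). Concretely, recall from Proposition~\ref{prop:revred} that $C=\on{ran}(Q)$, with $Q\colon\A\da\TH\times(\ol{\q}\oplus\q)$ given by \eqref{eq:qrelation}. On the other hand, the map \eqref{eq:grhom}, namely $(v+\alpha,\zeta,\zeta')\mapsto(v,f(\zeta),f(\zeta'))$, is a $\ca{VB}$-groupoid homomorphism, and by Proposition~\ref{prop:ciscois} (and its proof, which applies verbatim once we know $f$ and $(\dd,\g,\h)$ arise as in Section~\ref{subsec:construct}) the preimage of $V$ under it is exactly the set on the right-hand side of the displayed equation. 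So the task is to show that $\on{ran}(Q)$ coincides with this preimage.

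First I would show the inclusion $\subseteq$. Given $(v+\alpha,\zeta_1,\zeta_2)\in C$, pick $x\in\A$ with $v=\a(x)$ and $\zeta_1\circ x\circ\zeta_2^{-1}\in E$, as in \eqref{eq:c} (this is the description from Proposition~\ref{prop:revred} with $y-x=\a^*(\alpha)$, which does not affect the anchor). Writing $z:=\zeta_1\circ x\circ\zeta_2^{-1}\in E_h$, the defining equivariance property \eqref{eq:eqE} of the Lie algebra triple associated to the vacant $\ca{LA}$-groupoid $E$ gives $\iota(\a(z))\theta^R_h=\Ad_h s(z)-t(z)$. Now I must relate $s(z),t(z),\a(z)$ back to $\zeta_1,\zeta_2,x$. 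Using multiplicativity of the source, target and anchor maps (Proposition~\ref{prop:aastar}), together with the facts that $t_e=1-p_{\mf r}$, that $f(\zeta)=t_e(\zeta)+\a_e(\zeta)$ by \eqref{eq:deff}, and that $\a(x)=v$: one computes $t(z)=f(\zeta_1)$ plus a correction coming from $x$, $s(z)=f(\zeta_2)$ plus a correction, and $\a(z)=v$ plus a correction, with the three corrections combining — via the identity $f\circ p_{\mf r}=p_\h\circ f$ and the $\ca{LA}$-groupoid structure — so that $\iota(v)\theta^R_h=\Ad_h f(\zeta_2)-f(\zeta_1)$. This bookkeeping with the groupoid structure maps of $\A$ is the main obstacle: one has to be careful that the "corrections" are computed in the trivialization $\A\cong H\times\q$ of Proposition~\ref{prop:trivial} and that the $\ker(t)$-part is handled correctly.

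For the reverse inclusion $\supseteq$, suppose $\iota(v)\theta^R_h=\Ad_h f(\zeta_2)-f(\zeta_1)$ with $v\in T_hH$ and $\zeta_1,\zeta_2\in\q$. I want to produce $x\in\A_h$ with $\a(x)=v$ and $\zeta_1\circ x\circ\zeta_2^{-1}\in E$. Working in the trivialization $\A\cong H\times\q$, the element $\zeta_1\circ x\circ\zeta_2^{-1}$ lies in $E$ iff a certain $\q$-valued expression lands in $\g$; since $f$ restricted to $\g$ is the identity and $f\circ p_{\mf r}=p_\h\circ f$, the hypothesis on $v$ forces the $\mf r$-component to vanish, and one can then solve for $x$ (the anchor $\a$, being the action map of the $\q$-action on $H$ from Proposition~\ref{prop:trivial}(\ref{it:d}), has the right image because the stabilizers are coisotropic and $\a^*$ accounts for $C^\perp$). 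Finally I would note that dimension counts on both sides agree — both are $\ca{VB}$-subgroupoids of rank $\dim\q+\dim\dd$, by the rank computation in Proposition~\ref{prop:ciscois} — so the two inclusions already give equality without further work, and in fact one inclusion plus equality of ranks suffices.
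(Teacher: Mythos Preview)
Your approach is essentially the same as the paper's, and is correct, but your sketch of the key computation is imprecise in a way that makes it look harder than it is. You write ``$t(z)=f(\zeta_1)$ plus a correction coming from $x$, $s(z)=f(\zeta_2)$ plus a correction''; in fact there is no correction from $x$ in either: since $z=\zeta_1\circ x\circ\zeta_2^{-1}$ we simply have $t(z)=t_e(\zeta_1)$ and $s(z)=s(\zeta_2^{-1})=t_e(\zeta_2)$. Only $\a(z)$ involves $x$, and here the computation is clean: $\a$ is a groupoid morphism to the tangent group $TH$, so in right trivialization $\iota(\a(z))\theta^R_h=\a_e(\zeta_1)+\iota(v)\theta^R_h-\Ad_h\a_e(\zeta_2)$. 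Substituting into \eqref{eq:eqE} and using $f=t_e+\a_e$ from \eqref{eq:deff} gives $\iota(v)\theta^R_h=\Ad_h f(\zeta_2)-f(\zeta_1)$ immediately, with no further bookkeeping needed.

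The paper organizes this as a single equivalence rather than two inclusions: it fixes $z\in E_h$ by the condition $s(z)=t(\zeta_2)$ \emph{alone} (such $z$ always exists since $E$ is vacant), expands $\Ad_h f(\zeta_2)-f(\zeta_1)$, and splits the result into its $\h$- and $\g$-components. The $\g$-component being zero says $t(\zeta_1)=t(z)$, i.e.\ that $x:=\zeta_1^{-1}\circ z\circ\zeta_2$ is defined, and the $\h$-component then reads $\iota(v)\theta^R_h=\iota(\a(x))\theta^R_h$. This gives both directions at once, so no dimension count is required. Your rank observation is a fine shortcut, but the direct equivalence is both shorter and more transparent.
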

\begin{proof}
Given $(v+\alpha,\zeta_1,\zeta_2)\in \TH\times (\ol{\q}\oplus \q)$, let 
$z\in E_h$ be the unique element with $s(z)=t(\zeta_2)$. By Equation \eqref{eq:eqE}, we have 
$ \Ad_h t(\zeta_2)=\Ad_h s(z)=\iota(\a(z))\theta^R_h+t(z)$. Together with Equation  
\eqref{eq:deff} we obtain
\[ \begin{split}
\Ad_h f(\zeta_2)-f(\zeta_1)&=\Ad_h(t(\zeta_2)+\a(\zeta_2))-(t(\zeta_1)+\a(\zeta_1))\\
&=\iota(\a(z))\theta^R_h+\Ad_h\a(\zeta_2)-\a(\zeta_1)+t(z)-t(\zeta_1).
\end{split}\]
The first three terms lie in $\h$, the last two in $\g$. Hence the property  
$\iota(v)\theta^R_h=\Ad_h f(\zeta_2)-f(\zeta_1)$ is equivalent to the two 
conditions 
\begin{equation}\label{eq:that}
 \iota(v)\theta^R_h=\iota(\a(z))\theta^R_h+\Ad_h\a(\zeta_2)-\a(\zeta_1),\ \ t(\zeta_1)+t(z)\end{equation}
Equation \eqref{eq:that} is equivalent to the condition that 
$x:=\zeta_1^{-1}\circ z\circ \zeta_2$ is defined and $v=\a(x)$. 
\end{proof}

\begin{remark}\label{rem:trivial}
Define a bundle map $H\times \q\to C,\ (h,\zeta)\mapsto (v,\xi,\zeta)$ where
$\iota(v)\theta^R_h=p(\Ad_h f(\zeta))$ and $\xi=(1-p)\Ad_h f(\zeta)$. 
The sub-bundle given as its image is invariant under left groupoid multiplication by elements of 
$TH\times (\g\oplus\g)$, and is a complement to $C^\perp$. Hence, its composition with the quotient map to $\A=C/C^\perp$ is 
the trivialization $H\times\q\cong \A$ from Proposition \ref{prop:trivial}.
\end{remark}

\subsubsection{Relation between $(\dd,\g,\h)_\beta$ and $(\q,\g,\mf{r})_\gamma$}
We still have to show that $\beta$ is $H$-invariant, and that $(\q,\g,\mf{r})_\gamma$
is the Dirac Manin triple associated to $(\dd,\g,\h)_\beta$ by the construction from 
Section~\ref{subsec:genman}. 
\begin{proposition}\label{prop:atlast}
\begin{enumerate}
\item The map $f\colon \q\to \dd$ is a Lie algebra homomorphism. 
\item The element $\beta\in S^2\dd$ 
defined by $\beta^\sharp=f\circ f^*$ is $\dd$-invariant as well as $\Ad_h$-invariant. 
\item $\g$ is $\beta$-coisotropic. 
\item The Lie subalgebra $\mf{c}=\g\ltimes \dd^*_\beta\subseteq \dd\ltimes\dd^*_\beta$ is coisotropic, and 
the map 
\[ \mf{c}\to \q,\ (\xi,\mu)\mapsto \xi+f^*(\mu)\]
descends to an isometric Lie algebra isomorphism $\mf{c}/\mf{c}^\perp\to\q$. 
\end{enumerate}
\end{proposition}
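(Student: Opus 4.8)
The plan is to prove the four parts in order, extracting as much as possible from the morphism-theoretic descriptions already in hand rather than from coordinate computations. For part (a), I would use the trivialization morphism $T\colon(\A,E)\da(\q,\g)$ from \eqref{eq:trivial} together with the canonical morphism $(\TH,TH)\da(\A,E)$ of Example~\ref{ex:canonicalmorphism1}. Composing gives a morphism of Manin pairs $(\TH,TH)\da(\q,\g)$ covering $\mathrm{id}_H$; unwinding the definitions, this morphism sends $v+\alpha$ to $\zeta\in\q$ with $\a(\zeta)=v$ and $\zeta-\a^*(\alpha)\in\g$ — but the real content is that the resulting bundle map $TH\to E$ (equivalently, after trivialization, the anchor map restricted to constant sections $\q\to\mf{X}(H)$, cf.~Proposition~\ref{prop:trivial}\eqref{it:d}) is compatible with brackets. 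More directly: $f=t_e+\a_e$, and $t_e\colon\A_e\to\g\hookrightarrow\dd$ followed by inclusion is the composition of the Lie algebroid morphism $\A\to E$ (projection $j$, which at the unit is a Lie algebra map since $E$ is involutive and vacant) with $E_e=\g\hookrightarrow\dd$; while $\a_e\colon\q\to\h\subseteq\dd$ is a Lie algebra map because the action $\q\to\mf{X}(H)$ is by Proposition~\ref{prop:trivial}\eqref{it:d} a Lie algebra homomorphism and its image consists of vector fields whose value at $e$ recovers the $\h$-part via the equivariance relation \eqref{eq:eqE}. The one thing to check is that $t_e$ and $\a_e$ land in subalgebras $\g,\h$ that are \emph{complementary} inside $\dd$ in a way compatible with the bracket, i.e.~that $f$ respects the cross-terms; this follows from $p_\h\circ f=f\circ p_{\mf{r}}$ and the fact that both $\g$ and $\mf{r}$ are Lie subalgebras of $\q$ (Proposition~\ref{prop:trivial}).

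For part (b), $\dd$-invariance of $\beta$ should follow formally: $f$ being a Lie algebra homomorphism makes $f\colon\q\to\dd$ and its metric-dual $f^*\colon\dd^*\to\q^*$ intertwine the (co)adjoint actions appropriately, and since $\gamma$ (the dual metric on $\q^*$, which on $\A_e$ is nondegenerate and $\ad$-invariant) is invariant, $\beta^\sharp=f\circ\gamma^\sharp\circ f^*$ is intertwining, which is exactly $\dd$-invariance of $\beta\in S^2\dd$. For $\Ad_h$-invariance I would invoke the $H$-equivariance already built into the setup: $C^\perp\subseteq C$ was checked in Proposition~\ref{prop:ciscois}/\ref{prop:revred} precisely using $H$-equivariance of $f\circ f^*=\beta^\sharp$, so the statement here is the converse reading — the sub-bundle $C$ of Proposition~\ref{prop:altCdesc} is manifestly a $\ca{VB}$-\emph{subgroupoid} (it is pulled back from the $\ca{LA}$-groupoid $V$), and subgroupoid-ness of the locus $\{\iota(v)\theta^R_h=\Ad_hf(\zeta_2)-f(\zeta_1)\}$ forces $\Ad_h\circ(f\circ f^*)=(f\circ f^*)\circ\Ad_h^*$, i.e.~$\Ad_h$-invariance of $\beta$; alternatively this is immediate from $f$ being a morphism of the $H$-equivariant structures. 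Part (c) is then nearly immediate: $\beta^\sharp(\on{ann}(\g))=f(f^*(\on{ann}(\g)))$, and $f^*(\on{ann}(\g))\subseteq\on{ann}(\g)\cong\mf{r}^\perp\subseteq\q$ because $f$ restricts to the identity on $\g$ (so its dual carries $\on{ann}(\g)$ to $\on{ann}(\g)$), and $f(\mf{r}^\perp)$; since $\g$ is Lagrangian in $\q$, $\mf{r}^\perp=\g^{\perp_\q}$ — wait, more carefully: $\g$ Lagrangian means $\g^{\perp}=\g$, so $f(\g^\perp)=f(\g)=\g$, giving $\beta^\sharp(\on{ann}(\g))\subseteq\g$, which is $\beta$-coisotropy of $\g$.

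For part (d), coisotropy of $\mf{c}=\g\ltimes\dd^*_\beta$ was already observed in Section~\ref{subsec:genman} (it contains the Lagrangian $\g\ltimes\on{ann}(\g)$), so the content is that $(\xi,\mu)\mapsto\xi+f^*(\mu)$ descends to an isometric Lie algebra isomorphism $\mf{c}/\mf{c}^\perp\xra{\cong}\q$. I would check: (i) the map kills $\mf{c}^\perp$ — computing $\mf{c}^\perp$ inside $(\dd\ltimes\dd^*_\beta,\wh\beta)$ and using $\beta^\sharp=f f^*$ to see that $(\xi,\mu)\in\mf{c}^\perp$ implies $\xi+f^*(\mu)=0$ in $\q$; (ii) surjectivity, from $f^*(\dd^*)+\g=\q$ (true since $\mf{r}=f^{-1}(\h)$ and $\q=\g\oplus\mf{r}$, and $f^*$ surjects onto a complement of $\ker f$... this needs a dimension/rank bookkeeping); (iii) the map is isometric, comparing $\wh\beta$ on $\mf{c}$ with $\gamma^{-1}$-induced metric on $\q$, which reduces to the identities $\beta^\sharp=ff^*$ and $f|_\g=\mathrm{id}$; (iv) bracket-compatibility, which follows once (a), the definition of the bracket on $\dd^*_\beta$, and the semidirect structure are assembled, together with $f$ being a Lie homomorphism. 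The main obstacle I anticipate is part (a): establishing that $f=t_e+\a_e\colon\q\to\dd$ is a genuine Lie algebra homomorphism requires carefully reconciling three different bracket structures — the Courant bracket on constant sections of $\A\cong H\times\q$, the Lie algebroid bracket on $E$, and the Lie algebra bracket on $\dd$ reconstructed from the vacant $\ca{LA}$-groupoid — and showing the cross-terms match; everything in (b)–(d) is comparatively formal once (a) and the $H$-equivariance are in place.
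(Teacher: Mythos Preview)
Your outline for parts (b)--(d) is essentially correct and close to the paper's own argument. The genuine gap is in part (a), and you correctly flag it as the main obstacle --- but your proposed resolution does not work.

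You want to decompose $f=t_e+\a_e$ with $t_e\colon\q\to\g$ and $\a_e\colon\q\to\h$, and argue that each piece is a Lie algebra homomorphism. Neither is: $t_e=(1-p_{\mf{r}})$ is projection along a subalgebra that is not an ideal, and evaluation of vector fields at $e$ does not preserve brackets. What \emph{is} true is that $f|_\g=\mathrm{id}_\g$ and $f|_{\mf{r}}=\a_e|_{\mf{r}}\colon\mf{r}\to\h$ are Lie homomorphisms (the latter because constant sections of $\ker(t)$ map under the anchor to left-invariant vector fields). So the entire content of (a) is the cross-term identity
\[
f([\tau,\xi]_\q)=[f(\tau),\xi]_\dd,\qquad \tau\in\mf{r},\ \xi\in\g,
\]
and your justification ``this follows from $p_\h\circ f=f\circ p_{\mf{r}}$ and the fact that both $\g$ and $\mf{r}$ are Lie subalgebras'' is empty: the identity $p_\h\circ f=f\circ p_{\mf{r}}$ holds for any linear map sending $\g$ into $\g$ and $\mf{r}$ into $\h$ and says nothing about brackets, while the subalgebra property only controls $[\g,\g]_\q$ and $[\mf{r},\mf{r}]_\q$, not $[\g,\mf{r}]_\q$.

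The paper closes this gap by a direct computation inside the involutive coisotropic subbundle $C\subseteq\TH\times(\ol\q\oplus\q)$ of Proposition~\ref{prop:revred}, which by Proposition~\ref{prop:altCdesc} is the locus $\iota(v)\theta^R_h=\Ad_hf(\zeta_2)-f(\zeta_1)$. One writes down two explicit sections of $C$ --- namely $(f(\tau)^L,0,\tau)$ and $\big((p_\h\Ad_h\xi)^R,(1-p_\h)\Ad_h\xi,\xi\big)$ --- computes their Courant bracket, and reads off $f([\tau,\xi])=[f(\tau),\xi]$ from the membership of the result in $C$. This is precisely what your morphism-theoretic sketch cannot supply: the cross-bracket $[\xi,\nu]_\dd$ for $\xi\in\g$, $\nu\in\h$ was \emph{defined} from the vacant $\ca{LA}$-groupoid $E$ via the $H$-action on $\dd$, and has not yet been linked to the Courant bracket on $\q=\A_e$. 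The computation inside $C$ is exactly the bridge between the two structures.
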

\begin{proof}
\begin{enumerate} 
\item Since $f$ restricts to a Lie algebra homomorphism $\mf{r}\to \h$, 
and is given by the identity map on $\g$, we need 
only check that $f([\tau,\xi])=[f(\tau),\xi]$ for $\xi\in \g,\ \tau\in\mf{r}$. 
Define sections of $C\subseteq \TH\times (\q\oplus \q)$ by 
\[ (f(\tau)^L,0,\tau),\ \ \ (p_\h(\Ad_h\xi))^R,(1-p_\h)\Ad_h \xi,\xi).\]
Since $C$ is involutive, their Courant bracket 
\[ \big(p_\h(\Ad_h[f(\tau),\xi])^R,(1-p_\h)\Ad_h\,[f(\tau),\xi],[\tau,\xi]\big)
\]
is again a section of $C$. Thus 
\[\begin{split}
 p_\h(\Ad_h[f(\tau),\xi])&=\Ad_h f([\tau,\xi])-(1-p_\h)\Ad_h\,[f(\tau),\xi]\\
&=\Ad_h f([\tau,\xi])-\Ad_h\,[f(\tau),\xi]+p_\h(\Ad_h[f(\tau),\xi]),\end{split}
\]
giving $f([\tau,\xi])=[f(\tau),\xi]$ as desired.  
\item 
By the same argument as in the proof of Proposition~\ref{prop:ciscois}, 
the fiber of $C^\perp$ at $h\in H$ is spanned by the sections $\psi(\mu)$, $\mu\in\dd^*$.
The property $C^\perp\subseteq C$ gives $0=\Ad_h(f(f^*(\mu))-f(f^*(\mu))$ as desired. 
This shows that $\beta$ is invariant under the adjoint action of $H$. 
In particular it is $\h$-invariant. Since $f$ is a Lie algebra homomorphism, it is 
also equivariant under the adjoint action of $\g$. Thus $\beta=f\circ f^*$ is $\g$-invariant as 
well.  
\item 
The dual map $f^*\colon \dd^*\to \q$ 
takes $\on{ann}(\g)\subseteq\dd^*$ to $\g^\perp\subseteq\q$. Hence, for $\mu\in \on{ann}(\g)$, 
$\beta(\mu,\mu)=\l f^*(\mu),f^*(\mu)\r=0$. 
\item
The map $\mf{c}\to \q,\ (\xi,\mu)\mapsto \xi+f^*(\mu)$ is surjective, since its image contains $\g$ as well as the complement
$f^*(\on{ann}(\h))=\mf{r}^\perp$. The map clearly preserves the bilinear
forms, hence its kernel must be $\mf{c}^\perp$.  Using the
identity
\[ [f^*(\mu_1),f^*(\mu_2)]=f^*([\beta^\sharp(\mu_1),\mu_2]),\]
(which is verified by pairing both sides with $\zeta\in\q$), one finds that 
it is a Lie algebra homomorphism.
\end{enumerate}
\end{proof}

It follows that $(\dd,\g,\h)_\beta$ is an $H$-equivariant Dirac Manin triple, and that $(\q,\g,\mf{r})_\gamma$ 
and $f\colon \q\to \g$ result from the construction of Section~\ref{subsec:genman}.  Propositions~\ref{prop:revred} and~\ref{prop:altCdesc} define a canonical isomorphism between $(\A,E)$ and the Dirac Lie group constructed in Section~\ref{subsec:construct}.

\section{Morphisms}\label{sec:mor}
In this section, we will show that the correspondence between Dirac Lie group structures and $H$-equivariant Dirac Manin pairs respects morphisms, thus completing the proof of Theorem \ref{th:main}. 

\subsection{Morphisms of Dirac Manin triples}\label{subsec:MorphDiracMan}
The Dirac Manin triples form a category relative to the following notion of morphism. 
\begin{definition}
A \emph{morphism of Dirac Manin triples} 
\[ \k\colon (\dd_0,\g_0,\h_0)_{\beta_0}\da (\dd_1,\g_1,\h_1)_{\beta_1}\] 
is a $\beta_1-\beta_0$-coisotropic Lie subalgebra $\k\subseteq \dd_1\times\dd_0$ such that 
\[ \g_1=\k\circ \g_0,\ \ \ \h_0=\h_1\circ \k.\] 
\end{definition}
See Appendix~\ref{app:compos} for compositions of linear relations. 
The property $\h_0=\h_1\circ \k$ implies 
$\ker(\k)\subseteq \h_0$, hence $\g_0\cap \ker(\k)=0$. Hence there is a linear map $\psi\colon 
\g_1\to \g_0$, taking $\xi_1\in\g_1$ to the unique element $\xi_0=\psi(\xi_1)\in\g_0$ with 
$\xi_0\sim_\k \xi_1$. By a similar argument, there is a linear map $\phi\colon \h_0\to \h_1$, taking $\nu_0\in \h_0$ to the unique element $\nu_1=\phi(\nu_0)\in\h_1$ with $\nu_0\sim_\k \nu_1$. 

\begin{lemma}
$\k$ is the direct sum of the graphs of $\psi,\phi$. In particular 
\[\dim \k=\dim\g_1+\dim(\dd_0/\g_0).\]
\end{lemma}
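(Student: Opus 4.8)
The statement is purely linear-algebraic: the plan is to prove it using only the two defining identities $\g_1=\k\circ\g_0$ and $\h_0=\h_1\circ\k$ together with the splittings $\dd_i=\g_i\oplus\h_i$, with neither the Lie bracket on $\k$ nor its coisotropy entering. The idea is to exhibit $\on{gr}(\psi)$ and $\on{gr}(\phi)$ as two intrinsically defined subspaces of $\k$ whose dimensions are visibly $\dim\g_1$ and $\dim\h_0=\dim(\dd_0/\g_0)$, and then to check that these subspaces already span $\k$.

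First I would set $A=\k\cap(\dd_1\times\g_0)$ and $B=\k\cap(\h_1\times\dd_0)$. Reading the relation compositions in terms of the coordinate projections $\pr_0,\pr_1$ of $\dd_1\times\dd_0$ (cf.~Appendix~\ref{app:compos}), the hypothesis $\g_1=\k\circ\g_0$ is exactly the assertion $\pr_1(A)=\g_1$, and it also gives $\{y_1\mid (y_1,0)\in\k\}\subseteq\g_1$; similarly $\h_0=\h_1\circ\k$ says $\pr_0(B)=\h_0$ and, as already noted before the statement, forces $\ker(\k)\subseteq\h_0$. Since $\ker(\k)\cap\g_0=0$, the projection $\pr_1\colon A\to\dd_1$ is injective, hence $\dim A=\dim\g_1$; since $\{y_1\mid (y_1,0)\in\k\}\cap\h_1=0$, the projection $\pr_0\colon B\to\dd_0$ is injective, hence $\dim B=\dim\h_0=\dim(\dd_0/\g_0)$.

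Next, because $\psi$ takes values in $\g_0$ the graph $\on{gr}(\psi)$ lies inside $A$, and being of the same dimension $\dim\g_1$ it equals $A$; by the same reasoning $\on{gr}(\phi)=B$. These two graphs intersect trivially: a common vector $(\xi_1,\xi_0)=(\phi(\nu_0),\nu_0)$ would satisfy $\xi_1\in\g_1\cap\h_1=0$ and $\xi_0\in\g_0\cap\h_0=0$. Finally, to see $\k=A+B$, take $(y_1,y_0)\in\k$ and split $y_1=\xi_1+\nu_1$ with $\xi_1\in\g_1=\pr_1(A)$ and $\nu_1\in\h_1$; choosing $a\in A$ with $\pr_1(a)=\xi_1$, the element $(y_1,y_0)-a\in\k$ has first coordinate $\nu_1\in\h_1$ and therefore lies in $B$. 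Putting this together gives $\k=A\oplus B=\on{gr}(\psi)\oplus\on{gr}(\phi)$ and $\dim\k=\dim\g_1+\dim(\dd_0/\g_0)$.

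I do not expect a genuine obstacle here; the one point that needs care is the bookkeeping of which coordinate projection is injective on $A$ versus on $B$ — this is precisely where $\ker(\k)\subseteq\h_0$ and its dual $\{y_1\mid(y_1,0)\in\k\}\subseteq\g_1$ get used — and the correct translation of the abstract compositions $\k\circ\g_0$ and $\h_1\circ\k$ into the concrete statements $\pr_1(A)=\g_1$ and $\pr_0(B)=\h_0$.
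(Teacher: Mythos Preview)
Your proof is correct and uses the same ingredients as the paper's argument---the two inclusions $\ker(\k)\subseteq\h_0$ and $\{y_1\mid(y_1,0)\in\k\}\subseteq\g_1$, together with the splittings $\dd_i=\g_i\oplus\h_i$. The organization differs slightly: you introduce the auxiliary subspaces $A=\k\cap(\dd_1\times\g_0)$ and $B=\k\cap(\h_1\times\dd_0)$, identify them with the two graphs via a dimension count, and then show $\k=A\oplus B$; the paper instead takes an arbitrary element $(\xi_1+\nu_1,\xi_0+\nu_0)\in\k$, subtracts the known elements $(\xi_1,\psi(\xi_1))$ and $(\phi(\nu_0),\nu_0)$, and observes that the remainder $(\nu_1-\phi(\nu_0),\xi_0-\psi(\xi_1))$ is forced to vanish because $\xi_0-\psi(\xi_1)\in\g_0\cap(\h_1\circ\k)=\g_0\cap\h_0=0$. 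Both routes are short and amount to the same linear algebra; the paper's is marginally more direct since it avoids the dimension count.
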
 
\begin{proof}
Suppose $\xi_0+\nu_0\sim_\k \xi_1+\nu_1$ with $\xi_i\in\g_i$ and $\nu_i\in \h_i$. 
Since $\psi(\xi_1)\sim_\k \xi_1$ and $\nu_0\sim \phi(\nu_0)$, it follows that 
$\xi_0-\psi(\xi_1)\sim_\k \nu_1-\phi(\nu_0)$. Since $\h_0=\h_1\circ \k$, this implies that 
$\xi_0-\psi(\xi_1)\in \h_0$, hence $\xi_0-\psi(\xi_1)=0$. Similarly 
$\nu_1-\phi(\nu_0)=0$. 
\end{proof} 
%
%
\begin{remark}\label{rem:nondeg}
Suppose that the $\beta_i$ are non-degenerate and that the Lie subalgebras $\g_i$ are Lagrangian.
Then  $\k\subseteq \dd_1\times \dd_0$ is $\beta_1-\beta_0$-Lagrangian, for dimensional reasons. 
For all $(\xi_1+\phi(\nu_0),\psi(\xi_1)+\nu_0)\in\k$, we find (denoting the metric on $\dd_i$ 
simply by $\l\cdot,\cdot\r$), 
\[ \begin{split}
0&=\l (\xi_1+\phi(\nu_0),\psi(\xi_1)+\nu_0), (\xi_1+\phi(\nu_0),\psi(\xi_1)+\nu_0)\r\\
&=2\l \phi(\nu_0),\xi_1)\r+\l \phi(\nu_0),\phi(\nu_0)\r-2\l \psi(\xi_1),\nu_0\r-\l\nu_0,\nu_0\r.
\end{split}\]
Since this is true for all $\xi_1\in\g_1$, this shows that $\psi=\phi^*$, for the identification $\h_i=\g_i^*$ given by the pairing, and that 
$\phi$ preserves the induced bilinear forms on $\h_i$. In particular, for ordinary Manin triples 
  $(\dd_i,\g_i,\h_i)$, a morphism is given by 
a pair of Lie algebra homomorphisms $\psi\colon \g_1\to \g_0$ and $\phi\colon \h_0\to \h_1$ such that 
$\phi=\psi^*$. 
\end{remark}

Let $(\dd,\g,\h)_\beta$ be a Dirac Manin triple, and $(\q,\g,\mf{r})_\gamma$ the Dirac Manin triple 
obtained by the construction from Section~\ref{subsec:genman}, with the corresponding map 
$f\colon \q\to \dd$. Then the graph of $f$ defines a morphism of Dirac Manin triples 
\[ \on{gr}(f)\colon (\q,\g,\mf{r})_\gamma\da (\dd,\g,\h)_\beta.\]
The construction of $(\q,\g,\mf{r})_\gamma$ is functorial:   
\begin{proposition}\label{prop:lcons}
Let $\k\colon (\dd_0,\g_0,\h_0)_{\beta_0}\da (\dd_1,\g_1,\h_1)_{\beta_1}$ be a morphism of Dirac Manin triples, 
given as the direct sum of the graphs of $\phi\colon \h_0\to \h_1$  
and $\psi\colon \g_1\to \g_0$. Let $(\q_i,\g_i,\mf{r}_i)_{\gamma_i}$ be the Dirac Manin triples
associated to $(\dd_i,\g_i,\h_i)_{\beta_i}$ as in Section 
\ref{subsec:genman}, with the corresponding maps $f_i\colon \q_i\to \dd_i$. 
Let $\mf{l}\subseteq \q_1\times \q_0$ be the direct sum of the graphs of $\psi$ and 
$\kappa=\psi^*\colon \mf{r}_0=\g_0^*\to \mf{r}_1=\g_1^*$. Then $\mf{l}$ 
defines a morphism of Dirac Manin triples 
\[ \mf{l}\colon (\q_0,\g_0,\mf{r}_0)_{\gamma_0}\da
(\q_1,\g_1,\mf{r}_1)_{\gamma_1},\]
with $f(\mf{l})\subseteq \k$  where $f=f_1\times f_0$. As a consequence, 
\[\ \ \ \ \ \on{gr}(f_1)\circ \mf{l}=\k\circ \on{gr}(f_0).\]
%
%
\end{proposition}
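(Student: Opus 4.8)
The plan is to verify directly that $\mf{l}$, defined as the direct sum of the graphs of $\psi\colon\g_1\to\g_0$ and $\kappa=\psi^*\colon\mf{r}_0\to\mf{r}_1$, is a morphism of Dirac Manin triples, and then read off the displayed identity as a formal consequence of clean composition of linear relations. First I would recall the explicit description of $\q_i$ from Section~\ref{subsec:genman}: $\q_i=\mf{c}_i/\mf{c}_i^\perp$ with $\mf{c}_i=\g_i\ltimes(\dd_i)^*_{\beta_i}$, and $f_i\colon\q_i\to\dd_i$ given by $f_i(\xi+\mf{c}_i^\perp)=\xi+\beta_i^\sharp(\mu)$ for a representative $(\xi,\mu)$, with the crucial relation $\beta_i^\sharp=f_i\circ f_i^*$ and $f_i|_{\g_i}=\mathrm{id}$. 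The key algebraic point is that $\kappa=\psi^*$ is forced: since $\mf{r}_i$ was identified with $\g_i^*$ via $f_i^*(\on{ann}(\h_i))=\mf{r}_i^\perp$-type arguments (Examples~\ref{ex:specialcases} and Proposition~\ref{prop:atlast}(d)), the only map $\mf{r}_0\to\mf{r}_1$ compatible with $\psi$ under the pairing is the transpose.

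The main steps are: (1) Check $\mf{l}$ is a Lie subalgebra of $\q_1\times\q_0$. On the $\g$-part this is immediate since $\psi$ is a Lie algebra homomorphism (part of $\k$ being a subalgebra). On the $\mf{r}$-part one uses the bracket formula $[f_i^*(\mu),f_i^*(\nu)]=f_i^*([\beta_i^\sharp(\mu),\nu])$ from the proof of Proposition~\ref{prop:atlast}(d), together with the fact that $\phi=\psi^*$'s dual intertwines the brackets — this reduces to the statement that $\k\subseteq\dd_1\times\dd_0$ is a $\beta_1-\beta_0$-coisotropic subalgebra, pushed through $f_1\times f_0$. (2) Check $f(\mf{l})\subseteq\k$: for $(\xi_1+f_1^*(\nu_1),\,\psi(\xi_1)+f_0^*(\nu_0))\in\mf{l}$ with $\nu_1=\kappa(\nu_0)=\psi^*(\nu_0)$, compute $f_1$ of the first component as $\xi_1+\beta_1^\sharp(\nu_1)$ and $f_0$ of the second as $\psi(\xi_1)+\beta_0^\sharp(\nu_0)$, and verify this pair lies in $\k$. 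Since $\k$ is the sum of the graphs of $\psi$ and $\phi$, one needs $\psi(\xi_1+\beta_1^\sharp(\nu_1)) = \psi(\xi_1)+\beta_0^\sharp(\nu_0)$ modulo the $\h$-parts, i.e.\ that $\psi$ restricted appropriately sends $\beta_1^\sharp\psi^*(\nu_0)$ to $\beta_0^\sharp(\nu_0)$ — this is exactly the coisotropy condition on $\k$ rewritten using $\beta_i^\sharp=f_i f_i^*$ and the relation $f_i(\mf{l})\subseteq\k$ becoming self-consistent. (3) Check the two relational identities $\g_1=\mf{l}\circ\g_0$ and $\mf{r}_0=\mf{r}_1\circ\mf{l}$, which are immediate from $\mf{l}$ being the sum of $\on{gr}(\psi)$ and $\on{gr}(\kappa)$ and from $\kappa$ being injective (as $\psi^*$ with $\psi$ — well, one needs $\psi$ surjective or $\kappa$ injective; here $\kappa=\psi^*$ and the relevant statement $\mf{r}_0=\mf{r}_1\circ\mf{l}$ says $\kappa$ is defined on all of $\mf{r}_0$, which holds by construction). (4) Check $\mf{l}$ is $\gamma_1-\gamma_0$-coisotropic: since $\gamma_i$ is non-degenerate and $\g_i$ is Lagrangian in $\q_i$, by the dimension count in Remark~\ref{rem:nondeg} $\mf{l}$ is automatically $\gamma_1-\gamma_0$-Lagrangian once $\dim\mf{l}=\dim\g_1+\dim(\q_0/\g_0)$, which follows from the definition. (5) Deduce the final identity: the composites $\on{gr}(f_1)\circ\mf{l}$ and $\k\circ\on{gr}(f_0)$ are both linear relations $\q_0\da\dd_1$; the inclusion $f(\mf{l})\subseteq\k$ gives $\on{gr}(f_1)\circ\mf{l}\subseteq\k\circ\on{gr}(f_0)$ directly, and the reverse inclusion (or equality) follows by a dimension count, or more cleanly by observing both sides equal the sum of the graph of $\psi\colon\g_1\to\g_0$ (regarded the appropriate way) and the graph of $\phi\colon\h_0\to\h_1$ precomposed with the projections built from $f_i$.

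I expect the main obstacle to be step (2), verifying $f(\mf{l})\subseteq\k$ cleanly. The subtlety is that the map $f_i$ is only defined on the quotient $\q_i=\mf{c}_i/\mf{c}_i^\perp$, so every computation must be checked to be independent of the choice of representative $(\xi,\mu)$ for a class in $\q_i$; and the compatibility ultimately rests on the identity $\beta_i^\sharp\circ\psi^* = \psi\circ\beta_{?}^\sharp$-type intertwining, which is precisely a reformulation of the coisotropy of $\k$ for the form $\beta_1-\beta_0$ transported through $f_1\times f_0$. Once the bookkeeping of representatives is set up correctly — choosing, say, representatives supported on $\g_i\oplus\on{ann}(\h_i)$ so that $f_i$ acts transparently — the verification becomes a short linear-algebra check, and the final identity $\on{gr}(f_1)\circ\mf{l}=\k\circ\on{gr}(f_0)$ drops out by comparing the two sides as sums of graphs.
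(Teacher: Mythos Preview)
Your direct-verification strategy is a legitimate alternative, but the paper takes a cleaner route that dissolves the obstacle you flag in step~(2). Rather than starting from the explicit sum-of-graphs description of $\mf{l}$ and checking it is a Lie subalgebra, the paper \emph{constructs} $\mf{l}$ by coisotropic reduction: writing $\dd=\dd_1\times\dd_0$, $\g=\g_1\times\g_0$, $\beta=\beta_1-\beta_0$, it observes that $\wh{\k}=\k\ltimes\on{ann}(\k)\subseteq\dd\ltimes\dd^*_\beta$ is a Lagrangian Lie subalgebra, and then sets $\mf{l}=(\wh{\k}\cap\mf{c})/(\wh{\k}\cap\mf{c}^\perp)\subseteq\q$. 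This yields a Lagrangian Lie subalgebra automatically, with no bracket computations. The inclusion $f(\mf{l})\subseteq\k$ then follows in one line: coisotropy of $\k$ gives $\beta^\sharp(\on{ann}(\k))\subseteq\k$, and $f$ is by definition the map induced by $(\xi,\mu)\mapsto\xi+\beta^\sharp(\mu)$. Only after establishing these properties does the paper identify the reduced $\mf{l}$ with your sum of graphs, via Remark~\ref{rem:nondeg}.

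Your sketch as written has two genuine gaps. First, step~(1) treats the $\g$-part and $\mf{r}$-part brackets but omits the mixed bracket $[\on{gr}(\psi),\on{gr}(\kappa)]$: since $\q_i=\g_i\oplus\mf{r}_i$ is not a direct sum of Lie algebras, this is a separate check, and it is exactly where the compatibility of $\psi$ with the $\g_i$-actions on $\mf{r}_i$ enters. Second, the parametrization in step~(2) is wrong: $f_i^*$ restricted to $\on{ann}(\h_i)\cong\g_i^*$ lands in $\mf{r}_i^\perp$, not $\mf{r}_i$, so elements of $\mf{r}_i$ are not of the form $f_i^*(\nu_i)$ and the formula $f_i(\xi_i+f_i^*(\nu_i))=\xi_i+\beta_i^\sharp(\nu_i)$ does not apply as stated. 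To make the direct approach go through you would need to work with the metric identification $\mf{r}_i\cong\g_i^*$ instead, and then verify $\phi\circ f_0|_{\mf{r}_0}=f_1\circ\kappa$ by hand---which is precisely the computation the paper carries out at the very end to upgrade the inclusion $\on{gr}(f_1)\circ\mf{l}\subseteq\k\circ\on{gr}(f_0)$ to equality.
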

\begin{proof}
We will begin with an alternative construction of $\mf{l}$ by reduction. 
Write $\dd=\dd_1\times\dd_0$, $\g=\g_1\times\g_0$, $\h=\h_1\times\h_0$, and $\beta=\beta_1-\beta_0$ 
so that $\k\subseteq \dd$ is a $\beta$-coisotropic Lie subalgebra. Then 
\[ \wh{\k}=\k\ltimes \on{ann}(\k)\subseteq \dd\ltimes \dd^*_\beta\]
is a Lagrangian Lie subalgebra. Since $\k$ is the direct sum of the graphs 
of $\psi\colon \g_1\to \g_0$ and $\phi\colon \h_0\to \h_1$, 
its annihilator $\on{ann}(\k)$ is the direct sum of the graphs of  
$\phi^*\colon \on{ann}(\g_1)\to \on{ann}(\g_0)$ and 
$\psi^*\colon \on{ann}(\h_0)\to \on{ann}(\h_1)$. Let $\mf{l}\subseteq\q$ be the Lagrangian Lie subalgebra defined by reduction of $\wh{\k}$ with respect to the coisotropic Lie subalgebra $\mf{c}=\g\ltimes \dd^*_\beta$, 
\[ \mf{l}=\wh{\k}\cap \mf{c}/\wh{\k}\cap \mf{c}^\perp\subseteq \q.\]
Since $\mf{k}$ is coisotropic, we have $\beta^\sharp(\on{ann}\mf{k})\subseteq\mf{k}$, and hence $f(\mf{l})\subseteq\mf{k}$, by definition of $f$.
Equivalently, $\mf{k}\supseteq\on{gr}(f_1)\circ\mf{l}\circ\on{gr}(f_0)^\top$. This implies $$\mf{k}\circ\on{gr}(f_0)\supseteq\on{gr}(f_1)\circ\mf{l}\circ\on{gr}(f_0)^\top\circ\on{gr}(f_0)\supseteq\on{gr}(f_1)\circ\mf{l}.$$

The inclusion
$$\mf{r}_1\circ\mf{l}=\h_1\circ\on{gr}(f_1)\circ\mf{l}\subseteq\h_1\circ\k\circ \on{gr}(f_0)=\h_0\circ\on{gr}(f_0)=\mf{r}_0$$
shows $\mf{r}_1\circ\mf{l}=\mf{r}_0$, since both sides are Lagrangian.
On the other hand, since $\wh{\mf{k}}\cap\mf{c}\supseteq\mf{k}\cap\g=\on{gr}(\psi)$, we have $\mf{l}\supseteq\on{gr}(\psi)$, and hence 
\[ \mf{l}\circ\g_0\supseteq \on{gr}(\psi)\circ \g_0\supseteq\g_1.\] 
Since both sides are Lagrangian, it follows that $\mf{l}\circ\g_0=\g_1$.
 This shows that $\mf{l}$ defines a morphism $\mf{l}\colon (\q_0,\g_0,\mf{r}_0)_{\gamma_0}\da
(\q_1,\g_1,\mf{r}_1)_{\gamma_1}$ of Dirac Manin triples. 
By Remark~\ref{rem:nondeg}, $\mf{l}$ is the direct sum of the graph of $\psi$ 
with a Lie algebra homomorphism $\kappa=\psi^*$, recovering the 
description of $\mf{l}$ given in the proposition.

Finally we show that $\on{gr}(f_1)\circ \mf{l}=\k\circ \on{gr}(f_0)$, or equivalently 
\begin{equation}
\phi\circ f_0=f_1\circ \kappa,\ \ \psi\circ f_1=f_0\circ \psi
\end{equation}
For the first equation, let $y\in \mf{r}_0$. Then $\kappa(y)+y\in \mf{r}\cap \mf{l}$, and 
$f_1(\kappa(y))+f_0(y)\in \h\cap \k$. Hence $\phi(f_0(y))=f_1(\kappa(y))$ by definition of 
$\phi$. Similarly, given $x\in \g_1$ we have $x+\psi(x)\in\g\cap \mf{l}$, hence
$f_1(x)+f_0(\psi(x))\in\g\cap \k$. Thus $\psi(f_1(x))=f_0(\psi(x))$. 
\end{proof}

\subsection{The Equivalence Theorem for morphisms} 

A \emph{morphism of $H_i$-equivariant Dirac Manin triples 
$ \k\colon (\dd_0,\g_0,\h_0)_{\beta_0}\da (\dd_1,\g_1,\h_1)_{\beta_1}$} is a morphism of Dirac Manin triples 
where the map $\phi\colon \h_0\to \h_1$ is the differential 
of a Lie group homomorphism $\Phi\colon H_0\to H_1$, and where $\mf{k}$ is 
invariant under the adjoint action of $H_0\cong \on{gr}(\Phi)\subseteq H_1\times H_0$.  
Our goal is to show that such morphisms are in 1-1 correspondence with the morphisms of the 
corresponding Dirac Lie group structures. (Recall that a morphism of Dirac Lie groups is defined to be a morphism of multiplicative Manin pairs.)
\begin{theorem}[Morphisms] \label{thm:morph}
There is a 1-1 correspondence between  morphisms of 
of $H_i$-equivariant Dirac Manin triples 
$\k\colon (\dd_0,\g_0,\h_0)_{\beta_0}\da (\dd_1,\g_1,\h_1)_{\beta_1}$
and morphisms of the corresponding 
Dirac Lie groups,  $L\colon (\A_0,E_0)\da (\A_1,E_1)$. 
This correspondence is compatible with the composition of morphisms. 
\end{theorem}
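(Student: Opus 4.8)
The strategy is to produce, for each morphism $\k$ of $H_i$-equivariant Dirac Manin triples, a morphism $L$ of the corresponding Dirac Lie groups by the same reduction procedure that produced the objects in Section~\ref{subsec:construct}, and conversely to recover $\k$ from a given $L$ by running the analysis of Section~\ref{sec:classDLG} relative-to-morphisms. First I would set up the construction in one direction: given $\k\colon (\dd_0,\g_0,\h_0)_{\beta_0}\da(\dd_1,\g_1,\h_1)_{\beta_1}$, invoke Proposition~\ref{prop:lcons} to obtain the companion morphism $\mf{l}\colon(\q_0,\g_0,\mf{r}_0)_{\gamma_0}\da(\q_1,\g_1,\mf{r}_1)_{\gamma_1}$ of the reduced (non-degenerate, $\g$-Lagrangian) Dirac Manin triples, together with the compatibility $\on{gr}(f_1)\circ\mf{l}=\k\circ\on{gr}(f_0)$. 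The $H_0$-equivariance of $\k$ and the fact that $\phi$ integrates to $\Phi\colon H_0\to H_1$ should pass to $\mf{l}$ (the extra component $\kappa=\psi^*$ is built from $\psi$, hence equivariant), so $\mf{l}$ is a morphism of $H_i$-equivariant Dirac Manin triples in the trivialized sense.

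\textbf{Building the morphism $L$ by reduction.} Recall that $(\A_i,E_i)$ was obtained as the coisotropic reduction of $(\TH_i,TH_i)\times(\ol{\q_i}\oplus\q_i,\g_i\oplus\g_i)$ with respect to the subbundle $C_i$ of \eqref{eq:defc}. The standard lift $R_\Phi\colon\TH_0\da\TH_1$ (twisted appropriately so that it is a morphism of the multiplicative Manin pairs $(\TH_0,TH_0)\da(\TH_1,TH_1)$ — here $\eta=0$, so the plain standard lift works and is a $\ca{CA}$-groupoid morphism since $\Phi$ is a group homomorphism) together with the pair-groupoid morphism $\ol{\mf{l}}\oplus\mf{l}\colon\ol{\q_0}\oplus\q_0\da\ol{\q_1}\oplus\q_1$ gives a morphism of multiplicative Manin pairs
\[
R_\Phi\times(\ol{\mf{l}}\oplus\mf{l})\colon (\TH_0,TH_0)\times(\ol{\q_0}\oplus\q_0,\g_0\oplus\g_0)\da(\TH_1,TH_1)\times(\ol{\q_1}\oplus\q_1,\g_1\oplus\g_1).
\]
The plan is to define $L\colon(\A_0,E_0)\da(\A_1,E_1)$ as the composition $S_1^\top\circ\big(R_\Phi\times(\ol{\mf{l}}\oplus\mf{l})\big)\circ S_0$, where $S_i\colon\A_i\da\TH_i\times(\ol{\q_i}\oplus\q_i)$ is the reduction morphism \eqref{eq:S}. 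One then checks: (i) the composition is clean — this reduces to checking that $\mf{l}$ and the anchor interact transversally with $C_1$, using $f(\mf{l})\subseteq\k$ and Proposition~\ref{prop:altCdesc}'s description of $C_i$ in terms of $f_i$; (ii) $\ker(L)\cap E_0=0$ and $\Phi^*E_1=L\circ E_0$ — these follow from the corresponding properties of $\mf{l}$ ($\mf{l}\circ\g_0=\g_1$, $\ker(\mf{l})\subseteq\mf{r}_0$ from the Lemma) pushed through the reductions; (iii) $L$ is a morphism of $\ca{CA}$-groupoids, which is automatic since every factor is.

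\textbf{The converse and the obstacle.} For the reverse direction, given a Dirac Lie group morphism $L\colon(\A_0,E_0)\da(\A_1,E_1)$ over $\Phi\colon H_0\to H_1$: restrict $L$ to the fibers over the group units to get a morphism of Manin pairs $(\q_0,\g_0)=(\A_{0,e},E_{0,e})\da(\q_1,\g_1)=(\A_{1,e},E_{1,e})$; since $L$ is also a morphism of $\ca{LA}$-groupoids, it restricts to a morphism of the vacant $\ca{LA}$-groupoids $E_0\da E_1$, which by the Mackenzie correspondence (Proposition~\ref{prop:VLAg1}, functorialized) yields $\k\colon(\dd_0,\g_0,\h_0)\da(\dd_1,\g_1,\h_1)$ as a Lie subalgebra of $\dd_1\times\dd_0$ with $\g_1=\k\circ\g_0$, $\h_0=\h_1\circ\k$. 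The main work is to show $\k$ is $(\beta_1-\beta_0)$-coisotropic and that the fiberwise-at-$e$ relation $\mf{l}_e\subseteq\q_1\times\q_0$ satisfies $f(\mf{l}_e)\subseteq\k$ — equivalently that the diagram relating anchors, targets and the $f_i$'s commutes along $L$. I expect this verification — tracking $\beta_i^\sharp=f_i\circ f_i^*$ through the morphism and confirming $\k$-coisotropy, i.e. $(\beta_1-\beta_0)^\sharp(\on{ann}\k)\subseteq\k$ — to be the main obstacle, since it is exactly where the metric data (which is not seen by the $\ca{LA}$-groupoid $E$ alone) must be extracted from the Courant-algebroid morphism $L$; the cleanest route is probably to use Lemma~\ref{lem:kerZeroLem} and Proposition~\ref{prop:revred}/\ref{prop:altCdesc} to present $L$ itself as induced by reduction from a relation $\mf{l}$ on the $\q_i$, so that coisotropy of $\k$ falls out of coisotropy of $\mf{l}$ together with $\on{gr}(f_1)\circ\mf{l}=\k\circ\on{gr}(f_0)$. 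Finally, compatibility with composition follows formally: both constructions are built from compositions of $\ca{CA}$-groupoid morphisms and reductions, and the two directions are mutually inverse because each $(\A_i,E_i)$ is canonically isomorphic (Propositions~\ref{prop:revred},~\ref{prop:altCdesc}) to the reduction model, so round-tripping returns the original data.
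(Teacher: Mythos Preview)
Your plan is correct and follows essentially the same route as the paper: the forward direction is exactly $L=S_1^\top\circ(R_\Phi\times(\mf{l}\oplus\mf{l}))\circ S_0$ with $\mf{l}$ coming from Proposition~\ref{prop:lcons}, and the backward direction extracts $\mf{l}=L_e$, recovers $\k$ from the vacant $\ca{LA}$-subgroupoid $L\cap(E_1\times E_0)$ via the Mackenzie correspondence, and deduces $(\beta_1-\beta_0)$-coisotropy of $\k$ from the Lagrangian property of $\mf{l}$ together with $f(\mf{l})\subseteq\k$. Two points where the paper is slightly sharper than your sketch: (i) one must show that $L$ is trivialized over \emph{all} of $\on{gr}(\Phi)$ by $\mf{l}$, not just at $e$ --- the paper does this by checking $L\cap E$ is a wide subgroupoid of $L$ so that the trivialization $\A\to\q$ of Proposition~\ref{prop:trivial} restricts to $L\to\mf{l}$; (ii) the coisotropy of $\k$ is a one-line linear-algebra lemma (if a linear map sends $\gamma$ to $\beta$ and a $\gamma$-coisotropic subspace into $W'$, then $W'$ is $\beta$-coisotropic), applied to $f=f_1\times f_0$ and $\mf{l}\mapsto\k$, so your anticipated obstacle dissolves cleanly once $f(\mf{l})\subseteq\k$ is established.
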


The proof of this Theorem will be given in the next two subsections. We will use the presentation 
$\A_i=C_i/C_i^\perp$ with $C_i\subset \TH\times(\ol{\q}_i\oplus \q_i)$, as in Section~\ref{subsec:construct}. 
For convenience, let us write $\A=\A_1\times\ol{\A_0}$, $C=C_1\times C_0$, $H=H_1\times H_0$,  
$\q=\ol{\q}_1\times \q_0$, $\dd=\dd_1\times\dd_0$, $\beta=\beta_1-\beta_0$, 
$\g=\g_1\times\g_0$ etc. 

\subsubsection{The direction $\Rightarrow$}\label{sec:dir=>}
Given $\k\colon (\dd_0,\g_0,\h_0)_{\beta_0}\da (\dd_1,\g_1,\h_1)_{\beta_1}$, let 
$\mf{l}\colon (\q_0,\g_0,\mf{r}_0)_{\gamma_0}\da (\q_1,\g_1,\mf{r}_1)_{\gamma_1}$ be the morphism 
constructed in Proposition~\ref{prop:lcons}, and consider the Courant morphism
\[ R_\Phi\times (\mf{l}\oplus \mf{l})\colon \TH_0\times (\ol{\q}_0\oplus \q_0)\da 
\TH_1\times (\ol{\q}_1\oplus \q_1).\]
Let $S_i\colon \A_i\da \TH_i\times (\ol{\q}_i\oplus \q_i)$
be the Courant morphisms defined by the reduction, as in \eqref{eq:S}.
\begin{lemma}\label{lem:Lconstr}
The composition $L=S_1^\top\circ 
(R_\Phi\times (\mf{l}\oplus \mf{l}))\circ S_0$
%
defines a Dirac Lie group morphism 
\begin{equation}\label{eq:LmapA0toA1} L\colon (\A_0,E_0)\da (\A_1,E_1).\end{equation}
The trivialization $\A\to \q$ from Proposition~\ref{prop:trivial} restricts to a trivialization 
$L\to \mf{l}$. 
\end{lemma}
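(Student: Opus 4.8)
The plan is to verify three things in turn: (1) that the composition $L=S_1^\top\circ(R_\Phi\times(\mf{l}\oplus\mf{l}))\circ S_0$ is clean, so that $L$ is a well-defined Courant relation; (2) that $L$ is in fact a $\ca{CA}$-groupoid morphism and a morphism of Manin pairs, i.e.~ a Dirac Lie group morphism; and (3) that under the trivializations $\A_i\cong H_i\times\q_i$ of Proposition \ref{prop:trivial}, the relation $L$ becomes the constant relation $H\times\mf{l}$ (more precisely, $L$ restricted to $H=\on{gr}(\Phi)$ equals $\mf{l}$, viewed as a sub-bundle via the trivialization). The last point is really the one that pins down $L$ and makes the rest of the proof of Theorem \ref{thm:morph} run, so I would spend the most care there.

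For (1), recall that each $S_i$ arises from coisotropic reduction (Proposition \ref{prop:reduction}), and that $R_\Phi\times(\mf{l}\oplus\mf{l})$ is a Courant morphism (product of the standard lift of $\Phi$ with the linear Courant relation $\mf{l}\oplus\mf{l}$, which is Lagrangian and involutive since $\mf{l}$ is a Lie subalgebra that is $\gamma_1-\gamma_0$-Lagrangian by Proposition \ref{prop:lcons} together with Remark \ref{rem:nondeg}). Cleanness of the two compositions is where one must do a little work: I would check it by computing ranks fibrewise over $H=\on{gr}(\Phi)$, using that $\ker(S_i)=0$ and that the base maps are all (graphs of) honest maps, so the clean-composition criterion for $\ca{VB}$-relations reduces to a constant-rank statement; since $\A_i$ is an action Courant algebroid in the given trivialization, the fibrewise computation is uniform in $h\in H$, which gives constant rank. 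The composability of relations of $\ca{CA}$-groupoids then follows because each factor is a $\ca{CA}$-groupoid morphism, so $L$ is too, by Proposition \ref{prop:dirrel} and the remarks following Definition \ref{def:CALAVBgroupoid}.

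For (2), I must check $\Phi^*E_1=L\circ E_0$ and $\ker(L)\cap E_0=0$. The cleanest route is to trace $E_i$ through the reduction: by construction $E_i=(TH\times(\g_i\oplus\g_i))_{C_i}$, equivalently $E_i=S_i^\top\circ(TH\times(\g_i\oplus\g_i))$ up to the identification in Proposition \ref{prop:ciscois}; combining this with $\mf{l}\circ\g_0=\g_1$ (from Proposition \ref{prop:lcons}) and the fact that $R_\Phi$ carries $TH_0$ to $TH_1$, I get $L\circ E_0 = S_1^\top\circ(R_\Phi\times(\mf{l}\oplus\mf{l}))\circ(TH_0\times(\g_0\oplus\g_0)) = S_1^\top\circ(TH_1\times(\g_1\oplus\g_1))$ along $\on{gr}(\Phi)$, which is exactly $\Phi^*E_1$. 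The kernel condition $\ker(L)\cap E_0=0$ follows because $\ker(S_0)=0$, $\ker(S_1^\top)$ meets the image of $TH_1\times(\g_1\oplus\g_1)$ trivially (wideness of $E_1$, Proposition \ref{prop:dirm}), and $\mf{l}$ has the morphism-of-Manin-pairs property $\ker(\mf{l})\cap\g_0=0$ by the Lemma preceding Remark \ref{rem:nondeg}. I expect this to be the main obstacle: keeping the bookkeeping of the several composed relations straight, and in particular making sure the transversality/cleanness hypotheses needed to apply ``$\ker$ of a composition'' and ``image of a composition'' identities are genuinely in force at each stage.

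For (3), I would use the explicit trivialization from Remark \ref{rem:trivial}: the trivializing sub-bundle of $C_i$ is spanned by $(h,\zeta)\mapsto(v,\xi,\zeta)$ with $\iota(v)\theta^R_h=p_\h(\Ad_h f_i(\zeta))$ and $\xi=(1-p_\h)\Ad_h f_i(\zeta)$, and it is a complement to $C_i^\perp$ invariant under left multiplication by $TH\times(\g_i\oplus\g_i)$. Unwinding $L=S_1^\top\circ(R_\Phi\times(\mf{l}\oplus\mf{l}))\circ S_0$ on these trivializing sections, the $\TH$-components match precisely because of the intertwining relation $\phi\circ f_0=f_1\circ\kappa$ and $\psi\circ f_1=f_0\circ\psi$ established at the end of Proposition \ref{prop:lcons} (equivalently $f_1\circ\mf{l}\supseteq\k\circ f_0$ composed down), so that the equations cutting out $C_1$ are satisfied with the transformed data. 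Hence a constant section $\zeta_0$ of $\A_0\cong H_0\times\q_0$ is $L$-related to a constant section $\zeta_1$ of $\A_1\cong H_1\times\q_1$ exactly when $\zeta_0\sim_{\mf{l}}\zeta_1$, which is the claim $L\cong\mf{l}$ under trivialization. This last identification is what will subsequently be used to show the correspondence $\k\leftrightarrow L$ is bijective and compatible with composition.
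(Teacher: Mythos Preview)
Your overall structure is reasonable, but the paper takes a different and cleaner route: it proves your point (3) \emph{first}, and then (2) drops out for free. Once you know $L\cong \on{gr}(\Phi)\times\mf{l}$ under the trivialization, the Manin pair condition follows immediately from $\mf{l}\circ\g_0=\g_1$ and $\ker(\mf{l})\cap\g_0=0$. Trying to verify (2) directly via relation algebra, as you do, runs into the difficulty that $S_0\circ S_0^\top$ is \emph{not} the identity relation on $\TH_0\times(\ol{\q}_0\oplus\q_0)$ (it is the equivalence relation ``differ by an element of $C_0^\perp$''), so the line $L\circ E_0 = S_1^\top\circ(R_\Phi\times(\mf{l}\oplus\mf{l}))\circ(TH_0\times(\g_0\oplus\g_0))$ is not justified as written.

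More importantly, your plan for (3) is missing the key ingredient: the $H_0$-equivariance of $\mf{k}$ (invariance under $\Ad_h$ for $h\in\on{gr}(\Phi)$). The intertwining identities $\phi\circ f_0=f_1\circ\kappa$, $\psi\circ f_1=f_0\circ\psi$ from Proposition \ref{prop:lcons} are purely Lie-algebraic and only settle the fibre at $h=e$. At a general $h\in\on{gr}(\Phi)$, the trivializing section of Remark \ref{rem:trivial} sends $\zeta\in\mf{l}$ to $(v,\xi,\zeta)$ with $\iota(v)\theta^R_h=p_\h(\Ad_hf(\zeta))$ and $\xi=(1-p_\h)\Ad_hf(\zeta)$; for this to land in $R_\Phi\times(\mf{l}\oplus\mf{l})$ you need $\Ad_hf(\zeta)\in\mf{k}$, which uses both $f(\mf{l})\subseteq\mf{k}$ and the $\Ad_h$-invariance of $\mf{k}$. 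The paper organizes this as a three-step bootstrap: first show $L_e=\mf{l}$ directly; then, for $z\in E_h$ with $s(z)\in\mf{l}\cap\g$, use equation \eqref{eq:eqE} and $\Ad_h$-invariance of $\mf{k}$ to conclude $t(z)\in\mf{l}\cap\g$ and $\a(z)\in T\on{gr}(\Phi)$, hence $z\in L$; finally, since any $x\in\A_h$ with trivialized value $\zeta\in\mf{l}$ factors as $x=z\circ\zeta$ with $z\in E_h\cap L$ and $\zeta\in L_e$, the groupoid multiplication gives $x\in L_h$. This avoids any delicate relation-composition bookkeeping.
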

\begin{proof}
By definition, 
\[ L=(R_\Phi\times (\mf{l}\oplus \mf{l}))_C\subseteq \A=(\TH\times (\ol{\dd}\oplus\dd))_C.\]
%
We will show that the trivializing map $\A\to \q$ restricts to $L\to \mf{l}$, defining an isomorphism
$L\xra{\cong}\on{gr}(\Phi)\times \mf{l}$. Since $\A_i\xra{\cong} H_i\times\q_i$ restricts 
to $E_i\xra{\cong} H_i\times\g_i$, this then implies that $L$ gives a morphism 
of Manin pairs $(\A_0,E_0)\da (\A_1,E_1)$. 

We first verify $L_e=\mf{l}$. By \eqref{eq:defc}, the fiber $C_e$ consists 
of elements $(f(\zeta')-f(\zeta)+\mu,\zeta,\zeta')$ with $\mu\in \h^*$ and $\zeta,\zeta'\in\q$.
Given $\zeta\in\mf{l}$, we have  $t(\zeta)+\a(\zeta)=f(\zeta)\in\mf{k}$ by Equation~\eqref{eq:deff}.  
Hence $t(\zeta)\in\mf{k}\cap\g=\mf{l}\cap\g$ and $\a(\zeta)\in \mf{k}\cap\h$. 
It follows that 
\begin{equation}\label{eq:ate}
(\a(\zeta),t(\zeta),\zeta)\in C_e\cap(R_\Phi\times (\mf{l}\oplus \mf{l})),\end{equation}
mapping to $\zeta\in C_e/C_e^\perp$ (cf. Remark \ref{rem:trivial}).

This shows $L_e\supseteq\mf{l}$, and equality follows since both sides are Lagrangian.  

Recall next that on $E\subset \A$, the trivialization is given by the source map $s\colon E\to \g$. 
Suppose $z\in E_h$ satisfies $s(z)\in\mf{l}$. Then $s(z)\in \g\cap \mf{l}=\g\cap \k$. 
Equation~\eqref{eq:eqE} shows that 
\[ \Ad_h s(z)=t(z)+\iota(\a(z))\theta^R_h.\] 
Since $\mf{k}$ is invariant under the action of $\on{gr}(\Phi)\subseteq H_1\times H_0$, 
the left hand side lies in $\mf{k}$, hence so does the right hand side. Since $\k$ is the direct sum of its intersections with $\h,\g$, this shows $t(z)\in \g\cap \mf{k}=\g\cap \mf{l}$ and $\a(z)\in T\on{gr}(\Phi)$. Hence $(\a(z),t(z),s(z))\in C\cap R_\Phi\times (\mf{l}\oplus \mf{l})$ maps to $z\in E$, proving that $z\in E\cap L$. 

Finally, given $h\in\on{gr}(\Phi)$ and $\zeta\in \mf{l}$, let $z\in E_h$ be the unique element such that $t(\zeta)=s(z)$. Then $x:=z\circ \zeta\in \A_h$ maps to $\zeta$ under the trivialization. 
Since $\zeta\in L_e$ and $z\in L_h$, by the above, it follows that $x\in L_h$. This show that the 
image of $L_h$ under $\A\to \q$ contains $\mf{l}$, and equality follows since both sides are Lagrangian. 
\end{proof}

\subsubsection{The direction $\Leftarrow$}
Suppose $L\colon  (\A_0,E_0)\da (\A_1,E_1)$ is a morphism of Dirac Lie groups, with underlying 
Lie group homomorphism $\Phi\colon H_0\to H_1$. We will show how to 
construct $\k$.

\begin{proposition}
Suppose that $$L\colon (\A_0,E_0)\dasharrow(\A_1,E_1)$$ is a morphism of multiplicative Manin pairs over the morphism $\Phi:H_0\to H_1$ of Lie groups. Then there exists a unique $H$-equivariant morphism $$\mf{k}\colon (\mf{d}_0,\g_0,\h_0)_{\beta_0}\dasharrow (\mf{d}_1,\g_1,\h_1)_{\beta_1}$$ of Dirac Manin triples such that $L$ results from the construction of Section~\ref{sec:dir=>}.
\end{proposition}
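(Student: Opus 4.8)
The plan is to run the construction of Section~\ref{sec:dir=>} in reverse: from $L$ one first produces a morphism $\mf{l}$ between the reduced Dirac Manin triples $(\q_i,\g_i,\mf{r}_i)_{\gamma_i}$, and then transports it along the maps $f_i\colon\q_i\to\dd_i$ to obtain $\k$. First I would restrict $L$ to the group unit: using the trivialization morphisms $T_i\colon(\A_i,E_i)\da(\q_i,\g_i)$ of~\eqref{eq:trivial} (morphisms of Manin pairs) and the reduction morphisms $S_i$ of~\eqref{eq:S}, set $\mf{l}:=T_1\circ L\circ T_0^\top$. Being a clean composition of morphisms of Manin pairs over $\pt\to\pt$, $\mf{l}$ is a Lagrangian Lie subalgebra $\mf{l}\subseteq\q_1\times\ol{\q}_0$ with $\ker(\mf{l})\cap\g_0=0$ and $\mf{l}\circ\g_0=\g_1$. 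Unwinding the composition, $\mf{l}$ is the union over $h$ of the images of the fibres $L_h$ under the (metric-preserving) trivialization; since each such image is Lagrangian in $\q_1\times\ol{\q}_0$ and is contained in the Lagrangian $\mf{l}$, it must equal $\mf{l}$. Hence $L$ is \emph{constant} in the trivialization $\A_i\cong H_i\times\q_i$, i.e.\ $L\cong\on{gr}(\Phi)\times\mf{l}$, and in particular $\mf{l}$ is the image of $L_{(e,e)}$.

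Next comes the point of real substance: showing that $\mf{l}$ is in fact a morphism of Dirac Manin triples $(\q_0,\g_0,\mf{r}_0)_{\gamma_0}\da(\q_1,\g_1,\mf{r}_1)_{\gamma_1}$, the remaining condition being $\mf{r}_0=\mf{r}_1\circ\mf{l}$; equivalently, $\mf{l}$ must respect the polarizations $\q_i=\g_i\oplus\mf{r}_i$, i.e.\ be the direct sum of its intersections with $\g_1\times\g_0$ and $\mf{r}_1\times\mf{r}_0$. This does \emph{not} follow from $\mf{l}$ being merely a Lagrangian Lie subalgebra with $\mf{l}\circ\g_0=\g_1$; it is here that one must use that $L$ is a morphism of $\ca{CA}$-\emph{groupoids}, not just a Courant morphism. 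Concretely, the trivialization identifies $\ker(t_{\A_i})$ with $H_i\times\mf{r}_i$ (Proposition~\ref{prop:trivial}), and since $L$ is a subgroupoid of $\A_1\times\ol{\A}_0$ the target map sends $L$ into its unit space $\mf{l}\cap(\g_1\times\ol{\g}_0)=\on{gr}(\psi)$; combining this with the $\on{gr}(\Phi)$-invariance of $L$ (again because $L$ is a subgroupoid, via conjugation) forces $\mf{l}$ to be built from $\on{gr}(\psi)$ together with the $\ker(t)$-data alone. Granting $\mf{r}_0=\mf{r}_1\circ\mf{l}$, the computation of Remark~\ref{rem:nondeg}, as used in the proof of Proposition~\ref{prop:lcons}, shows that $\mf{l}$ is the direct sum of the graph of a Lie algebra homomorphism $\psi\colon\g_1\to\g_0$ and the graph of $\kappa=\psi^*\colon\mf{r}_0\to\mf{r}_1$.

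Now put $\phi:=d\Phi\colon\h_0\to\h_1$ and let $\k:=(f_1\times f_0)(\mf{l})\subseteq\dd_1\times\dd_0$; using $f_i|_{\g_i}=\on{id}$ and the identity $\phi\circ f_0=f_1\circ\kappa$ (verified exactly as at the end of the proof of Proposition~\ref{prop:lcons}), $\k$ is the direct sum of the graphs of $\psi$ and $\phi$. Since $\k$ is the image of the Lie subalgebra $\mf{l}$ under the Lie algebra homomorphism $f_1\times f_0$ of Proposition~\ref{prop:atlast}, it is a Lie subalgebra; it is $(\beta_1-\beta_0)$-coisotropic because $\beta_i^\sharp=f_i\circ f_i^*$ carries $\on{ann}(\k)$ into $\k$ via $\on{ann}(\mf{l})$, which the $\gamma_i$ identify with $\mf{l}$ as $\mf{l}$ is Lagrangian; the relations $\g_1=\k\circ\g_0$ and $\h_0=\h_1\circ\k$ are immediate from the direct-sum-of-graphs form; and $\k$ is $H$-equivariant since $\phi=d\Phi$ intertwines the adjoint actions and $\psi$, read off from $\mf{l}=L_{(e,e)}$, is invariant under conjugation by $\on{gr}(\Phi)$. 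Finally, applying the construction of Section~\ref{sec:dir=>} to this $\k$ produces, by Proposition~\ref{prop:lcons}, the same $\mf{l}$, and then the morphism $L'=S_1^\top\circ(R_\Phi\times(\mf{l}\oplus\mf{l}))\circ S_0$; by Lemma~\ref{lem:Lconstr}, $L'$ has underlying map $\Phi$ and trivialization $\on{gr}(\Phi)\times\mf{l}$, which by the constancy established above coincide with those of $L$. As the trivialization $\A_1\times\ol{\A}_0\xra{\cong}(H_1\times H_0)\times(\q_1\times\ol{\q}_0)$ is an isomorphism of Courant algebroids, a subbundle is determined by its image, so $L=L'$. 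Uniqueness is then clear: any $\k$ producing $L$ has underlying Lie group homomorphism $\Phi$, hence $\phi=d\Phi$, and reproduces $\mf{l}=L_{(e,e)}$ via Proposition~\ref{prop:lcons}, hence the same $\psi$, so $\k$ is determined; compatibility with composition follows by the same arguments. I expect the claim that $\mf{l}$ is a morphism of Dirac Manin triples — concretely, the polarization compatibility $\mf{r}_0=\mf{r}_1\circ\mf{l}$ — to be the one genuine obstacle, since it is precisely where the full $\ca{CA}$-groupoid structure of $L$ (beyond being a Courant morphism and a morphism of Manin pairs) is needed; everything else is bookkeeping with graphs of linear maps and the identities relating $f_i,\beta_i,\gamma_i$.
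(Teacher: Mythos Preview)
Your overall architecture matches the paper's: extract $\mf{l}$ from $L$ by trivialization, then produce $\k$. But the step ``let $\k:=(f_1\times f_0)(\mf{l})$'' is wrong, and the claimed identification of this image with the direct sum of $\on{gr}(\psi)$ and $\on{gr}(\phi)$ fails. The point is that $f_i|_{\mf{r}_i}\colon\mf{r}_i\to\h_i$ need not be surjective: for instance if $\beta_0=0$ then $f_0$ is projection to $\g_0$ (Example~\ref{ex:specialcases}(i)), so $f_0(\mf{r}_0)=0$, and $(f_1\times f_0)(\on{gr}(\kappa))\subseteq\h_1\times 0$ misses $\on{gr}(\phi)$ entirely whenever $\h_0\neq 0$. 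Your appeal to the identity $\phi\circ f_0=f_1\circ\kappa$ ``verified exactly as at the end of the proof of Proposition~\ref{prop:lcons}'' is circular: that argument uses $f(\mf{l})\subseteq\k$ with $\k$ already given, precisely what you are trying to establish. Even granting the identity, it only tells you that the image lies \emph{inside} $\on{gr}(\phi)$, not that it equals it.

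The paper builds $\k$ differently: it observes that $L\cap E$ is a vacant $\ca{LA}$-subgroupoid of $E=E_1\times E_0$ over $\on{gr}(\Phi)$, and invokes Proposition~\ref{prop:VLAg1} to obtain an $H_0$-equivariant Lie algebra triple $(\k,\g',\h')$ with $\k\subseteq\dd$. This is the correct $\k$; the paper then shows $f(\mf{l})\subseteq\k$ (an inclusion, not an equality) and uses Lemma~\ref{lem:small} for coisotropy. If you want to avoid the $\ca{LA}$-groupoid machinery, you could simply \emph{define} $\k$ as the direct sum of $\on{gr}(\psi)$ and $\on{gr}(d\Phi)$, but then your ``image of a Lie subalgebra under a Lie homomorphism'' argument for $\k$ being a Lie subalgebra no longer applies, and you need another mechanism---essentially the $H$-equivariance encoded in $L\cap E$---to see that the cross-brackets close up. That is the content you are missing.
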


\begin{proof}
We begin by trivializing the bundle $L$. By definition of a morphism of Manin pairs, any element of $(E_1)_{\Phi(h_0)}$ is $L_{(\Phi(h_0),h_0)}$-related to a unique element of $(E_0)_{h_0}$. This shows that $L\cap E$ is of constant rank equal to $\on{rank}(E_1)$, 
and hence is a $\ca{VB}$-subgroupoid. Let $\mf{l}=L_e$ be the fiber at the identity. The space of 
objects of $L$ is $L^{(0)}=L\cap A^{(0)}=\mf{l}\cap \g$, and coincides with the space of 
objects of $L\cap E$. Thus $L\cap E$ is a wide subgroupoid of $L$, and $L=(L\cap E) \oplus \ker(t|_L)$.  
It follows that the projection $j\colon \A\to E$ along $\ker(t)$ restricts to a projection $j\colon L\to L\cap E$, and that the map $\A\to \q,\ x\mapsto j(x)^{-1}\circ x$ restricts to a trivialization $L\to \mf{l}$. 
Equivalently, the isomorphism $\A \xra{\cong} H\times \q$ restricts to an isomorphism
$L \xra{\cong} \on{gr}(\Phi)\times\mf{l}$, with $L\cap E\xra{\cong} \on{gr}(\Phi)\times (\mf{l}\cap\g)$
and $L\cap \ker(t)\xra{\cong} \on{gr}(\Phi)\times (\mf{l}\cap\mf{r})$. 

Since $L$ is a Dirac structure along $\on{gr}(\Phi)$, it follows that $\mf{l}\subset \q$ is a Lie subalgebra. 
Letting $T_i\colon(\A_i,E_i)\da (\q_i,\g_i)$ be the trivializations described in Proposition~\ref{prop:trivial}, 
we have shown that 
\begin{equation*}\label{eq:trivCompMorph}\xymatrix{
(\A_0,E_0)\ar@{-->}^L[r]\ar@{-->}_{T_0}[d]&(\A_1,E_1)\ar@{-->}^{T_1}[d]\\
(\q_0,\g_0)\ar@{-->}^{\mf{l}}[r]&(\q_1,\g_1)
}\end{equation*}
is a commutative diagram of morphisms of Manin pairs. From $\mf{l}=\mf{l}\cap \g\oplus 
\mf{l}\cap\mf{r}$, and the fact that $\mf{l}\cap \g$ is the graph of a linear map 
$\psi\colon\g_1\to \g_0$, it follows that $\mf{l}\cap\mf{r}$ is the graph of a linear 
map $\kappa=\psi^*\colon \mf{r}_0\to \mf{r}_1$. Hence $\mf{l}$ defines a morphism of Dirac Manin triples 
$\mf{l}\colon (\q_0,\g_0,\mf{r}_0)_{\gamma_0}\da (\q_1,\g_1,\mf{r}_1)_{\gamma_1}$. 

We have to verify that $L$ is recovered from $\mf{l}$ by the construction from the last subsection. 
As in the proof of Proposition~\ref{prop:revred}, consider the multiplicative Courant relations
\[R_i:\A_i\da \T H_i\times \A_i,\ \ \
 D_i^\top:\A_i\da \ol{\A_i}\times \A_i\]
for $i=0,1$,  where $\A_i\times\ol{\A_i}$ is taken to be the pair groupoid. Thus 
\[ v+\alpha\sim_{R_i}(y,x) \Leftrightarrow x-y=\a^*\alpha,\ \  v=\a(x),\ \ \ \ 
x\sim_{D_i^\top} (x_1,x_2) \Leftrightarrow x=x_1^{-1}\circ x_2.\]
We also have the identity morphisms $(\TH_i)_\Delta\colon \TH_i\da \TH_i$, the 
standard lift $R_\Phi:\T H_0\da\T H_1$ of $\Phi$ defined by \eqref{eq:standardLift}, and the Courant morphism $\mf{l}=L_e\colon \q_0\da\q_1$. These compose to form the following diagram of multiplicative Courant relations,

$$\xymatrix{
\A_0\ar@{-->}[rr]^L\ar@{-->}[d]_{R_0}&&\A_1\ar@{-->}[d]^{R_1}&\\
\T H_0\times\A_0\ar@{-->}[rr]^{R_\Phi\times L}\ar@{-->}[d]_{(\T H_0)_\Delta\times D_0^\top}&&
\T H_1\times\A_1\ar@{-->}[d]^{(\T H_1)_\Delta\times D_1^\top}\\
\T H_0\times(\ol{\A_0}\times \A_0)\ar@{-->}[rr]^{R_\Phi\times L\times L}\ar@{-->}[d]_{(\T H_0)_\Delta\times T_0\times T_0}&&
\T H_1\times(\ol{\A_1}\times \A_1)\ar@{-->}[d]^{(\T H_1)_\Delta\times T_1\times T_1}\\
\T H_0\times(\ol{\q}_0\oplus \q_0)\ar@{-->}[rr]^{R_\Phi\times(\mf{l}\oplus\mf{l})}&&\T H_1\times(\ol{\q}_1\oplus{\q_1})
}$$
Let $Q_i\colon \A_i\da \TH_i\times (\ol{\q}_i\oplus \q_i)$ be the vertical composition of relations,
as in Proposition~\ref{prop:revred}. In the diagram above, the bottom square commutes, since the trivialization of $\A$ 
restricts to the trivialization $L\to \on{gr}(\Phi)\times\mf{l}$. 
The middle square commutes since $L:\A_0\da \A_1$ is a multiplicative Courant morphism, and $D_i$ is the graph of division for the groupoid $\A_i$. Finally, the top square commutes since both $R_1\circ L$ and $(R_\Phi\times L)\circ R_0$
are the morphism $\A_0\da \TH_1\times \A_1$, where $x$ is related to $(v+\alpha,y)$ if and only if
$x\sim_L y+\a^*\alpha, v=\a(y)$. We conclude that
\[ (R_\Phi\times(\mf{l}\oplus\mf{l}))\circ Q_0=Q_1\circ L.\] 
Now, as in Proposition~\ref{prop:revred}, $\ker{Q_1}=0$. Consequently, $Q_1^\top\circ Q_1=(\A_1)_\Delta$, the identity relation for $\A_1$. Hence
 \begin{equation}\label{eq:Lred}Q_1^\top\circ \big(R_\Phi\times(\mf{l}\oplus\mf{l})\big)\circ Q_0=Q_1^\top\circ Q_1\circ L= L.\end{equation}
 Let $Q=Q_1\times Q_0:\A\da\T H\times(\ol\q\oplus\q)$, and $C=\on{ran}(Q)$. Since $\ker{Q}=0$, Lemma~\ref{lem:kerZeroLem} and \eqref{eq:Lred} imply that
\[ L=(R_\Phi\times (\mf{l}\oplus \mf{l}))_C\subseteq \A=(\TH\times (\ol{\dd}\oplus\dd))_C,\]
%
as claimed.
 
It remains to construct $\mf{k}\colon (\dd_0,\g_0,\h_0)_{\beta_0}\da(\dd_1,\g_1,\h_1)_{\beta_1}$.
Recall again that $L\cap E$ is a vacant $\ca{LA}$-groupoid. By Proposition~\ref{prop:VLAg1}, it 
corresponds to a $\on{gr}(\Phi)\cong H_0$-equivariant Lie algebra triple $(\mf{k},\g',\h')$.
Since $L\cap E$ is a subgroupoid of the vacant $\ca{LA}$-groupoid $E$, we have $\g'\subseteq\g$, $\h'\subseteq\h$, and $\mf{k}\subseteq\dd$ is a $\on{gr}(\Phi)$-invariant Lie subalgebra. Here $\h'$ is the Lie algebra of the base
$H'=\on{gr}(\Phi)\subseteq H$, i.e.~ $\h'=\on{gr}(\phi)$ with $\phi=\d_e\Phi$. On the other hand, 
we saw that $\g'=\mf{l}\cap \g$ is the graph of a Lie algebra homomorphism $\psi\colon\g_1\to \g_0$. 
Since $L\cap E$ is supported on $\on{gr}(\Phi)$ and $t(L\cap E)=(L\cap E)^{(0)}=\mf{l}\cap\g$, it follows from \eqref{eq:deff} that $f(\mf{l})\subseteq\mf{k}$. Then the fact that $\mf{k}\subseteq\dd$ is co-isotropic 
is a special case of Lemma~\ref{lem:small} below.

We conclude that $\mf{k}\subseteq\dd$ defines an $H$-equivariant morphism of Dirac Manin triples $\mf{k}\colon (\dd_0,\g_0,\h_0)_{\beta_0}\da(\dd_1,\g_1,\h_1)_{\beta_1}$ over $\Phi:H_0\to H_1$.
Finally, we verify that the image of $\mf{k}\ltimes\on{ann}(\mf{k})$ under the projection $\g\ltimes\dd^*_\beta\to\q$ is equal to $\mf{l}$.
Since $f(\mf{l})\subseteq \k$, we have $f^*(\on{ann}(\k))\subseteq \mf{l}^\perp=\mf{l}$. 
Since the projection coincides with $f^*$ on $\dd^*$, this shows that the image 
of $\on{ann}(\k)$ is contained in $\mf{l}$. 
On the other hand, the image of $\k\cap (\g\ltimes\dd^*_\beta)$ equals the image of 
$\on{gr}(\psi)^\top\subseteq \mf{l}$. 
This shows that the reduced Lagrangian is contained in $\mf{l}$, and hence coincides with $\mf{l}$.
\end{proof}

The proof used: 

\begin{lemma}\label{lem:small}
Suppose $\mathsf{l}\colon V\to V'$ is a linear map taking 
$\beta\in S^2V$ to $\beta'\in S^2V'$. Suppose $W\subseteq V,\ W'\subseteq V'$ with 
$\mathsf{l}(W)\subseteq W'$. If $W\subseteq V$ is $\beta$-coisotropic, then 
$W'$ is $\beta'$-coisotropic.\end{lemma}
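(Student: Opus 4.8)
The plan is to rephrase both instances of ``coisotropic'' in terms of the sharp maps $\beta^\sharp\colon V^*\to V$ and $(\beta')^\sharp\colon (V')^*\to V'$, and then to invoke the functoriality of $\sharp$ under pushforward. First I would record how $\sharp$ transforms along $\mathsf{l}$. The hypothesis that $\mathsf{l}$ carries $\beta$ to $\beta'$ means precisely that $\beta'(\mu',\nu')=\beta(\mathsf{l}^*\mu',\mathsf{l}^*\nu')$ for all $\mu',\nu'\in(V')^*$. Pairing with an arbitrary $\nu'$ and using the definition of $\sharp$ then yields the identity
\[ (\beta')^\sharp=\mathsf{l}\circ\beta^\sharp\circ\mathsf{l}^*\colon (V')^*\longrightarrow V'. \]

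Next I would dualize the inclusion $\mathsf{l}(W)\subseteq W'$: for any $\mu'\in\on{ann}(W')$ and any $w\in W$ one has $\langle\mathsf{l}^*\mu',w\rangle=\langle\mu',\mathsf{l}(w)\rangle=0$, so $\mathsf{l}^*(\on{ann}(W'))\subseteq\on{ann}(W)$.

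The proof then concludes in one line: given $\mu'\in\on{ann}(W')$, the covector $\mathsf{l}^*\mu'$ lies in $\on{ann}(W)$, hence $\beta^\sharp(\mathsf{l}^*\mu')\in W$ since $W$ is $\beta$-coisotropic, and therefore $(\beta')^\sharp(\mu')=\mathsf{l}\bigl(\beta^\sharp(\mathsf{l}^*\mu')\bigr)\in\mathsf{l}(W)\subseteq W'$. This shows $(\beta')^\sharp(\on{ann}(W'))\subseteq W'$, i.e.\ that $W'$ is $\beta'$-coisotropic. There is no real obstacle here; the only point demanding a moment's care is the variance bookkeeping in the displayed identity $(\beta')^\sharp=\mathsf{l}\circ\beta^\sharp\circ\mathsf{l}^*$, which is a routine pairing computation, and the observation $\mathsf{l}^*(\on{ann}(W'))\subseteq\on{ann}(W)$, which is immediate.
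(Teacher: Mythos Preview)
Your proof is correct and is essentially the same as the paper's: both hinge on the observation $\mathsf{l}^*(\on{ann}(W'))\subseteq\on{ann}(W)$. The only cosmetic difference is that the paper phrases coisotropy as ``$\beta$ vanishes on $\on{ann}(W)$'' (so that $\beta'(\mu',\nu')=\beta(\mathsf{l}^*\mu',\mathsf{l}^*\nu')=0$ is immediate), whereas you unwind the equivalent $\beta^\sharp$ formulation via the identity $(\beta')^\sharp=\mathsf{l}\circ\beta^\sharp\circ\mathsf{l}^*$.
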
\begin{proof} Indeed, $\mathsf{l}(W)\subseteq W'$ implies
$\mathsf{l}^*(\on{ann}(W'))\subseteq \on{ann}(W)$. Thus, if $\beta$ vanishes on 
$\on{ann}(W)$ then $\beta'=\mathsf{l}(\beta)$ vanishes on $\on{ann}(W')$. \end{proof}

\section{Explicit formulas}\label{sec:explF}
\subsection{The $\ca{CA}$-groupoid structure in terms of the trivialization}
Let $(\A,E)$ be a Dirac Lie group structure on $H$. In this Section we will work out the 
formulas for the Courant algebroid structure and $\ca{VB}$-groupoid structure on $\A$ in terms of the 
trivialization $\A\xra{\cong}H\times\q$, obtained in Proposition~\ref{prop:trivial}. 

We will need some background from the theory of matched pairs. Given a Lie algebra triple $(\dd,\g,\h)$, one obtains actions of of $\h$ on $\g\cong\dd/\h$ and $\g$ on $\h\cong\dd/\g$, satisfying the compatibility conditions of a \emph{matched pair} $\g\bowtie \h$ of Lie algebras.  Moreover, letting $p_\h\in\End(\dd)$ be the projection to $\h$ along $\g$, the $\g$-action on $\h$ extends to a $\dd$-action 
on $\h$ by 
\[ \nu\mapsto p_\h([\xi,\nu]),\ \ \ \xi\in\dd,\ \ \nu\in\h.\]
Similarly, an $H$-equivariant Lie algebra triple $(\dd,\g,\h)$ defines a linear action 
$\bullet$ of the group $H$ on $\g=\dd/\h$ and a Lie algebra action 
$\varrho\colon \g\to \mf{X}(H)$, satisfying the compatibility conditions of a \emph{matched pair $\g\bowtie H$ between a Lie group and 
a Lie algebra}. 
These actions are given by 
\begin{equation}\label{eq:hbulvrho}
  h\bullet\xi=(1-p_\h)\Ad_h \xi.\ \ h\in H,\ \xi\in\g
\end{equation}  
and $\iota(\varrho(\xi))\theta^R_h=p_\h(\Ad_h\xi)$
for $h\in H,\ \xi\in\g$. Furthermore, the $\g$-action on $H$ combines with 
the $\h$-action $\nu\to \nu^L$ to a Lie algebra action $\varrho\colon \dd\to\mf{X}(H)$, 
by the same formula:
\begin{equation}
 \iota(\varrho(\zeta))\theta^R_h=p_\h(\Ad_h\zeta),\ \ h\in H,\ \zeta\in\dd.
 \end{equation}
See Appendix~\ref{app:mat} for more details.
One has the following extension of Proposition~\ref{prop:VLAg1}.
\begin{proposition}\label{prop:VLAg2}
There is a 1-1 correspondence between 
\begin{itemize}
\item[(i)] Vacant $\ca{LA}$-groupoids $E\rra \g$ over groups $H\rra \pt$, 
\item[(ii)] $H$-equivariant Lie algebra triples $(\dd,\g,\h)$, and
\item[(iii)] Matched pairs $\g\bowtie H$.
\end{itemize}
Furthermore, if $x\in E_h$ with $s(x)=\xi$ then $t(x)=h\bullet\xi,\ \ \a(x)=\varrho(\xi)_h$. 
\end{proposition}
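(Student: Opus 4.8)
The equivalence of (i) and (ii) is Proposition~\ref{prop:VLAg1}, so it remains to add (iii) and to prove the stated formulas for $s,t,\a$; I would treat the formulas first, since they also feed into the rest. Given an $H$-equivariant Lie algebra triple $(\dd,\g,\h)$, Proposition~\ref{prop:dress}(b) realizes the associated vacant $\ca{LA}$-groupoid explicitly as
\[ E=\{(v,\xi_t,\xi_s)\mid v\in T_hH,\ \xi_t,\xi_s\in\g,\ \Ad_h\xi_s-\xi_t=\iota(v)\theta^R_h\},\]
with source $s(v,\xi_t,\xi_s)=\xi_s$, target $t(v,\xi_t,\xi_s)=\xi_t$, and anchor $\a(v,\xi_t,\xi_s)=v$. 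If $x=(v,\xi_t,\xi)\in E_h$ has $s(x)=\xi$, I apply the projection $1-p_\h$ (onto $\g$ along $\h$) to the defining relation $\Ad_h\xi-\xi_t=\iota(v)\theta^R_h$; since $\theta^R_h$ is $\h$-valued the right hand side drops out and we obtain $\xi_t=(1-p_\h)\Ad_h\xi=h\bullet\xi$ by \eqref{eq:hbulvrho}. Applying $p_\h$ instead gives $\iota(v)\theta^R_h=p_\h(\Ad_h\xi)=\iota(\varrho(\xi)_h)\theta^R_h$, and since $\theta^R_h\colon T_hH\to\h$ is an isomorphism this forces $v=\varrho(\xi)_h$. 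This proves the final assertion.

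For the direction (ii)$\Rightarrow$(iii), formula \eqref{eq:hbulvrho} defines $\bullet\colon H\times\g\to\g$ and $\varrho\colon\g\to\mf{X}(H)$. The map $\bullet$ is a genuine action because $\Ad_h$ preserves $\h$ (part of the equivariance of the triple), so that $(1-p_\h)\Ad_{h_1}p_\h=0$ and hence $(h_1h_2)\bullet\xi=(1-p_\h)\Ad_{h_1}(h_2\bullet\xi)=h_1\bullet(h_2\bullet\xi)$; that $\varrho$ is a Lie algebra action, and that $(\bullet,\varrho)$ satisfies the remaining compatibility axioms of a matched pair $\g\bowtie H$, is the classical matched-pair bookkeeping, which I would carry out in Appendix~\ref{app:mat}. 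Conversely, from a matched pair $\g\bowtie H$ I would reconstruct the triple by the standard ``double'' construction: set $\dd=\g\oplus\h$ with $\h$ the Lie algebra of $H$, declare $\g,\h$ to be Lie subalgebras, take the mixed bracket to be assembled from the infinitesimal $\h$-action on $\g$ (obtained by differentiating $\bullet$) and the $\g$-action on $\h$ obtained by linearizing $\varrho$ at the fixed point $e$ (note $\iota(\varrho(\xi)_e)\theta^R_e=p_\h\xi=0$), and define the $H$-action on $\dd$ to be the ordinary adjoint action on $\h$ and $\xi\mapsto h\bullet\xi+\iota(\varrho(\xi)_h)\theta^R_h$ on $\g$. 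One then verifies that the matched-pair axioms are exactly what makes $\dd$ obey the Jacobi identity and $\Ad$ an action by automorphisms integrating $\ad|_\h$, and that the two constructions are mutually inverse.

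The real work, and the main obstacle, is precisely this last point: matching the list of matched-pair compatibility identities, term by term, against the Jacobi identity for $\dd=\g\bowtie\h$ and the automorphism property of $\Ad_h$. This is the (by now classical) theory of matched pairs in the sense of Mackenzie, and I would organize it in Appendix~\ref{app:mat} rather than in the body. Granting that, the displayed formulas for $s$, $t$ and $\a$ follow from Proposition~\ref{prop:dress}(b) via the $\g/\h$-decomposition carried out above, which completes the three-way correspondence.
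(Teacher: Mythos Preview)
Your proposal is correct and follows essentially the same route as the paper, which defers all the content to Appendix~\ref{app:mat}. The only organizational difference is that the paper establishes $(i)\Leftrightarrow(iii)$ directly in Proposition~\ref{prop:VLAisMP} by \emph{defining} $h\bullet\xi$ and $\varrho(\xi)_h$ via the equation $(\a(x),t(x),s(x))=(\varrho(\xi)_h,h\bullet\xi,\xi)$ and then reading the matched-pair axioms off the subgroupoid and subalgebroid properties of the image of $(\a,t,s)$, whereas you first invoke $(i)\Leftrightarrow(ii)$ and then derive the same formulas by projecting the defining relation of the explicit $E$ from Proposition~\ref{prop:dress}(b) onto $\g$ and $\h$. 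Your projection argument is exactly the computation hidden in Proposition~\ref{prop:Echar}, so the two packagings are equivalent.
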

Here, the equivalence $(i)\Leftrightarrow (iii)$ was observed by Mackenzie \cite{Mackenzie:2003wj,Mackenzie:2008tz}.
Again, further details are given in Appendix~\ref{app:mat}. 

Let $(\q,\g,\mf{r})_\gamma$ 
and $f\colon \q\to \g$ be as in Section~\ref{subsec:genman}. The action $\bullet$ of $H$ on $\g$
defines an action on the dual space $\g^*\cong \mf{r}^\perp\subseteq\q$, which we again denote by $\bullet$. 
Recalling that 
$f^*$ takes the $\Ad_h$-invariant subspace $\g^*\cong \on{ann}(\h)$ isomorphically to $\mf{r}^\perp$, this action is characterized by 
\begin{equation}\label{eq:bulletonrperp} 
h\bullet f^*(\mu)=f^*(\Ad_h\mu),\ \ \ h\in H,\ \mu\in\on{ann}(\h).
\end{equation} 
The restriction of the metric on $\q$ to the subspace $\mf{r}^\perp$ is invariant 
under the $H$-action:
\begin{equation}\label{eq:note} \l h\bullet\nu,\ h\bullet\nu'\r=\l\nu,\nu'\r,\ \ \ 
\nu,\nu'\in\mf{r}^\perp,\ h\in H.
\end{equation}
This follows by writing $\nu=f^*(\mu),\nu'=f^*(\mu')$ and using the $H$-equivariance of 
$\beta^\sharp=f\circ f^*$. 
Write $p_{\mf{r}}\in\End(\q)$ for the projection to $\mf{r}$ along $\g$. 
We are in a position to give explicit structural formulas for Dirac Lie groups.
\begin{theorem}\label{th:mainB}
The Dirac Lie group structure defined by the $H$-equivariant Dirac Manin triple $(\dd,\g,\h)_{\beta}$ is 
given by 
\[ (\A,\,E)=(H\times\q,\,H\times\g).\]
Here  $\A$ carries the structure of an action Courant algebroid, for the 
action $\varrho^\q=\varrho\circ f\colon \q\to \mf{X}(H)$, 
\begin{equation}\label{eq:qact}\ \ \ \
\iota(\varrho^\q(\zeta))\theta^R_h=p_\h(\Ad_h f(\zeta)) .\end{equation} 
The $\ca{VB}$-groupoid structure has source and target maps $s,t\colon \A\to \g$, 
%
\[ s(h,\zeta)=p_{\mf{r}}^*(\zeta),\ \ \ \ t(h,\zeta)=h\bullet(1-p_{\mf{r}})(\zeta);\]
the inclusion of units is the map $\g\to \A,\ \ \xi\mapsto (e,\xi)$, 
and the multiplication of composable elements is given by 
\[(h_1,\zeta_1)\circ (h_2,\zeta_2)=(h_1h_2,\zeta_2+h_1^{-1}\bullet (1-p_{\mf{r}}^*)\zeta_1 ).\]
\end{theorem}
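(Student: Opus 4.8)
The plan is to read off every piece of structure from the reduction presentation $\A=C/C^\perp$ of Proposition~\ref{prop:ciscois}, using the explicit trivializing representatives of Remark~\ref{rem:trivial}: under the identification $\A\cong H\times\q$, the element $(h,\zeta)$ is the class of the point $(v,\xi,\zeta)\in C\subseteq\TH\times(\ol\q\oplus\q)$ whose $T^*H$-component vanishes and for which $\iota(v)\theta^R_h=p_\h(\Ad_h f(\zeta))$ and $\xi=(1-p_\h)\Ad_h f(\zeta)$. The statements $(\A,E)=(H\times\q,H\times\g)$, that $\A$ is an action Courant algebroid, and that the metric on $\q$ is the dual metric $\gamma$, are exactly Proposition~\ref{prop:trivial}; it remains to identify the maps. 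For the anchor: by part~(\ref{it:d}) of Proposition~\ref{prop:trivial} the action $\q\to\mf{X}(H)$ is the restriction of the anchor to constant sections, and since the anchor of $C/C^\perp$ is induced from that of $\TH\times(\ol\q\oplus\q)$, which is projection onto the $TH$-summand (the factor $\ol\q\oplus\q$ sitting over a point), its value at $\zeta$ over $h$ is the vector $v$ above; by the formula $\iota(\varrho(\eta))\theta^R_h=p_\h(\Ad_h\eta)$ for the extended action $\varrho\colon\dd\to\mf{X}(H)$, applied with $\eta=f(\zeta)$, this is $\varrho(f(\zeta))_h$, giving $\varrho^\q=\varrho\circ f$ and \eqref{eq:qact}.

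For the source, target and unit maps I would compute directly in $C/C^\perp$, using that the groupoid structure on $\TH\times(\ol\q\oplus\q)$ is the product of the group $TH\rra\pt$, the symplectic groupoid $T^*H\rra\h^*$ and the pair groupoid $\ol\q\oplus\q\rra\q$, and that $\A^{(0)}=C^{(0)}/(C^\perp)^{(0)}=(\h^*\times\q)/\dd^*\cong\g$ via the complement $0\times\g$ (Proposition~\ref{prop:ciscois}). Applying source and target of $\TH\times(\ol\q\oplus\q)$ to the representative $(v,\xi,\zeta)$ gives $(0,\zeta)$ and $(0,\xi)$ in $\h^*\times\q$; projecting to $\g$ along $\dd^*$ amounts to taking the $\g$-component for the decomposition $\q=\g\oplus\mf{r}^\perp$, which is legitimate because $\g$ is Lagrangian in $\q$ and $f^*(\on{ann}(\h))=\mf{r}^\perp$. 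This yields $s(h,\zeta)=p_{\mf{r}}^*(\zeta)$ and $t(h,\zeta)=(1-p_\h)\Ad_h f(\zeta)$; since $f$ is the identity on $\g$ and carries $\mf{r}$ into $\h$, and $\Ad_h$ preserves $\h$, we get $(1-p_\h)\Ad_h f(\zeta)=(1-p_\h)\Ad_h((1-p_{\mf{r}})\zeta)=h\bullet(1-p_{\mf{r}})\zeta$ by \eqref{eq:hbulvrho}. Finally, the unit at $\xi\in\g$ is the class of the unit $(0,\xi,\xi)$ of $\TH\times(\ol\q\oplus\q)$, which in the trivialization is $(e,\xi)$.

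The multiplication is the main obstacle. To compute $(h_1,\zeta_1)\circ(h_2,\zeta_2)$ in $C/C^\perp$ (assuming composability, $p_{\mf{r}}^*\zeta_1=h_2\bullet(1-p_{\mf{r}})\zeta_2$), one must choose lifts to $C$ that are already composable in $\TH\times(\ol\q\oplus\q)$: keeping the representative $(v_2,\xi_2,\zeta_2)$, one adjusts the representative of $(h_1,\zeta_1)$ by an element of $C^\perp$ — a combination of the spanning sections $\psi(\mu)$, $\mu\in\dd^*$ — so that its source matches the target of $(v_2,\xi_2,\zeta_2)$; using $f^*(\on{ann}(\h))=\mf{r}^\perp$ and \eqref{eq:bulletonrperp} one finds the required $\mu\in\on{ann}(\h)$ is characterized by $f^*\mu=-h_1\bullet(1-p_{\mf{r}}^*)\zeta_1$. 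One then multiplies componentwise — trivially in $T^*H$, in the group $TH$ via $\iota(v_1v_2)\theta^R_{h_1h_2}=\iota(v_1)\theta^R_{h_1}+\Ad_{h_1}\iota(v_2)\theta^R_{h_2}$, and by concatenation in the pair groupoid — and pushes the product back down to $\A$. Simplifying, using that $\bullet$ is a genuine $H$-action (as $\Ad_h$ preserves $\h$), the characterization \eqref{eq:bulletonrperp} of $\bullet$ on $\mf{r}^\perp$, and the identities $p_{\mf{r}}^*(1-p_{\mf{r}})=1-p_{\mf{r}}$ and $(1-p_{\mf{r}}^*)(1-p_{\mf{r}})=0$, gives $(h_1,\zeta_1)\circ(h_2,\zeta_2)=(h_1h_2,\ \zeta_2+h_1^{-1}\bullet(1-p_{\mf{r}}^*)\zeta_1)$. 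As a check I would verify that the source and target of the right-hand side equal $s(h_2,\zeta_2)$ and $t(h_1,\zeta_1)$ — the first because $1-p_{\mf{r}}^*$ has image in $\ker p_{\mf{r}}^*$, the second using the composability constraint and that $\bullet$ is an action.

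A leaner alternative for the multiplication step, avoiding the componentwise computation in $\TH\times(\ol\q\oplus\q)$: the sub-bundle $E=H\times\g$ is the vacant $\ca{LA}$-groupoid of $(\dd,\g,\h)$, whose groupoid structure is given explicitly by the matched pair $\g\bowtie H$ of Proposition~\ref{prop:VLAg2}; one can then extend the multiplication to $\A$ by writing each $x\in\A_h$ as $x=y\circ\zeta$ with $y\in E_h$ and $\zeta\in\q=\A_e$ (the decomposition from the proof of Proposition~\ref{prop:trivial}), using associativity, the source/target formulas already established, and the fact that a $\ca{VB}$-groupoid multiplication is bilinear on composable pairs.
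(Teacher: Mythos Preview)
Your approach is correct but genuinely different from the paper's. You work throughout in the reduction model $C/C^\perp$, reading off the anchor, source, target and units from explicit representatives (Remark~\ref{rem:trivial}) and then computing the multiplication by lifting to composable elements of $C$, multiplying in $\TH\times(\ol\q\oplus\q)$, and pushing back down. The paper instead works intrinsically in $\A$: for the source and target it factors $(h,\zeta)=j(h,\zeta)\circ(e,\zeta)$ with $j(h,\zeta)\in E$ and reduces to the known vacant $\ca{LA}$-groupoid structure on $E$; for the multiplication it first determines the special products $(h_1,0)\circ(h_2,\nu)$ with $\nu\in\mf{r}$ and $(h_1,\tau)\circ(h_2,0)$ with $\tau\in\mf{r}^\perp$ not by lifting to $C$ but by pairing against the known products in $E$, using the multiplicativity of the metric $\l x_1\circ x_2,\,y_1\circ y_2\r=\l x_1,y_1\r+\l x_2,y_2\r$ (isotropy of $\on{gr}(\Mult_\A)$), and then combines these via bilinearity. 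Your route is self-contained and systematic but computationally heavier in the multiplication step; the paper's metric trick is shorter and conceptually clean, but of course presupposes the source/target formulas and the $E$-multiplication. One caution about your ``leaner alternative'': bilinearity, associativity, and the $E$-multiplication alone do not pin down products such as $(h_1,0)\circ(h_2,\nu)$ for $\nu\in\mf{r}$; you still need some extra input, and the multiplicativity of the metric is exactly what the paper uses to supply it.
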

\begin{proof}
Let $\varrho^\q\colon \q\to \mf{X}(H)$ be the Lie algebra action described in Proposition~\ref{prop:trivial}.
By definition, $\varrho^\q(\zeta)=\a(\sigma)$ where $\sigma\in\Gamma(\A)$ is the constant 
section corresponding to $\zeta$ (i.e. $\sig\sim_T \zeta$). 
For $\zeta=\tau\in \mf{r}$, the constant section $\sig$ takes values in $\ker(t)$, hence 
$\sig_h=0_h\circ \sig_e$ for all $h$. Since the anchor $\a$ is a groupoid homomorphism, 
it follows that $\varrho^\q(\tau)_h=0_h\circ \varrho^\q(\tau)_0$. Equivalently, $\varrho^\q(\tau)$ is left-invariant. Hence  $\iota(\varrho^\q(\tau))\theta^R_h=\Ad_h(\varrho^\q(\tau)_e)=\Ad_h(f(\tau))=p(\Ad_h(f(\tau)))$, i.e.~  
$\varrho^q(\tau)=\varrho(f(\tau))$. On the other hand,  Equation \eqref{eq:hbulvrho} shows  $\iota(\varrho(\xi))\theta^R_h=p_\h(\Ad_h(f(\xi)))$ 
for $\xi\in\g$, hence $\varrho^\q(\xi)=\varrho(f(\xi))$. This proves $\varrho^\q=\varrho\circ f$. 

We next consider the groupoid structure. Use the trivialization to write elements of $\A$ in the form $x=(h,\zeta)$. Recall that on the vacant $\ca{VB}$-subgroupoid $E$, the trivialization is given by the 
source map. Hence, by Proposition~\ref{prop:VLAg2} we have 
\begin{equation}\label{eq:st}
 s(h,\xi)=\xi,\ \ t(h,\xi)=h\bullet\xi\end{equation}
for $(h,\xi)\in E$, and 
\begin{equation}\label{eq:stmE} (h_1,\xi_1)\circ  (h_2,\xi_2)=(h_1h_2,\xi_2)\end{equation}
for $(h_i,\xi_i)\in E$ with $h_2\bullet\xi_2=\xi_1$.
Consider now a general element $(h,\zeta)\in \A$. 
By definition of the trivialization, 
\begin{equation}(h,\zeta)=j(h,\zeta)\circ (e,\zeta).\end{equation}
Since $s(j(h,\zeta))=t(e,\zeta)=(1-p_{\mf{r}})\zeta$ by definition of $p_{\mf{r}}$, 
it follows that $j(h,\zeta)=(h,(1-p_{\mf{r}})\zeta)$. 
%
%
We conclude that $t(h,\zeta)=t(j(h,\zeta))=h\bullet(1-p_{\mf{r}})\zeta$, and $s(h,\zeta)=s(e,\zeta)=p_{\mf{r}}^*\zeta$. 

To find the groupoid multiplication, consider first a product $(h_1,0)\circ (h_2,\nu)$ with 
$(h_2,\nu)\in\ker(t)$, i.e. $\nu\in \mf{r}$. The product lies in $\ker(t)$, hence it is of the form 
$(h_1,0)\circ (h_2,\nu)=(h_1h_2,\nu')$ for some $\nu'\in\mf{r}$. Taking inner products with the identity 
$(h_1,h_2\bullet \xi)\circ (h_2,\xi)=(h_1h_2,\xi)$ for $\xi\in\g$, we obtain 
$\l 0,h_2\bullet \xi\r+\l \nu,\xi\r=\l \nu',\xi\r$, hence $\nu'=\nu$. Thus 
\[  (h_1,0)\circ (h_2,\nu)=(h_1h_2,\nu),\ \ \nu\in\mf{r}.\]
Similarly, consider a product $(h_1,\tau)\circ (h_2,0)=(h_1h_2,\tau')$ with $(h_1,\tau)\in \ker(s)$, thus $\tau\in\mf{r}^\perp$. Then $\tau'\in \mf{r}^\perp$, and $\l\tau,h_2\bullet \xi\r+\l 0,\xi\r=\l \tau',\xi\r$
for $\xi\in\g$, proving $\tau'=(h_2)^{-1}\bullet \tau$. 

For a general product $(h_1,\zeta_1)\circ (h_2,\zeta_2)$ of composable elements, write
$\zeta_2=(1-p_{\mf{r}})\zeta_2+p_{\mf{r}}\zeta_2$ and 
$\zeta_1=p_{\mf{r}}^*\zeta_1+(1-p_{\mf{r}}^*)\zeta_1$. We obtain
\[ \begin{split} (h_1,\zeta_1)\circ (h_2,\zeta_2)
&= (h_1,0)\circ (h_2,p_{\mf{r}}\zeta_2)+(h_1,(1-p_{\mf{r}}^*)\zeta_1)\circ (h_2,0)+
(h_1,p_{\mf{r}}^*\zeta_1)\circ (h_2,(1-p_{\mf{r}})\zeta_2)\\
&= (h_1h_2,p_{\mf{r}}\zeta_2+h_1^{-1}\bullet (1-p_{\mf{r}}^*)\zeta_1+(1-p_{\mf{r}})\zeta_2)\\
&= (h_1h_2,\zeta_2+h_1^{-1}\bullet (1-p_{\mf{r}}^*)\zeta_1).
\end{split}\]
\end{proof}

\subsection{Examples}
\subsubsection{The standard Dirac Lie group structure}
For any Lie group $H$, we have the $H$-equivariant Dirac Manin triple 
$(\h\ltimes\h^*,\h^*,\h)_\beta$, with $\beta$ the symmetric bilinear form given by the pairing.  
Since $\beta$ is non-degenerate and $\g=\h^*$ is Lagrangian, we have (cf.~ Example~\ref{ex:specialcases})
$\q=\dd$, with $f$ the identity. The projections $p_\h$ and $(1-p_\h^*)$ coincide, and 
our formulas specialize to
\[ s(h,\nu,\mu)=\mu,\ \ t(h,\nu,\mu)=\Ad_h\mu,\]
\[ (h_1,\nu_1,\mu_1)\circ (h_2,\nu_2,\mu_2)=(h_1h_2,\nu_2+\Ad_{h_1^{-1}}\nu_1,\mu_2).\]
The action of $\h\ltimes \h^*$ on $H$ is given by the left-invariant vector fields, 
$\varrho(\nu,\mu)=\nu^L$. This is the standard Dirac Lie group structure $(\A,E)=(\TH,T^*H)$, written in left-trivialization. 

\subsubsection{The Cartan-Dirac structure}\label{sec:CartDirStr}
Given a Lie group $G$ with an invariant metric on $\g$, one can form the Dirac
Manin triple $(\ol{\g}\oplus \g,\g_\Delta,0\oplus \g)_\beta$ where $\beta$ is given by the metric on 
$\ol{\g}\oplus \g$, and $\g_\Delta$ is the diagonal. Again $\q=\dd,\ f=\on{id}$. 
For $h\in H=\{1\}\times G$ we have $\Ad_h(\xi,\xi')=(\xi,\Ad_h\xi')$. It follows that the action $\bullet$ on 
$\g$ (hence also on $\mf{r}^\perp$) is the trivial action:
\[ h\bullet(\xi,\xi)=(1-p_\h)(\xi,\Ad_h\xi)=(\xi,\xi).\]
The formulas for the groupoid structure simplify to 
\[ s(h,\xi,\xi')=\xi',\ t(h,\xi,\xi')=\xi,\ \ \ 
(h_1,\xi_1,\xi_1')\circ (h_2,\xi_2,\xi_2')=(h_1h_2,\xi_1,\xi_2').\]
From $\iota(\varrho(\xi,\xi'))\theta^R_h=p_\h \Ad_h(\xi,\xi')
=p_\h(\xi,\Ad_h\xi')=\Ad_h\xi'-\xi$
we obtain
\[ \varrho(\xi,\xi')=(\xi')^L-\xi^R.\]
The resulting Dirac Lie group structure $(\A,E)=(G\times (\ol{\g}\oplus \g),\ G\times \g_\Delta)$ is the 
Cartan-Dirac structure from Example~\ref{ex:cardir}.

\subsubsection{Dirac Lie group structures over $H=\pt$}
If the group $H$ is trivial, then the Dirac Manin triple is of the form $(\dd,\dd,0)_\beta$. 
Dirac Lie group structures over $\pt$ are hence classified by Lie algebras $\dd$ with invariant elements 
$\beta\in S^2\dd$. (The same data also classify the $\ca{CA}$-groupoid structures $\A$ over $H=\pt$, since $\A$
extends uniquely to a Dirac Lie group structure by putting $E=\A^{(0)}$.) 
We find
\[ (\q,\g,\mf{r})_\gamma=(\dd\ltimes\dd^*_\beta,\dd,(\dd^*_\beta)^\perp)_{\wh{\beta}}\]
with $f$ the projection along $(\dd^*_\beta)^\perp$. The $\ca{VB}$-groupoid structure 
on $\dd\ltimes_\beta \dd^*_\beta\rra \dd$ is given by 
$s(\xi,\mu)=\xi, \ \ \ t(\xi,\mu)=\xi+\beta^\sharp(\mu)$,  
and the groupoid multiplication of composable elements is given by 
\[ (\xi_1,\mu_1)\circ (\xi_2,\mu_2)=(\xi_2,\mu_1+\mu_2).\]
Note that $\dd$ is a subgroupoid, as required. 

We can also classify the multiplicative Main pairs 
over $H=\pt$. First, by Drinfel'd's classification of $\ca{CA}$-groupoids over $H=\pt$ (see Remark~\ref{rem:CAgrpOvPt}), we may assume that $\A=\dd\ltimes\dd^*_\beta$.
Suppose $E$ is a multiplicative Dirac structure inside $\dd\ltimes\dd^*_\beta$, and 
let $\g:=E^{(0)}$. Then $E=E^\perp\subseteq \g^\perp=\dd\oplus \on{ann}(\g)$. 
The kernel of the source map $E\to \g$ has dimension  $\dim E-\dim\g=\dim\dd-\dim\g=\dim\on{ann}(\g)$, 
and is contained in $\mf{r}=\dd^*_\beta$. This shows $\on{ann}(\g)\subseteq E$. Thus $E=\g\oplus \on{ann}(\g)$ as a vector space. The form $\beta$ vanishes on $\on{ann}(\g)$ since $E$ is isotropic. 
Finally, $E$ is a Lie subalgebra of $\A$ 
if and only if $\g$ is a Lie subalgebra of $\dd$. Thus $E=\g\ltimes\on{ann}(\g)$ as a Lie algebra. 
Conversely, for any $\beta$-coisotropic Lie subalgebra $\g\subseteq\dd$, the semi-direct product $E=\g\ltimes\on{ann}(\g)$ defines a 
Lagrangian Lie subalgebra. We have $E\cap \dd^*=E\cap (\dd^*)^\perp=\on{ann}(\g)$, hence the restriction of both $s,t$ to $E$ are the projections to $\g$ along $\on{ann}(\g)$. It is then clear that $E$ is a subgroupoid. 
We have shown:
\begin{proposition}There is a 1-1 correspondence between
\begin{itemize}
\item[(i)] Multiplicative Manin pairs $(\A,E)$ over $H=\pt$, 
\item[(ii)] Dirac Manin pairs $(\dd,\g)_\beta$ (i.e. $\g$ is a $\beta$-coisotropic Lie subalgebra of $\dd$).  
\end{itemize}
\end{proposition}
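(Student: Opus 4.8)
The plan is to establish the correspondence in both directions and then verify that the constructions are mutually inverse. First I would note that by Remark~\ref{rem:CAgrpOvPt}, the $\ca{CA}$-groupoid structures $\A$ over $H = \pt$ are classified by pairs $(\dd, \beta)$ with $\beta \in (S^2\dd)^\dd$, where $\A = \dd \ltimes \dd^*_\beta \rra \dd$; so it suffices to classify the multiplicative Dirac structures $E \subseteq \A$ compatible with a fixed such $\A$. Given a multiplicative Manin pair $(\A,E)$ with $\A = \dd \ltimes \dd^*_\beta$, set $\g := E^{(0)}$. The key chain of identifications is: $E$ Lagrangian forces $E = E^\perp \subseteq \g^\perp = \dd \oplus \on{ann}(\g)$; a dimension count on $\ker(s|_E)$ (which has dimension $\dim\dd - \dim\g = \dim\on{ann}(\g)$ and lies in the fiber direction $\mf{r} = \dd^*_\beta$) shows $\on{ann}(\g) \subseteq E$, whence $E = \g \oplus \on{ann}(\g)$ as a vector space; isotropy of $E$ forces $\beta|_{\on{ann}(\g)} = 0$, i.e.~$\g$ is $\beta$-coisotropic; and finally $E$ is a subalgebra iff $\g$ is, so $E = \g \ltimes \on{ann}(\g)$.

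Conversely, for any $\beta$-coisotropic Lie subalgebra $\g \subseteq \dd$, I would check that $E := \g \ltimes \on{ann}(\g) \subseteq \dd \ltimes \dd^*_\beta$ is a Lagrangian Lie subalgebra: it is a subalgebra because $\on{ann}(\g)$ is an ideal in $\g \ltimes \dd^*_\beta$ and the bracket closes; it is isotropic because $\wh\beta$ pairs $\g$ with $\on{ann}(\g)$ trivially and vanishes on $\on{ann}(\g) \times \on{ann}(\g)$ by coisotropy; and it has the right dimension $\dim\g + \dim\on{ann}(\g) = \dim\dd$, hence is Lagrangian. Then one must see $E$ is a subgroupoid: since $E \cap \dd^* = \on{ann}(\g)$, the source and target maps restrict on $E$ to the projection $\g \oplus \on{ann}(\g) \to \g$, and the pair groupoid-style multiplication of the fiber visibly preserves $E$, so $\on{gr}(\Mult_E) \subseteq E^3$ is a vector subbundle. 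This gives a multiplicative Manin pair, and one notes $E^{(0)} = E \cap \A^{(0)} = \g$, so the two constructions invert each other.

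The main obstacle, though a mild one, is bookkeeping in the dimension-counting step that forces $\on{ann}(\g) \subseteq E$: one must carefully identify $\mf{r} = \dd^*_\beta$ as the kernel of both source and target on the fiber over $e$, observe that $\ker(s|_E) \subseteq \mf{r}$, compute its dimension as $\on{rank}(E) - \dim\g = \dim\dd - \dim\g$, and then conclude that this subspace of $\mf{r} \cong \dd^*$, being contained in $E \cap \dd^* \subseteq \dd^* \cap (\dd^*)^\perp = \on{ann}(\g)$, must equal $\on{ann}(\g)$ for dimension reasons. I would also double-check that no equivariance condition intervenes: since $H = \pt$ is trivially simply connected, the $H$-equivariance requirements in the definition of a Dirac Manin triple are vacuous, so a plain Dirac Manin pair $(\dd,\g)_\beta$ is exactly the right datum. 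Everything else is a matter of unwinding definitions already set up in Sections~\ref{sec:prel} and~\ref{sec:DLG}, together with Remark~\ref{rem:CAgrpOvPt}.
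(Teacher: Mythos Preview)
Your proposal is correct and follows essentially the same route as the paper's proof: reduce via Remark~\ref{rem:CAgrpOvPt} to $\A=\dd\ltimes\dd^*_\beta$, set $\g=E^{(0)}$, use $E\subseteq\g^\perp=\dd\oplus\on{ann}(\g)$ together with a dimension count on $\ker(s|_E)\subseteq\dd^*_\beta$ to force $E=\g\oplus\on{ann}(\g)$, and then read off coisotropy and the subalgebra condition; the converse is verified just as you describe. One small slip: in your bookkeeping paragraph you write $E\cap\dd^*\subseteq\dd^*\cap(\dd^*)^\perp=\on{ann}(\g)$, but $\dd^*\cap(\dd^*)^\perp=\ker\beta^\sharp$, not $\on{ann}(\g)$ --- the inclusion you want is $E\cap\dd^*\subseteq\g^\perp\cap\dd^*=\on{ann}(\g)$, which you already have from $E\subseteq\g^\perp$.
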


\section{The Lagrangian complement $F$} \label{sec:compl}
Let $(\A,E)$ be a Dirac Lie group structure on $H$. We will show that $E$ has a distinguished 
Lagrangian complement. The splitting $\A=E\oplus F$ defines a bi-vector field $\pi_H$, and 
$(H,\pi_H)$ is a quasi-Poisson $\g$-space in the sense of Alekseev and Kosmann-Schwarzbach \cite{Alekseev99}. 

\subsection{Quasi-Poisson $\g$-manifolds}\label{subsec:quasip}
A \emph{quasi-Manin triple} $(\q,\g,\mf{\n})_\gamma$ is Lie algebra $\q$ with a 
non-degenerate element $\gamma\in (S^2\q)^\q$, together with a 
Lagrangian Lie subalgebra $\g$ and a Lagrangian subspace $\n$ complementary 
to $\g$. The quasi-Manin triple $(\q,\g,\n)_\gamma$ determines a trivector
$\chi\in \wedge^3\g\subseteq \wedge^3\q$ by the equation 
\[ \chi(\zeta,\zeta',\zeta'')=\l [\zeta,\zeta'],\zeta'' \r,\ \ \zeta,\zeta',\zeta'' \in\n,\]
as well as a \emph{cobracket}
\[ \partial\colon \g\to\wedge^2\g,\ \ \partial(\xi)(\zeta,\zeta')=\langle[\zeta,\zeta'],\xi\rangle,\quad \zeta,\zeta'\in\n,\ 
\xi\in\g.\]
Here $\g$ is identified with $\n^*$. Note that $\chi$ measures the failure of $\n$ to define a Lie subalgebra of $\q$. 
A \emph{quasi-Poisson $\g$-space} \cite{Alekseev99} for the quasi-Manin triple $(\q,\g,\n)_\gamma$ is a manifold $M$ with an action $\varrho_M\colon \g\to \mf{X}(M)$ and a bivector field $\pi_M\in\mf{X}^2(M)$ satisfying 
\begin{equation}\label{eq:q-Pspace}\begin{split}
 \hh [\pi_M,\pi_M]&=\varrho_M(\chi), \\
 \ca{L}_{\varrho_M(\xi)}\pi_M&=\varrho_M(\partial \xi),\quad \xi\in\g.
 \end{split}\end{equation}

As shown in \cite{Bursztyn:2009wi}, this definition is equivalent to a morphism of Manin pairs,
\begin{equation}\label{eq:k}
K\colon (\T M,TM)\da (\mf{q},\g).\end{equation}
Here $K$ determines the $\g$-action $\varrho_M\colon\g\to \mf{X}(M)$ by the condition 
$\varrho_M(\xi)\sim_K \xi,\ \xi\in\g$, and the bivector field $\pi_M$ on $M$ is described in terms of its graph as 
$\on{Gr}(\pi_M)=\n\circ K\subseteq \TM$, the `backward image' of $\n$. 
More generally, any morphism of Manin pairs $R\colon (\A,E)\da (\q,\g)$ 
determines a quasi-Poisson structure on $M$, by taking its composition with the morphism 
$(\TM,TM)\da (\A,E)$ from Example ~\ref{ex:canonicalmorphism}. Here the $\g$-action is 
$\varrho_M(\xi)=\a(e(\xi))$ where $\g\to \Gamma(E),\ \xi\mapsto e(\xi)$ is defined by the condition 
$e(\xi)\sim_R \xi$. The bi-vector field $\pi_M$ is determined by the splitting $\A=E\oplus F$, and is locally given by 
the formula $\pi_M=\hh \a(e_i)\wedge \a(f^i)$ where $e_i,f^j$ are sections of $E,F$ with 
$\l e_i,f^j \r=\delta^i_j$. (See e.g. \cite[Theorem 3.16]{LiBland:2009ul}). 

\subsection{Quasi-Poisson structures from Dirac Lie groups}\label{subsec:compl}
Let $(\dd,\g,\h)_\beta$ be an $H$-equivariant Dirac Manin triple, and let 
$(\q,\g,\mf{r})_\gamma$ be the Dirac Manin triple constructed from it. 
The following standard procedure turns the Lie algebra complement $\mf{r}$ into a \emph{Lagrangian} 
complement. As before we denote by $p_{\mf{r}}\in\End(\q)$ the projection to $\mf{r}$ along $\g$, so that $1-p_{\mf{r}}$ and 
$p_{\mf{r}}^*$ are the projections to $\g$ along $\mf{r},\mf{r}^\perp$, respectively. 
Their average $\hh ((1-p_{\mf{r}})+p_{\mf{r}}^*)$ is again a projection to $\g$, and its 
kernel $\n$ is the desired Lagrangian complement. Thus $\n$ is the mid-point between $\mf{r},\mf{r}^\perp$ in 
the affine space of complements to $\g$. If $\eps_i$ is a basis of $\g$, and $\phi^i$ a basis of $\mf{r}^\perp$ with $\l\eps_i,\phi^j\r=\delta_i^j$, the space $\n$ has basis 
\[ \nu^i=\phi^i-\hh \sum_j \l \phi^i,\phi^j\r\eps_j.\]
Note that the `r-matrix' 
\[ \hh \sum_i \eps_i\wedge \nu^i=\hh \sum_i \eps_i \wedge \phi^i\in \wedge^2\q\]
is independent of the choice of basis. Letting $\eps^i\in\g^*$ be the dual basis, we have $\phi^i=f^*(\eps^i)$, 
and using  $\l f^*(\eps^i),f^*(\eps^j)\r=\beta(\eps^i,\eps^j)$ we obtain 
\begin{equation}\label{eq:dualbasis} \nu^i= f^*(\eps^i)-\hh \sum_j \beta(\eps^i,\eps^j)\eps_j.\end{equation}
As explained in the previous section, the `trivializing morphism' $T\colon (\A,E)\da (\q,\g)$ 
gives $H$ the structure of a quasi-Poisson space for the quasi-Manin triple $(\q,\g,\mf{n})$. 
In terms of the trivialization $\A=H\times\q$, the Lagrangian complement $F=\n\circ T$ is simply the trivial bundle $H\times\mf{n}$. 
\begin{proposition}
In the affine space of Lagrangian complements to $E$ in $\A$, the sub-bundle $F$ is the mid-point between 
$\ker(s)$ and $\ker(t)$. One has, 
\begin{equation}\label{eq:defF}F=\{x\in\A|\ h\bullet s(x)+t(x)=0\} \end{equation}
where $h\in H$ indicates the base point of $x$. 
\end{proposition}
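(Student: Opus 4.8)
The plan is to verify both assertions directly in the left trivialization $\A\cong H\times\q$, $E\cong H\times\g$ of Proposition~\ref{prop:trivial}, using the explicit source and target maps from Theorem~\ref{th:mainB} together with the definition of $\n$ recalled just above. By Theorem~\ref{th:mainB}, $s(h,\zeta)=p_{\mf{r}}^*(\zeta)$ and $t(h,\zeta)=h\bullet(1-p_{\mf{r}})(\zeta)$. Since $1-p_{\mf{r}}$ and $p_{\mf{r}}^*$ are the projections of $\q$ onto $\g$ along $\mf{r}$ and along $\mf{r}^\perp$ respectively, and since $h\bullet$ is an invertible linear endomorphism of $\g$, one reads off that $\ker(t_\A)$ is the constant subbundle $H\times\mf{r}$ (this is also part (d) of Proposition~\ref{prop:trivial}) and $\ker(s_\A)=H\times\mf{r}^\perp$; as $\g$ is Lagrangian in $\q$ and $\mf{r}$ is a complement of $\g$, both $\mf{r}$ and $\mf{r}^\perp$ are complements of $\g$, so $\ker(s_\A)$ and $\ker(t_\A)$ are complements of $E$. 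Moreover the vector bundle projection $\A\to E$ along $\ker(t_\A)$ is the map $j\colon(h,\zeta)\mapsto(h,(1-p_{\mf{r}})\zeta)$ of Proposition~\ref{prop:trivial}, and the projection along $\ker(s_\A)$ is $(h,\zeta)\mapsto(h,p_{\mf{r}}^*\zeta)$.

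For the first statement, the midpoint of the complements $\ker(s_\A)$ and $\ker(t_\A)$ in the affine space of all complements of $E$ is, by definition, the kernel of the average of the two associated projections onto $E$, namely the kernel of $(h,\zeta)\mapsto\big(h,\hh((1-p_{\mf{r}})+p_{\mf{r}}^*)\zeta\big)$. Since $\n$ was defined as $\ker\big(\hh((1-p_{\mf{r}})+p_{\mf{r}}^*)\big)$, this midpoint equals $H\times\n$; and in the trivialization $F=\n\circ T$ is exactly the constant subbundle $H\times\n$. Although $\ker(s_\A)$ and $\ker(t_\A)$ need not be Lagrangian, their midpoint $F$ is, as recorded in the text.

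For the second statement, fix $x\in\A_h$ and write $x=(h,\zeta)$. Both $s(x)=p_{\mf{r}}^*(\zeta)$ and $t(x)=h\bullet(1-p_{\mf{r}})(\zeta)$ lie in $\g$, so $h\bullet s(x)$ is defined and
\[ h\bullet s(x)+t(x)=h\bullet p_{\mf{r}}^*(\zeta)+h\bullet(1-p_{\mf{r}})(\zeta)=h\bullet\big((p_{\mf{r}}^*+1-p_{\mf{r}})\zeta\big). \]
Since $h\bullet$ acts invertibly on $\g$, the right-hand side vanishes if and only if $(p_{\mf{r}}^*+1-p_{\mf{r}})\zeta=0$, i.e. $\zeta\in\ker\big(\hh((1-p_{\mf{r}})+p_{\mf{r}}^*)\big)=\n$, i.e. $x\in H\times\n=F$. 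This yields the stated description of $F$.

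No serious obstacle arises: the whole argument is a short computation once Theorem~\ref{th:mainB} is in hand. The only points needing care are that $s(x)$ and $t(x)$ take values in $\g$ (so that $h\bullet$ is applied on the correct space) and that $p_{\mf{r}}^*$ is the projection onto $\g$ along $\mf{r}^\perp$ rather than along $\mf{r}$.
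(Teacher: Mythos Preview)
Your proof is correct and follows essentially the same approach as the paper's own proof: both use the trivialization $\A\cong H\times\q$ to identify $\ker(t)\cong H\times\mf{r}$ and $\ker(s)\cong H\times\mf{r}^\perp$, reducing the midpoint claim to the defining property of $\n$, and both verify \eqref{eq:defF} by computing $h\bullet s(h,\zeta)+t(h,\zeta)=h\bullet\big((p_{\mf{r}}^*+1-p_{\mf{r}})\zeta\big)$ using the formulas of Theorem~\ref{th:mainB}. Your write-up is somewhat more detailed than the paper's (you spell out why $h\bullet$ is invertible and why $p_{\mf{r}}^*$ has kernel $\mf{r}^\perp$), but the logic is identical.
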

Note that $E$ is similarly given by a condition $h\bullet s(x)-t(x)=0$.
\begin{proof}
The first claim follows since the trivialization of $\A$ restricts to isomorphisms $E\cong H\times \g$, 
$\ker(t)\cong H\times \mf{r}$ and $\ker(s)\cong H\times \mf{r}^\perp$. The second part 
follows from the characterization of $\mf{n}$ as the kernel of $\hh((1-p_{\mf{r}})
+p_{\mf{r}}^*)$, since $h\bullet s(h,\zeta)=h\bullet p_{\mf{r}}^*(\zeta)$ and 
$t(h,\zeta)=h\bullet (1-p_{\mf{r}})(\zeta)$ in the trivialization. 
\end{proof}

Since $\a$ is given on constant sections of $\A=H\times\q$ by the action map $\varrho$, 
we obtain:
\begin{proposition}
For any Dirac Lie group structure $(\A,E)$ on $H$, with corresponding $\q$-action $\varrho\colon \q\to 
\mf{X}(H)$, one obtains a quasi-Poisson structure on $H$, with bivector field 
\[ \pi_H=\hh \sum_i \varrho(\eps_i)\wedge \varrho(f^*(\eps^i)).\]
and $\g$-action $\varrho_H=\varrho|_\g$. 
\end{proposition}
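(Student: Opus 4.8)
The plan is to deduce everything from the general correspondence recalled in Section~\ref{subsec:quasip}, applied to the trivializing morphism $T\colon (\A,E)\da (\q,\g)$ of~\eqref{eq:trivial}. First I would observe that $(\q,\g,\n)_\gamma$ is a quasi-Manin triple: $\gamma$ is non-degenerate and $\g$ is a Lagrangian Lie subalgebra by the construction of $(\q,\g,\mf{r})_\gamma$ in Section~\ref{subsec:genman}, and $\n$ is a Lagrangian complement to $\g$ by the construction in Section~\ref{subsec:compl}. Composing $T$ with the canonical morphism $(\TH,TH)\da (\A,E)$ of Example~\ref{ex:canonicalmorphism} produces a morphism of Manin pairs $(\TH,TH)\da (\q,\g)$, which by~\cite{Bursztyn:2009wi} (cf.~\eqref{eq:k} and the discussion around it) is precisely a quasi-Poisson $\g$-structure on $H$ for $(\q,\g,\n)_\gamma$. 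This gives the existence of the structure; it remains to identify the $\g$-action and the bivector field.

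For the action, Section~\ref{subsec:quasip} gives $\varrho_H(\xi)=\a(e(\xi))$, where $e(\xi)\in\Gamma(E)$ is determined by $e(\xi)\sim_T\xi$. Since the trivialization restricts on $E$ to the source map, $e(\xi)$ is the constant section of $E=H\times\g$ with value $\xi$, so $\a(e(\xi))=\varrho^\q(\xi)=\varrho(f(\xi))=\varrho(\xi)$ by Theorem~\ref{th:mainB} and the identity $f|_\g=\on{id}$. Hence $\varrho_H=\varrho|_\g$.

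For the bivector, Section~\ref{subsec:quasip} gives $\pi_H=\hh\,\a(e_i)\wedge\a(f^i)$ for dual bases $e_i\in\Gamma(E)$ and $f^j\in\Gamma(F)$ with $\l e_i,f^j\r=\delta^j_i$. I would take $e_i$ to be the constant section $\eps_i$ of $E=H\times\g$ and $f^i$ the constant section $\nu^i$ of $F=H\times\n$, with $\nu^i$ as in~\eqref{eq:dualbasis}. The duality relation $\l\eps_i,\nu^j\r=\delta^j_i$ holds because $\g$ is Lagrangian in $\q$, so that $\l\eps_i,\eps_j\r=0$, while $\l\eps_i,f^*(\eps^j)\r=\l f(\eps_i),\eps^j\r=\delta^j_i$ since $f|_\g=\on{id}$. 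Then $\a(e_i)=\varrho(\eps_i)$ and, by~\eqref{eq:dualbasis}, $\a(f^i)=\varrho(f^*(\eps^i))-\hh\sum_j\beta(\eps^i,\eps^j)\varrho(\eps_j)$, so that
\[ \pi_H=\hh\sum_i\varrho(\eps_i)\wedge\varrho(f^*(\eps^i))-\f{1}{4}\sum_{i,j}\beta(\eps^i,\eps^j)\,\varrho(\eps_i)\wedge\varrho(\eps_j).\]
The double sum vanishes since $\beta$ is symmetric while the wedge product is antisymmetric, which yields the claimed formula.

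The argument is mostly bookkeeping, with the real content packaged into Section~\ref{subsec:quasip}, the Lagrangian complement construction of Section~\ref{subsec:compl}, and~\cite{Bursztyn:2009wi}. The one spot deserving a small check is the duality relation $\l\eps_i,\nu^j\r=\delta^j_i$, which is exactly where the Lagrangianity of $\g$ inside $\q$ and the normalization $f|_\g=\on{id}$ enter, together with the cancellation of the symmetric correction term; I do not expect any genuine obstacle.
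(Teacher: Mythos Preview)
Your proof is correct and follows essentially the same route as the paper. The paper treats the proposition as an immediate consequence of the general formula $\pi_M=\hh\,\a(e_i)\wedge\a(f^i)$ from Section~\ref{subsec:quasip}, together with the fact that $\a$ on constant sections of $\A=H\times\q$ is the action map $\varrho$; to pass from the dual basis $\nu^i$ of $\n$ to $f^*(\eps^i)$ it simply invokes the r-matrix identity $\hh\sum_i\eps_i\wedge\nu^i=\hh\sum_i\eps_i\wedge\phi^i$ recorded just before~\eqref{eq:dualbasis}, whereas you re-derive that identity by expanding $\nu^i$ and observing that the symmetric $\beta$-correction is killed by the wedge.
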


\subsection{Multiplicative properties}
We next consider the multiplicative aspects of the quasi-Poisson structure. The composition of morphisms
\[ (\A,E)\times (\A,E)\da (\A,E)\da (\q,\g)\]
gives $H\times H$ the structure of a quasi-Poisson $\g$-space $(H\times H,\pi_{H\times H})$, 
with the property that the underlying map $\on{Mult}_H\colon H\times H\to H$ is a morphism of quasi-Poisson manifolds.
The $\g$-action $\varrho_{H\times H}$ is computed as follows. 
Using the trivialization $E=H\times\g$, the equality $(h_1,\xi_1)\circ (h_2,\xi_2)=(h_1h_2,\xi)$ 
holds if and only if $\xi_2=\xi,\ \xi_1=h_2\bullet\xi$. Thus
\[ \varrho_{H\times H}(\xi)_{(h_1,h_2)}=(\varrho_H(h_2\bullet \xi)_{h_1},\varrho_H(\xi)_{h_2}).\]
Proposition~\ref{prop:VLAisMP} confirms that the multiplication in $H$ is equivariant for this twisted 
action. (More generally this holds true for any matched pair between a Lie group and a Lie algebra.) 

The bivector field $\pi_{H\times H}$ is determined by the splitting $(E\times E)\oplus F'$, 
where $F'$ is the backward image 
$F'=\n\circ (T\circ \Mult_\A)=F\circ \Mult_\A=\{(x_1,x_2)\in \A\times\A|\ x_1\circ x_2\in F\}$. Thus 
\[ \begin{split}
F'&=\{(x_1,x_2)\in \A\times\A|\ s(x_1)=t(x_2),\ \ h_1h_2\bullet s(x_2)+t(x_1)=0\}.
\end{split}\]
Since $F'$ and $F\times F$ are both Lagrangian complements to $E\times E$, there is a unique section $\lambda\in\Gamma(\wedge^2(E\times E))$ with the property that 
\[ F'=(\on{id}+\lambda^\sharp)(F\times F),\]
where $\lambda^\sharp\colon \A\times\A\to \A\times\A$ is the bundle map defined by $\lambda$, and
\[ \pi_{H\times H}=\pi_H^{(1)}+\pi_H^{(2)}+\a(\lambda)\]
where $\pi_H^{(i)},\ i=1,2$ is the bivector field $\pi_H$ on the $i$-th factor of $H\times H$, and $\a$ is 
the anchor map for $\A\times \A$. 
(See \cite[Proposition 1.18]{Alekseev:2009tg}.) 
It remains to compute $\lambda$.
\begin{proposition}  
The section $\lambda\in\Gamma(\wedge^2(E\times E))$ is given in terms of the trivialization $E=H\times\g$ as
\[ \lambda=-\hh \sum_{ij} \beta(\eps^i,\eps^j)\  (\eps_{i},0)\wedge (0,h_2^{-1}\bullet \eps_{j}).\]
Thus, 
\[ \a(\lambda)=-\hh \sum_{ij} \beta(\eps^i,\Ad_{h_2}\eps^j)\  
\varrho_H(\eps_i)^{(1)}\wedge \varrho_H(\eps_j)^{(2)}\in\mf{X}(H\times H),\]
where the superscripts $(1),(2)$ indicate the vector fields operating on the first resp.~ second 
$H$-factor. 
\end{proposition}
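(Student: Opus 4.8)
The plan is to compute $\lambda$ by identifying $F'=(\on{id}+\lambda^\sharp)(F\times F)$ explicitly in terms of the left trivialization $\A=H\times\q$, and then to obtain $\a(\lambda)$ by applying the anchor map. Recall from the previous section that, in the trivialization, $F=H\times\n$ with $\n$ the kernel of the projection $\hh((1-p_{\mf{r}})+p_{\mf{r}}^*)$ to $\g$, and that $\n$ has basis $\nu^i=f^*(\eps^i)-\hh\sum_j\beta(\eps^i,\eps^j)\eps_j$ by Equation~\eqref{eq:dualbasis}. The product formula of Theorem~\ref{th:mainB}, $(h_1,\zeta_1)\circ(h_2,\zeta_2)=(h_1h_2,\zeta_2+h_1^{-1}\bullet(1-p_{\mf{r}}^*)\zeta_1)$, together with the source and target formulas $s(h,\zeta)=p_{\mf{r}}^*\zeta$, $t(h,\zeta)=h\bullet(1-p_{\mf{r}})\zeta$, lets me describe $F'=\{(x_1,x_2)\mid x_1\circ x_2\in F\}$ as a subbundle of $\A\times\A=H\times H\times\q\times\q$.

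First I would take an arbitrary element $(h_1,h_2,\zeta_1,\zeta_2)$ of $E\times E$; in the trivialization $E=H\times\g$ this means $\zeta_1=\xi_1,\zeta_2=\xi_2\in\g$ subject to the composability condition $s(h_1,\xi_1)=t(h_2,\xi_2)$, i.e.\ $\xi_1=h_2\bullet\xi_2$. The map $\on{id}+\lambda^\sharp$ sends $F\times F$ to $F'$; dually, since $\lambda\in\Gamma(\wedge^2(E\times E))$, the operator $\lambda^\sharp$ maps $F\times F$ into $E\times E$ using the pairing, so it suffices to determine, for each basis element $(\nu^i,0)$ and $(0,h_2^{-1}\bullet\nu^i)$ of $F\times F$ (these being the natural trivialized bases of the two $\n$-factors, with the $h_2^{-1}\bullet$ twist dictated by the composability constraint on the second factor), which element of $E\times E$ must be added to land in $F'$. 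Concretely I would impose the condition that $x_1\circ x_2\in F$, i.e.\ that $h_1h_2\bullet s(x_1\circ x_2)+t(x_1\circ x_2)=0$ by Equation~\eqref{eq:defF}, expand using the product/source/target formulas, and read off the $\g$-valued correction term. The element $\beta(\eps^i,\eps^j)$ enters precisely because $f^*(\eps^i)$ has $\l f^*(\eps^i),f^*(\eps^j)\r=\beta(\eps^i,\eps^j)$, so the mismatch between $\n$ on the two factors and $\n$ on the product is governed by $\beta$; this yields $\lambda=-\hh\sum_{ij}\beta(\eps^i,\eps^j)\,(\eps_i,0)\wedge(0,h_2^{-1}\bullet\eps_j)$.

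For the second formula I would simply apply the anchor map $\a$ for $\A\times\A$, which on constant sections of $\A=H\times\q$ is the action map $\varrho^\q=\varrho\circ f$, and which sends $(\eps_i,0)\mapsto(\varrho_H(\eps_i)^{(1)},0)$ and $(0,h_2^{-1}\bullet\eps_j)\mapsto(0,\varrho_H(h_2^{-1}\bullet\eps_j)^{(2)})$. Using that $\varrho_H$ is the $\g$-action $\varrho|_\g$ and the relation $h\bullet\xi=(1-p_\h)\Ad_h\xi$ from Equation~\eqref{eq:hbulvrho}, one converts $h_2^{-1}\bullet\eps_j$ under the $\bullet$-action into $\Ad_{h_2}$ acting on the $\beta$-argument via the $H$-invariance of $\beta^\sharp=f\circ f^*$ and Equation~\eqref{eq:bulletonrperp}: pairing $\sum_j\beta(\eps^i,\eps^j)(h_2^{-1}\bullet\eps_j)$ against test elements and using $\l h\bullet\nu,h\bullet\nu'\r=\l\nu,\nu'\r$ from Equation~\eqref{eq:note} shifts the $h_2$ onto the second $\beta$-slot, giving $\beta(\eps^i,\Ad_{h_2}\eps^j)$. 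This produces $\a(\lambda)=-\hh\sum_{ij}\beta(\eps^i,\Ad_{h_2}\eps^j)\,\varrho_H(\eps_i)^{(1)}\wedge\varrho_H(\eps_j)^{(2)}$.

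\textbf{Main obstacle.} The routine but delicate part is bookkeeping the $\bullet$-twists: the composability constraint forces the second $\n$-factor to be trivialized with an $h_2^{-1}\bullet$ correction, and keeping track of how this interacts with the product formula's $h_1^{-1}\bullet(1-p_{\mf{r}}^*)$ term — and then converting everything into $\Ad$'s on $H$ when applying the anchor — is where sign and placement errors are most likely. I expect the cleanest route is to verify the claimed $\lambda$ by directly checking that $(\on{id}+\lambda^\sharp)$ maps the trivialized basis of $F\times F$ into $F'$ (using the explicit description of $F'$ as a subbundle), rather than by solving for $\lambda$ from scratch; this reduces the problem to a finite check on basis elements using only Theorem~\ref{th:mainB} and Equations~\eqref{eq:defF} and~\eqref{eq:dualbasis}.
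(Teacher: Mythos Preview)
Your approach is essentially the paper's: both compute $\lambda^\sharp$ by determining, for $(\tau_1,\tau_2)\in F\times F$, the unique $(\xi_1,\xi_2)\in E\times E$ with $(\tau_1+\xi_1,\tau_2+\xi_2)\in F'$, and then evaluate on basis vectors $\nu^i$.

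Two small points where the paper is cleaner than your sketch. First, the basis of $F\times F$ at $(h_1,h_2)$ is simply $\{(\nu^i,0),(0,\nu^j)\}$; there is no $h_2^{-1}\bullet$ twist (the $\bullet$-action is defined on $\g$ and on $\mf{r}^\perp$, not on $\n$), and $F\times F$ itself carries no composability constraint. Second, you will not need the full product formula of Theorem~\ref{th:mainB}: the paper works entirely with source and target, using that $t(x)=h\bullet s(x)$ for $x\in E_h$ while $t(x)=-h\bullet s(x)$ for $x\in F_h$ (the latter is Equation~\eqref{eq:defF}), together with $t(x_1\circ x_2)=t(x_1)$ and $s(x_1\circ x_2)=s(x_2)$. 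The two conditions defining $F'$, namely $s(\tau_1+\xi_1)=t(\tau_2+\xi_2)$ and $t(\tau_1+\xi_1)=-h_1h_2\bullet s(\tau_2+\xi_2)$, then solve directly to $\xi_1=-h_2\bullet s(\tau_2)$ and $\xi_2=h_2^{-1}\bullet s(\tau_1)$. Evaluating at $\tau_1=\nu^i$, $\tau_2=0$, and using $s(\nu^i)=p_{\mf{r}}^*(\nu^i)=-\hh\sum_j\beta(\eps^i,\eps^j)\eps_j$ (from Equation~\eqref{eq:dualbasis}) gives the formula for $\lambda$ at once. Your plan for the second formula is correct.
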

\begin{proof}
We will use the trivialization $\A=H\times\q$, and omit base points to simplify notation. 
For all $(\tau_1,\tau_2)\in F\times F$ at a given base point $h_1,h_2$, there is a 
unique element $(\xi_1,\xi_2)\in E\times E$ such that $(\tau_1+\xi_1,\tau_2+\xi_2)\in F'$.
Thus, $(\tau_1+\xi_1)\circ (\tau_2+\xi_2)\in F$, i.e.~ 
\[ s(\tau_1+\xi_1)=t(\tau_2+\xi_2),\ \ \ 
t(\tau_1+\xi_1)=-h_1h_2\bullet s(\tau_2+\xi_2)\]
Using $t(\xi_i)=h_i\bullet s(\xi_i)$ and 
$t(\tau_i)=-h_i\bullet s(\tau_i)$, and solving for $\xi_i=s(\xi_i)$, 
we find 
\[ \xi_1=-h_2\bullet s(\tau_2),\ \ \ \xi_2=h_2^{-1}\bullet s(\tau_1).\]
This shows that $\lambda$ is of the form $\lambda=\sum_{i} (\eps_i,0)\wedge (0,s^i)$
for some $s^i\in\g$ (depending on $h_1,h_2$). Taking $\tau_2=0$ and $\tau_1=\nu^i$ the basis element of $\n$, 
we find
\[ (0,s^i)=\lambda^\sharp(\nu^i,0)=(0,h_2^{-1}\bullet s(\nu^i))
=-\hh \sum_l \beta(\eps^i,\eps^j) (0,h_2^{-1}\bullet \eps_j).\qedhere
\]
\end{proof}


\begin{example} Let us specialize the formulas to the Cartan-Dirac structure from Section~\ref{sec:CartDirStr}, 
given by  the $G$-invariant Dirac Manin triple $(\ol{\g}\oplus \g,\g_\Delta,0\oplus \g)_\beta$. In this case
$\q=\dd,\ \mf{r}=\h$.  We have $\mf{r}^\perp=\g\oplus 0$, and $\n=\{(-\xi,\xi)|\xi\in\g\}$ is the anti-diagonal. 
Letting $e_i$ be a basis of $\g$, with $B$-dual basis $e^i$, the corresponding basis of $\g_\Delta$ is $\eps_i=(e_i,e_i)$, hence $f^*(\eps^i)=(-e^i,0)\in \mf{r}^\perp$, and the dual basis of $\n$ is $\nu^i=\hh(-e^i,e^i)$.  
The resulting bivector field on $G$ is 
\[ \pi_G=\hh \sum_i \varrho(e_i,e_i)\wedge \varrho(-e^i,0)=\hh\sum_i ((e_i)^L-(e_i)^R)\wedge (e^i)^R=
\hh \sum_i (e_i)^L\wedge (e^i)^R.\]
Since the action $\bullet$ is trivial, and $\beta(\eps^i,\eps^j)=-B(e^i,e^j)$, the section $\lambda\in \Gamma(\wedge^2(E\times E))$ is given by the formula $\lambda=\hh \sum_i (e_i,e_i)^{(1)}\wedge (e^i,e^i)^{(2)}$. 
\end{example}

\section{Exact Dirac Lie groups}\label{sec:exact}A Dirac Lie group structure $(\A,E)$ on $H$ is called \emph{exact} if the underlying Courant algebroid $\A$ is exact
(cf.~ Section~\ref{sec:prel}). We will show that in this case, $\A$ has a distinguished isotropic 
splitting, giving an identification $\A\cong \TH_\eta$ for a suitable closed 3-form 
$\eta\in\Om^3(H)$.
\subsection{Characterization in terms of Dirac Manin triples}
Exact Dirac Lie group structures have the following characterization in terms of the associated Dirac Manin triples.
\begin{proposition}
Let $(\A,E)$ be a Dirac Lie group structure on $H$, with corresponding Dirac Manin triple 
$(\dd,\g,\h)_\beta$. Then the Dirac Lie group structure is exact if and only if $\beta$ is non-degenerate and 
$\g$ is Lagrangian with respect to $\beta$.  
\end{proposition}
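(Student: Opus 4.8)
The plan is to analyze the action Courant algebroid structure $\A = H\times\q$ from Theorem~\ref{th:mainB}, where the anchor is $\varrho^\q = \varrho\circ f$, and to use the general criterion for exactness of an action Courant algebroid: $\A$ is exact if and only if the anchor map $\varrho^\q\colon \A_h\to T_hH$ is surjective at every point (equivalently, the action is transitive), and $\ker(\varrho^\q)$ is isotropic of the right rank — in fact, since $\A$ has a bundle metric of rank $\dim\q$, exactness amounts to $\varrho^\q$ being fiberwise surjective with isotropic (hence Lagrangian, by dimension count) kernel. Since $\dim H = \dim\h$, surjectivity of $\varrho^\q$ forces $\dim\q \ge 2\dim\h$ and $\ker(\varrho^\q)$ Lagrangian forces $\dim\q = 2\dim\h$, which combined with $\dim\q = \dim\g + \dim\h$ gives $\dim\g = \dim\h$, a necessary first observation.

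First I would translate the condition ``$\varrho^\q_h$ surjective for all $h$'' using formula~\eqref{eq:qact}: $\iota(\varrho^\q(\zeta))\theta^R_h = p_\h(\Ad_h f(\zeta))$. Since right-translation is an isomorphism, surjectivity at $h$ is equivalent to $p_\h\circ\Ad_h\circ f\colon \q\to\h$ being surjective; at $h=e$ this says $p_\h\circ f\colon\q\to\h$ is onto, i.e. $f(\q)+\g = \dd$, and conversely surjectivity at $e$ propagates to all $h$ by the $\Ad_h$-equivariance of $f$ (i.e. $\Ad_h f = f\Ad_h$) together with the fact that $\Ad_h$ preserves $\g$, hence descends to an automorphism of $\dd/\g\cong\h$. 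So transitivity $\Leftrightarrow$ $f$ is surjective (recall $f(\q)\supseteq\g$ always, so $f$ surjective $\Leftrightarrow$ $p_\h f$ surjective). Next, recall from Section~\ref{subsec:genman} that $\beta^\sharp = f\circ f^*$; hence $\beta$ is non-degenerate $\Leftrightarrow$ $\beta^\sharp$ is an isomorphism $\Leftrightarrow$ $f^*$ is injective and $f$ is surjective (using $\dim\q$ considerations, $f^*$ injective already forces $f$ surjective). Thus: $\beta$ non-degenerate $\Leftrightarrow$ $f$ is an isomorphism onto $\dd$ when $\dim\q=\dim\dd$, but in general $f\colon\q\to\dd$ with $\dim\q = \dim\g+\dim\h$ and $\dim\dd = \dim\g+\dim\h$ only when $\dim\g = \dim\h$... here I need to be careful: $\dim\dd = \dim\g+\dim\h$ always, while $\dim\q = \dim\g + \dim\mf{r} = \dim\g + \dim\h$ as well (since $\mf{r} = f^{-1}(\h)$ and $f$ restricted to $\mf{r}$ lands in $\h$). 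So $\dim\q = \dim\dd$ always, and $f$ surjective $\Leftrightarrow$ $f$ bijective $\Leftrightarrow$ $\beta^\sharp = ff^*$ bijective (since then $f^*$ is forced injective) $\Leftrightarrow$ $\beta$ non-degenerate. This handles the ``transitive $\Leftrightarrow$ $\beta$ non-degenerate'' half.

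For the remaining condition — $\ker(\varrho^\q)$ Lagrangian given transitivity — I would compute $\ker(\varrho^\q_e) = \ker(p_\h f) = f^{-1}(\g)$. With $f$ bijective this is $f^{-1}(\g)$, a subspace of dimension $\dim\g$. The metric on $\q = \mf{c}/\mf{c}^\perp$ is characterized via Proposition~\ref{prop:atlast}(d): the isometric isomorphism $\mf{c}/\mf{c}^\perp\to\q$, $(\xi,\mu)\mapsto\xi+f^*(\mu)$, together with $\beta^\sharp = ff^*$. I would show $\langle x,x'\rangle_\q = \langle f(x), f(x')\rangle_{\beta^{-1}}$ when $\beta$ is non-degenerate — i.e. $f$ is an isometry from $(\q,\gamma^{-1})$ to $(\dd, \beta)$... more precisely, since $\gamma\in S^2\q$ is dual to the metric on $\q$ and $f\circ f^* = \beta^\sharp$, one gets that $f$ intertwines the metric on $\q$ with the bilinear form $\beta^{-1}$ on $\dd$ (the inverse of $\beta$ as a form, well-defined since $\beta$ non-degenerate). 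Granting this, $f^{-1}(\g)$ is isotropic for the metric on $\q$ $\Leftrightarrow$ $\g$ is isotropic for $\beta^{-1}$ on $\dd$ $\Leftrightarrow$ $\g$ is Lagrangian for $\beta$ (isotropic of half dimension, using $\dim\g = \dim\h$). Then $\A = H\times\q$ is exact iff $\varrho^\q$ is transitive with Lagrangian kernel, iff $\beta$ is non-degenerate and $\g$ is $\beta$-Lagrangian.

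The main obstacle I anticipate is pinning down precisely the correct intertwining property of $f$ with respect to the metrics — the metric on $\q$ lives on a reduction $\mf{c}/\mf{c}^\perp$ and its transport under $f$ to $\dd$ is naturally the ``inverse form'' $\beta^{-1}$ rather than $\beta$ itself, so one must track dual-versus-original carefully; concretely one should verify that for $x = \xi + f^*(\mu)\in\q$, $f(x) = \xi + \beta^\sharp(\mu)$ and $\langle x,x\rangle_\q = 2\langle\mu,\xi\rangle + \beta(\mu,\mu)$, and then check this equals $\beta^{-1}(f(x),f(x))$ when $\beta$ is invertible. Once that identity is in hand, the rest is routine linear algebra plus the equivariance propagation from $e$ to general $h\in H$. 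I should also double-check that exactness of an \emph{action} Courant algebroid is genuinely equivalent to fiberwise-surjective anchor with isotropic kernel — this is essentially in \cite{LiBland:2009ul}, and for a transitive action the stabilizer $\q_h$ being coisotropic plus surjectivity forces $\q_h = \q_h^\perp$ exactly when $\ker$ is Lagrangian, matching the exact-sequence definition.
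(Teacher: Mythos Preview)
Your overall strategy is sound and close in spirit to the paper's, but there is a genuine error in the dimension count. You claim $\dim\q = \dim\dd$ always, reasoning that $\dim\mf{r} = \dim\h$ because $f(\mf{r})\subseteq\h$. This is false: $\mf{r}$ is a complement to the \emph{Lagrangian} subspace $\g$ in $\q$, so $\dim\mf{r} = \dim\g$ and $\dim\q = 2\dim\g$, while $\dim\dd = \dim\g + \dim\h$; these agree only when $\dim\g = \dim\h$. (Check against Example~\ref{ex:specialcases}(i): $\beta = 0$ gives $\q = \g\ltimes\g^*$ of dimension $2\dim\g$, independent of $\h$.) Consequently your step ``$f$ surjective $\Leftrightarrow$ $f$ bijective'' fails in general. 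The equivalence ``$f$ surjective $\Leftrightarrow$ $\beta$ non-degenerate'' is still correct, but for a different reason: $f$ surjective $\Leftrightarrow$ $f^*$ injective $\Leftrightarrow$ $\beta^\sharp = f\circ f^*\colon\dd^*\to\dd$ injective, hence bijective since source and target have the same dimension. A second slip: there is no $H$-action on $\q$ in general, so the equivariance ``$\Ad_h f = f\Ad_h$'' is not available, and $\Ad_h$ on $\dd$ does \emph{not} preserve $\g$. Propagation from $e$ to all $h$ is simpler: once $f$ is surjective, $p_\h\circ\Ad_h\circ f$ is surjective for every $h$ because $\Ad_h\colon\dd\to\dd$ is bijective.

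The paper's proof bypasses your metric-intertwining computation. For $\Rightarrow$ it observes that $\g\subseteq\ker\a_e$ (the $\g$-action fixes $e$), so once $\A$ is exact the Lagrangian kernel $\ker\a_e$ must \emph{equal} $\g$; then $\a_e|_{\mf{r}}\colon\mf{r}\to\h$ is an isomorphism, hence so is $f\colon\q\to\dd$, and $f_*\gamma=\beta$ gives the conclusion. For $\Leftarrow$ it simply invokes Example~\ref{ex:specialcases}(iii), which already records that under these hypotheses $f$ is an isomorphism $(\q,\g,\mf{r})_\gamma\cong(\dd,\g,\h)_\beta$; then $\varrho^\q=\varrho\circ f$ is transitive and a rank count finishes. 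Your detour through ``$f$ is an isometry for $\beta^{-1}$'' is correct in the case $f$ bijective but unnecessary: all that is needed is $f_*\gamma=\beta$, which is exactly the defining relation $\beta^\sharp = f\circ f^*$.
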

\begin{proof}
Let $(\q,\g,\mf{r})_\gamma$ be the  Dirac Manin triple constructed from $(\dd,\g,\h)_\beta$. 

$\Rightarrow$. Suppose $\A$ is exact. Then $\a_e\colon \q=\A_e\to \h=T_eH$ is surjective, with kernel $\g$. It follows that $\a_e$ restricts to an isomorphism $\mf{r}\to\h$; hence $f\colon \q\to \dd$ is an 
isomorphism. Since $f(\gamma)=\beta$, we 
conclude that $\beta$ is non-degenerate and $\g$ is Lagrangian. 

$\Leftarrow$. If $\beta$ is non-degenerate and $\g$ is Lagrangian, then (cf.~ Example~\ref{ex:specialcases}) the map $f\colon \q\to \dd$ gives an isomorphism $(\q,\g,\mf{r})_\gamma\cong (\dd,\g,\h)_\beta$. Hence the action 
$\varrho^\q=\varrho\circ f$ of $\q$ on $H$ is transitive.  Hence $\a\colon \A\to TH$ is surjective, and 
by dimension count its kernel is $\a^*(T^*H)$. 
\end{proof}

For the remainder of this section, we assume $(\A,E)$ is an exact Dirac Lie group structure, so that 
$\beta$ is non-degenerate, and $\g$ is Lagrangian in $\dd$. Using the isomorphism $(\q,\g,\mf{r})_\gamma\cong (\dd,\g,\h)_\beta$ we will write $\dd,\h,\beta$ in place of $\q,\mf{r},\gamma$ and we omit the 
letter $f$. 
We will write $p\in\End(\dd)$ for the projection to $\h$ along $\g$.

\subsection{Symmetries} \label{subsec:summ}
\begin{proposition}
Let $(\A,E)$ be an exact Dirac Lie group structure on $H$. The action of $H\times H$ on $H$, given by 
$(h_1,h_2).h=h_1 h h_2^{-1}$ lifts uniquely to an action on the vector bundle $E$, in such a way that:
\begin{enumerate}
\item The lifted action is by $\ca{CA}$ automorphisms: It preserves the metric $\l\cdot,\cdot\r$ and the bracket $\Cour{\cdot,\cdot}$, and the anchor map is equivariant.  
\item The lifted action is compatible with the groupoid structure, in the sense that 
the action map $(H\times H)\times\A\to \A$ is a groupoid homomorphism (using the pair groupoid structure 
$H\times H\rra H$).
\end{enumerate}
The action of the diagonal $H\subseteq H\times H$ on $\ker(\a_e)\cong\g$ is 
given by $h.\xi=p^*(\Ad_h\xi)$.
\end{proposition}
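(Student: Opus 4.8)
I work in the left trivialization $\A\cong H\times\dd$, $E\cong H\times\g$ of Theorem~\ref{th:mainB}. In the exact case $f=\on{id}$, so $\q=\dd$, $\mf{r}=\h$, $\A$ is the action Courant algebroid for $\varrho\colon\dd\to\mf{X}(H)$ with $\iota(\varrho(\zeta))\theta^R_h=p(\Ad_h\zeta)$ ($p\in\End(\dd)$ the projection onto $\h$ along $\g$), $s(h,\zeta)=p^*\zeta$, $t(h,\zeta)=h\bullet(1-p)\zeta$, units $\xi\mapsto(e,\xi)$, and $(h_1,\zeta_1)\circ(h_2,\zeta_2)=(h_1h_2,\ \zeta_2+h_1^{-1}\bullet(1-p^*)\zeta_1)$. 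For $h\in H$ define $\Psi_h\in\GL(\dd)$ by $\Psi_h|_\h=\Ad_h|_\h$ and $\Psi_h|_\g=p^*\Ad_h|_\g$; this is well defined on $\g$ because $\Ad_h$ preserves the metric and $\h$, hence $\h^\perp=\ker p^*$, so $p^*\Ad_h$ maps $\g=\im p^*$ into $\g$. Using only $\g^\perp=\g$, $\Ad_h\in O(\dd,\beta)$ and $\Ad_h\h=\h$ one verifies that $\Psi_h\in O(\dd,\beta)$, that $h\mapsto\Psi_h$ is a homomorphism $H\to O(\dd,\beta)$, and the bookkeeping identities $p\Psi_h=\Ad_hp$, $p^*\Psi_h=p^*\Ad_h$, $(1-p)\Psi_h=p^*\Ad_h(1-p)$. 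The single substantial input is that $\Psi_h$ is in addition a \emph{Lie algebra} automorphism of $\dd$, together with the covariance relation
\[
\Psi_k^{-1}\,\Ad_g\,\Psi_k=\Ad_{k^{-1}gk},\qquad g,k\in H
\]
(whose restriction to $\g$ gives the analogous identity for the $H$-action $\bullet$). These encode the multiplicativity of $E$, not merely the fact that it is a Dirac structure: they follow from the matched-pair identities for $\g\bowtie H$ attached to the vacant $\ca{LA}$-groupoid $E$ (Proposition~\ref{prop:VLAg2}), or, more conceptually, from the Equivalence Theorem~\ref{thm:morph} applied to the group automorphism $h'\mapsto hh'h^{-1}$ of $H$.

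Granting this, set $(h_1,h_2).(h,\zeta)=\big(h_1hh_2^{-1},\ \Psi_{h_1}\Ad_{h_1^{-1}h_2}\zeta\big)$. I would check in turn: (i) this is a left action of $H\times H$ covering the stated action on $H$ --- clear on the base, and on the fibre it reduces (using that $\Psi$ is a homomorphism) to the covariance relation; (ii) each transformation is an automorphism of the action Courant algebroid: the fibrewise map is independent of $h$ and lies in $O(\dd,\beta)\cap\Aut(\dd)$, and the anchor is intertwined because $\Ad_{h_1hh_2^{-1}}\Psi_{h_1}=\Psi_{h_1}\Ad_{hh_2^{-1}h_1}$ (covariance) together with $p\Psi_{h_1}=\Ad_{h_1}p$ gives $p\,\Ad_{h_1hh_2^{-1}}\!\big(\Psi_{h_1}\Ad_{h_1^{-1}h_2}\zeta\big)=\Ad_{h_1}\,p\,\Ad_h\zeta$, which is precisely the required identity. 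This is (1). The restriction to $\{e\}\times H$ is the transparent case $(h,\zeta)\mapsto(hh_2^{-1},\Ad_{h_2}\zeta)$, needing no extra input: $\Ad_{h_2}$ is a constant metric-preserving Lie algebra automorphism of $\dd$ and $\varrho|_\h$ consists of left-invariant vector fields, whose flows are the right translations.

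For (2) I use the explicit groupoid structure. With the induced map on objects $h\cdot\xi:=p^*\Ad_h\xi$ --- an $H$-action on $\g$ since $\Ad_h\h^\perp=\h^\perp$ --- the identities $p^*\Psi_{h_1}=p^*\Ad_{h_1}$ and $\Ad_{h_2}\h^\perp=\h^\perp$ give $s\big((h_1,h_2).(h,\zeta)\big)=h_2\cdot s(h,\zeta)$; the identities $(1-p)\Psi_{h_1}=p^*\Ad_{h_1}(1-p)$, $(1-p)\Ad_g\zeta=g\bullet(1-p)\zeta$, the group-action property of $\bullet$, and the covariance relation restricted to $\g$ give $t\big((h_1,h_2).(h,\zeta)\big)=h_1\cdot t(h,\zeta)$; units go to units; and the composition identity $(h_1,h_3).(x\circ y)=\big((h_1,h_2).x\big)\circ\big((h_2,h_3).y\big)$ for composable $x,y$ is a direct manipulation of the product formula (the $p\zeta$-parts cancel because $\Psi_k|_\h=\Ad_k|_\h$, and the remaining identity is again a consequence of the covariance relation; alternatively one transports the $\ca{CA}$-groupoid morphism property of $\Mult_\A$, already built into the Dirac Lie group structure, through the trivialization). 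Hence $(H\times H)\times\A\to\A$ is a groupoid homomorphism over $(h,\xi)\mapsto h\cdot\xi$.

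The final formula is then immediate: for the diagonal $(h,h).(h',\zeta)=(hh'h^{-1},\Psi_h\zeta)$, so the action on $\ker(\a_e)=E_e=\g\subseteq\A_e=\dd$ is $\xi\mapsto\Psi_h\xi=p^*(\Ad_h\xi)$. For uniqueness, if $\Theta,\Theta'$ both lift $(h_1,h_2)$ and satisfy (1)--(2), then $\Theta^{-1}\Theta'$ is an automorphism of the exact Courant algebroid $\A$ over $\on{id}_H$ --- hence a gauge transformation by a closed $2$-form $\om\in\Om^2(H)$ --- which at the same time preserves $s$ and $t$ while inducing $\on{id}$ on $\g$; in the trivialization this forces it to act fibrewise by $\zeta\mapsto\zeta+N_h\zeta$ with $N_h$ valued in $\h\cap\h^\perp$, and preservation of the metric together with anchor-equivariance over $\on{id}_H$ then forces $N_h=0$ (for instance at $e$ one gets $N_e\zeta\in(\h\cap\h^\perp)\cap\g=0$). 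The main obstacle throughout is the Lie-algebra-automorphism property of $\Psi_h$ together with the covariance relation: this is where the multiplicativity of $(\A,E)$ is genuinely used, the rest being formal manipulation in the trivialization.
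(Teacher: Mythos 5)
Your proposed lift is not the canonical one, and the two ``substantial inputs'' you defer to the matched-pair identities are false in general. On $\g$ the map $\Psi_h$ is $p^*\Ad_h|_\g$, which in the case where $\h$ is also Lagrangian (so $p^*=1-p$) is exactly the dressing action $h\bullet$ of \eqref{eq:hbulvrho}; the compatibility condition \eqref{eq:comp1*} shows this is \emph{not} by Lie algebra automorphisms unless the correction terms $\L_{\varrho(\xi_i)}(h\bullet\xi_j)$ vanish. Concretely, for the exact Dirac Lie group attached to the Manin triple $(\mathfrak{sl}(2,\C),\su(2),\mathfrak{b})$ and $h=\diag(a,a^{-1})$, the map $h\bullet$ fixes $iH_0=[E_+-E_-,\tfrac12 i(E_++E_-)]$ but scales $E_+-E_-$ and $i(E_++E_-)$ by $a^{-2}$, so it does not preserve brackets. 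Since $\Psi_h(\g)=\g$, the map $\Psi_h$ cannot then be a Lie algebra automorphism of $\dd$, and the covariance relation $\Psi_k^{-1}\Ad_g\Psi_k=\Ad_{k^{-1}gk}$ also fails (its $\h$-components disagree on $\g$). This is fatal for your formula $(h_1,h_2).(h,\zeta)=(h_1hh_2^{-1},\Psi_{h_1}\Ad_{h_1^{-1}h_2}\zeta)$: a bundle map of an action Courant algebroid whose fibre maps are independent of the base point sends constant sections to constant sections, and $\Cour{\zeta_1,\zeta_2}=[\zeta_1,\zeta_2]$ on constants, so bracket preservation would force the fibre map to be an automorphism of $\dd$. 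Your map therefore does not preserve the Courant bracket (one can also check directly that it fails to intertwine the anchor away from $h=e$).

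The underlying misconception is that the lifted action has constant fibre maps in the trivialization $\A\cong H\times\dd$. It does not: carrying the natural action through the reduction $\A=C/C^\perp$ one finds
\[ (h_1,e).(h,\zeta)=\bigl(h_1h,\ \zeta-\Ad_{h^{-1}h_1^{-1}}(1-p^*)\Ad_{h_1}(1-p)\Ad_h\zeta\bigr), \]
whose fibre map genuinely depends on $h$ --- this is precisely why the paper remarks that the $H\times\{e\}$-action ``is more involved'' in the trivialization, and instead constructs the action upstairs: lift $(h_1,h_2)$ to $\TH\times(\ol{\dd}\oplus\dd)$ by the tangent/cotangent lifts on $\TH$ and by $\Ad_{h_1}\times\Ad_{h_2}$ on $\ol{\dd}\oplus\dd$, check that $C$ is invariant, and descend. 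The formula $h.\xi=p^*(\Ad_h\xi)$ then drops out of metric preservation together with the fact that the diagonal acts by $\Ad$ on $\ker(t_e)=\h$; it is only a statement about the fibre over the fixed point $e$ and does not extend to an $h$-independent automorphism of the trivialized bundle. Your uniqueness sketch is salvageable (uniqueness does follow from equivariance of $\a$ and $t$ together with $\A=\ker(\a)\oplus\ker(t)$ in the exact case), but the existence half of your argument collapses.
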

Note that this action of $H$ on $\g$ is different from the action $\bullet$, in general. 
The proposition says in particular that $t((h_1,h_2).x)=h_1.t(x),\ \ s((h_1,h_2).x)=h_2.s(x)$,
and 
\[ ((h_1,h_2).x)\circ ((h_1',h_2').x')=(h_1,h_2').(x\circ x')\]
if $s(x)=t(x')$ and $h_2=h_1'$.
\begin{proof}
Regard $\A$ as the reduction $C/C^\perp$ of $\TH\times (\ol{\dd}\oplus \dd)$, as in 
Section \ref{subsec:construct}. 
The lift of the $H\times H$-action to $\TH$, given as the direct sum of the tangent and cotangent 
lifts, is by Courant algebroid automorphisms, and is compatible with the groupoid structure $\TH\rra \h^*$. 
Similarly the action on $\ol{\dd}\oplus \dd$, where the $i$-th factor of $H$ acts by $\Ad$ on the $i$-th factor of $\dd$, is by Courant algebroid automorphisms, and is compatible with the pair groupoid structure 
$\ol{\dd}\oplus \dd\rra \dd$. The sub-bundle $C\subset \TH\times (\ol{\dd}\oplus \dd)$ given by \eqref{eq:defc} is invariant under this action, 
hence so is $C^\perp$, and we obtain an induced action on $\A=C/C^\perp$ by Courant automorphisms compatible 
with the groupoid structure. It has the desired property that the anchor is equivariant and that $\ker(t)$ 
is invariant. In particular, the action of the diagonal $H\subseteq H\times H$ on $\ker(t_e)\cong T_eH=\h$ is the adjoint action. Since the metric on $\A_e=\dd$ is preserved, we deduce the action on $\ker(\a_e)=\g$: 
For $\xi\in\g$ and $\zeta\in\h$, 
\[ \l h.\xi,\zeta\r=\l \xi,\,h^{-1}.\zeta\r=\l\xi,\,\Ad_{h^{-1}}\zeta\r=\l \Ad_h\xi,
\zeta\r=\l p^*(\Ad_h\xi),\zeta\r,\]
thus $h.\xi=p^*(\Ad_h\xi)$. For the uniqueness properties, suppose we are given any lift of the $H\times 
H$-action to Courant automorphisms of $\A$, compatible with the groupoid structure. By the equivariance properties of the anchor and target maps, the action on  $\ker(\a)\cong T^*H$ and $\ker(t)\cong TH$ are uniquely determined, 
hence so is the action on $\A=\ker(\a)\oplus \ker(t)$. 
\end{proof}
\begin{remark}
For possibly non-exact Dirac Lie group structures $(\A,E)$, a similar construction gives an action of 
$\mf{r}\times\mf{r}$ by infinitesimal Courant automorphisms of $\A$, where $\mf{r}=\ker(t_e)\subseteq \dd=\A_e$. 
\end{remark}

\begin{lemma}\label{lem:rightequiv}
Under the isomorphism $\A=H\times\dd$, the action of $\{e\}\times H\subset H\times H$ is given by 
$(e,h_2).(h,\zeta)=(hh_2^{-1},\Ad_{h_2}\zeta)$. 
\end{lemma}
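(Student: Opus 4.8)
The plan is to compute the action directly on the model $\A=C/C^\perp$ with $C\subseteq\TH\times(\ol{\dd}\oplus\dd)$ used in Section~\ref{subsec:construct}, exploiting the explicit trivializing bundle map from Remark~\ref{rem:trivial}. In the present exact situation $\q=\dd$ and $f=\on{id}$, so that map reads $\iota\colon H\times\dd\to C,\ (h,\zeta)\mapsto (v,\xi,\zeta)$, where $\xi=(1-p)\Ad_h\zeta\in\g$ and $v\in T_hH$ is the (purely tangent) vector determined by $\iota(v)\theta^R_h=p(\Ad_h\zeta)$; composing $\iota$ with the quotient map $C\to\A$ recovers the trivialization $H\times\dd\xra{\cong}\A$ of Proposition~\ref{prop:trivial}. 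By the construction in the preceding proposition, the $H\times H$-action on $\A$ is induced from the action on $\TH\times(\ol{\dd}\oplus\dd)$ in which the $i$-th copy of $H$ acts on $\TH$ by the (tangent plus cotangent) lift of $(h_1,h_2).h=h_1hh_2^{-1}$ and on the $i$-th copy of $\dd$ by $\Ad$; since $C$, hence $C^\perp$, is invariant, it suffices to evaluate this action on the image of $\iota$ and match the result with another value of $\iota$.

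So the first step is to apply $(e,h_2)$ to $\iota(h,\zeta)=(v,\xi,\zeta)$. On the $\TH$-factor $(e,h_2)$ is the lift of the right translation $R_{h_2^{-1}}\colon h\mapsto hh_2^{-1}$; as $v$ has no cotangent component it is sent to $TR_{h_2^{-1}}(v)\in T_{hh_2^{-1}}H$, still purely tangent. On $\ol{\dd}\oplus\dd$ it acts by $(X,X')\mapsto (X,\Ad_{h_2}X')$, so $(\xi,\zeta)\mapsto(\xi,\Ad_{h_2}\zeta)$. Thus $(e,h_2).\iota(h,\zeta)=\big(TR_{h_2^{-1}}(v),\,\xi,\,\Ad_{h_2}\zeta\big)$. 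The second step is to recognize the right-hand side as $\iota(hh_2^{-1},\Ad_{h_2}\zeta)$: its $\dd$-component is $\Ad_{h_2}\zeta$ by inspection; its $\g$-component should be $(1-p)\Ad_{hh_2^{-1}}(\Ad_{h_2}\zeta)=(1-p)\Ad_h\zeta=\xi$, which it is; and its $\TH$-component should be the vector $w\in T_{hh_2^{-1}}H$ with $\iota(w)\theta^R_{hh_2^{-1}}=p(\Ad_{hh_2^{-1}}\Ad_{h_2}\zeta)=p(\Ad_h\zeta)$. By right-invariance of $\theta^R$ we have $\iota(TR_{h_2^{-1}}v)\theta^R_{hh_2^{-1}}=\iota(v)\theta^R_h=p(\Ad_h\zeta)$ as well, and since $\theta^R_{hh_2^{-1}}$ is a linear isomorphism and both vectors are purely tangent, $w=TR_{h_2^{-1}}v$. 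Hence $(e,h_2).\iota(h,\zeta)=\iota(hh_2^{-1},\Ad_{h_2}\zeta)$ inside $C$, and passing to $\A=C/C^\perp$ gives $(e,h_2).(h,\zeta)=(hh_2^{-1},\Ad_{h_2}\zeta)$, as claimed.

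There is essentially no serious obstacle here, only two bookkeeping points. First, one must observe that the $\TH$-part of $\iota(h,\zeta)$ carries no cotangent component (immediate from Remark~\ref{rem:trivial}), since that is exactly what lets the $\TH$-components be compared through the single identity involving $\theta^R$. Second, one must keep the conventions of the $H\times H$-action straight, in particular that the cotangent lift contributes nothing on purely tangent vectors and that the $i$-th $H$ acts only on the $i$-th $\dd$-factor. As a consistency check, one may instead derive the lemma from the equivariance identities $t((e,h_2).x)=t(x)$ and $s((e,h_2).x)=p^*(\Ad_{h_2}s(x))$ of the preceding proposition together with the structural formulas of Theorem~\ref{th:mainB}; that route additionally uses the $\Ad$-invariance of $\h\subseteq\dd$ (so that $p^*\Ad_{h_2}(1-p^*)=0$), whereas the computation above, being phrased through the closed formula for $\iota$, does not invoke it directly.
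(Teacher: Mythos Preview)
Your proof is correct and follows essentially the same approach as the paper's. Both arguments rest on the identification of $\A$ with the complement $\on{im}(\iota)=C\cap\big(TH\times(\g\oplus\dd)\big)\subseteq C$ coming from Remark~\ref{rem:trivial}, and on the fact that this sub-bundle is preserved by the $\{e\}\times H$-action; you verify the latter by an explicit computation, while the paper simply observes that $\{e\}\times H$ acts trivially on the first $\dd$-factor (so the condition ``first $\dd$-component in $\g$'' is preserved) and that $TH\subseteq\TH$ is invariant under the tangent lift, after which the trivialization is just projection to the last $\dd$-factor and the claim is immediate.
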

\begin{proof}
Recall that $\A=C/C^\perp$ may be identified with the sub-bundle  $C\cap (TH\times (\g\oplus\dd))$. Under this identification, the trivialization is the projection to the last factor. Since $\{e\}\times H$ preserves this sub-bundle, the Lemma follows. 
\end{proof}
The description of the $H\times\{e\}$-action in terms of the trivialization is more involved, but will not be needed here. 

\subsection{Splitting} 
We next show that the Courant algebroid $\A$ admits a \emph{canonical} $H\times H$-invariant isotropic 
splitting $\mathsf{l}\colon TH\to \A$. Recall that $\n$ denotes the Lagrangian complement 
to $\g$ in $\dd=\q$, as described in Section~\ref{subsec:compl}. 

\begin{theorem}\label{th:splittt}
Let $(\A,E)$ be an exact Dirac Lie group structure on $H$.
There is a unique $\{e\}\times H$-equivariant Lagrangian splitting $\mathsf{l}\colon TH\to \A$ 
such that $\mathsf{l}(T_eH)=\n$. In fact, the splitting is $H\times H$-invariant. 
In terms of the trivialization $\A=H\times\dd$, the splitting is given on right-invariant vector 
fields by 
\begin{equation}\label{eq:splitright} \mathsf{l}(\nu^R)=(h,\Ad_{h^{-1}} (\nu-\hh p^*(\nu))),\ \ \nu\in\h.\end{equation}
The splitting determines an $H\times H$-equivariant isomorphism $\A\xra{\cong} \TH_\eta$, where 
$\eta$ is the closed bi-invariant \emph{Cartan 3-form}
\[ \eta=\f{1}{12}\l\theta^R,[\theta^R,\theta^R]\r.\]
\end{theorem}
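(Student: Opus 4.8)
The plan is to construct $\mathsf{l}$ from its value at the identity, propagate it by $\{e\}\times H$-equivariance, and then read off the twisting $3$-form from the explicit action-Courant-algebroid description. After the identification $(\q,\g,\mf r)_\gamma\cong(\dd,\g,\h)_\beta$ (so we drop $f$ and write $p\in\End(\dd)$ for the projection to $\h$ along $\g$), Theorem~\ref{th:mainB} gives $\A=H\times\dd$ as the action Courant algebroid for $\varrho\colon\dd\to\mf X(H)$, $\iota(\varrho(\zeta))\theta^R_h=p(\Ad_h\zeta)$, with $t_e=1-p$ and hence $\a_e=p$, so that $\ker\a_e=\g$. Since right-invariant vector fields generate $\Gamma(TH)$ over $C^\infty(H)$ and are $\{e\}\times H$-invariant, any $\{e\}\times H$-equivariant splitting $\mathsf{l}$ is determined by $\mathsf{l}|_{T_eH}\colon\h\to\dd$; the condition $\mathsf{l}(T_eH)=\n$ forces this to be the unique section of $\a_e=p$ with image $\n$, which (solving $(1-p)x+p^*x=0$, $px=\nu$) is $\nu\mapsto\nu-\hh p^*\nu$. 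This proves uniqueness. For existence I would \emph{define} $\mathsf{l}$ on right-invariant fields by $\{e\}\times H$-equivariance: by Lemma~\ref{lem:rightequiv}, $(e,h_2)$ fixes $\nu^R$ and acts on $\A$ by $(h,\zeta)\mapsto(hh_2^{-1},\Ad_{h_2}\zeta)$, which forces \eqref{eq:splitright}; extending $C^\infty(H)$-linearly in $\nu^R$ yields a bundle map $TH\to\A$.

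Next I would check that $\mathsf{l}$ is a Lagrangian splitting. One has $\a\circ\mathsf{l}=\on{id}$, since $\iota(\a(\mathsf{l}(\nu^R)))\theta^R_h=p(\Ad_h\Ad_{h^{-1}}(\nu-\hh p^*\nu))=p\nu-\hh p p^*\nu=\nu$. Isotropy holds because the trivialization and $\Ad_h$ both preserve the metric, so $\l\mathsf{l}(\mu^R),\mathsf{l}(\nu^R)\r=\l\mu-\hh p^*\mu,\nu-\hh p^*\nu\r$, which vanishes on expanding and using $\l p^*\mu,\nu\r=\l\mu,p\nu\r=\l\mu,\nu\r$ for $\mu,\nu\in\h$ together with $\l\cdot,\cdot\r|_\g=0$ ($\g$ Lagrangian); exactness of $\A$ (split signature) then makes $\mathsf{l}(TH)$ a Lagrangian complement to $\a^*(T^*H)$. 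For $H\times H$-invariance, $\{e\}\times H$-invariance is automatic, and for the diagonal $H$ (which fixes $e$) I would use the Proposition of Section~\ref{subsec:summ}: it acts on $\A_e=\dd$ as $\Ad_h$ on $\ker t_e=\h$ and as $\xi\mapsto p^*(\Ad_h\xi)$ on $\ker\a_e=\g$; since $\h$, hence $\h^\perp=\ker p^*$, is $\Ad_h$-invariant and $p^*\nu-\nu\in\ker p^*$, we get $p^*\Ad_h(p^*\nu-\nu)=0$, so $(h,h)\cdot(\nu-\hh p^*\nu)=\Ad_h\nu-\hh p^*\Ad_h\nu\in\n$. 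Thus the diagonal $H$ preserves $\n$; as $(h,h)$ normalizes $\{e\}\times H$, the pushed-forward splitting $(h,h)\cdot\mathsf{l}$ is again $\{e\}\times H$-equivariant with image $\n$ at $e$, so $(h,h)\cdot\mathsf{l}=\mathsf{l}$ by uniqueness. Since $\{e\}\times H$ and the diagonal generate $H\times H$, $\mathsf{l}$ is $H\times H$-invariant.

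Finally I would identify $\eta$. The splitting $\mathsf{l}$ gives $\A\cong TH\oplus T^*H$ with twisting $3$-form $\eta$ as in \eqref{eq:3form}, and $H\times H$-invariance of $\mathsf{l}$ makes $\eta$ bi-invariant, so it suffices to compute the constant $\l\Cour{\mathsf{l}(\mu^R),\mathsf{l}(\nu^R)},\mathsf{l}(\lambda^R)\r$ for $\mu,\nu,\lambda\in\h$. Writing $\mathsf{l}(\mu^R)=\wh{\mu'}$, where $\wh\zeta$ denotes the section $h\mapsto\Ad_{h^{-1}}\zeta$ and $\mu':=\mu-\hh p^*\mu$, one computes $\a(\wh\zeta)=(p\zeta)^R$ and $\d\wh\zeta=-\Ad_{h^{-1}}[\theta^R,\zeta]$ (the $\dd$-valued $1$-form $X\mapsto[\iota_X\theta^R,\zeta]$), and feeding these into \eqref{eq:actioncourant} yields the constant
\[ \l\Cour{\wh{\zeta_1},\wh{\zeta_2}},\wh{\zeta_3}\r=\l[(1-p)\zeta_1,\zeta_2]-[\zeta_1,p\zeta_2],\zeta_3\r-\l[p\zeta_3,\zeta_1],\zeta_2\r. \]
Substituting $\zeta_i=\mu',\nu',\lambda'$, the terms with two or more factors of $p^*$ cancel or vanish (by cyclicity of $\l[\cdot,\cdot],\cdot\r$ and $\l\cdot,\cdot\r|_\g=0$), and using invariance of $\l\cdot,\cdot\r$, the identity $\l p^*\mu,\cdot\r=\l\mu,p\cdot\r$, and $[\h,\h]\subseteq\h$, everything collapses to $-\hh\l[\mu,\nu],\lambda\r$, which is exactly $\iota(\mu^R)\iota(\nu^R)\iota(\lambda^R)$ of the bi-invariant Cartan $3$-form $\tfrac1{12}\l\theta^R,[\theta^R,\theta^R]\r$. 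Equivariance of the resulting isomorphism $\A\xra{\cong}\TH_\eta$ follows since both $\mathsf{l}$ and $\eta$ are $H\times H$-invariant. As a cross-check, for the Cartan--Dirac structure of Example~\ref{ex:cardir} the formula \eqref{eq:splitright} reproduces the splitting implicit in $\kappa$ of \eqref{eq:kappa}; alternatively one can prove the general statement by writing down an explicit Courant isomorphism $\A\to\TH_\eta$ extending $\kappa$.

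The main obstacle is this last step: the bracket computation for $\eta$ is bookkeeping-heavy, and the half-integer coefficients coming from $\n$ being the mid-point between $\ker s$ and $\ker t$, together with pinning down the overall sign and normalization, are where errors are most likely. Once the action-Courant-algebroid picture of Theorem~\ref{th:mainB} and the trivialization are in hand, the remaining steps are routine.
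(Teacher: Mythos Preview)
Your proposal is correct, and the computation of the $3$-form via \eqref{eq:actioncourant} is essentially the paper's own argument, arriving at the same four-term expression in $\l[\cdot,\cdot],\cdot\r$ and the same reduction to $-\hh\l[\mu,\nu],\lambda\r$.

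Where you genuinely differ is in how $H\times H$-invariance is obtained. The paper constructs the splitting \emph{intrinsically}: it takes the projection $\Pi\colon\A\to\A$ onto $\ker(t)$ along $\ker(\a)$, observes that both subbundles (hence $\Pi$) are $H\times H$-invariant, and defines $\mathsf{l}(TH)$ as the kernel of $\hh((1-\Pi)+\Pi^*)$. Invariance is then immediate, and the formula \eqref{eq:splitright} and the uniqueness statement are read off afterwards (by checking at $e$ and invoking Lemma~\ref{lem:rightequiv}). You instead start from uniqueness, \emph{define} $\mathsf{l}$ by \eqref{eq:splitright}, and then prove $H\times H$-invariance in a second pass by showing the diagonal $H$ preserves $\n\subset\A_e$ and appealing back to uniqueness. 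Both routes are sound; the paper's midpoint construction is slicker for invariance and explains conceptually \emph{why} this particular splitting is distinguished (it is the midpoint of $\ker(s)$ and $\ker(t)$ among Lagrangian complements to $\ker(\a)$, in the same spirit as the construction of $\n$ in Section~\ref{subsec:compl}), while your approach makes the uniqueness claim the organizing principle and does not require introducing the global projection $\Pi$.
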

\begin{proof}
Let $\Pi\colon \A\to \A$ denote the projection to $\ker(t)$ along $\ker(\a)$. Since $\ker(t),\ker(\a)$ are 
$H\times H$-invariant sub-bundles, the projection $\Pi$ is $H\times H$-equivariant.  
$1-\Pi,\ \Pi^*$ are projections to $\ker(\a)$ with kernels $\ker(t),\ \ker(s)$ respectively. 
The kernel of the projection $\hh((1-\Pi)+\Pi^*)$ is a Lagrangian sub-bundle complementary to $\ker(\a)$, defining an $H\times H$-invariant isotropic splitting $\mathsf{l}\colon TH\to \A$ having this sub-bundle as its range. 
At the group unit, $\Pi$ coincides with the projection $p\colon \dd=\A_e\to \h=\ker(t_e)$ along $\g=\ker(\a_e)$, 
hence $\mathsf{l}(T_eH)$ is the subspace $\n=\ker((1-p)+p^*)$. 

By Lemma \ref{lem:rightequiv}, the right hand side of Formula \eqref{eq:splitright} defines an $\{e\}\times H$-splitting. To show that it equals the left hand side, it is hence enough to check at $h=e$. Since 
$\nu-\hh p^*(\nu)$ lies in $\n=\ker((1-p)+p^*)$ and maps to $\nu$ under $p$, this proves 
\eqref{eq:splitright}. 

It remains to compute the resulting 3-form, using the formula \eqref{eq:actioncourant} for 
action Courant algebroids. For $\nu\in\h$ let $\ti{\nu}=\nu-\hh p^*(\nu)$ be its projection to $\n$. 
Equation \eqref{eq:actioncourant} gives
\[ \begin{split}
\l \Cour{\mathsf{l}(\nu_1^R),\,\mathsf{l}(\nu_2^R)},\ \mathsf{l}(\nu_3^R)\r
=&\l [\ti{\nu}_1,\ti{\nu}_2],\ti{\nu}_3\r + \l \L(\nu_1^R)\Ad_{h^{-1}}\ti{\nu}_2 ,\ti{\nu}_3\r
-\l \L(\nu_2^R)\Ad_{h^{-1}}\ti{\nu}_3,\Ad_{h^{-1}}\ti{\nu}_3\r
\\&+\l \L(\nu_3^R)\Ad_{h^{-1}}\ti{\nu}_1,\Ad_{h^{-1}}\ti{\nu}_3\r\\
 =&\l [\ti{\nu}_1,\ti{\nu}_2],\ti{\nu}_3\r -\l [\nu_1,\ti{\nu}_2],\ti{\nu}_3\r \\
&+\l [\nu_2,\ti{\nu}_1],\ti{\nu}_3\r -\l [\nu_3,\ti{\nu}_1],\ti{\nu}_2\r,
\end{split}\]
where we used $\L(\nu^R)\Ad_{h^{-1}}\zeta=-\Ad_{h^{-1}}[\nu,\zeta]$ for $\nu\in\h,\ \zeta\in\dd$.
Using the definition $\ti{\nu}_i=\nu_i-\hh p^*\nu_i$, note that terms with exactly two $p^*\nu_i$'s
cancel out. The term with three $p^*\nu_i$'s is zero, since $\g$ is a Lagrangian Lie subalgebra. 
Terms with exactly one $p^*\nu_i$ simplify, e.g. $\l [\nu_1,\nu_2],-\hh p^*\nu_3\r
=-\hh \l p[\nu_1,\nu_2],\nu_3\r=-\hh  \l [\nu_1,\nu_2],\nu_3\r$. Denoting terms with two or more $p^*\nu_i$'s by $\ldots$, we hence have 
\[ \l [\ti{\nu}_1,\ti{\nu}_2],\ti{\nu}_3\r=\l [\nu_1,\nu_2],\nu_3]\r-\f{3}{2} \l [\nu_1,\nu_2],\nu_3\r+\ldots=-\hh \l [\nu_1,\nu_2],\nu_3\r+\ldots,\]
and similarly $\l [\nu_1,\ti{\nu}_2],\ti{\nu}_3\r=0+\ldots$. We conclude
\[ \l \Cour{\mathsf{l}(\nu_1^R),\,\mathsf{l}(\nu_2^R)},\ \mathsf{l}(\nu_3^R)\r=-\hh \l [\nu_1,\nu_2],\nu_3\r.\]
It follows that $\eta=\f{1}{12}\l\theta^R,[\theta^R,\theta^R]\r$. 
\end{proof}

For later reference, note that the isomorphism $\TH_\eta\to \A,\ v+\alpha\mapsto \mathsf{l}(v)+\a^*(\alpha)$
is given in terms of the trivialization $\A=H\times \dd$ by the map $v+\alpha\mapsto (h,\zeta)$, 
with 
\begin{equation}\label{eq:form1}
\zeta=\Ad_{h^{-1}}\big((1-\hh p^*)\iota_v\theta^R_h+t(\alpha)\big),\ \ \Leftrightarrow \ \  
\iota_v\theta^R_h=p(\Ad_h\zeta),\ \ \alpha=\l\theta^R_h,\hh (p^*+(1-p))(\Ad_h\zeta)\r.
\end{equation}

\subsection{Multiplicative properties}
We next discuss the multiplicative aspects of the isomorphism $\A\cong \TH_\eta$. 
Let $\pr_1,\pr_2\colon H\times H\to H$ be the two projections, and let
$\sigma\in \Om^2(H\times H)$ be the 2-form
\begin{equation}\label{eq:varpi} \sigma =-\hh \l\pr_1^*\theta^{L},\pr_2^*\theta^{R}\r.
\end{equation}
Then the pull-back of $\eta$ under group multiplication satisfies
\begin{equation*}\label{eq:varsig} \on{Mult}_H^*\eta-\pr_1^*\eta-\pr_2^*\eta=\d\sigma.\end{equation*}
Consequently, the pair $(\Mult_H,\sigma)$ defines a Courant morphism 
\[ R_{\Mult_H,\sigma}\colon \TH_\eta\times \TH_\eta\da \TH_\eta.\]
\begin{proposition}\label{prop:multip}
The isomorphism $\A\to \TH_\eta$ intertwines the Courant morphisms 
\[ \on{gr}(\Mult_\A)\colon \A\times \A\da \A,\ \ \ 
R_{\on{Mult}_H,\sigma}\colon \TH_\eta\times \TH_\eta\da \TH_\eta.\] 
The $\ca{VB}$-groupoid structure on $\TH_\eta$, defined by this isomorphism, has 
source and target maps
\[ 
s(v+\alpha)=(TR_h)^*(\alpha+\hh \l v,\theta^R_h\r),\ \ 
t(v+\alpha)=(TL_h)^*(\alpha-\hh \l v,\theta^L_h\r),\]
for $v+\alpha\in T_hH_\eta$.  If $v_i+\alpha_i\in \T_{h_i}H_\eta,\ i=1,2$ are composable, then 
their groupoid product $v+\alpha=(v_1+\alpha_1)\circ (v_2+\alpha_2)\in \T_{h_1h_2}H_\eta$ is given by 
$v=v_1\circ v_2$ (product in the group $TH$) and 
\[ \alpha=(TL_{h_1^{-1}})^*(\alpha_2-\f{1}{2}\l \iota(v_1)\theta^L_{h_1},\theta^R_{h_2}\r).
\]  
\end{proposition}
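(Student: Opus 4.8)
The strategy is to transport everything through the explicit isomorphism $\A\cong\TH_\eta$ of Theorem~\ref{th:splittt}, using the formula \eqref{eq:form1} together with the concrete description of $\Mult_\A$ on $\A=H\times\dd$ from Theorem~\ref{th:mainB}. First I would observe that the compatibility of $\d\sigma$ with $\Mult_H^*\eta-\pr_1^*\eta-\pr_2^*\eta$ is the standard identity for the Cartan $3$-form (it follows from $\Mult_H^*\theta^R=\theta^R$, $\Mult_H^*\theta^L=\theta^L$ and a short Maurer--Cartan computation), so that $R_{\Mult_H,\sigma}$ is a genuine Courant morphism by the second Example of Section~\ref{sec:prel}. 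The heart of the argument is to show that the trivialized isomorphism $\Psi\colon\TH_\eta\to\A=H\times\dd$ from \eqref{eq:form1} intertwines $R_{\Mult_H,\sigma}$ with $\on{gr}(\Mult_\A)$; equivalently, that $(\Psi\times\Psi\times\Psi)$ carries the Dirac structure $R_{\Mult_H,\sigma}\subseteq\TH_\eta\times\ol{\TH_\eta}\times\ol{\TH_\eta}$ onto $\on{gr}(\Mult_\A)$. Because both sides are Lagrangian subbundles of the same rank along $\on{gr}(\Mult_H)$, it suffices to check the inclusion in one direction pointwise.

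So, fix composable $v_i+\alpha_i\in\T_{h_i}H_\eta$, set $v+\alpha=v_1\circ v_2$ with $\alpha=(TL_{h_1^{-1}})^*(\alpha_2-\tfrac12\l\iota(v_1)\theta^L_{h_1},\theta^R_{h_2}\r)$ as in the statement, and verify two things: (i) that $(v+\alpha,v_1+\alpha_1,v_2+\alpha_2)$ indeed lies in $R_{\Mult_H,\sigma}$, i.e. the cotangent relation $\alpha_1\oplus\alpha_2 = T\Mult_H^*\alpha+\iota_{(v_1,v_2)}\sigma$ holds — this is a direct Maurer--Cartan manipulation using $\sigma=-\tfrac12\l\pr_1^*\theta^L,\pr_2^*\theta^R\r$; and (ii) that under $\Psi$ the three elements go to $(h_1h_2,\zeta),(h_1,\zeta_1),(h_2,\zeta_2)$ with $(h_1,\zeta_1)\circ(h_2,\zeta_2)=(h_1h_2,\zeta)$ in the sense of Theorem~\ref{th:mainB}, namely $\zeta=\zeta_2+h_1^{-1}\bullet(1-p_{\mf r}^*)\zeta_1$. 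Step (ii) is where one substitutes the three instances of \eqref{eq:form1}, writes $\iota_{v_i}\theta^R_{h_i}=p(\Ad_{h_i}\zeta_i)$ and $\alpha_i=\l\theta^R_{h_i},\tfrac12(p^*+(1-p))\Ad_{h_i}\zeta_i\r$, and uses $\iota(v)\theta^R_{h_1h_2}=\iota(v_1)\theta^R_{h_1}+\Ad_{h_1}\iota(v_2)\theta^R_{h_2}$ (the cocycle identity for $\theta^R$ under the product in $TH$) to collect terms; the $\bullet$-action enters through \eqref{eq:hbulvrho}, $h\bullet\xi=(1-p)\Ad_h\xi$, and \eqref{eq:bulletonrperp}.

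Once the intertwining of the multiplication morphisms is established, the formulas for $s,t$ and for the groupoid product on $\TH_\eta$ follow formally: $s$ and $t$ are recovered from $\on{gr}(\Mult_{\TH_\eta})$ by composing with the unit section, exactly as in Proposition~\ref{prop:units}, and one reads them off by plugging $h_2=e$ (resp. $h_1=e$) into the product formula and comparing with $s(h,\zeta)=p_{\mf r}^*\zeta$, $t(h,\zeta)=h\bullet(1-p_{\mf r})\zeta$ pushed through $\Psi$. Concretely, $s(v+\alpha)$ is the element of $T_eH\oplus\h^*=T^*_eH\oplus\h^*$ — more precisely of $\A^{(0)}=\g$, viewed inside $\TH_\eta$ via $\Psi^{-1}$ — and a short computation with \eqref{eq:form1} at $h$ gives $s(v+\alpha)=(TR_h)^*(\alpha+\tfrac12\l v,\theta^R_h\r)$ and symmetrically $t(v+\alpha)=(TL_h)^*(\alpha-\tfrac12\l v,\theta^L_h\r)$; the product formula for $\alpha$ is then forced by associativity and the already-known product $v=v_1\circ v_2$ on $TH$, or alternatively obtained directly by the same substitution.

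\textbf{Main obstacle.} The conceptual steps are easy; the real work is the bookkeeping in step (ii), keeping straight the four projections $p,\,1-p,\,p^*,\,1-p^*$, the left/right Maurer--Cartan forms at the three base points $h_1,h_2,h_1h_2$, and the twist by $\bullet=(1-p)\Ad$. I expect the one genuinely delicate point to be checking that the $\tfrac12$-coefficients coming from the splitting $\mathsf l$ (the factor $(1-\tfrac12 p^*)$ in \eqref{eq:splitright}) combine with the $\tfrac12$ in $\sigma=-\tfrac12\l\pr_1^*\theta^L,\pr_2^*\theta^R\r$ to produce exactly the stated $\alpha$ with no leftover terms — i.e. that the anomalous $2$-form contributions match. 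Organizing the computation so that terms involving $\g$ (which is Lagrangian, so $\l\cdot,\cdot\r$ vanishes on it) are discarded early, as in the proof of Theorem~\ref{th:splittt}, should make this tractable.
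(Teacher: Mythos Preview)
Your approach is correct in principle, but it takes a significantly more laborious route than the paper's proof. The paper exploits the $H\times H$-equivariance established in Section~\ref{subsec:summ}: the action $(h_1,h_2).(a,a_1,a_2)=(h_1ah_2^{-1},h_1a_1,a_2h_2^{-1})$ on $H\times H\times H$ leaves both $\on{gr}(\Mult_\A)$ (by Proposition in Section~\ref{subsec:summ}) and $R_{\Mult_H,\sigma}$ (since $\ti\sigma$ is invariant and $R_{\Mult_H}$ is equivariant) invariant. Since the isomorphism $\A\to\TH_\eta$ is itself $H\times H$-equivariant by Theorem~\ref{th:splittt}, it suffices to check that the two Lagrangians match at the single point $(e,e,e)$. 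There the formula \eqref{eq:form1} collapses to a linear map $\h\oplus\h^*\to\dd$, the composability condition $p^*\zeta_1=(1-p)\zeta_2$ becomes $\tfrac12 p^*(v_1)+\alpha_1=-\tfrac12 p^*(v_2)+\alpha_2$, and the product $\zeta=\zeta_2+(1-p^*)\zeta_1$ yields $v=v_1+v_2$, $\alpha=\alpha_2-\tfrac12 p^*(v_1)$, which one checks directly against $\iota(v_1,v_2)\sigma_{(e,e)}$. The source and target formulas are then obtained at $e$ and extended by right (resp.\ left) translation.

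What you gain by your route is that you never invoke the $H\times H$-action, so your argument is more self-contained; what you lose is precisely the ``delicate bookkeeping'' you anticipate in step~(ii), tracking $p,1-p,p^*,1-p^*$ and the Maurer--Cartan cocycle identities at three distinct base points simultaneously. The paper's equivariance reduction eliminates this entirely: all the $\Ad_h$'s disappear and the check becomes a few lines of linear algebra in $\dd$. If you want to carry out your direct computation anyway, it will go through, but you should expect it to be several times longer than necessary.
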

\begin{proof}
The graph of the multiplication morphism for $\TH_\eta$ is obtained from that of $\TH$ as 
\[ R_{\on{Mult}_{H,\sigma}}=(\on{id}+\ti{\sigma}^\sharp) R_{\on{Mult}_H},\]
where $\ti{\sigma}\in \Om^2(G\times G\times G)$ is the pull-back of $\sigma$ under the map 
$(a,a_1,a_2)\mapsto (a_1,a_2)$. Let $H\times H$ act on $H\times H\times H$ by 
$(h_1,h_2)(a,a_1,a_2)=(h_1ah_2^{-1},h_1a_1,a_2 h_2^{-1})$. Since $\ti{\sigma}$ is invariant under this action, 
and $R_{\on{Mult}_H}$ is invariant under its lift to the Courant algebroid, $R_{\on{Mult}_{H,\sigma}}$ is again $H\times H$-equivariant. On the other hand, by Section~\ref{subsec:summ} the graph $\on{gr}(\Mult_\A)$ of the multiplication morphism 
is invariant under the $H\times H$-action on $\A\times \ol{\A}\times\ol{\A}$, given by 
\[ (h_1,h_2).(x,x_1,x_2)=((h_1,h_2).x,\ (h_1,e).x_1,\ (e,h_2).x_2).\]
It hence suffices to check that the isomorphism $\A_e\to \T_eH_\eta$ takes $\on{gr}(\Mult_\A)_{(e,e,e)}$ to 
$R_{\Mult_H,\sigma}|_{(e,e,e)}$. For $i=1,2$ let $\zeta_i\in\dd$ the elements corresponding to $v_i+\alpha_i\in \T_eH$ under the isomorphism $\A_e\cong \T_eH$, as in \eqref{eq:form1}.
The elements $\zeta_1,\zeta_2$ are composable if and only if $p^*\zeta_1=s(\zeta_1)=t(\zeta_2)=(1-p)\zeta_2$, 
that is (cf.~ \eqref{eq:form1}), 
\[ \hh p^*(v_1)+\alpha_1=-\hh p^*(v_2)+\alpha_2.\]
In this case, the composition $\zeta=\zeta_1\circ \zeta_2=\zeta_2+(1-p^*)\zeta_1$
corresponds, under \eqref{eq:form1},  to the element $v+\alpha$ with $v=v_1+v_2$ and 
$\alpha=\alpha_2-\hh p^*(v_1)$. The resulting equations read
\[ v=v_1+v_2,\ \ \ \alpha_1=\alpha-\hh p^*(v_2),\ \ \ \alpha_2=\alpha+\hh p^*(v_1). \]
Since 
\[ \begin{split}
\iota(v_1,v_2)\sigma_{e,e}&=-\hh \l v_1, \pr_2^*\theta^R_e\r+\hh \l\pr_1^*\theta^L_e,\ v_2\r
\\&=-\hh \l p^*(v_1),\pr_2^*\theta^R_e\r+\hh \l\pr_1^*\theta^L_e,\ p^*(v_2)\r \\
&=(\hh p^*(v_2),-\hh p^*(v_1)),
\end{split}\]
these are exactly the conditions for  
$(v+\alpha,\,v_1+\alpha_1,\,v_2+\alpha_2)\in R_{\Mult_H,\sigma}$. 
Consider the resulting groupoid structure on $\TH_\eta$. For $h=e$ and $v+\alpha\in \T_eH=\h\oplus \h^*$ we had found 
\[ s_e(v+\alpha)=\alpha+\hh p^*(v)=\alpha+\hh \l v,\theta^R_e\r.\]
The formula at general group elements follows by right translation, using the equivariance property 
of the source map. The argument for the target map is similar, and 
the formula for groupoid multiplication is just spelling out the definition of $R_{\Mult_H,\sigma}$.
\end{proof}

\begin{appendix}
\section{Composition of relations}\label{app:compos}
For more details on the theory summarized in this section, with particular emphasis on the symplectic 
setting, see Guillemin-Sternberg \cite{Guillemin:ww}.

A (linear) \emph{relation} $R\colon V_1\da V_2$ 
between vector spaces $V_1,\ V_2$ is a subspace $R\subseteq V_2\times V_1$. Write $v_1\sim_R v_2$ if $(v_2,v_1)\in R$. Any linear map $A\colon V_1\to V_2$ defines a relation $\on{gr}(A)$. In particular, the identity map 
of $V$ defines the diagonal relation $\on{gr}(\on{id}_V)=V_\Delta\subseteq V\times V$. 

The \emph{transpose relation} $R^\top \colon V_2\to V_1$ consists of all $(v_1,v_2)$ such that 
$(v_2,v_1)\in R$. We define 
\[\ker(R)=\{v_1\in V_1|\ v_1\sim 0\},\ \ \  
\on{ran}(R)=\{v_2\in V_2|\ \exists v_1\in V_1\colon (v_2,v_1)\in R\}\]
Given another relation $R'\colon V_2\da V_3$, the composition $R'\circ R\colon V_1\da V_3$ consists of all $(v_3,v_1)$ such that $v_1\sim_{R}v_2$ and $v_2\sim_{R'} v_3$ for some $v_2\in V_2$. 

We let $\on{ann}^\natural(R)\colon V_1^*\to V_2^*$ be the relation such that $\mu_1\sim_{\on{ann}^\natural(R)}\mu_2$ if $\l\mu_1,\,v_1\r=\l\mu_2,\,v_2\r$ whenever $v_1\sim_R v_2$. 
Thus $(\mu_2,\mu_1)\in \on{ann}^\natural(R)\Leftrightarrow (\mu_2,-\mu_1)\in \on{ann}(R)$. Note 
$\on{ann}^\natural(V_\Delta)=(V^*)_\Delta$, and more generally 
\begin{equation}\label{eq:annRel} \on{ann}^\natural(\on{gr}(A))=\on{gr}(A^*)^\top\end{equation}
for linear maps $A\colon V_1\to V_2$. 
Suppose $W_1,\ W_2$ are vector spaces with non-degenerate symmetric bilinear forms. A relation 
$L\colon W_1\da W_2$ is called \emph{Lagrangian} if $L\subseteq W_2\times\ol{W_1}$ is a Lagrangian 
subspace, where $\ol{W_1}$ indicates $W_1$ with the opposite bilinear form. 
\begin{lemma}\label{lem:lagRel}
If $L\colon W_1\da W_2$ and $L'\colon W_2\da W_3$ are Lagrangian relations, then $L'\circ L\colon W_1\da W_3$ 
is a Lagrangian relation.  
\end{lemma}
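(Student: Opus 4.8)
The plan is to realize $L'\circ L$ as a linear coisotropic reduction of a Lagrangian subspace, and then invoke the elementary fact that such reductions are again Lagrangian. First I would reorder the factors of $W_3\times\ol{W_2}\times W_2\times\ol{W_1}$ so as to view $L'\times L$ as a subspace of $(W_3\times\ol{W_1})\times(\ol{W_2}\times W_2)$; since a product of Lagrangian subspaces is Lagrangian and a permutation of orthogonal factors is an isometry, $L'\times L$ is Lagrangian there. In the second factor the diagonal $\Delta_{W_2}\subseteq\ol{W_2}\times W_2$ is Lagrangian, so $C:=(W_3\times\ol{W_1})\times\Delta_{W_2}$ is coisotropic, with $C^\perp=\{0\}\times\Delta_{W_2}\subseteq C$ and reduced space $C/C^\perp$ canonically isomorphic to $W_3\times\ol{W_1}$.

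Next I would check, directly from the definitions, that $(L'\times L)\cap C$ is exactly the fibered product $L'\diamond L$, and that its image under the quotient map $\pi\colon C\to C/C^\perp\cong W_3\times\ol{W_1}$ is precisely $L'\circ L$; equivalently, $L'\circ L$ equals the reduction $\big((L'\times L)\cap C+C^\perp\big)/C^\perp$ of the Lagrangian $L'\times L$ by the coisotropic $C$. It then remains to prove the general statement: if $V$ carries a non-degenerate symmetric bilinear form, $C\subseteq V$ is coisotropic and $\Lambda\subseteq V$ is Lagrangian, then the image of $\Lambda\cap C$ under $\pi\colon C\to C/C^\perp$ is Lagrangian. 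Isotropy is immediate because $\Lambda$ is isotropic and $\pi$ preserves the forms. For maximality, suppose $\pi(z)$ is orthogonal to $\pi(\Lambda\cap C)$ for some $z\in C$; then $z\perp(\Lambda\cap C)+C^\perp$, so $z\in\big((\Lambda\cap C)+C^\perp\big)^\perp\cap C=(\Lambda^\perp+C^\perp)\cap C=(\Lambda+C^\perp)\cap C$, using the identity $(\Lambda\cap C)^\perp=\Lambda^\perp+C^\perp$ together with $\Lambda^\perp=\Lambda$. Writing $z=\lambda+c$ with $\lambda\in\Lambda$ and $c\in C^\perp\subseteq C$ gives $\lambda=z-c\in C$, whence $z\in(\Lambda\cap C)+C^\perp$ and $\pi(z)\in\pi(\Lambda\cap C)$. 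This gives $\pi(\Lambda\cap C)^\perp\subseteq\pi(\Lambda\cap C)$, and combined with isotropy finishes the general lemma and hence the proposition.

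None of the steps is genuinely hard; the only points requiring care are the bookkeeping of which factor carries which sign of the form (so that $L'\times L$ really is Lagrangian and $C^\perp$ really is the diagonal $\{0\}\times\Delta_{W_2}$), and the linear-algebra identity $(\Lambda\cap C)^\perp=\Lambda^\perp+C^\perp$ valid in any space with a non-degenerate pairing. As an alternative to the reduction argument, one can proceed by hand: isotropy of $L'\circ L\subseteq W_3\times\ol{W_1}$ follows at once from isotropy of $L$ and $L'$, and maximality follows from a dimension count using the exact sequence $0\to M\to L'\diamond L\to L'\circ L\to 0$, where $M$ is the annihilator in $W_2$ of $\ran(L')$ together with the $W_2$-projection of $L$; expressing $\dim(L'\diamond L)$ through $\dim\big((L'\times L)\cap C\big)=\dim(L'\times L)+\dim C-\dim\big((L'\times L)+C\big)$, all terms telescope to $\dim(L'\circ L)=\tfrac12(\dim W_1+\dim W_3)$, which is the dimension of a Lagrangian subspace of $W_3\times\ol{W_1}$.
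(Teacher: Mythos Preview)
Your proof is correct; the coisotropic reduction argument you give is the standard one. The paper itself does not spell out a proof but simply refers to Guillemin--Sternberg for the symplectic case and notes that the argument carries over to split bilinear forms, so your write-up is essentially what that citation would supply.
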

The analogous result for symplectic vector spaces is proved in detail in \cite{Guillemin:ww}; this proof carries over to Lagrangian spaces for vector spaces with split bilinear form.
\begin{lemma}\label{lem:ann}
For any relations $R\colon V_1\to V_2$ and $R'\colon V_2\to V_3$, 
one has  
$\on{ann}^\natural(R'\circ R)=\on{ann}^\natural(R')\circ \on{ann}^\natural(R)$. 
\end{lemma}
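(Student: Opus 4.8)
Proof plan for Lemma A.5 ($\on{ann}^\natural(R'\circ R)=\on{ann}^\natural(R')\circ \on{ann}^\natural(R)$).

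The plan is to unwind both sides directly from the definitions, since this is a purely linear-algebraic identity and no deep structure is involved. First I would fix notation: write $R\colon V_1\da V_2$, $R'\colon V_2\da V_3$, and recall that $\mu_1\sim_{\on{ann}^\natural(R)}\mu_2$ means $\langle\mu_1,v_1\rangle=\langle\mu_2,v_2\rangle$ for all pairs $v_1\sim_R v_2$, and similarly for $R'$. The relation $\on{ann}^\natural(R')\circ\on{ann}^\natural(R)\colon V_1^*\da V_3^*$ consists of pairs $(\mu_3,\mu_1)$ for which there exists $\mu_2\in V_2^*$ with $\mu_1\sim_{\on{ann}^\natural(R)}\mu_2$ and $\mu_2\sim_{\on{ann}^\natural(R')}\mu_3$. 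I would prove the two inclusions separately.

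For the inclusion $\on{ann}^\natural(R')\circ\on{ann}^\natural(R)\subseteq\on{ann}^\natural(R'\circ R)$: given $(\mu_3,\mu_1)$ in the left side with witness $\mu_2$, and any pair $v_1\sim_{R'\circ R}v_3$, pick $v_2\in V_2$ with $v_1\sim_R v_2$ and $v_2\sim_{R'}v_3$; then $\langle\mu_1,v_1\rangle=\langle\mu_2,v_2\rangle=\langle\mu_3,v_3\rangle$, so $(\mu_3,\mu_1)\in\on{ann}^\natural(R'\circ R)$. This direction is immediate.

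The reverse inclusion is the only real content. Suppose $(\mu_3,\mu_1)\in\on{ann}^\natural(R'\circ R)$; I must produce $\mu_2\in V_2^*$ compatible with both $\mu_1$ (via $R$) and $\mu_3$ (via $R'$). The natural candidate is a linear functional on $V_2$ defined on the subspace $U=\on{ran}(R)+\ker(R')$ by $\mu_2(v_2+w_2):=\langle\mu_1,v_1\rangle+\langle\mu_3,v_3'\rangle$ where $v_1\sim_R v_2$ and $w_2\sim_{R'}v_3'$, then extended arbitrarily to all of $V_2$. The key step is checking this is well-defined: if $v_2+w_2=v_2'+w_2'$ with $v_2,v_2'\in\on{ran}(R)$ and $w_2,w_2'\in\ker(R')$, then $v_2-v_2'=w_2'-w_2\in\on{ran}(R)\cap\ker(R')$; choosing preimages, one gets an element $u_1$ with $u_1\sim_R(v_2-v_2')$ and $(v_2-v_2')\sim_{R'}0$, hence $u_1\sim_{R'\circ R}0$, so $\langle\mu_1,u_1\rangle=\langle\mu_3,0\rangle=0$, which (after also accounting for the $w$-ambiguity in $\ker(R)$, handled by $\mu_1\in\on{ann}^\natural$-compatibility is not needed here since $w\in\ker(R')$ pairs to $0$ with $\mu_3$ — wait, one must be slightly careful and also use that changing the preimage $v_1$ by an element of $\ker(R)$ changes $\langle\mu_1,v_1\rangle$ by $\langle\mu_1,\ker R\rangle$, which need not vanish). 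The clean fix is to define $\mu_2$ instead on $U$ by the formula above but first verifying consistency: the genuinely safe route is to set $\mu_2$ on $\on{ran}(R)$ by $\mu_2(v_2)=\langle\mu_1,v_1\rangle$ — well-defined because if $v_1,v_1'$ both map to $v_2$ then $v_1-v_1'\in\ker(R)\subseteq\ker(R)$, and $v_1-v_1'\sim_{R'\circ R}0$ forces $\langle\mu_1,v_1-v_1'\rangle=0$ by hypothesis — then check this agrees with $\langle\mu_3,\cdot\rangle$ on $\on{ran}(R)\cap(\text{things related via }R'\text{ to }\on{ran}(R'))$, extend over $\ker(R')$ by $\langle\mu_3,\cdot\rangle$, and finally extend to $V_2$. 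I expect the well-definedness and consistency check on the overlap $\on{ran}(R)\cap\ker(R')$ to be the main obstacle, and it is exactly where the hypothesis $(\mu_3,\mu_1)\in\on{ann}^\natural(R'\circ R)$ gets used. Once $\mu_2$ is constructed, the relations $\mu_1\sim_{\on{ann}^\natural(R)}\mu_2$ and $\mu_2\sim_{\on{ann}^\natural(R')}\mu_3$ hold by construction, completing the inclusion and the proof.

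A cleaner alternative I would consider, avoiding the extension argument, is to express $\on{ann}^\natural$ via a fixed nondegenerate pairing: identifying $V_i^*$-relations with subspaces and using $\on{ann}^\natural(R)=\on{ann}(R)$ up to the sign twist $(\mu_2,\mu_1)\mapsto(\mu_2,-\mu_1)$ recorded in the text, the statement reduces to the compatibility of annihilators with composition of relations, which in turn follows from Lemma A.3 (composition of Lagrangian relations) applied to the hyperbolic spaces $V_i\oplus V_i^*$: the relation $R\subseteq V_2\times\ol{V_1}$ has a canonical Lagrangian "lift" $R\oplus\on{ann}^\natural(R)$ inside $(V_2\oplus V_2^*)\times\ol{(V_1\oplus V_1^*)}$, and composing lifts gives the lift of the composite. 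I would present whichever of the two is shorter; the direct functional-extension argument is elementary and self-contained, so I lean toward that, with the overlap-consistency verification flagged as the crux.
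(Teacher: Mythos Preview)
Your ``cleaner alternative'' is precisely the paper's proof: one sets $W_i=V_i\oplus V_i^*$ with the hyperbolic metric, observes that $R\oplus\on{ann}^\natural(R)\subseteq W_2\times\ol{W_1}$ is Lagrangian, composes, and reads off the result from the fact that $(R'\circ R)\oplus(\on{ann}^\natural(R')\circ\on{ann}^\natural(R))$ is again Lagrangian. So if you present that argument you are done, and it is by far the shorter of the two.

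Your direct argument is essentially correct but has a notational slip that, if taken literally, leaves a gap. You write $U=\on{ran}(R)+\ker(R')$, but the subspace on which $\mu_2$ is forced by the second compatibility $\mu_2\sim_{\on{ann}^\natural(R')}\mu_3$ is $\on{dom}(R'):=\on{ran}((R')^\top)$, not $\ker(R')$; indeed your own formula $w_2\sim_{R'}v_3'$ already places $w_2$ in $\on{dom}(R')$. If you only control $\mu_2$ on $\on{ran}(R)+\ker(R')$ and extend arbitrarily, there is no reason the relation $\langle\mu_2,v_2\rangle=\langle\mu_3,v_3\rangle$ holds for $(v_3,v_2)\in R'$ with $v_2\notin\on{ran}(R)+\ker(R')$. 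With $\on{dom}(R')$ in place of $\ker(R')$, the argument goes through cleanly: $\mu_2$ is well defined on $\on{ran}(R)$ (since $v_1-v_1'\in\ker(R)$ gives $v_1-v_1'\sim_{R'\circ R}0$), well defined on $\on{dom}(R')$ (since $\mu_3$ vanishes on $\{v_3:(v_3,0)\in R'\}$, again by the hypothesis), and the two prescriptions agree on $\on{ran}(R)\cap\on{dom}(R')$ exactly because $(\mu_3,\mu_1)\in\on{ann}^\natural(R'\circ R)$. Then any linear extension to $V_2$ works.
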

\begin{proof}
Let $W_i=V_i\oplus V_i^*$ with the metric given by the pairing $\l(v,\alpha),(v',\alpha')\r
=\l\alpha,v'\r+\l\alpha',v\r$. By Lemma~\ref{lem:lagRel}, the composition of Lagrangian relations  
\[ (R'\oplus \on{ann}^\natural(R'))\circ (R\oplus \on{ann}^\natural(R))
=(R'\circ R)\oplus (\on{ann}^\natural(R')\circ \on{ann}^\natural(R)).\]
is again a Lagrangian relation. This means that $\on{ann}^\natural$ of the first summand is equal to the second summand.
\end{proof}
The composition $R'\circ R$ can be regarded as the image of 
\[ R'\diamond R:=(R'\times R)\cap (V_3\times (V_2)_\Delta\times V_1)\]
under the projection to $V_3\times V_1$. 
\begin{lemma}
\[ \begin{split}
\dim(R'\diamond R)&=\dim R'+\dim R-\dim V_2+\dim (\ker(\on{ann}^\natural(R'))\cap \ker (\on{ann}^\natural(R)^\top)),\\
 \dim(R'\circ R)&=\dim (R'\diamond R)-\dim(\ker(R')\cap \ker(R^\top)).\end{split}\]
\end{lemma}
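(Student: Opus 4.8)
The plan is to deduce both identities from rank--nullity applied to two elementary linear maps. For the first, note that $R'\diamond R$ is exactly the kernel of the linear map
\[ \Psi\colon R'\times R\lra V_2,\qquad \big((v_3,v_2),(w_2,v_1)\big)\longmapsto v_2-w_2,\]
where an element of $R'\times R\subseteq(V_3\times V_2)\times(V_2\times V_1)$ is written with $(v_3,v_2)\in R'$ and $(w_2,v_1)\in R$. Since the two $V_2$-components range independently, $\on{ran}(\Psi)$ is the sum of the $V_2$-projections of $R'$ and of $R$, namely $\on{ran}(\Psi)=\on{ran}((R')^{\top})+\on{ran}(R)$. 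Rank--nullity then gives $\dim(R'\diamond R)=\dim R'+\dim R-\dim\!\big(\on{ran}((R')^{\top})+\on{ran}(R)\big)$.

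Next I rewrite the last term via annihilators. Unwinding the definition of $\on{ann}^\natural$ one checks that $\ker\big(\on{ann}^\natural(R')\big)=\on{ann}\big(\on{ran}((R')^{\top})\big)$ and $\ker\big(\on{ann}^\natural(R)^\top\big)=\on{ann}\big(\on{ran}(R)\big)$; the only point requiring care here is keeping the transpose and $\natural$ conventions straight, everything else being routine. Consequently
\[ \ker\big(\on{ann}^\natural(R')\big)\cap\ker\big(\on{ann}^\natural(R)^\top\big)=\on{ann}\big(\on{ran}((R')^{\top})+\on{ran}(R)\big),\]
which has dimension $\dim V_2-\dim\!\big(\on{ran}((R')^{\top})+\on{ran}(R)\big)$. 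Substituting this into the displayed formula above yields the first identity.

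For the second identity, recall that $R'\circ R$ is the image of $R'\diamond R$ under the projection $\pi$ to $V_3\times V_1$, forgetting the common middle $V_2$-component. Applying rank--nullity to $\pi|_{R'\diamond R}$ reduces the claim to the identification $\ker\big(\pi|_{R'\diamond R}\big)=\ker(R')\cap\ker(R^\top)$, the right-hand side being viewed inside the middle copy of $V_2$: indeed, $(0,v_2,0)$ lies in $R'\diamond R$ precisely when $(0,v_2)\in R'$ and $(v_2,0)\in R$, i.e. when $v_2\in\ker(R')$ and $v_2\in\ker(R^\top)$. This gives $\dim(R'\circ R)=\dim(R'\diamond R)-\dim\!\big(\ker(R')\cap\ker(R^\top)\big)$. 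Thus the whole argument consists of two applications of rank--nullity together with the dictionary above; the only real subtlety is bookkeeping with the transpose and annihilator conventions, and there is no genuine obstacle.
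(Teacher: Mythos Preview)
Your proof is correct and follows essentially the same idea as the paper's, with a slightly different packaging. The paper computes $\dim(R'\diamond R)$ by the intersection formula $\dim(A\cap B)=\dim A+\dim B-\dim(A+B)$ applied to $A=R'\times R$ and $B=V_3\times(V_2)_\Delta\times V_1$, and then identifies the codimension of $A+B$ with the dimension of its annihilator, which it recognizes as $(\on{ann}^\natural(R')\times\on{ann}^\natural(R))\cap(0\times(V_2^*)_\Delta\times 0)\cong\ker(\on{ann}^\natural(R'))\cap\ker(\on{ann}^\natural(R)^\top)$. Your argument via the difference map $\Psi$ with image $\on{ran}((R')^\top)+\on{ran}(R)$ is exactly the same computation expressed as rank--nullity rather than as an intersection in the ambient product; the identification of the kernel intersection with the annihilator of this sum is precisely the step the paper leaves implicit. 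For the second identity your argument and the paper's are identical.
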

\begin{proof}
The codimension of $(R'\times R)+(V_3\times (V_2)_\Delta\times V_1)$ equals the 
dimension of its annihilator. It is thus equal to the dimension of 
\[ (\on{ann}^\natural(R')\times \on{ann}^\natural(R))\cap (0\times (V_2^*)_\Delta\times 0)
\cong \ker(\on{ann}^\natural(R'))\cap \ker (\on{ann}^\natural(R)^\top).\]
This gives the formula for $\dim(R'\diamond R)$. On the other hand, the projection 
$R'\diamond R\to R'\circ R$ has kernel the intersection 
$(R'\times R)\cap (0\times (V_2)_\Delta \times 0)\cong \ker(R')\cap \ker(R^\top)$.
\end{proof}
The composition of linear relations $R,R'$ is called \emph{transverse} if 
\[ \ker(R')\cap \ker(R^\top)=0,\  \ \ \ \ker(\on{ann}^\natural(R'))\cap \ker(\on{ann}^\natural(R)^\top)=0.\] 
The first condition is equivalent to the claim that for $(v_3,v_1)\in R'\circ R$, there is a unique $v_2\in V_2$ such that $(v_3,v_2)\in R'$ and $(v_2,v_1)\in R$. The second condition is equivalent to 
the transversality of $R'\times R$ with $V_3\times (V_2)_\Delta \times V_1$.
For transverse compositions, $R'\circ R$ varies smoothly 
with $R',R$.
Either of the two conditions in the transversality condition can be replaced with the dimension formula
$\dim(R'\circ R)=\dim(R')+\dim(R)-\dim V_2$. For Lagrangian relations, the dimension formula is automatic. 

More generally, consider (non-linear) relations between manifolds. Here, `clean composition' hypotheses are 
needed. Recall that the intersection of submanifolds $S_1,S_2\subseteq M$ is \emph{clean} (in the sense of Bott) 
if $S_1\cap S_2$ is a submanifold, and $T(S_1\cap S_2)=TS_1\cap TS_2$. Equivalently, the intersection is clean 
if at all points $x\in S_1\cap S_2$, there are local coordinates in which both $S_1,S_2$
are given as subspaces \cite[page 491]{ho:an3}. We say that the composition $R'\circ R$ of submanifolds 
$R \subseteq M_2\times M_1$ and $R'\subseteq M_3\times M_2$ is \emph{clean} if 
\begin{equation}\label{eq:inters}
 R'\diamond R=(R'\times R)\cap (M_3\times (M_2)_\Delta \times M_1)\end{equation}
is a clean intersection, and the map $R'\diamond R\to M_3\times M_1$ (forgetting the 
$M_2$-component) has constant rank. Thus  $R'\circ R$ is an (immersed) 
submanifold, and the map $R'\diamond R\to R'\circ R$ is a submersion. 

The composition is called transverse if the composition of tangent spaces is transverse everywhere. 
In this case $R'\diamond R$ is a smooth submanifold of dimension 
$\dim R'+\dim R-\dim M_2$, and the map $R'\diamond R\to R'\circ R$ is a covering.

\section{Matched pairs and $\ca{LA}$-groupoids}\label{app:mat}
A $\ca{VB}$-groupoid $V$  over $H$ is \emph{vacant} if $V^{(0)}=V|_{H^{(0)}}$. Equivalently, the source map is a fiberwise isomorphism. In \cite{Mackenzie:2003wj,Mackenzie:2008tz}, 
Mackenzie interpreted
a vacant $\ca{LA}$-groupoid $E\rra \g$ over a group $H\rra \pt$  
as a matched pair between a Lie algebra $\g$ and a Lie group $H$.\footnote{Note that in \cite{Mackenzie:2003wj}, Mackenzie uses the terminology of an \emph{interaction} rather than a matched pair.} In this Section we review and 
elaborate these results, proving Proposition~\ref{prop:VLAg2}, in particular.
\begin{lemma}
Suppose $E\rra \g$ is an $\ca{LA}$-groupoid over $H\rra \pt$. Then the map 
\[ (\a,t,s)\colon E\to TH\times(\g\oplus\g),\ x\mapsto (\a(x),t(x),s(x))\]
is a homomorphism of $\ca{LA}$-groupoids. If $E$ is vacant, then $(\a,t,s)$ is 
an embedding as a subbundle.
\end{lemma}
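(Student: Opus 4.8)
The plan is to verify that $(\a,t,s)$ is simultaneously a morphism of $\ca{VB}$-groupoids, a morphism of Lie algebroids, and (in the vacant case) an embedding onto a subbundle. I would split the argument accordingly.

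\textbf{Morphism of $\ca{VB}$-groupoids.} First I would observe that the target $TH\times(\g\oplus\g)$ carries the direct-product $\ca{VB}$-groupoid structure of the tangent prolongation $TH\rra 0$ with the pair groupoid $\g\oplus\g\rra\g$ (recall $\g=E^{(0)}$ when $E$ is vacant, and in general $\g$ denotes the object space of $E$). Since $E$ is an $\ca{LA}$-groupoid, Definition~\ref{def:CALAVBgroupoid} together with the Grabowski--Rotkiewicz characterization (Remark~\ref{rem:grab}) guarantees that all the structure maps $s,t\colon E\to \g$ and $\on{gr}(\on{Mult}_E)$ are morphisms of vector bundles; by the same token the anchor $\a\colon E\to TH$ of an $\ca{LA}$-groupoid is a morphism of $\ca{VB}$-groupoids $E\to TH$ (this is the Lie-algebroid analogue of Proposition~\ref{prop:aastar}, and follows because $\a$ sends $\on{gr}(\on{Mult}_E)$ into $T\on{gr}(\on{Mult}_H)=\on{gr}(\on{Mult}_{TH})$, being a Lie algebroid morphism over the groupoid multiplication). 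Then $(\a,t,s)$, being the product of the $\ca{VB}$-groupoid morphisms $\a\colon E\to TH$, $t\colon E\to \g$, $s\colon E\to\g$, is itself a $\ca{VB}$-groupoid morphism into the product; here I use that $t,s$ intertwine the groupoid structure of $E$ with the pair groupoid structure on $\g\oplus\g\rra\g$ in the obvious way, $(t,s)(x\circ y)=(t(x),s(y))$ when $s(x)=t(y)$.

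\textbf{Morphism of Lie algebroids.} Each of $\a$, $t$, $s$ is a morphism of Lie algebroids: $\a$ is the anchor, hence a Lie algebroid morphism by axiom c6)/the defining property of Lie algebroids; $s\colon E\to\g$ is a Lie algebroid morphism because (in the vacant case) it is a trivialization arising from the groupoid structure — this is exactly the content of part (b) of Proposition~\ref{prop:dress} in the linearized setting, and more conceptually $s$ and $t$ are Lie algebroid morphisms since they are the source and target maps of an $\ca{LA}$-groupoid, which by the Remark following Definition~\ref{def:CALAVBgroupoid} are morphisms of Lie algebroids. A product of Lie algebroid morphisms is a Lie algebroid morphism into the direct product Lie algebroid $TH\times(\g\oplus\g)$, so $(\a,t,s)$ is one. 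Combining with the previous paragraph, $(\a,t,s)$ is a homomorphism of $\ca{LA}$-groupoids.

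\textbf{The embedding claim.} Assume $E$ is vacant, so $s\colon E\to \g$ is a fiberwise isomorphism (equivalently $E=H\times\g$ via $s$, by Proposition~\ref{prop:dress}(b)). The bundle map $(\a,t,s)$ is then fiberwise injective: if $(\a(x),t(x),s(x))=0$ then $s(x)=0$, and since $s$ is a fiberwise isomorphism this forces $x=0$. A fiberwise injective vector bundle morphism over the identity of $H$ is an embedding onto a subbundle, provided the rank is locally constant — which it is here, equal to $\dim\g=\on{rk}(E)$ everywhere. Hence $(\a,t,s)$ identifies $E$ with a subbundle of $TH\times(\g\oplus\g)$, and being an $\ca{LA}$-groupoid morphism it identifies $E$ with an $\ca{LA}$-subgroupoid — indeed with the subbundle $\{(v,X',X)\mid \Ad_h X - X' = \iota(v)\theta^R_h,\ X'=h\bullet X\}$ cut out in Proposition~\ref{prop:dress}. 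I do not expect a serious obstacle: the only point needing a little care is bookkeeping the three groupoid structures and checking that the source/target of a product behave as claimed, which is immediate from the pair groupoid structure on $\g\oplus\g$. The substantive input — that $\a$, $s$, $t$ are each morphisms of $\ca{LA}$-groupoids — is already packaged in the definition of an $\ca{LA}$-groupoid and in Proposition~\ref{prop:dress}.
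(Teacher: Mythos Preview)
Your proof is correct and follows essentially the same approach as the paper: verify separately that $(\a,t,s)$ is a Lie algebroid morphism (because each component is) and a $\ca{VB}$-groupoid morphism (because each component intertwines the groupoid structures), then use that in the vacant case the source (or target) map is a fiberwise isomorphism to conclude injectivity and hence embedding. The paper's argument is just a terser version of yours, writing out the single computation $(\a,t,s)(x_1\circ x_2)=(\a,t,s)(x_1)\circ(\a,t,s)(x_2)$ explicitly rather than invoking the product structure.

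One minor stylistic remark: your appeals to Proposition~\ref{prop:dress}(b) are slightly misplaced, since that proposition \emph{constructs} $E$ from a triple, whereas here you start from an abstract vacant $\ca{LA}$-groupoid; the argument you actually need (and which you also give) is simply that the structure maps of an $\ca{LA}$-groupoid are Lie algebroid morphisms, as recorded in the remark after Definition~\ref{def:CALAVBgroupoid}.
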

\begin{proof} 
Since $\a\colon E\to TH,\ s\colon E\to \g,\ t\colon E\to \g$ are morphisms
of Lie algebroids, the map $(\a,t,s)$ is one also.  Furthermore, if $s(x_1)=t(x_2)$, 
\[ \begin{split}
(\a(x_1\circ x_2),t(x_1\circ x_2),s(x_1\circ x_2))
&=(\a(x_1)\circ \a(x_2),\ t(x_1),s(x_1))\\
&=(\a(x_1),t(x_1),s(x_1))\circ (\a(x_1),t(x_2),s(x_2))
\end{split}\]
shows that $(\a,t,s)$ is a $\ca{VB}$-groupoid homomorphism. If $E$ is vacant, so that $t$ is a fiberwise isomorphism, the map $E\to H\times \g$ taking $x\in E_h$ to $(h,t(x))$ is an isomorphism. 
In particular, $(\a,t,s)$ is an embedding as a subbundle.  
\end{proof}

\begin{proposition}\label{prop:VLAisMP} 
Let $E\rra \g$ be a vacant $\ca{LA}$-groupoid. For any $\xi\in\g$ and
$h \in H$, let $h^{-1} \bullet\xi \in \g$ and $\varrho(\xi)_h \in
T_hH$ be defined by the condition that there exist $x\in E_h$ with
\begin{equation}\label{eq:atstransform}
 (\a(x),\,t(x),\,s(x))=(\varrho(\xi)_h,\,  h\bullet \xi,\,  \xi ).\end{equation}
Then the map $(h,\xi)\mapsto h\bullet\xi$ defines an action of $H$ on
$\g$, while $\varrho\colon \g\to \mf{X}(H)$ is an action of $\g$ on
$H$. These actions satisfy the compatibility conditions,
\begin{equation}\label{eq:comp1*} 
h\bullet[\xi_1,\xi_2]=[h\bullet \xi_1,\,h\bullet\xi_2]
+\L_{\varrho(\xi_1)}(h\bullet \xi_2)-\L_{\varrho(\xi_2)}(h\bullet \xi_1)
\end{equation}
and 
\begin{equation}\label{eq:comp2*} 
\varrho(\xi)_{h_1h_2}=(\Mult_H)_*(\varrho(h_2\bullet \xi)_{h_1},
\varrho(\xi)_{h_2}).\end{equation}
Conversely, given a pair of actions of $H$ on $\g$ and of $\g$ on $H$,
satisfying \eqref{eq:comp1*} and \eqref{eq:comp2*}, the span of the
sections $\alpha(\xi)$ is a vacant $\ca{LA}$-subgroupoid.
\end{proposition}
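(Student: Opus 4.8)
The statement is an ``if and only if'' between vacant $\ca{LA}$-groupoids $E\rra\g$ over $H\rra\pt$ and matched pairs $\g\bowtie H$; the forward direction produces the data $(\bullet,\varrho)$ and verifies \eqref{eq:comp1*}--\eqref{eq:comp2*}, and the converse reconstructs $E$. I would organize the proof around the embedding $(\a,t,s)\colon E\hookrightarrow TH\times(\g\oplus\g)$ furnished by the preceding Lemma, which realizes $E$ as a subbundle whose fiber over $h$ is the graph of the assignment $\xi=s(x)\mapsto(\varrho(\xi)_h,h\bullet\xi)$; this is legitimate since $s$ is a fiberwise isomorphism (vacancy), so \eqref{eq:atstransform} does define $h\bullet\xi$ and $\varrho(\xi)_h$ unambiguously.

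\emph{Forward direction.} First I would check that $\bullet$ is a genuine $H$-action on $\g$. The unit of $E$ over $e\in H$ is the identity section, giving $e\bullet\xi=\xi$; for the cocycle property, apply the groupoid multiplication: if $x_2\in E_{h_2}$ has $s(x_2)=\xi$ and $x_1\in E_{h_1}$ has $s(x_1)=t(x_2)=h_2\bullet\xi$, then $x_1\circ x_2\in E_{h_1h_2}$ has source $\xi$ and target $t(x_1)=h_1\bullet(h_2\bullet\xi)$, while the target of the element of $E_{h_1h_2}$ with source $\xi$ is $(h_1h_2)\bullet\xi$; uniqueness (vacancy again) forces $(h_1h_2)\bullet\xi=h_1\bullet(h_2\bullet\xi)$. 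The same multiplication computation, read on the anchor component and using $\a(x_1\circ x_2)=\a(x_1)\circ\a(x_2)$ in $TH$ (the tangent groupoid), yields exactly the compatibility \eqref{eq:comp2*}. That $\varrho\colon\g\to\mf{X}(H)$ is a Lie algebra action follows because $\a\colon E\to TH$ is a Lie algebroid morphism and the $\g$-valued constant sections $s^{-1}(\xi)$ close under the Lie algebroid bracket of $E$: their bracket has source a \emph{constant} (by the bracket formula on a $\ca{VB}$-groupoid / matched structure), so $\varrho$ sends brackets to brackets. Finally \eqref{eq:comp1*} is the statement that the Lie algebroid bracket on $E$ of two constant-source sections, computed via the identification $E\cong H\times\g$ (trivialization by $t$, or by $s$), has the prescribed target component; concretely, differentiate the identity $s([x_1,x_2])$-vs-$t([x_1,x_2])$ relation, or compute $\L_{\varrho(\xi_1)}$ of the function $h\mapsto h\bullet\xi_2$ and compare — this is precisely the Leibniz-type identity expressing that $(\a,t,s)$ lands in an involutive subbundle.

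\emph{Converse.} Given $(\bullet,\varrho)$ satisfying \eqref{eq:comp1*}--\eqref{eq:comp2*}, set $E:=\{(v,\xi_t,\xi_s)\mid \exists\,\xi\in\g\colon \xi_s=\xi,\ \xi_t=h\bullet\xi,\ v=\varrho(\xi)_h\}\subseteq TH\times(\g\oplus\g)$, i.e.\ the span of the sections $\alpha(\xi):=(\varrho(\xi),h\bullet\xi,\xi)$. This is a rank-$\dim\g$ subbundle with $E^{(0)}=\g=E|_{\pt}$ (vacancy is built in). The subgroupoid property is \eqref{eq:comp2*} together with the cocycle law for $\bullet$, exactly reversing the computation above: $\alpha(h_2\bullet\xi)|_{h_1}\circ\alpha(\xi)|_{h_2}=\alpha(\xi)|_{h_1h_2}$. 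Involutivity of $E$ as a Lie algebroid (i.e.\ that the $\alpha(\xi)$ span a Lie subalgebroid of $TH\times(\g\oplus\g)$) follows from \eqref{eq:comp1*}: bracketing $\alpha(\xi_1)$ with $\alpha(\xi_2)$ in $TH\times(\g\oplus\g)$ and using the action-Lie-algebroid bracket formula, the result is $\alpha([\xi_1,\xi_2])$ precisely when \eqref{eq:comp1*} holds. Hence $E$ is a vacant $\ca{LA}$-subgroupoid, and by construction its associated $(\bullet,\varrho)$ is the one we started with; conversely, starting from an $E$ and re-running the construction recovers $E$ because $E$ is the graph of $\xi\mapsto(\varrho(\xi)_h,h\bullet\xi)$ over each $h$. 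This establishes the bijection.

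\emph{Main obstacle.} The routine parts are the two ``multiplication gives \eqref{eq:comp2*}'' and ``cocycle law $\Leftrightarrow$ subgroupoid'' arguments. The genuinely delicate step is matching the Lie-algebroid bracket identity \eqref{eq:comp1*} with involutivity: one must carefully write the bracket of two sections $\alpha(\xi_i)$ inside $TH\times(\g\oplus\g)$ — where the $\g\oplus\g$ factor is \emph{not} a trivial-coefficient bundle for the bracket because the target component $h\mapsto h\bullet\xi$ is non-constant — and see that the derivative terms $\L_{\varrho(\xi_i)}(h\bullet\xi_j)$ are exactly what \eqref{eq:comp1*} supplies, with no leftover terms in the $\h\subseteq TH$ or source directions. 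I would handle this by trivializing $E\cong H\times\g$ via the \emph{source} map, under which the bracket becomes $[(\xi_1,\varrho),(\xi_2,\varrho)]=([\xi_1,\xi_2]+\L_{\varrho(\xi_1)}\xi_2-\L_{\varrho(\xi_2)}\xi_1,\dots)$ as in the action-Courant/action-Lie-algebroid formula \eqref{eq:actioncourant}, and then read off that the target of this bracketed section equals $h\bullet$ applied to its (constant) source precisely under \eqref{eq:comp1*}. Everything else is bookkeeping; further details are deferred to Appendix~\ref{app:mat}.
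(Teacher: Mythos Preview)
Your proposal is correct and follows essentially the same approach as the paper: both use the embedding $(\a,t,s)\colon E\hookrightarrow TH\times(\g\oplus\g)$, derive the action law and \eqref{eq:comp2*} from the groupoid multiplication identity $\alpha(h_2\bullet\xi)_{h_1}\circ\alpha(\xi)_{h_2}=\alpha(\xi)_{h_1h_2}$, and derive the Lie-action property of $\varrho$ together with \eqref{eq:comp1*} from the bracket identity $[\alpha(\xi_1),\alpha(\xi_2)]=\alpha([\xi_1,\xi_2])$; the converse runs both computations backwards. One small remark: in your ``main obstacle'' paragraph the appeal to \eqref{eq:actioncourant} is a slight detour---that is a Courant-algebroid formula, and in any case in the forward direction you do not yet know $E$ is an action Lie algebroid; the cleanest route (and the paper's) is simply to compute $[\alpha(\xi_1),\alpha(\xi_2)]$ directly in the ambient product Lie algebroid $TH\times(\g\oplus\g)$, where the source component is $[\xi_1,\xi_2]$ (the $\xi_i$ being constant) and the target component is exactly the right-hand side of \eqref{eq:comp1*}.
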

\begin{proof}
For $h\in H,\ \xi\in\g$ let 
$\alpha(\xi)_h=(\varrho(\xi)_h,\,  h\bullet \xi,\,  \xi)$ be the right hand side 
of \eqref{eq:atstransform}. 
Since the image of $E$ under $(\a,t,s)$ is a subgroupoid, we have 
\[ \alpha(h_2\bullet \xi)_{h_1}\circ \alpha(\xi)_{h_2}=\alpha(\xi)_{h_1h_2}.\]
Applying $\a,s$ to this identity gives \eqref{eq:comp2*} and the
action property $(h_1h_2)\bullet\xi=h_1\bullet h_2\bullet\xi$.  On the
other hand, since $E$ is a Lie subalgebroid,
$[\alpha(\xi_1),\alpha(\xi_2)]=\alpha([\xi_1,\xi_2])$.

Application of
$s,\a$ gives \eqref{eq:comp1*} and the property
$[\varrho(\xi_1),\varrho(\xi_2)]=\varrho([\xi_1,\xi_2])$. Conversely,
given actions $\varrho$ and $\bullet$ satisfying \eqref{eq:comp1*} and
\eqref{eq:comp2*}, let $E$ be the subbundle of $TH\times (\g\oplus\g)$
spanned by the sections $\alpha(\xi)$. The compatibility conditions
guarantee that it is a $\ca{VB}$-subgroupoid and also a Lie subalgebroid.
\end{proof} 

We note that \eqref{eq:comp1*} and \eqref{eq:comp2*} are exactly the
compatibility conditions for a \emph{matched pair $\g\bowtie H$
between a Lie group and a Lie algebra} as given in
\cite{Mackenzie:2003wj}. Therefore
Proposition~\ref{prop:VLAisMP} can be interpreted as proving a 1-1
correspondence between such matched pairs and vacant
$\ca{LA}$-groupoids over a Lie group.

By differentiating the action of $H$ on $\g$, we obtain a linear
representation of $\h$ on $\g$ (still denoted $\bullet$). Similarly,
since $\varrho(\xi)_e=0$, we may linearize the action of $\g$ on $H$
to obtain a linear representation $\dot\varrho$ of $\g$ on $\h$. 
Concretely, 
\[ \dot{\varrho}(\xi)(\tau)=[\varrho(\xi),\wt{\tau}]|_e\]
where $\wt{\tau}\in\mf{X}(H)$ with $\wt{\tau}|_e=\tau$.  
By
linearizing \eqref{eq:comp1*} and \eqref{eq:comp2*}, one obtains the
following conditions, for all $\xi,\xi_1,\xi_2\in\g$ and
$\tau,\tau_1,\tau_2\in\h$:
\begin{equation}\label{eq:compA}
\tau\bullet[\xi_1,\xi_2]=[\tau\bullet\xi_1,\xi_2]-[\tau\bullet\xi_2,\xi_1]-\dot{\varrho}(\xi_1)(\tau)\bullet\xi_2
+\dot{\varrho}(\xi_2)(\tau)\bullet\xi_1,
\end{equation}
\begin{equation}\label{eq:compB}
\dot{\varrho}(\xi)([\tau_1,\tau_2])=[\dot{\varrho}(\xi)(\tau_1),\tau_2]-[\dot{\varrho}(\xi)(\tau_2),\tau_1]
-\varrho(\tau_1\bullet\xi)(\tau_2)+\varrho(\tau_2\bullet\xi)(\tau_1).
\end{equation}
%
%
These are exactly the compatibility conditions for a \emph{matched
pair} of Lie algebras, as studied in
\cite{Majid:1990tba,KosmannSchwarzbach:1988vja}. The conditions are
equivalent to the statement that $\dd=\g\oplus \h$ carries a Lie
bracket, with $\g,\h$ as Lie subalgebras and such that
\begin{equation}\label{eq:xitau}
 [\xi,\tau]=\dot{\varrho}_{\xi}(\tau)-\tau\bullet\xi,\ \ \ \xi\in\g,\ \tau\in\h.\end{equation}
Let $p=p_\h\colon \dd\to \h$ be the projection to the second summand, and put $q=1-p$. 
\begin{proposition}\label{prop:ext}
\begin{enumerate}
\item 
The adjoint action $\Ad\colon H\to \End(\h)$ admits a unique extension
$\Ad\colon H\to \End(\dd)$ with the property
\begin{equation}\label{eq:action} \Ad_{h}\xi=h\bullet\xi+
\iota(\varrho(\xi))\theta^R_h\end{equation}
for all $h\in H$, $\xi\in\g$. Its derivative is the adjoint action of
$\h$ on $\dd$.
\item The action of $\g$ on $H$ combines with the $\h$-action 
$\nu\mapsto \nu^L$ to an action of the Lie algebra $\dd$. We have
\[\iota(\varrho(\zeta))\theta^R_h=p(\Ad_h\zeta),\ \ \zeta\in\dd.\]
\item 
The action $\Ad_h$ on $\dd$ is a Lie algebra automorphism of 
$\dd$.
\end{enumerate}
\end{proposition}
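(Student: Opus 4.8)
Proposition~\ref{prop:ext} has three parts, and the plan is to establish them in order, since each part feeds into the next. For part (a), I would first check that the formula \eqref{eq:action} is forced: on $\h\subseteq\dd$ it must reduce to the adjoint action (since $\iota(\varrho(\nu))\theta^R_h=\iota(\nu^R)\theta^R_h=\nu$ and the $\bullet$-part vanishes on $\h$, using $q\nu=0$), and on $\g$ it is prescribed outright, so uniqueness of the extension is immediate from $\dd=\g\oplus\h$. The content is that this candidate \emph{is} an action of $H$. The cleanest route, as suggested in Section~\ref{subsec:vacuum}, is to recognise the right-hand side of \eqref{eq:action} as the target map of the vacant $\ca{LA}$-groupoid $E$: writing $x\in E_h$ with $s(x)=\xi$, Proposition~\ref{prop:VLAg2} gives $t(x)=h\bullet\xi$ and $\a(x)=\varrho(\xi)_h$, so the proposed $\Ad_h\xi$ equals $t(x)+\iota(\a(x))\theta^R_h$. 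The groupoid identity $x_1\circ x_2=x_{12}$ with $s(x_1)=t(x_2)$, pushed through the homomorphism $(\a,t,s)$ into $TH\times(\g\oplus\g)$, then yields $\Ad_{h_1}\Ad_{h_2}\xi=\Ad_{h_1h_2}\xi$ after a short Maurer--Cartan computation $\iota(v_1\circ v_2)\theta^R_{h_1h_2}=\iota(v_1)\theta^R_{h_1}+\Ad_{h_1}\iota(v_2)\theta^R_{h_2}$ for composable $v_i\in TH$. That the derivative of $\Ad\colon H\to\End(\dd)$ is the $\h$-action on $\dd$ follows by differentiating \eqref{eq:action} at $h=e$ along a curve through $e$ with velocity $\tau\in\h$, using $\dot\varrho(\xi)(\tau)=[\varrho(\xi),\wt\tau]|_e$ and the definition \eqref{eq:xitau} of the bracket $[\tau,\xi]=\dot\varrho_\xi(\tau)-\tau\bullet\xi$; the two terms assemble precisely to $[\tau,\cdot]$ on $\g$, and to the adjoint on $\h$.

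For part (b), the statement is that $\varrho\colon\g\to\mf{X}(H)$ together with $\nu\mapsto\nu^L$ extends to a $\dd$-action, characterised by $\iota(\varrho(\zeta))\theta^R_h=p(\Ad_h\zeta)$ for $\zeta\in\dd$. The characterisation itself is a direct check on the two summands: for $\zeta=\xi\in\g$ it is \eqref{eq:action} composed with $p$ (since $h\bullet\xi\in\g=\ker p$), giving $\iota(\varrho(\xi))\theta^R_h=p(\Ad_h\xi)$; for $\zeta=\nu\in\h$, we want $\iota(\nu^L)\theta^R_h$, and since $\theta^L=\Ad_{h^{-1}}\theta^R$ we get $\iota(\nu^L)\theta^R_h=\Ad_h\nu=p(\Ad_h\nu)$ because $\Ad_h\nu\in\h$. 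To see that the span of $\nu^L$ ($\nu\in\h$) and $\varrho(\xi)$ ($\xi\in\g$) really defines a $\dd$-action, I would verify $[\varrho(\zeta_1),\varrho(\zeta_2)]=\varrho([\zeta_1,\zeta_2])$ on the three types of pairs: $(\h,\h)$ is the standard fact $[\nu_1^L,\nu_2^L]=[\nu_1,\nu_2]^L$; $(\g,\g)$ is the Lie-algebroid-homomorphism property of $\a|_E$, i.e.\ $[\varrho(\xi_1),\varrho(\xi_2)]=\varrho([\xi_1,\xi_2])$ from Proposition~\ref{prop:VLAisMP}; and the mixed pair $(\g,\h)$ is exactly the definition \eqref{eq:xitau} of the cross-bracket, $[\varrho(\xi),\nu^L]=\varrho(\dot\varrho_\xi(\nu))-\varrho(\nu\bullet\xi)=\varrho([\xi,\nu])$ — the compatibility conditions \eqref{eq:compA}, \eqref{eq:compB} guarantee these fit together. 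Alternatively, one can argue more slickly that the characterisation $\iota(\varrho(\zeta))\theta^R_h=p(\Ad_h\zeta)$ \emph{defines} $\varrho(\zeta)$ for all $\zeta$, and then use part (c) together with the transitivity formula to deduce the bracket relation in one stroke.

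Part (c), that $\Ad_h\in\End(\dd)$ is a Lie algebra automorphism of $\dd$, is the step I expect to be the main obstacle, since it cannot be reduced to a single structure map of $E$ the way (a) could. Here the plan is to differentiate rather than to integrate: it suffices (assuming $H$ connected, which is harmless as in Proposition~\ref{prop:dress}, and noting both sides are continuous in $h$) to show that the generator of $t\mapsto\Ad_{\exp(t\nu)}$ acts as a derivation of the bracket on $\dd$ for every $\nu\in\h$. But by part (a) this generator is precisely $\ad_\nu$ on $\dd$, and $\ad_\nu$ is a derivation of the Lie bracket of $\dd$ by the Jacobi identity — so $\Ad_{\exp(t\nu)}$ is a one-parameter group of automorphisms, hence $\Ad_h$ is an automorphism for all $h$ in the identity component. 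The one point demanding care is the interplay between the $H$-action and the Lie bracket at finite $h$: one must know that $\dd=\g\oplus\h$ is $H$-equivariant in the sense of Definition~\ref{def:triple}, i.e.\ that $\Ad_h$ restricts to the adjoint action on $\h$ — which is built into \eqref{eq:action} — and that the differential of $\Ad$ recovers $\ad|_\h$, which is part (a). Given those, the derivation-implies-automorphism argument closes part (c), and with it the whole proposition. An alternative, more hands-on approach would be to verify $\Ad_h[\xi_1,\xi_2]=[\Ad_h\xi_1,\Ad_h\xi_2]$ directly using \eqref{eq:action}, \eqref{eq:comp1*}, and the definition of the bracket on $\dd$; this is a bookkeeping computation with the $\bullet$- and $\varrho$-terms, and I would keep it in reserve in case the connectedness reduction is deemed undesirable.
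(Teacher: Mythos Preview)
Your plan for (a) and the formula verification in (b) match the paper. There are two gaps, however. In (b), the mixed bracket relation $[\varrho(\xi),\nu^L]=\varrho([\xi,\nu])$ is not ``exactly the definition \eqref{eq:xitau}'': that equation defines the bracket $[\xi,\nu]$ in $\dd$, whereas what must be checked is an identity of vector fields on all of $H$, and the infinitesimal conditions \eqref{eq:compA}, \eqref{eq:compB} only constrain the linearisation at $e$. The paper closes this by the route you list as an alternative: contract with $\theta^R$ and use $\L(\tau^L)\,p(\Ad_h\xi)=p(\Ad_h[\tau,\xi])$ (a consequence of part (a)) to obtain $\iota([\tau^L,\varrho(\xi)])\theta^R=\iota(\varrho([\tau,\xi]))\theta^R$ in one line.

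More seriously, the connectedness reduction in (c) is \emph{not} harmless. In Proposition~\ref{prop:dress} the property being checked (a Lie subalgebroid condition) was local and $H\times H$-equivariant, so working near $e$ sufficed; here ``$\Ad_h\in\Aut(\dd)$'' is a pointwise condition at every $h\in H$, and your differentiation argument reaches only the identity component $H_0$ --- the set of $h$ with $\Ad_h\in\Aut(\dd)$ is a closed subgroup containing $H_0$, with no mechanism to reach other components. The paper therefore carries out the direct computation you hold in reserve: for $\tau\in\h,\ \xi\in\g$ it differentiates the action identity $\Ad_h\Ad_{\exp(t\tau)}\xi=\Ad_{\exp(t\Ad_h\tau)}\Ad_h\xi$, and for $\xi_1,\xi_2\in\g$ it verifies $\Ad_h[\xi_1,\xi_2]=[\Ad_h\xi_1,\Ad_h\xi_2]$ by projecting to the $\g$- and $\h$-summands separately, using \eqref{eq:comp1*} for the former and the bracket relation from (b) for the latter. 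Both hold for arbitrary $h$.
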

\begin{proof}
\begin{enumerate}
\item
For $h_1,h_2\in H$, 
\[ \begin{split}\Ad_{h_1}\Ad_{h_2}\xi&=
\Ad_{h_1}(h_2\bullet\xi+\iota(\varrho(\xi))\theta^R_{h_2})\\
&=h_1h_2\bullet\xi+\iota(\varrho(h_2\bullet\xi))\theta^R_{h_1}
+\iota(\varrho(\xi))\Ad_{h_1}\theta^R_{h_2})\\
&=h_1h_2\bullet\xi+\iota(\varrho(h_2\bullet\xi),\varrho(\xi))
(\Mult^*\theta^R)_{h_1,h_2}\\
&=h_1h_2\bullet\xi+\iota(\varrho(\xi))\theta^R_{h_1h_2}
\end{split}\]
where we used \eqref{eq:comp1*}.  This shows that \eqref{eq:action}
extends the adjoint action on $\h$ to an action on $\dd$. It is clear
that $\Ad_h$ extends the adjoint action on $\h$, and that $\f{\p}{\p
t}\Ad_{t\tau}\xi=[\tau,\xi]$ for $\xi\in\g,\ \tau\in\h$ (cf.~
\eqref{eq:xitau}).
\item 
Note first that for $\tau\in\h$, $\varrho(\tau)=\tau^L$. On the other
hand, for $\xi\in\h$ the formula
$\iota(\varrho(\xi))\theta^R_h=p(\Ad_h\xi)$ follows from
\eqref{eq:action}, by applying $p$ to both sides. 
For $\tau,\in\h,\ \xi\in\g$ we compute: 
\[ \iota([\varrho(\tau),\varrho(\xi)])\theta^R=
\L(\tau^L)\iota(\varrho(\xi))\theta^R=\L(\tau^L)(p(\Ad_h\xi))=p(\Ad_h[\tau,\xi])=\iota(\varrho([\tau,\xi]))\theta^R
\]
hence $[\varrho(\tau),\varrho(\xi)]=\varrho([\tau,\xi])$.
\item 
We have
$
\Ad_h[\tau,\xi]=[\Ad_h \tau,\Ad_h\xi],\ \ \xi\in\g,\ \tau\in\h,
$
by taking the derivative of $\Ad_h \Ad_{\exp(t\tau)}\xi=\Ad_{\exp(t\Ad_h\tau)}\Ad_h\xi$.  
For $\xi_1,\xi_2\in\g$, we have 
\[ \L_{\varrho(\xi_1)}(\Ad_h\xi_2)=[\iota(\varrho(\xi_1))\theta^R,\Ad_h\xi_2]=[p(\Ad_h\xi),\,\Ad_h\xi_2].\]
The $\g$-component of the desired equation $\Ad_h([\xi_1,\xi_2])=[\Ad_h\xi_1,\Ad_h\xi_2]$ now follows from 
Equation \eqref{eq:comp1*}:
\[\begin{split}
q(\Ad_h[\xi_1,\xi_2])
&=[q(\Ad_h\xi_1),q(\Ad_h\xi_2)]+\L_{\varrho(\xi_1)}q(\Ad_h\xi_2)-
\L_{\varrho(\xi_2)}q(\Ad_h\xi_1)\\
&=q\Big([q(\Ad_h\xi_1),q(\Ad_h\xi_2)]+[p(\Ad_h\xi_1),\,\Ad_h\xi_2]-[p(\Ad_h\xi_2),\,\Ad_h\xi_1]\Big)\\
&=q([\Ad_h\xi_1,\Ad_h\xi_2]).
\end{split}
\]
Similarly, the $\h$-component is obtained by the calculation,  
\[  \begin{split}
p(\Ad_h[\xi_1,\xi_2])&=\iota(\varrho([\xi_1,\xi_2]))\theta^R\\
&=\L_{\varrho(\xi_1)}\iota(\varrho(\xi_2))\theta^R-\L_{\varrho(\xi_2)}\iota(\varrho(\xi_1))\theta^R
-[\iota(\varrho(\xi_1))\theta^R,\  \iota(\varrho(\xi_2))\theta^R]\\
&=\L_{\varrho(\xi_1)} p(\Ad_h\xi_2)-\L_{\varrho(\xi_2)} p(\Ad_h\xi_1)
-[p(\Ad_h\xi_1),\,p(\Ad_h\xi_2)]\\
&=p([p(\Ad_h\xi_1),\,\Ad_h\xi_2])-p([p(\Ad_h\xi_2),\,\Ad_h\xi_1])-[p(\Ad_h\xi_1),\,p(\Ad_h\xi_2)]\\
&=p([\Ad_h\xi_1,\Ad_h\xi_2]).
\end{split}
\]
\end{enumerate}
\end{proof}
Proposition~\ref{prop:ext} shows that a vacant $\ca{LA}$-groupoid over $H\rra\pt$ 
determines an $H$-equivariant Lie algebra triple $(\dd,\g,\h)$, as in Definition 
\ref{def:triple}. The converse was established in Proposition~\ref{prop:dress}. 

Using the $H$-action on $\dd$ we can now characterize $E$ directly in terms of the 
$H$-equivariant triple. 

\begin{proposition}\label{prop:Echar}
Suppose $E\to H$ is a vacant $\ca{LA}$-groupoid over a group $H$, and define 
the $H$-equivariant triple $(\dd,\g,\h)$ as explained above. Then 
\[ \begin{split}
(\a,t,s)(E)=\{(v,\xi,\xi')|\ v\in T_hH,\ \xi,\xi'\in\g,\ \Ad_h\xi'-\xi=\iota(v)\theta^R_h\}
\end{split}\]
\end{proposition}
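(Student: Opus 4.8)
\textbf{Proof plan for Proposition~\ref{prop:Echar}.} The plan is to verify the claimed equality by a double inclusion, using the defining relation \eqref{eq:atstransform} together with the characterization \eqref{eq:action} of $\Ad_h$ on $\dd$.

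First I would show the inclusion $\subseteq$. Take $x\in E_h$, and write $\xi'=s(x)\in\g$, $\xi=t(x)\in\g$, $v=\a(x)\in T_hH$. By the definition of the actions $\bullet$ and $\varrho$ (Proposition~\ref{prop:VLAisMP}, equation \eqref{eq:atstransform}), the element $x$ being in $E_h$ with $s(x)=\xi'$ forces $t(x)=h\bullet\xi'$ and $\a(x)=\varrho(\xi')_h$; hence $\xi=h\bullet\xi'$ and $v=\varrho(\xi')_h$. Now apply \eqref{eq:action} to $\xi'\in\g$:
\[ \Ad_h\xi'=h\bullet\xi'+\iota(\varrho(\xi'))\theta^R_h=\xi+\iota(v)\theta^R_h, \]
which rearranges to $\Ad_h\xi'-\xi=\iota(v)\theta^R_h$, exactly the asserted condition.

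Next, the inclusion $\supseteq$. Suppose $(v,\xi,\xi')$ with $v\in T_hH$, $\xi,\xi'\in\g$ satisfies $\Ad_h\xi'-\xi=\iota(v)\theta^R_h$. Let $x\in E_h$ be the unique element with $s(x)=\xi'$ (it exists and is unique because $E$ is vacant, so $s\colon E_h\to\g$ is an isomorphism). By \eqref{eq:atstransform} we have $t(x)=h\bullet\xi'$ and $\a(x)=\varrho(\xi')_h$, and by \eqref{eq:action}, $h\bullet\xi'=\Ad_h\xi'-\iota(\varrho(\xi'))\theta^R_h=\Ad_h\xi'-\iota(\a(x))\theta^R_h$. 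It remains to see $t(x)=\xi$ and $\a(x)=v$. Decompose $\dd=\g\oplus\h$: the hypothesis $\Ad_h\xi'-\xi=\iota(v)\theta^R_h$ has $\xi\in\g$ on the left-hand side and $\iota(v)\theta^R_h\in\h$; comparing $\g$- and $\h$-components of this identity with the identity $\Ad_h\xi'-t(x)=\iota(\a(x))\theta^R_h$ (which also has $t(x)\in\g$ and $\iota(\a(x))\theta^R_h\in\h$), and using that $\theta^R_h\colon T_hH\to\h$ is an isomorphism, gives $t(x)=\xi$ and $v=\a(x)$. Hence $(v,\xi,\xi')=(\a(x),t(x),s(x))\in(\a,t,s)(E)$.

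The only mild subtlety — and the step I would be most careful about — is the last comparison of $\g$- and $\h$-components: one must know that $\iota(v)\theta^R_h$ genuinely lands in $\h\subseteq\dd$ (not merely in $\dd$), which is built into \eqref{eq:action} since $\theta^R_h$ is $\h$-valued and $h\bullet\xi',\Ad_h\xi'$ differ by an element of $\h$. Given that, the argument is a direct unwinding of the definitions in Proposition~\ref{prop:VLAisMP} and Proposition~\ref{prop:ext}, with no real computation; indeed this proposition is essentially just a repackaging of the formula \eqref{eq:action} together with the bijectivity of $s|_{E_h}$ coming from vacancy.
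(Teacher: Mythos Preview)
Your proof is correct and follows essentially the same approach as the paper: both establish $\subseteq$ directly from \eqref{eq:action}, and for $\supseteq$ the paper invokes a dimension count (noting that $\xi'$ determines $\xi=q(\Ad_h\xi')$ and $\iota(v)\theta^R_h=p(\Ad_h\xi')$, so the right-hand side has fiber dimension $\dim\g$), whereas you spell out the same fact as an explicit construction of the preimage $x\in E_h$ via vacancy. These are two phrasings of the same observation.
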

\begin{proof}
The condition $\Ad_h\xi'-\xi=\iota(v)\theta^R_h$ is just \eqref{eq:action}, proving the 
inclusion $\subseteq$. The opposite inclusion follows by dimension count: Given $\xi'\in\g$, 
the elements $\xi,\,v$ are determined as $\xi=q(\Ad_h\xi'),\ \iota(v)(\theta^R_h)=p(\Ad_h\xi)$.  
\end{proof}

The correspondence between $\ca{LA}$-groupoids and $H$-equivariant triples is compatible with 
morphisms. Suppose $E_i\rra \g_i,\ i=0,1$ are vacant $\ca{LA}$-groupoids over groups $H_i$. 
A \emph{morphism} (resp.~ \emph{comorphism}) from $E_0$ to $E_1$ is a Lie group homomorphism 
$\Phi\colon H_0\to H_1$, together with a vector bundle map 
$E_0\to E_1$ (resp.~ $\Phi^*E_1\to E_0$) whose graph is an 
$\ca{LA}$-subgroupoid of $E_1\times E_0$ along the graph of $\Phi$. If $E_i$ are vacant, 
so that $E_i|_e\cong \g_i$, such a morphism (resp.~ comorphism) defines a pair of Lie algebra 
homomorphisms $\d_e\Phi\colon \h_0\to \h_1$ and $\g_0\to \g_1$ (resp. $\g_1\to \g_0$). 
Let $\dd_i=\g_i\oplus\h_i$. 
\begin{proposition}\label{prop:vacmor}
\begin{enumerate}
\item If $E_0\to E_1$ is a morphism of $\ca{LA}$-groupoids, then the linear map 
$\dd_0\to \dd_1$ given as the direct sum of the maps $\g_0\to \g_1$ and $\h_0\to \h_1$ 
is a Lie algebra homomorphism, equivariant relative to the underlying group homomorphism 
$\Phi\colon H_0\to H_1$. 
\item If $\Phi^*E_1\to E_0$ is a comorphism of $\ca{LA}$-groupoids, then the subspace 
$\mf{r}\subseteq \dd_1\times\dd_0$, given as the direct sum of the graphs of the maps $\g_1\to \g_0$ and $\h_0\to \h_1$, is a Lie subalgebra, invariant under the action of $H_0$ (via its inclusion as $\on{gr}(\Phi)\subseteq H_1\times H_0$). 
\end{enumerate}
\end{proposition}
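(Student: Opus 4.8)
The plan is to reduce both statements to the explicit description of a vacant $\ca{LA}$-groupoid $E$ furnished by Proposition~\ref{prop:Echar}, namely the identification $(\a,t,s)(E)=\{(v,\xi,\xi')\mid \Ad_h\xi'-\xi=\iota(v)\theta^R_h\}$ inside $TH\times(\g\oplus\g)$. Under the embeddings $(\a,t_i,s_i)$, a morphism (resp.\ comorphism) of $\ca{LA}$-groupoids $E_0\to E_1$ (resp.\ $\Phi^!E_1\to E_0$) becomes an $\ca{LA}$-subgroupoid $N\subseteq (TH_1\times(\g_1\oplus\g_1))\times(TH_0\times(\g_0\oplus\g_0))$ along $\on{gr}(\Phi)\subseteq H_1\times H_0$. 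Taking the fibre of $N$ over the group unit $(e,e)$ and using that everything is vacant (so the fibre is determined by its $\g_1\oplus\g_0$–components via the source maps), one recovers exactly the linear data: a pair of Lie algebra maps $\psi\colon\g_0\to\g_1$, $\phi\colon\h_0\to\h_1$ in case (a), or $\psi\colon\g_1\to\g_0$, $\phi\colon\h_0\to\h_1$ in case (b), with $\phi=\d_e\Phi$ in both cases.

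For part (a): I would exploit that $(\a,t,s)\colon E\to TH\times(\g\oplus\g)$ is a homomorphism of $\ca{LA}$-groupoids and that the graph of the bundle map $E_0\to E_1$ is an $\ca{LA}$-subgroupoid. Restricting the $\ca{LA}$-subgroupoid to the source fibre over $e$ and tracing through the defining relation of $E_i$, the equation $\Ad_h\xi'-\xi=\iota(v)\theta^R_h$ from Proposition~\ref{prop:ext}(a) shows that the bundle map $E_0\to E_1$, expressed on $(\a,t,s)$–coordinates, is $(v,\xi,\xi')\mapsto(T\Phi(v),\psi(\xi),\psi(\xi'))$ and is equivariant only if $\Phi$–equivariance of the $H$-action on $\dd$ holds along $\on{gr}(\Phi)$; the Lie subalgebroid condition at $e$ then forces $\psi\oplus\phi\colon\dd_0\to\dd_1$ to be a Lie algebra homomorphism, because the bracket on $\dd_i=\g_i\oplus\h_i$ is precisely the matched-pair bracket \eqref{eq:xitau} reconstructed from $\bullet$ and $\varrho$ (cf.\ Proposition~\ref{prop:VLAisMP} and the compatibility conditions \eqref{eq:compA},\eqref{eq:compB}), and $\ca{LA}$-subgroupoid forces compatibility of $\psi,\phi$ with both the $H$-action \eqref{eq:action} and the brackets. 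Equivariance relative to $\Phi$ follows from the $H_0\cong\on{gr}(\Phi)$–invariance of the graph-$\ca{LA}$-subgroupoid together with Proposition~\ref{prop:ext}(a).

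For part (b): the comorphism $\Phi^!E_1\to E_0$ has graph an $\ca{LA}$-subgroupoid of $E_1\times E_0$ along $\on{gr}(\Phi)$; passing to $(\a,t,s)$–coordinates and the source fibre over $e$ gives the subspace $\mf{r}=\on{gr}(\psi\colon\g_1\to\g_0)\oplus\on{gr}(\phi\colon\h_0\to\h_1)\subseteq\dd_1\times\dd_0$, by the same dimension-count argument as in Proposition~\ref{prop:Echar}. That $\mf{r}$ is a Lie subalgebra is the statement that the graph of a comorphism, restricted to the unit, is closed under the reconstructed matched-pair bracket; this is just the $\ca{LA}$-subgroupoid/Lie-subalgebroid property at $e$, unwound through \eqref{eq:xitau}. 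Invariance of $\mf{r}$ under $H_0\cong\on{gr}(\Phi)\subseteq H_1\times H_0$ is the $H_0$-equivariance of the comorphism, transported through the formula \eqref{eq:action} for the $H$-action on $\dd_i$.

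The routine part is the bookkeeping: unwinding the matched-pair bracket \eqref{eq:xitau} and the action formula \eqref{eq:action} and checking that the subgroupoid/subalgebroid conditions translate term-by-term into the Lie-algebra-homomorphism (resp.\ Lie-subalgebra, invariance) conditions. The main obstacle I expect is purely organizational rather than conceptual: one must be careful that "morphism of $\ca{LA}$-groupoids" and "comorphism of $\ca{LA}$-groupoids" produce graphs on opposite sides (hence a graph vs.\ a general relation), so that in case (a) one genuinely gets a \emph{homomorphism} $\dd_0\to\dd_1$ while in case (b) one only gets a \emph{Lie subalgebra} $\mf{r}\subseteq\dd_1\times\dd_0$; keeping the variances of $\psi$ and $\phi$ straight, and confirming that the $\g$– and $\h$–components of every identity close up, is where the care is needed. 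No new ideas beyond Propositions~\ref{prop:VLAisMP}, \ref{prop:ext}, and~\ref{prop:Echar} are required.
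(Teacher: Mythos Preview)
Your approach is correct in principle but takes a substantially more computational route than the paper. You propose to work in the explicit $(\a,t,s)$-coordinates of Proposition~\ref{prop:Echar} and verify directly, via the matched-pair formulas \eqref{eq:xitau} and \eqref{eq:action}, that the morphism data assemble into a Lie algebra homomorphism (resp.\ an invariant Lie subalgebra). This works, but one caution: the phrase ``the Lie subalgebroid condition at $e$'' is not quite enough. Preservation of the cross-term $[\xi,\tau]=\dot\varrho(\xi)(\tau)-\tau\bullet\xi$ requires that $\psi,\phi$ intertwine both the $\bullet$-actions and the linearized $\varrho$-actions, and extracting this needs the $\ca{LA}$-subgroupoid condition along all of $\on{gr}(\Phi)$, not just the fibre at $(e,e)$. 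With that adjustment your outline goes through.

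The paper sidesteps all of this bookkeeping with a single reduction. It first observes that when $E_0\hookrightarrow E_1$ is an \emph{inclusion} of vacant $\ca{LA}$-groupoids over an inclusion $H_0\hookrightarrow H_1$, the statement is immediate: the associated triple $(\dd_0,\g_0,\h_0)$ is then a subtriple of $(\dd_1,\g_1,\h_1)$, and since the $H$-action on $\dd$ is canonically determined by the groupoid, equivariance is automatic. The general morphism in (a) is reduced to this case by the graph trick: replace $E_1$ by $E_1'=E_1\times E_0$ over $H_1'=H_1\times H_0$ and regard the given morphism as the inclusion $E_0\hookrightarrow E_1'$, $x\mapsto(\text{image},x)$, over $h_0\mapsto(\Phi(h_0),h_0)$. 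Part (b) is then obtained from (a) rather than argued in parallel: the graph of the comorphism $\Phi^*E_1\to E_0$ is itself a vacant $\ca{LA}$-subgroupoid $E_0'\hookrightarrow E_1\times E_0$, and applying (a) to this inclusion gives an $H_0$-equivariant Lie algebra inclusion $\dd_0'=\g_1\oplus\h_0\hookrightarrow\dd_1\times\dd_0$ whose image is exactly $\mf{r}$. Thus both parts collapse to the trivial inclusion case, with no direct manipulation of the matched-pair bracket or the action formula at all. Your direct verification buys explicitness at the cost of length; the paper's graph reduction buys brevity and makes the relation between (a) and (b) transparent.
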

\begin{proof}
\begin{enumerate}
\item The statement is obvious if $E_0\to E_1$ is an inclusion. The general case reduces to that of an 
inclusion, by letting $E_1'=E_1\times E_0,\ H_1'=H_1\times H_0$, $\Phi'(h_0)=(\Phi(h_0),h_0)$, 
and with the inclusion $E_0\to E_1'$ the direct sum of the identity map with the map $E_0\to E_1$. 
\item Similar to (a), let $E_0'\hra E_1'=E_1\times E_0$ be the
inclusion of the graph of the map $\Phi^*E_1\to E_0$. By (a), applied
to inclusions one obtains an $H_0\cong \on{gr}(\Phi)$-equivariant Lie
algebra homomorphism $\dd_0':=\g_1\times
\h_0\hra \dd_1':=\dd_1\times\dd_0$. Its range is $\mf{r}$, which is
hence an $H_0$-invariant Lie subalgebra.
\end{enumerate}
\end{proof}

Conversely, given an $H_0\cong \on{gr}(\Phi)$-invariant Lie subalgebra $\mf{r}\subseteq \dd_1\times\dd_0$ given as the direct sum of the graphs of $\d_e\Phi$ and the graph of a Lie algebra homomorphism $\g_0\to \g_1$ (resp.~ $\g_1\to \g_0$), the resulting $\ca{LA}$-subgroupoid of $E_1\times E_0$ defines a  
morphism (resp.~ comorphism) of $\ca{LA}$-groupoids.

\section{Some constructions with $\ca{VB}$-groupoids}\label{app:fiberproducts}
We will base our discussion on the  following result 
%
\begin{proposition}\label{prop:moerdijk} \cite[$\mathsection$ 5.3]{moerdijk03}
Suppose $H,G,K$ are Lie groupoids, and $\phi\colon G\to K$ and $\psi\colon H\to K$ are morphisms of Lie groupoids.  
 If $\phi$ and $\psi$ are transverse, 
 then the fibered product 
$H\times_K G$ is a Lie groupoid, with $H^{(0)}\times_{K^{(0)}} G^{(0)}$ as its space of units. 
\end{proposition}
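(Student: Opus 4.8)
The plan is to verify directly that the componentwise groupoid structure on the fibered product is a Lie groupoid structure, along the lines of \cite[$\mathsection$ 5.3]{moerdijk03}. Write $\phi_0\colon G^{(0)}\to K^{(0)}$ and $\psi_0\colon H^{(0)}\to K^{(0)}$ for the maps induced on objects, so that $\phi$ and $\psi$ being groupoid morphisms means they intertwine $s$, $t$, units, multiplication and inversion with $\phi_0$, $\psi_0$. Transversality of $\phi$ and $\psi$ is to be understood in the sense that makes $\psi\times\phi\colon H\times G\to K\times K$ transverse to the diagonal $K_\Delta$ and $\psi_0\times\phi_0\colon H^{(0)}\times G^{(0)}\to K^{(0)}\times K^{(0)}$ transverse to $(K^{(0)})_\Delta$; hence
\[ P:=H\times_K G=(\psi\times\phi)^{-1}(K_\Delta),\qquad P_0:=H^{(0)}\times_{K^{(0)}} G^{(0)} \]
are smooth submanifolds of $H\times G$ and $H^{(0)}\times G^{(0)}$, respectively.

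Next I would equip $P\rra P_0$ with the evident componentwise structure and check it is well defined. The source map $(h,g)\mapsto(s(h),s(g))$ lands in $P_0$ because $\psi_0(s(h))=s(\psi(h))=s(\phi(g))=\phi_0(s(g))$, and similarly for the target; the unit map is $(x,y)\mapsto(u(x),u(y))$, the inversion is $(h,g)\mapsto(h^{-1},g^{-1})$, and the product of composable pairs is $(h_1,g_1)(h_2,g_2)=(h_1h_2,g_1g_2)$, which lies in $P$ since $\psi(h_1h_2)=\psi(h_1)\psi(h_2)=\phi(g_1)\phi(g_2)=\phi(g_1g_2)$. All the groupoid axioms hold because they hold separately in $H$ and in $G$.

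It then remains to verify the smoothness and regularity conditions for a Lie groupoid. The maps $s$, $t$, $u$, $\on{Inv}$ on $P$ are restrictions to the submanifold $P$ (resp. $P_0$) of the smooth maps $s_H\times s_G$, $t_H\times t_G$, etc. on the ambient products, hence smooth; and once the source map $s\colon P\to P_0$ is known to be a submersion, $P^{(2)}$ is the transverse fiber product of $s$ and $t$ over $P_0$, hence a smooth manifold, and the multiplication, being the restriction of $m_H\times m_G$, is smooth — so $P\rra P_0$ is a Lie groupoid with object manifold $P_0$. \textbf{The one genuinely substantive point} is thus that $s\colon P\to P_0$ is a submersion: at a point $(h,g)\in P$ one takes a tangent vector of $P_0$, lifts its two components arbitrarily along the submersions $s_H$, $s_G$, observes that the resulting discrepancy $(d\psi)(\delta h)-(d\phi)(\delta g)$ lies in $\ker(ds_K)$, and uses the transversality of $\psi$ and $\phi$ — in the form that also controls the object level — to correct the two lifts inside the source fibres of $H$ and $G$ so that the corrected vector is tangent to $P$. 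This correction step is where the transversality hypothesis is used in an essential way; everything else is bookkeeping, and since the proposition is quoted verbatim from \cite{moerdijk03} the paper may simply cite it with the above as the underlying argument.
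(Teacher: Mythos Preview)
Your final sentence is exactly right: the paper does not prove this proposition at all --- it simply cites \cite[$\mathsection$ 5.3]{moerdijk03}. Your sketch of the underlying argument is essentially the Moerdijk--Mr\v{c}un proof and is fine as such.

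There is, however, one point where your write-up and the paper diverge. You build into the hypothesis that \emph{both} $\psi\times\phi$ is transverse to $K_\Delta$ \emph{and} $\psi_0\times\phi_0$ is transverse to $(K^{(0)})_\Delta$; this is indeed how Moerdijk--Mr\v{c}un state it. The paper's only contribution beyond the citation is a short remark observing that the second condition is \emph{automatic} from the first: at a unit one has the splittings $T_xG=\ker(T_xs)\oplus T_xG^{(0)}$ (and similarly for $H$, $K$), and since $T\phi$, $T\psi$ respect these splittings, transversality on arrows implies transversality on units. You did not need this for your argument because you simply assumed it, but it is worth noting since it is the one thing the paper actually adds here.
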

\begin{remark}
Note that, \cite[$\mathsection$ 5.3]{moerdijk03} makes the additional assumption that 
the restrictions $\phi^{(0)}=\phi|_{G^{(0)}}$ and $\psi^{(0)}=\psi|_{H^{(0)}}$ to the 
units are transverse. However, this property is automatic: Suppose $x\in G^{(0)}$, $y\in H^{(0)}$ 
are units with $w:=\phi(x)=\psi(y)$. Then  
$$T_xG=\ker(T_xs)\oplus T_xG^{0},\ \ \ T_yH=\ker(T_ys)\oplus T_yH^{0}, \ \ \ T_wK=\ker(T_ws)\oplus T_wK^{0}.$$
Since the tangent maps to $\phi,\ \psi$ respect these decompositions, their transversality implies that 
of $\phi^{(0)},\ \psi^{(0)}$.
\end{remark}
\begin{corollary}\label{cor:keragrp}
Suppose $\phi\colon V\to W$ is a fiberwise surjective homomorphism of $\mathcal{VB}$-groupoids over $G\to H$. 
 Then $\ker(\phi)$ is a $\mathcal{VB}$-subgroupoid of $V$.  
\end{corollary}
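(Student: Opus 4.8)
The plan is to apply Proposition~\ref{prop:moerdijk} (the fibered product of Lie groupoids) with a judicious choice of the groupoids and morphisms, together with the Grabowski--Rotkiewicz characterization of vector subbundles recalled in Remark~\ref{rem:grab}. First I would observe that $\ker(\phi)$, being the pre-image of the zero section $0_W\subseteq W$ under the vector bundle homomorphism $\phi\colon V\to W$, is automatically a smooth submanifold of $V$ (here fiberwise surjectivity guarantees $\phi$ has constant rank, so the pre-image is a submanifold), and it is invariant under scalar multiplication; hence by \cite[Theorem 2.3]{Grabowski:2009dc} it is a vector subbundle of $V$. What remains is to equip it with a groupoid structure over $\ker(\phi|_{V^{(0)}})\subseteq V^{(0)}$ and to check it is closed under the groupoid operations.

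For this I would realize $\ker(\phi)$ as a fibered product of Lie groupoids. Let $0_W\rra 0_{W^{(0)}}$ denote the zero-section subgroupoid of $W$ (a $\ca{VB}$-subgroupoid, being scalar-multiplication invariant), which as a Lie groupoid is just a copy of $H\rra H^{(0)}$. Then $\ker(\phi)=V\times_W 0_W$, the fibered product with respect to $\phi\colon V\to W$ and the inclusion $0_W\hookrightarrow W$. Both maps are morphisms of Lie groupoids, and they are transverse: fiberwise surjectivity of $\phi$ means $\phi$ is a submersion on each $\ca{VB}$-fiber, and since the base morphism $G\to H$ underlying $\phi$ is the identity (both $\ca{VB}$-groupoids sit over the \emph{same} Lie groupoid $G\rra H$ in the statement), transversality of $\phi$ with the embedding $0_W\hookrightarrow W$ follows from surjectivity on the vertical directions. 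By Proposition~\ref{prop:moerdijk}, $\ker(\phi)=V\times_W 0_W$ is therefore a Lie groupoid, with space of units $V^{(0)}\times_{W^{(0)}} 0_{W^{(0)}}=\ker(\phi|_{V^{(0)}})$.

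Finally I would note that the inclusion $\ker(\phi)\hookrightarrow V$ is a morphism of Lie groupoids (it is the first-coordinate projection from the fibered product), so $\ker(\phi)$ is genuinely a subgroupoid of $V$; combined with the fact established above that it is a vector subbundle, this exhibits $\ker(\phi)$ as a $\ca{VB}$-subgroupoid of $V$. The main obstacle I anticipate is purely bookkeeping: verifying that the transversality hypothesis of Proposition~\ref{prop:moerdijk} is met, i.e.\ that at each point of $\ker(\phi)$ one has $T(\phi)(T V)+T(0_W)=TW$ inside $TW$, which reduces to the fiberwise surjectivity of $\phi$ once one uses that the underlying base groupoids coincide; the remark following Proposition~\ref{prop:moerdijk} then takes care of transversality on the units automatically.
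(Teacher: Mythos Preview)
Your approach is exactly the paper's: realize $\ker(\phi)$ as the fibered product $V\times_W 0_W$ with the zero section, verify transversality from fiberwise surjectivity, and invoke Proposition~\ref{prop:moerdijk}. One small correction that does not affect the argument: ``over $G\to H$'' means $V$ lies over the Lie groupoid $G$ and $W$ over the Lie groupoid $H$ with $\phi$ covering a base morphism $G\to H$ (not that both sit over a single groupoid); transversality still holds because the tangent space to the zero section $0_W\cong H$ supplies the horizontal directions in $TW$ while fiberwise surjectivity of $\phi$ supplies the vertical ones.
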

\begin{proof}
Since $\phi$ is fiberwise surjective, it is transverse to the zero section $H\to W$§. We may view $\ker(\phi)$ as the fibered product $V\times_W H$, where $H\to W$ is the inclusion of the 
zero section. By Proposition~\ref{prop:moerdijk}, it is a Lie groupoid. By Definition~\ref{def:CALAVBgroupoid}
it is a $\mathcal{VB}$-groupoid.  
\end{proof}

For the next result, we recall Pradines' observation 
\cite{Pradines:1988td} (see also \cite[$\mathsection$ 11.2]{Mackenzie05}) 
that the dual of a $\mathcal{VB}$-groupoid $V\to H$ carries a natural
structure of $\mathcal{VB}$-groupoid,
\[ \xymatrix{ {V^*} \ar@<2pt>[r]\ar@<-2pt>[r] \ar[d] &{\on{ann}(V^{(0)})}
\ar[d]\\
H \ar@<2pt>[r]\ar@<-2pt>[r]& H^{(0)} }\]
where $\on{ann}(V^{(0)})$ is the annihilator of $V^{(0)}$ in $V^*|_{H^{(0)}}$. 
The groupoid structure is given by $\l\alpha_1\circ \alpha_2,\ v_1\circ v_2\r=\l\alpha_1,v_1\r+\l\alpha_2,v_2\r$, for
composable elements $\alpha_1,\alpha_2\in V^*$ and $v_1,v_2\in V^*$, with
$\alpha_i$ having the same base points as $v_i$. 

Alternatively, one can define the groupoid multiplication in terms of its graph by
\begin{equation}\label{eq:grMultDual} \on{gr}(\Mult_{V^*})=\on{ann}^\natural(\on{gr}(\Mult_V))\end{equation}
(using the notation from Appendix~\ref{app:compos}). Writing the groupoid axioms in terms of compositions of relations, it then follows from the vector bundle version of Lemma~\ref{lem:ann}, that the $\ca{VB}$-groupoid axioms of $V$ imply those for $V^*$. 

Suppose now that  $\Phi:\colon V\to W$ is a morphism of $\ca{VB}$-groupoids, i.e.~ 
$$\on{gr}(\Phi)\circ\Mult_V\subseteq\Mult_W\circ\on{gr}(\Phi\times \Phi).$$
By application of Lemma~\ref{lem:ann} and \eqref{eq:grMultDual} one obtains the corresponding equation for $\Phi^*\colon W^*\to V^*$ holds. Thus we have proven 
\cite[Proposition~11.2.6]{Mackenzie05}, that the dual bundle map $\Phi^*\colon W^*\to V^*$ is again 
a morphism of $\ca{VB}$-groupoids.
%

\begin{corollary}
Suppose $C\subseteq V$ is a $\mathcal{VB}$-subgroupoid over groupoids $K\subseteq H$. Then 
$\on{ann}(C)\subseteq V^*$ is a  $\mathcal{VB}$-subgroupoid. 
Its space of objects is $\on{ann}(C)\cap \on{ann}(V^{(0)})$. 
\end{corollary}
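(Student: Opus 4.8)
The plan is to deduce this statement from the dualization machinery just developed, exactly as in the two preceding corollaries. The key observation is that the annihilator $\on{ann}(C)\subseteq V^*$ can be realized as a kernel of a $\ca{VB}$-groupoid homomorphism, so that Corollary~\ref{cor:keragrp} applies. Specifically, since $C\subseteq V$ is a $\ca{VB}$-subgroupoid, the inclusion $\iota\colon C\hookrightarrow V$ is a morphism of $\ca{VB}$-groupoids over $K\hookrightarrow H$. By the statement established just above (the $\ca{VB}$-groupoid version of \cite[Proposition~11.2.6]{Mackenzie05}, via Lemma~\ref{lem:ann} and \eqref{eq:grMultDual}), the dual map $\iota^*\colon V^*|_K\to C^*$ is again a morphism of $\ca{VB}$-groupoids. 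This map is fiberwise surjective, being the restriction map for a linear inclusion. Hence by Corollary~\ref{cor:keragrp}, its kernel is a $\ca{VB}$-subgroupoid of $V^*|_K$, and therefore of $V^*$. But $\ker(\iota^*)=\on{ann}(C)$ by definition of the annihilator, which gives the first assertion.

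First I would spell out that $\iota^*$ is genuinely a $\ca{VB}$-groupoid morphism covering the restriction-of-duals map $K^{(0)}$-side; here one needs the pull-back $V^*|_K$ to itself be a $\ca{VB}$-groupoid, which follows since restriction of a $\ca{VB}$-groupoid to a subgroupoid of the base is again a $\ca{VB}$-groupoid (and the dual construction commutes with such restriction up to the natural identification of $(V|_K)^*$ with $V^*|_K$). Next I would invoke Corollary~\ref{cor:keragrp} with $\phi=\iota^*\colon V^*|_K\to C^*$, noting the transversality hypothesis there is automatic from fiberwise surjectivity. The identification $\ker(\iota^*)=\on{ann}(C)$ is immediate: an element $\alpha\in V^*_h$ (for $h\in K$) lies in $\ker(\iota^*)$ iff $\langle\alpha, v\rangle=0$ for all $v\in C_h$, i.e.\ iff $\alpha\in\on{ann}(C_h)$.

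For the space of objects: by Proposition~\ref{prop:moerdijk}/Corollary~\ref{cor:keragrp}, the units of $\ker(\iota^*)$ are $\ker(\iota^*)\cap (V^*)^{(0)}$. Since $(V^*)^{(0)}=\on{ann}(V^{(0)})$ by Pradines' description recalled above, the object space of $\on{ann}(C)$ is $\on{ann}(C)\cap\on{ann}(V^{(0)})$, as claimed.

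I expect the only real point requiring care is the bookkeeping around pull-backs and duals over the subgroupoid $K$ — making sure that ``restrict to $K$, then dualize'' and ``dualize, then restrict to $K$'' agree as $\ca{VB}$-groupoids, and that the dualization statement proved above (stated for morphisms over $G\to H$) applies verbatim to the inclusion $C\hookrightarrow V$ over $K\hookrightarrow H$. None of this involves computation; it is purely a matter of chaining together Corollary~\ref{cor:keragrp}, the self-duality \eqref{eq:grMultDual} of the $\ca{VB}$-groupoid structure, and the identity $\ker(\iota^*)=\on{ann}(C)$.
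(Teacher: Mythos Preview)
Your proposal is correct and follows essentially the same approach as the paper: both arguments restrict $V^*$ to $K$ (the paper invokes Proposition~\ref{prop:moerdijk} to see $i^*V^*$ is a $\ca{VB}$-groupoid), observe that the dual of the inclusion gives a fiberwise surjective $\ca{VB}$-groupoid morphism $i^*V^*\to C^*$, and apply Corollary~\ref{cor:keragrp} to conclude its kernel $\on{ann}(C)$ is a $\ca{VB}$-subgroupoid. Your extra care about the ``restrict then dualize versus dualize then restrict'' bookkeeping is well-placed but not something the paper spells out.
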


\begin{proof}
Let $i\colon K\hra H$ be inclusion. 
By Proposition~\ref{prop:moerdijk}, the pull-back 
$i^* V^*\to K$ is a $\mathcal{VB}$-groupoid. 
It comes with a fiberwise surjective 
Lie groupoid homomorphism $i^*V^*\to C^*$, where the map on units is again fiberwise surjective. Its kernel is $\on{ann}(C)$. 
\end{proof}

A non-degenerate fiber metric $\l\cdot,\cdot\r$ on a
$\ca{VB}$-groupoid $V$ is \emph{multiplicative} if it satisfies
\begin{equation}\label{eq:multForm} \l v_1\circ v_2,v_1'\circ v_2'\r=\l v_1,v_1'\r+\l v_2,v_2'\r\end{equation}
for composable elements $v_1,v_2$, resp.~ $v_1',v_2'$, with $v_i$
having the same base points as $v_i'$. Equivalently, the graph
$\on{gr}(\on{Mult}_V)\subset V\times \ol{V}\times \ol{V}$ is an isotropic subbundle, where $\ol{V}$ denotes 
$V$ with the opposite fiber metric.  

The fiber metric $\l\cdot,\cdot\r$ defines a map $\Psi:V\to V^*$, and \eqref{eq:multForm} shows that  \begin{equation}\label{eq:Psiinside}\Psi\big(\on{gr}(\on{Mult}_V)\big)\subseteq \on{gr}(\on{Mult}_{V^*})=\on{ann}^\natural\big(\on{gr}(\on{Mult}_V)\big).\end{equation} Since, in addition, the fiber metric $\l\cdot,\cdot\r$ is non-degenerate, $\Psi$ defines an isomorphism of $\ca{VB}$-groupoids. This shows that \eqref{eq:Psiinside} is an equality and $$\Psi(V^{(0)})=(V^*)^{(0)}=\on{ann}(V^{(0)}).$$ Therefore both $\on{gr}(\on{Mult}_V)$ and $V^{(0)}$ are Lagrangian.

\begin{corollary}\label{cor:david}
Let $V\to H$ be a $\mathcal{VB}$-groupoid, equipped with a
multiplicative non-degenerate fiber metric.  Let $C\rra C\cap V^{(0)}$ be a
co-isotropic $\mathcal{VB}$-subgroupoid. 
Then $C^\perp\rra C^\perp\cap V^{(0)}$ 
is a $\mathcal{VB}$-subgroupoid of $C$, and hence the quotient inherits a
$\mathcal{VB}$-groupoid structure $C/C^\perp\rra (C\cap V^{(0)})/(C^\perp\cap V^{(0)})$. Moreover, the natural non-degenerate fiber metric on $C/C^\perp$ is multiplicative.
\end{corollary}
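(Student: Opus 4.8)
The strategy is to reduce everything to statements about the metric isomorphism $\Psi\colon V\to V^*$ and the behaviour of annihilators and orthogonal complements under restriction to subgroupoids. First I would observe that, by the discussion preceding the corollary, $\Psi$ is an isomorphism of $\ca{VB}$-groupoids carrying $V^{(0)}$ to $\on{ann}(V^{(0)})=(V^*)^{(0)}$ and $\on{gr}(\Mult_V)$ to $\on{gr}(\Mult_{V^*})$. Under $\Psi$, the subbundle $C^\perp\subseteq V$ corresponds precisely to $\on{ann}(C)\subseteq V^*$. Hence the claim that $C^\perp\rra C^\perp\cap V^{(0)}$ is a $\ca{VB}$-subgroupoid is equivalent, via $\Psi$, to the statement that $\on{ann}(C)\rra \on{ann}(C)\cap\on{ann}(V^{(0)})$ is a $\ca{VB}$-subgroupoid of $V^*$ — which is exactly the content of the preceding corollary (applied to $C\subseteq V$). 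So $C^\perp$ is a $\ca{VB}$-subgroupoid of $V$; since $C$ is coisotropic, $C^\perp\subseteq C$, so $C^\perp$ is in fact a $\ca{VB}$-subgroupoid of $C$, with space of units $C^\perp\cap V^{(0)}$.

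Next I would produce the quotient $\ca{VB}$-groupoid structure on $C/C^\perp$. Since $C^\perp\subseteq C$ are both $\ca{VB}$-subgroupoids, and $C^\perp$ is wide-over-$C$ in the sense that it has units $C^\perp\cap V^{(0)}\subseteq C\cap V^{(0)}$, the quotient $C/C^\perp$ is a vector bundle over $K=H$ (restricted appropriately). To get the groupoid structure I would use the multiplicativity of the metric directly: for composable $v_1,v_2\in C$ and $c_1,c_2\in C^\perp$ with matching base points, $(v_1+c_1)\circ(v_2+c_2)$ differs from $v_1\circ v_2$ by an element of $C^\perp$ (this uses that $C^\perp$ is a subgroupoid and that $\on{gr}(\Mult_V)$ restricted to $C^\perp\times$ anything lands in $C^\perp$, which follows from $\on{ann}^\natural$ of $\on{gr}(\Mult_V)$ being $\on{gr}(\Mult_{V^*})$ together with $C$ being a subgroupoid). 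Concretely I would pair with test elements: $\l c_1\circ x, \cdot\r$ computations show the product is well-defined modulo $C^\perp$. Thus $\Mult_V$ descends to a well-defined multiplication $\Mult_{C/C^\perp}$, and the groupoid axioms for $C/C^\perp$ follow from those of $V$ by passing to the quotient. The units are $(C\cap V^{(0)})/(C^\perp\cap V^{(0)})$, which is a vector bundle over $H^{(0)}$, again by the coisotropy of $C\cap V^{(0)}$ inside $V^{(0)}$ with respect to the (zero) induced form — or more simply because it is the image of $C\cap V^{(0)}$ under the quotient map.

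The induced metric on $C/C^\perp$ is the standard one: for $[v],[w]\in C/C^\perp$ set $\l[v],[w]\r=\l v,w\r$, well-defined because $C^\perp$ is by definition the radical of $\l\cdot,\cdot\r|_C$, and non-degenerate for the same reason. Multiplicativity of this induced metric is then immediate from multiplicativity on $V$: for composable classes, choose composable representatives $v_1,v_2$ and $w_1,w_2$ in $C$ (possible since $\Mult_C$ is the restriction of $\Mult_V$), and compute
\[
\l [v_1]\circ[v_2],\,[w_1]\circ[w_2]\r=\l v_1\circ v_2,\,w_1\circ w_2\r=\l v_1,w_1\r+\l v_2,w_2\r=\l[v_1],[w_1]\r+\l[v_2],[w_2]\r.
\]
I would also note, for bookkeeping, that one should use Grabowski–Rotkiewicz (Remark~\ref{rem:grab}): to check $C^\perp$ and the quotient are vector subbundles / vector bundles it suffices that the relevant subsets are invariant under scalar multiplication, which is clear since $C^\perp$ is cut out by a bilinear condition and the quotient map intertwines scalings.

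\textbf{Main obstacle.} The only genuinely delicate point is verifying that the groupoid multiplication of $V$ really does descend to $C/C^\perp$ — i.e. that $\on{gr}(\Mult_V)$ restricted to $C$ projects to a well-defined partial multiplication on $C/C^\perp$ and that composability is preserved under passing to the quotient. This is where one must use both that $C$ is a subgroupoid \emph{and} that $C^\perp\subseteq C$ is a subgroupoid, combined with the identity $\on{gr}(\Mult_{V^*})=\on{ann}^\natural(\on{gr}(\Mult_V))$ from \eqref{eq:grMultDual}; everything else (non-degeneracy of the induced metric, units, groupoid axioms, multiplicativity) is then a routine transfer along the quotient map. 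It may be cleanest to package this by noting that reduction of $V$ by the coisotropic $C$ is precisely the $\ca{VB}$-groupoid analogue of the Courant/Manin-pair reduction used in Proposition~\ref{prop:reduction}, and to invoke the relational description: $C/C^\perp$ is $\on{ran}$ of the $\ca{VB}$-relation $\{(v,[v])\mid v\in C\}$, whose composition with $\on{gr}(\Mult_V)$ is clean (indeed transverse, after quotienting), so the image is a $\ca{VB}$-subgroupoid by the $\ca{VB}$-analogue of Proposition~\ref{prop:dirrel}.
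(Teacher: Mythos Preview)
Your first step—using the isomorphism $\Psi\colon V\to V^*$ to identify $C^\perp$ with $\on{ann}(C)$ and then invoking the preceding corollary—matches the paper exactly. Where you diverge is in producing the $\ca{VB}$-groupoid structure on $C/C^\perp$. You attack this directly, trying to show that the multiplication of $V$ descends, and you rightly flag this as the delicate point. The paper sidesteps this entirely by a dualization trick: the inclusion $C^\perp\hookrightarrow C$ of $\ca{VB}$-groupoids dualizes to a fiberwise surjective $\ca{VB}$-groupoid morphism $C^*\to (C^\perp)^*$, and then Corollary~\ref{cor:keragrp} (kernel of a fiberwise surjective $\ca{VB}$-groupoid morphism is a $\ca{VB}$-subgroupoid) gives that its kernel $(C/C^\perp)^*\cong C/C^\perp$ carries a $\ca{VB}$-groupoid structure. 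The multiplicativity of the induced metric is then checked just as you do.

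Your direct route is not wrong, but the argument you sketch needs more care than you indicate. The assertion that ``$\on{gr}(\Mult_V)$ restricted to $C^\perp\times$ anything lands in $C^\perp$'' is not true as stated; what you actually need is bilinearity of $\Mult_V$ on composable pairs together with the fact that $C^\perp$ is a subgroupoid, so that $(v_1+c_1)\circ(v_2+c_2)=v_1\circ v_2+c_1\circ c_2$ whenever both pairs are separately composable. You must also check that composable representatives exist: given classes $[v_1],[v_2]$ with $s([v_1])=t([v_2])$, one has $s(v_1)-t(v_2)\in C^\perp\cap V^{(0)}$, and you need surjectivity of $s|_{C^\perp}$ onto $(C^\perp)^{(0)}$ to adjust. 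These are all routine once stated, but the paper's dualization argument packages them into a single application of Corollary~\ref{cor:keragrp} and is cleaner.
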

\begin{proof}
The identification $V^*\cong V$ identifies $\on{ann}(C)\cong C^\perp\subseteq 
C$. By the previous Corollary, this is a $\ca{VB}$-subgroupoid of $V$.
Hence  $C^\perp\to C$ is an inclusion of $\ca{VB}$-groupoids. Therefore, the dual morphism, \begin{equation}\label{eq:C*toCpstar}C^*\to (C^\perp)^*,\end{equation} is a surjective submersion of $\ca{VB}$-groupoids.
Thus, by Corollary~\ref{cor:keragrp}, the kernel $(C/C^\perp)^*\cong C/C^\perp$ of \eqref{eq:C*toCpstar} carries a natural $\ca{VB}$-groupoid structure.
Finally, it is clear that the restriction of the fiber metric $\l\cdot,\cdot\r$ to $C/C^\perp$ satisfies \eqref{eq:multForm}, since it does so for $V$.
\end{proof}

\end{appendix}

\providecommand{\bysame}{\leavevmode\hbox to3em{\hrulefill}\thinspace}
\providecommand{\MR}{\relax\ifhmode\unskip\space\fi MR }
\providecommand{\MRhref}[2]{%
  \href{http://www.ams.org/mathscinet-getitem?mr=#1}{#2}
}
\providecommand{\href}[2]{#2}

\end{document}